\documentclass[11pt,twoside,a4paper]{article}
\usepackage{color,amscd,amsmath,amssymb,amsthm,latexsym,stmaryrd}
\usepackage{mathabx}
\usepackage{shuffle}
\usepackage[latin1]{inputenc}
\usepackage[T1]{fontenc}   
\usepackage[english]{babel}
\usepackage{tikz,tkz-tab}
\setlength{\textwidth}{16cm}
\setlength{\textheight}{25cm}
\topmargin = -25mm
\oddsidemargin = -1mm
\evensidemargin = 0mm



\newcommand{\tdun}[1]{\begin{picture}(10,5)(-2,-1)
\put(0,0){\circle*{2}}
\put(3,-2){\tiny #1}
\end{picture}}

\newcommand{\tddeux}[2]{\begin{picture}(12,5)(0,-1)
\put(3,0){\circle*{2}}
\put(3,5){\circle*{2}}
\put(3,0){\line(0,1){5}}
\put(6,-2){\tiny #1}
\put(6,3){\tiny #2}
\end{picture}}

\newcommand{\tdtroisun}[3]{\begin{picture}(22,12)(-8,-1)
\put(3,0){\circle*{2}}
\put(6.5,7){\circle*{2}}
\put(-1,7){\circle*{2}}
\put(-2.8,0){\Large $\vee$}
\put(5,-2){\tiny #1}
\put(9,5){\tiny #2}
\put(-8,5){\tiny #3}
\end{picture}}

\newcommand{\tdtroisdeux}[3]{\begin{picture}(12,15)(-2,-1)
\put(0,0){\circle*{2}}
\put(0,5){\circle*{2}}
\put(0,10){\circle*{2}}
\put(0,0){\line(0,1){5}}
\put(0,5){\line(0,1){5}}
\put(3,-2){\tiny #1}
\put(3,3){\tiny #2}
\put(3,9){\tiny #3}
\end{picture}}

\input{xy}
\xyoption{all}

\title{Twisted bialgebras, cofreeness and cointeraction}
\date{}
\author{Lo\"\i c Foissy\\ \\
{\small \it Fédération de Recherche Mathématique du Nord Pas de Calais FR 2956}\\
{\small \it Laboratoire de Mathématiques Pures et Appliquées Joseph Liouville}\\
{\small \it Université du Littoral Côte d'Opale-Centre Universitaire de la Mi-Voix}\\ 
{\small \it 50, rue Ferdinand Buisson, CS 80699,  62228 Calais Cedex, France}\\ \\
{\small \it email: foissy@univ-littoral.fr}}

\theoremstyle{plain}
\newtheorem{theo}{Theorem}[section]
\newtheorem{lemma}[theo]{Lemma}
\newtheorem{cor}[theo]{Corollary}
\newtheorem{prop}[theo]{Proposition}
\newtheorem{defi}[theo]{Definition}

\theoremstyle{remark}
\newtheorem{remark}{Remark}[section]
\newtheorem{notation}{Notations}[section]
\newtheorem{example}{Example}[section]

\newcommand{\K}{\mathbb{K}}
\newcommand{\N}{\mathbb{N}}

\newcommand{\comp}{\mathbf{Comp}}
\newcommand{\set}{\mathbf{Set}}
\newcommand{\vect}{\mathbf{Vect}}
\newcommand{\I}{\mathcal{I}}
\newcommand{\cont}{\mathbf{cont}}
\newcommand{\calA}{\mathcal{A}}
\newcommand{\calB}{\mathcal{B}}
\newcommand{\calC}{\mathcal{C}}
\newcommand{\calP}{\mathcal{P}}
\newcommand{\calQ}{\mathcal{Q}}
\newcommand{\calR}{\mathcal{R}}
\newcommand{\calcomp}{\mathcal{C}omp}
\newcommand{\gr}{\mathcal{G}r}
\newcommand{\calgr}{\mathcal{G}r'}
\newcommand{\caltop}{\mathcal{T}op}
\newcommand{\calpos}{\mathcal{P}os}
\renewcommand{\cot}{co\mathcal{T}}
\newcommand{\com}{\mathbf{Com}}
\newcommand{\fun}{\mathcal{K}}
\newcommand{\fdeux}{\widehat{\mathcal{K}}}
\newcommand{\WQSym}{\mathbf{WQSym}}
\newcommand{\QSym}{\mathbf{QSym}}
\newcommand{\bfh}{\mathbf{H}}
\newcommand{\fqsym}{\mathcal{O}rd}
\newcommand{\FQSym}{\mathbf{FQSym}}
\newcommand{\chara}{\mathrm{Char}}
\newcommand{\Endo}{\mathrm{End}}
\newcommand{\tdelta}{\tilde{\Delta}}
\newcommand{\mor}{\mathrm{Mor}}
\newcommand{\HO}{\mathbf{HO}}
\newcommand{\ho}{\mathrm{ho}}
\newcommand{\CE}{\mathrm{CE}}
\newcommand{\id}{\mathrm{Id}}
\newcommand{\cc}{\mathrm{cc}}
\newcommand{\cl}{\mathrm{cl}}
\newcommand{\Vect}{\mathrm{Vect}}
\newcommand{\qsh}{\mathrm{QSh}}
\begin{document}

\maketitle

\begin{abstract}
We study twisted bialgebras and double twisted bialgebras, that is to say bialgebras in the category of linear species,
or in the category of species in the category of coalgebras. 
We define the notion of cofree twisted coalgebra and generalize Hoffman's quasi-shuffle product, obtaining in particular
a twisted bialgebra of set compositions $\calcomp$. Given a special character, this twisted bialgebra satisfies a terminal property,
generalizing the one of the Hopf algebra of quasisymmetric functions proved by Aguiar, Bergeron and Sottile.

We give $\calcomp$ a second coproduct, making it a double twisted bialgebra, and prove that it is a terminal object
in the category of double twisted bialgebras. 
Actions of characters on morphisms allow to obtain every twisted bialgebra morphisms from
a connected double twisted bialgebra to $\calcomp$.

These results are applied to examples based on graphs and on finite topologies,
obtaining species versions of the chromatic symmetric series and chromatic polynomials,
or of the Ehrhart polynomials. Moreover, through actions of monoids of characters, we obtain a
twisted bialgebraic interpretation of the duality principle.
\end{abstract}

\textbf{AMS classification}. 16T05 18D35 18D10 06A11 05C15

\tableofcontents

\section*{Introduction}

Species, introduced by Joyal \cite{Joyal} in the 80's, are functors from the category
of finite sets with bijective maps to the category of vector spaces.
They form a monoidal category and a natural idea is to define and construct algebras, coalgebras and bialgebras 
in this category: this is for example explored by Aguiar and Mahajan in \cite{Aguiar,Aguiar3}. 
In the present paper, following \cite{PatrasReutenauer}, these objects will be qualified of \emph{twisted}.
A bunch of functors from species to vector spaces send twisted algebras, coalgebras or bialgebras
to algebras, coalgebras or bialgebras: we shall use here the full Fock functor $\fun$ and the bosonic Fock functor
$\fdeux$, in the terminology of \cite{Aguiar}. 
Our aim in this paper is to mimic several known results on bialgebras involving cofreeness,
shuffles and (co)interactions in the frame of twisted bialgebras. \\

We shall start in the first section with reminder on species, and examples of twisted algebras, coalgebras and bialgebras,
that is to say algebras, coalgebras and bialgebras in the category of species. A simple example is given by the species
$\com$: for any finite set $A$, $\com[A]=\K$; the product and the coproduct of $\com$ are given by:
\begin{align*}
m_{A,B}(1\otimes 1)&=1\in \com[A\sqcup B],\\
\Delta_{A,B}(1)&=1\otimes 1\in \com[A]\otimes \com[B].
\end{align*}
Two more complex examples will be particularly studied in the sequel:
firstly, the twisted bialgebra of graphs $\calgr$., which product is given by the disjoint union of graphs, and which coproduct
by extraction of subgraphs. For example, if $A$, $B$ and $C$ are finite sets:
\begin{align*}
\Delta_{A,B\sqcup C}(\tdtroisdeux{$A$}{$B$}{$C$})&=\tdun{$A$}\otimes \tddeux{$B$}{$C$},&
\Delta_{B\sqcup C,A}(\tdtroisdeux{$A$}{$B$}{$C$})&=\tddeux{$B$}{$C$}\otimes \tdun{$A$},\\
\Delta_{B,A\sqcup C}(\tdtroisdeux{$A$}{$B$}{$C$})&=\tdun{$B$}\otimes \tdun{$A$}\tdun{$C$},&
\Delta_{A\sqcup C,B}(\tdtroisdeux{$A$}{$B$}{$C$})&=\tdun{$A$}\tdun{$C$}\otimes \tdun{$B$},\\
\Delta_{C,A\sqcup B}(\tdtroisdeux{$A$}{$B$}{$C$})&=\tdun{$C$}\otimes \tddeux{$A$}{$B$},&
\Delta_{A\sqcup B,C}(\tdtroisdeux{$A$}{$B$}{$C$})&=\tddeux{$A$}{$B$}\otimes \tdun{$C$}.
\end{align*} 
This twisted bialgebra is commutative and cocommutative.
Secondly, the twisted bialgebra of finite topologies, or,  by Alexandroff's theorem \cite{Alexandrov}, of quasi-posets,
which product is given by the disjoint union of finite topologies, and which coproduct is given
by the extraction of open subsets. For example, if $A$, $B$ and $C$ are nonempty finite sets:
\begin{align*}
\Delta_{A,B\sqcup C}(\tdtroisdeux{$A$}{$B$}{$C$})&=\tdun{$A$}\otimes \tddeux{$B$}{$C$},&
\Delta_{B\sqcup C,A}(\tdtroisdeux{$A$}{$B$}{$C$})&=0,\\
\Delta_{B,A\sqcup C}(\tdtroisdeux{$A$}{$B$}{$C$})&=0,&
\Delta_{A\sqcup C,B}(\tdtroisdeux{$A$}{$B$}{$C$})&=0,\\
\Delta_{C,A\sqcup B}(\tdtroisdeux{$A$}{$B$}{$C$})&=0,&
\Delta_{A\sqcup B,C}(\tdtroisdeux{$A$}{$B$}{$C$})&=\tddeux{$A$}{$B$}\otimes \tdun{$C$}.
\end{align*} 
We here represent finite topologies or quasi-posets by their Hasse graphs.
This twisted bialgebra is commutative and not cocommutative. \\

We deal with the cofree twisted coalgebra $\cot(\calP)$ 
(co)generated by a species $\calP$ (Definition \ref{defi5}) in the second section. It satisfies a universal property, justifying
the adjective "cofree" (Theorem \ref{theo6}). Two particular examples will be used:
the first one, $\calcomp$, based on set compositions, is generated by the augmentation ideal of $\com$;
 the second one, $\fqsym$, based on total orders, is generated by a species concentrated in degree $1$.
The universal property is used to extend Hoffman's quasi-shuffle product to the twisted case:
if $\calP$ is a twisted algebra, then the cofree twisted coalgebra $\cot(\calP_+)$ of its augmentation ideal $\calP_+$
inherits a product $\squplus$ making it a twisted bialgebra; when the product of $\calP_+$ is trivial,
one obtains the simpler shuffle product $\shuffle$. This is applied to $\calcomp$
and to $\fqsym$. For example, in $\calcomp$, if $A$, $B$, $C$ and $D$ are nonempty finite sets:
\begin{align*}
(A)\squplus (B)&=(A,B)+(B,A)+(A\sqcup B),\\
(A,B)\squplus (C)&=(A,B,C)+(A,C,B)+(C,A,B)+(A\sqcup B,C)+(A,B\sqcup C),\\
(A,B)\squplus (C,D)&=(A,B,C,D)+(A,C,B,D)+(C,A,B,D)\\
&+(A,C,D,B)+(C,A,D,B)+(C,D,A,B)\\
&+(A,B\sqcup C,D)+(A\sqcup C,B,D)+(A\sqcup C,D,B)\\
&+(A,C,B\sqcup D)+(C,A,B\sqcup D)+(C,A\sqcup D,B)\\
&+(A\sqcup C,B\sqcup D),
\end{align*}
whereas:
\begin{align*}
(A)\shuffle (B)&=(A,B)+(B,A),\\
(A,B)\shuffle (C)&=(A,B,C)+(A,C,B)+(C,A,B),\\
(A,B)\shuffle (C,D)&=(A,B,C,D)+(A,C,B,D)+(C,A,B,D)\\
&+(A,C,D,B)+(C,A,D,B)+(C,D,A,B).
\end{align*}
These shuffle bialgebras are, in some sense, universal for commutative cofree twisted
coalgebras, see Theorem \ref{theo18}.\\

We extend in the third section  the notion of characters to twisted bialgebras (Definition \ref{defi14}). 
As in the classical case, the characters form a monoid $\chara(\calP)$ for the convolution product $*$ associated to $\Delta$.
If $\calP$ is a connected twisted bialgebra, that is to say if $\calP[\emptyset]$ is one-dimensional,
this monoid has a richer structure: for any $q\in \K$, there is a map $\lambda\longrightarrow \lambda^q$
such that:
\begin{align*}
&\forall q,q'\in \K,\:\forall \lambda \in \chara(\calP),&
\lambda^q*\lambda^{q'}&=\lambda^{q+q'},&(\lambda^q)^{q'}&=\lambda^{qq'}.
\end{align*}
Taking $q=-1$, one obtains an inverse of $\lambda$: in this case, $\chara(\calP)$ is a group.
Moreover, the twisted bialgebra $\calcomp$ inherits a special character $\varepsilon'$ making it a terminal object
in a suitable category of connected twisted bialgebras with a character (Theorem \ref{theo17}): 
this is the twisted version of Aguiar, Bergeron and Sottile's theorem on combinatorial Hopf algebras \cite{Aguiar2}.\\

All these constructions are based on the Cauchy tensor product of species.
We turn to the Hadamard tensor product of species in the fourth section.
This allows to define a second kind of bialgebras in the category of species (Definition \ref{defi19}).
We shall be especially interested in species with both sort of structures and with a nice cointeraction
between them, mimicking the notion of bialgebras in cointeraction we now recall.

A pair of bialgebras in cointeraction is a pair $(A,m_A,\Delta)$ and $(B,m_B,\delta)$ of bialgebras
with a (right) coaction $\rho:A\longrightarrow A\otimes B$ of $B$ on $A$ such that the product, coproduct,
counit and unit of $A$ are morphisms of right comodules. In other words:
\begin{itemize}
\item $\rho(1_A)=1_A\otimes 1_B$ and for any $a,b\in A$, $\rho(ab)=\rho(a)\rho(b)$:
$\rho$ is an algebra morphism.
\item $(\Delta\otimes \id)\circ \rho=m_{1,3,24}\circ (\rho\otimes \rho) \circ \Delta$, where
\[m_{1,3,24}:\left\{\begin{array}{rcl}
A\otimes B\otimes A\otimes B&\longrightarrow&A\otimes A\otimes b\\
a_1\otimes b_1\otimes a_2\otimes b_2&\longrightarrow&a_1\otimes a_1\otimes b_1b_2.
\end{array}\right.\]
\item For any $a\in A$, $(\varepsilon_A\otimes \id)\circ \rho(a)=\varepsilon_A(a)1_B$.
\end{itemize}
For example, the polynomial algebra $\K[X]$ is given two multiplicative coproducts by:
\begin{align*}
\Delta(X)&=X\otimes 1+1\otimes X,&\delta(X)&=X\otimes X.
\end{align*} 
Then $(\K[X],m,\Delta)$ and $(\K[X],m,\delta)$ are in cointeraction with the coaction $\rho=\delta$.
In numerous examples, $(A,m)=(B,m)$ and $\rho=\delta$; 
see \cite{CEFM,Foissychrom,FoissyEhrhart,ManchonBelhaj} for combinatorial
examples based on families of graphs, oriented graphs or posets, and applications of the cointeraction.

This notion is extended to species, obtaining double twisted bialgebras (Definition \ref{defi21}).
For example, the species $\com$ is a double twisted bialgebra: it has a second coproduct $\delta$, such that for any finite set $A$:
\[\delta_A:\left\{\begin{array}{rcl}
\com[A]&\longrightarrow&\com[A]\otimes \com[A]\\
1&\longrightarrow&1\otimes 1.
\end{array}\right.\]
 The set of characters of a double twisted bialgebra $\calP$ holds a second convolution product $\star$ associated to $\delta$,
making it a monoid. Moreover:
\begin{align*}
&\forall \lambda,\mu,\nu\in \chara(\calP),&
(\lambda*\mu)\star \nu&=(\lambda\star \nu)*(\mu\star\nu).
\end{align*}
In other words, $(\chara(\calP),\star)$ acts on $(\chara(\calP),*)$ by monoid endomorphisms.
For any twisted bialgebra $\calQ$, this monoid also acts on the set $\mor_B(\calP,\calQ)$ of twisted bialgebra morphisms
from $\calP$ to $\calQ$ (Proposition \ref{prop26}), giving a right action:
\[\left\{\begin{array}{rcl}
\mor_B(\calP,\calQ)\times \chara(\calP)&\longrightarrow&\mor_B(\calP)\\
(\phi,\lambda)&\longrightarrow&\phi \leftarrow \lambda.
\end{array}\right.\]
In particular, we obtain an injective morphism of monoids:
\[\chi_\calP:\left\{\begin{array}{rcl}
(\chara(\calP),\star)&\longrightarrow&(\mor_B(\calP,\calP),\circ)\\
\lambda&\longrightarrow&\id_{\calP}\leftarrow \lambda.
\end{array}\right.\]
We prove that if $\calP$ is a commutative double twisted  bialgebra,
then the twisted cofree coalgebra $\cot(\calP_+)$ is a double twisted bialgebra, with the quasi-shuffle product $\squplus$,
the deconcatenation coproduct $\Delta$ and another coproduct $\delta$ defined by a extraction-contraction process
(Proposition \ref{prop23}). This is directly applied to $\calcomp$ in Corollary \ref{cor24}.
For example, if $A$, $B$ and $C$ are nonempty finite subsets:
\begin{align*}
\delta(A)&=(A)\otimes (A),\\
\delta(A,B)&=(A,B)\otimes ((A)\squplus (B))+(A\sqcup B)\otimes (A,B),\\
\delta(A,B,C)&=(A,B,C)\otimes ((A)\squplus (B)\squplus (C))+(A,B\sqcup C)\otimes ((A)\squplus (B,C))\\
&+(A\sqcup B,C)\otimes ((A,B)\squplus (C))+(A\sqcup B\sqcup C)\otimes (A,B,C).
\end{align*}
We then prove a universal property for this twisted double bialgebra in Theorem \ref{theo29}:
if $\calP$ is a twisted double bialgebra satisfying a condition of connectivity,
then there exists a unique morphism $\phi$ of twisted double bialgebra from $\calP$ to $\calcomp$.
Moreover, all the morphisms of twisted bialgebras from $\calP$ to $\calcomp$ can be deduced from $\phi$
by Theorem \ref{theo17}, using the action $\leftarrow$ of the monoid $\chara(\calP)$. \\

Graphs and finite topologies are given a structure of double twisted bialgebras in the next two sections.
For example, in $\calgr$, If $A$, $B$, $C$ are nonempty finite sets:
\begin{align*}
\delta(\tdun{$A$})&=\tdun{$A$}\otimes \tdun{$A$},\\
\delta(\tddeux{$A$}{$B$})&=\tddeux{$A$}{$B$}\otimes \tdun{$A$}\tdun{$B$}+
\tdun{$A\sqcup B$} \hspace{6mm} \otimes \tddeux{$A$}{$B$},\\
\delta(\tdtroisun{$A$}{$C$}{$B$})&=\tdtroisun{$A$}{$C$}{$B$}\otimes \tdun{$A$}\tdun{$B$}\tdun{$C$}
+\tddeux{$A\sqcup B$}{$C$}\hspace{6mm}\otimes \tddeux{$A$}{$B$}\tdun{$C$}+
\tddeux{$A\sqcup C$}{$B$}\hspace{6mm}\otimes \tddeux{$A$}{$C$}\tdun{$B$}+
\tdun{$A\sqcup B\sqcup C$}\hspace{12mm}\otimes \tdtroisun{$A$}{$C$}{$B$}.
\end{align*}
In $\caltop$:
\begin{align*}
\delta(\tdun{$A$})&=\tdun{$A$}\otimes \tdun{$A$},\\
\delta(\tddeux{$A$}{$B$})&=\tddeux{$A$}{$B$}\otimes \tdun{$A$}\tdun{$B$}+
\tdun{$A\sqcup B$} \hspace{6mm} \otimes \tddeux{$A$}{$B$},\\
\delta(\tdtroisun{$A$}{$C$}{$B$})&=\tdtroisun{$A$}{$C$}{$B$}\otimes \tdun{$A$}\tdun{$B$}\tdun{$C$}
+\tddeux{$A\sqcup B$}{$C$}\hspace{6mm}\otimes \tddeux{$A$}{$B$}\tdun{$C$}+
\tddeux{$A\sqcup C$}{$B$}\hspace{6mm}\otimes \tddeux{$A$}{$C$}\tdun{$B$}+
\tdun{$A\sqcup B\sqcup C$}\hspace{12mm}\otimes \tdtroisun{$A$}{$C$}{$B$},\\
\delta(\tdtroisdeux{$A$}{$B$}{$C$})&=\tdtroisdeux{$A$}{$B$}{$C$}\otimes \tdun{$A$}\tdun{$B$}\tdun{$C$}
+\tddeux{$A\sqcup B$}{$C$}\hspace{6mm}\otimes \tddeux{$A$}{$B$}\tdun{$C$}+
\tddeux{$A$}{$B\sqcup C$}\hspace{6mm}\otimes \tdun{$A$}\tddeux{$B$}{$C$}+
\tdun{$A\sqcup B\sqcup C$}\hspace{12mm}\otimes \tdtroisdeux{$A$}{$B$}{$C$}.
\end{align*}
The unique double twisted bialgebra morphism $\phi_{chr}:\calgr\longrightarrow \calcomp$ 
is related to the noncommutative chromatic formal series of \cite{Gebhard}
(Proposition \ref{prop32}). For example:
\begin{align*}
\phi_{chr}(\tdun{$A$})&=(A),\\
\phi_{chr}(\tddeux{$A$}{$B$})&=(A,B)+(B,A),\\
\phi_{chr}(\tdtroisdeux{$A$}{$B$}{$C$})&=(A,B,C)+(A,C,B)+(B,A,C)+(B,C,A)+(C,A,B)+(C,B,A)\\
&+(A\sqcup C,B)+(B,A\sqcup C).
\end{align*}
The unique double twisted bialgebra morphism $\phi_{ehr}:\calgr\longrightarrow \calcomp$ 
 is related to linear extensions and to the Ehrhart quasisymmetric function (Theorem \ref{theo41}). For example:
\begin{align*}
\phi_{ehr}(\tdun{$A$})&=(A),\\
\phi_{ehr}(\tddeux{$A$}{$B$})&=(A,B),\\
\phi_{ehr}(\tdtroisun{$A$}{$C$}{$B$})&=(A,B,C)+(A,C,B)+(A,B\sqcup C),\\
\phi_{ehr}(\tdtroisdeux{$A$}{$B$}{$C$})&=(A,B,C).
\end{align*}
We also prove in this section that $\caltop$ is a cofree twisted coalgebra, so is isomorphic to a
quasi-shuffle twisted bialgebra (Theorem \ref{theo42}). \\

The seventh section is devoted to the study of homogeneity. We consider now graded twisted bialgebras,
that is to say twisted bialgebra $\calP$ admitting a decomposition
\[\calP=\bigoplus_{n\geq 0}\calP_n,\]
compatible with the product and coproduct of $\calP$ (Definition \ref{defi43}). 
For example, $\calgr$ admits a graduation, given by the number of vertices of the graphs, 
and $\caltop$ admits a graduation, given by the number of classes of the quasi-posets,
or equivalently the number of vertices of the Hasse graphs. If $\calP$ is graded, it admits a family $(\iota_q)_{q\in \K}$
of bialgebra endomorphisms, such that for any $q$, $q'\in \K$:
\[\iota_q\circ \iota_{q'}=\iota_{qq'}.\]
Conversely, we define in Proposition \ref{prop44} such a family $(\theta_q)_{q\in \K}$ of endomorphisms of $\calcomp$, given by:
\[\theta_q=\id_{\calcomp}\leftarrow \varepsilon'^q.\]
These morphisms imply a graduation of $\calcomp$. For example:
\begin{align*}
\calcomp_1[A]&=\Vect((A)),\\
\calcomp_2[A]&=Vect\left((A_1,A_2)+\frac{1}{2}(A_1\sqcup A_2),\: (A_1,A_2)\in \comp[A]\right),\\
\calcomp_3[A]&=Vect\left(\begin{array}{c}
(A_1,A_2,A_3)+\frac{1}{2}(A_1\sqcup A_2,A_3)+
\frac{1}{2}(A_1, A_2\sqcup A_3)+\frac{1}{6}(A_1\sqcup A_2\sqcup A_3),\\
 (A_1,A_2,A_3)\in \comp[A])
 \end{array}\right).
\end{align*}
For any graded connected twisted bialgebra $\calP$, we prove that the set of homogeneous morphisms
from $\calP$ to $\calcomp$ is in one-to-one correspondence with the set of morphisms from $\calP_1$ to $\com$
(Corollary \ref{cor47}). As applications, and we describe all homogeneous twisted bialgebra morphisms
from $\calgr$ or from $\caltop$ to $\calcomp$ (Proposition \ref{prop48} and \ref{prop56}). 
For graphs, we shall especially consider a  one-parameter family of them, denoted by $(\phi_q)_{q\in \K}$. For example:
\begin{align*}
\phi_q(\tdun{$A$})&=q(A),\\
\phi_q(\tddeux{$A$}{$B$})&=q^2(A,B)+q^2(B,A)+q^2(A\sqcup B)\\
\phi_q(\tdtroisun{$A$}{$C$}{$B$})&=q^3(A,B,C)+q^3(A,C,B)+q^3(B,A,C)+q^3(B,C,A)+q^3(C,A,B)+q^3(C,B,A)\\
&+q^3(A\sqcup B,C)+q^3(A\sqcup C,B)+q^3(B\sqcup C,A)\\
&+q^3(A,B\sqcup C)+q^3(B,A\sqcup C)+q^3(C,A\sqcup B)+q^3(A\sqcup B\sqcup C).
\end{align*}
Playing with the graduation, we deform the morphism $\phi_{chr}$, putting:
\begin{align*}
&\forall q\in \K,& \phi_{chr_q}&=\theta_q^{-1}\circ \phi_{chr} \circ \iota_q.
\end{align*} 
We prove the existence of a character $\lambda_{chr_q}$ such that:
\[\phi_{chr_q}=\phi_1\leftarrow \lambda_{chr_q}.\]
This rather complicated character is invertible for the $\star$ product, and for any graph $G$,
$\lambda_{chr_q}^{-1}(G)$ is a power of $q$ (Corollary \ref{cor53}). We finally obtain  in Corollary \ref{cor53}
a diagram of twisted bialgebras morphisms
\[\xymatrix{\calgr\ar[rr]^{\Gamma} \ar[rd]_{\phi_{chr_{-1}}}&&\calgr\ar[ld]^{\phi_{chr_1}}\\
&\calcomp&}\]
where $\Gamma$ is a twisted bialgebra automorphism related to a character defined by acyclic orientations of graphs.
Similar results are obtained for finite topologies.  \\ 

We apply the Fock functors to these constructions in the eighth section. 
Both of them send twisted bialgebras to bialgebras, and the bosonic one sends double twisted bialgebras 
to bialgebras in cointeraction (Theorem \ref{theo62}).
Note that this second point generally does not work for the full Fock functor by a lack of commutativity. We obtain:
\begin{itemize}
\item For $\com$: both Fock functors give $\K[X]$, with its usual product and coproducts. 
\item For $\fqsym$: the full Fock functor gives Malvenuto and Reutenauer's Hopf algebra $\FQSym$ of permutations
\cite{DHT,MR2}, whereas the bosonic Fock functor gives again the polynomial algebra $\K[X]$.
\item For $\calcomp$: the full Fock functor gives the Hopf algebra of packed words (or of set compositions) $\WQSym$
\cite{NovelliThibon,NovelliThibon2,NovelliThibon3} with its internal coproduct; the bosonic Fock functor
give the Hopf algebra of quasi-symmetric functions $\QSym$.
\item For the species of graphs $\calgr$, we obtain the Hopf algebras $\bfh_{\calgr}$ and
$H_{\calgr}$ described in \cite{Foissychrom}.
The induced morphism $\phi$ is Stanley's chromatic symmetric function \cite{Stanley2}.
\item For the species of finite topologies $\caltop$, we obtain the Hopf algebras $\bfh_{\caltop}$
and $H_{\caltop}$ described in \cite{FoissyEhrhart}.
The induced morphism $\phi$ is the (strict) Ehrhart quasisymmetric function.
\end{itemize}
We also prove in Theorem \ref{theo68} that the image of a a cofree twisted bialgebra by the full Fock functor 
is free and cofree (getting back a known result on $\FQSym$ and $\WQSym$).\\

The last section is devoted to applications of these results on graphs and finite topologies.
We also prove that there exists a unique morphism $H:\QSym\longrightarrow \K[X]$ of double bialgebra,
which we use to recover the chromatic polynomial $P_{chr}$ and the (strict) Ehrhart polynomial $P_{ehr}$. 
We finally obtain the following morphims of bialgebra morphisms:
\begin{align*}
&\xymatrix{&&&\WQSym\ar@{->>}[dddd]\\
\bfh_{\calgr}\ar[rr]_{\fun(\Gamma)}\ar@{->>}[d]\ar[rrru]^{\fun(\phi_{chr_{-1}})}
&&\bfh_{\calgr}\ar@{->>}[d]\ar[ru]_{\fun(\phi_{chr_1})}&\\
H_{\calgr}\ar[rr]^{\fun(\Gamma)}\ar[dr]^{P_{chr_{-1}}}\ar@/_4pc/[ddrrr]_{\fdeux(\phi_{chr_{-1}})}
&&H_{\calgr}\ar[ld]^{P_{chr_1}}\ar[rdd]^<(.2){\fdeux(\phi_{chr_1})}&\\
&\K[X]&&\\
&&&\QSym\ar[ull]^H}
&\xymatrix{&&&\WQSym\ar@{->>}[dddd]\\
\bfh_{\caltop}\ar[rr]_{\fun(\Gamma)}\ar@{->>}[d]\ar[rrru]^{\fun(\phi_{ehr_{-1}})}
&&\bfh_{\caltop}\ar@{->>}[d]\ar[ru]_{\fun(\phi_{ehr_1})}&\\
H_{\caltop}\ar[rr]^{\fun(\Gamma)}\ar[dr]^{P_{ehr_{-1}}}\ar@/_4pc/[ddrrr]_{\fdeux(\phi_{ehr_{-1}})}
&&H_{\caltop}\ar[ld]^{P_{ehr_1}}\ar[rdd]^<(.2){\fdeux(\phi_{ehr_1})}&\\
&\K[X]&&\\
&&&\QSym\ar[ull]^H}
\end{align*}

\begin{notation} \begin{itemize}
\item Let $\K$ be a commutative field of characteristic zero. All the objects of this text
(vector spaces, algebras, coalgebras, bialgebras$\ldots$) will be taken over $\K$.

\item For any $n\in \N$, we denote by $\underline{n}$ the set $\{1,\ldots,n\}$. In particular, 
$\underline{0}=\emptyset$.
\item For any $n\in \N$, we denote by $\mathfrak{S}_n$ the $n$-th symmetric group. 
\item If $A$ is a finite set, we denote by $\sharp A$ its cardinality.
\end{itemize}
\end{notation}

\section{Bialgebras in the category of species}

\subsection{Reminders and examples}

We refer to \cite{Bergeron,Joyal} for a more complete exposition of species.
We denote by $\set$ the category of finite sets with bijections and by $\vect$ the category of vector spaces over $\K$
with linear maps. A linear species is a functor from $\set$ to $\vect$. 
In other words, a species $\calP$ is given by:
\begin{itemize}
\item For any finite set $A$, a vector space $\calP[A]$.
\item For any bijection $\sigma:A\longrightarrow B$ between two finite sets, a linear map
 $\calP[\sigma]:\calP[A]\longrightarrow \calP[B]$.
\end{itemize}
The following properties are satisfied:
\begin{itemize}
\item For any finite set $A$, $\calP[\id_A]=\id_{\calP[A]}$.
\item For any finite sets, $A,B,C$ and any bijections $\sigma: A\longrightarrow B$ and $\tau:B\longrightarrow C$,
\[\calP[\tau \circ \sigma]=\calP[\tau]\circ \calP[\sigma].\]
\end{itemize}
If $\calP$ and $\calQ$ are two species, a morphism of species $f:\calP\longrightarrow \calQ$ is 
a natural transformation from $\calP$ to $\calQ$, that is, for any finite set $A$, $f[A]$ is a linear map from 
$\calP[A]$ to $\calQ[A]$, such that for any bijection $\sigma:A\longrightarrow B$ between two finite sets, 
the following diagram commutes:
\[\xymatrix{\calP[A]\ar[r]^{\calP[\sigma]} \ar[d]_{f[A]}&\calP[B]\ar[d]^{f[B]}\\
\calQ[A]\ar[r]_{\calQ[\sigma]}&\calQ[B]}\]

\begin{example}
\begin{enumerate}
\item We define a species $\com$ by the following:
\begin{itemize}
\item For any finite set $A$, $\com[A]=\K$. 
\item For any bijection $\sigma:A\longrightarrow B$, $\com[\sigma]=\id_\K$.
\end{itemize} 
\item For any finite set $A$, let us denote by $\comp[A]$ the set of set compositions of $A$, that is to say 
finite sequences $(A_1,\ldots,A_k)$ of nonempty subsets of $A$, such that  $A_1\sqcup \ldots \sqcup A_k=A$.
Let $\calcomp[A]$ be the space generated by $\comp[A]$.
If $\sigma:A\longrightarrow B$ is a bijection:
\[\calcomp[\sigma]:\left\{\begin{array}{rcl}
\calcomp[A]&\longrightarrow&\calcomp[B]\\
(A_1,\ldots,A_k)\in \comp[A]&\longrightarrow&(\sigma(A_1),\ldots,\sigma(A_k))\in \comp[B].
\end{array}\right.\]
For example, if $a,b$ are two different elements of a set $A$:
\begin{align*}
\calcomp[\emptyset]&=\Vect(\emptyset),\\
\calcomp[\{a\}]&=\Vect(\{a\}),\\
\calcomp[\{a,b\}]&=\Vect((\{a\},\{b\}),(\{b\},\{a\}),(\{a,b\}))
\end{align*}
\item We define the species of graphs $\gr$ by the following:
\begin{itemize}
\item For any finite set $A$, $\gr[A]$ is the vector space generated by graphs $G$ which vertex set $V(G)$ is equal to $A$.
\item For any bijection $\sigma:A\longrightarrow B$, $\gr[\sigma]$ sends the graph $G=(A,E(G))$,
where $E(G)$ is the set of edges of $A$, to the graph $\gr[\sigma](G)=G'=(B,E(G'))$, with:
\[E(G')=\{\{\sigma(a),\sigma(b)\}, \{a,b\}\in E(G)\}.\]
For example, if $a,b$ are two different elements of a set $A$:
\begin{align*}
\gr[\emptyset]&=\Vect(\emptyset),&
\gr[\{a\}]&=\Vect(\tdun{$a$}),&
\gr[\{a,b\}]&=\Vect(\tdun{$a$}\tdun{$b$}, \tddeux{$a$}{$b$}).
\end{align*}
\end{itemize}
\item We define the species of posets $\calpos$ by the following:
\begin{itemize}
\item For any finite set $A$, $\calpos[A]$ is the vector space generated by posets $P$ which vertex set is $A$,
that is to say pairs $P=(A,\leq_P)$, such that $\leq_P$ is a partial order on $A$.
\item For any bijection $\sigma:A\longrightarrow B$, $\calpos[\sigma]$ sends the poset $P=(A,\leq_P)$,
 to the poset $\calpos[\sigma](P)=(B,\leq_P')$, with:
\begin{align*}
&\forall a,b\in B,&a\leq_P' b\Longleftrightarrow \sigma^{-1}(a)\leq_P \sigma^{-1}(b).
\end{align*}
\end{itemize}
Posets will be represented by their Hasse graphs. For example, if $a,b$ are two different elements of a set $A$:
\begin{align*}
\calpos[\emptyset]&=\Vect(\emptyset),\\
\calpos[\{a\}]&=\Vect(\tdun{$a$}),\\
\calpos[\{a,b\}]&=\Vect(\tdun{$a$}\tdun{$b$},\tddeux{$a$}{$b$},\tddeux{$b$}{$a$}).
\end{align*}
\end{enumerate}\end{example}

\begin{notation} 
If $\calP$ and $\calQ$ are species, we define the composition of the species $\calP$ and $\calQ$ by:
\[\calP\circ \calQ[A]=\bigoplus_{\mbox{\scriptsize $I$ partition of $A$}}
\calP[I]\otimes \left(\bigotimes_{X\in I} \calQ[X]\right).\]
We shall especially consider the case where $\calQ=\com$:
\[\calP\circ \com[A]=\bigoplus_{\mbox{\scriptsize $I$ partition of $A$}} \calP[I].\]
\end{notation}

\begin{example}
\begin{enumerate}
\item We put $\calgr=\gr\circ \com$. For any finite set $A$, $\calgr[A]$ 
is the vector space generated by graphs which set of vertices is a partition of $A$. For example, if $A=\{a,b\}$:
\[\calgr[A]=\Vect(\tdun{$\{a\}$}\hspace{2mm}\tdun{$\{b\}$}\hspace{2mm},\tddeux{$\{a\}$}{$\{b\}$}\hspace{2mm},
\tdun{$\{a,b\}$}\hspace{4mm}).\]
If $\sigma:A\longrightarrow B$ is a bijection, the map $\calgr[\sigma]:\calgr[A]\longrightarrow \calgr[B]$ 
is given by the action of $\sigma$ on the elements of $A$.
\item The species $\caltop$ of finite topologies is given by the following:
\begin{itemize}
\item For any finite set $A$, $\caltop[A]$ is the vector space generated by the set of topologies on $A$,
that is to say subsets $T$ of the set of subsets of $A$ with the following properties:
\begin{enumerate}
\item $\emptyset$ and $A$ belong to $T$.
\item If $X,Y\in T$, then $X\cup Y \in T$ and $X\cap Y \in T$.
\end{enumerate}
\item If $\sigma:A\longrightarrow B$ and $T$ is a topology on $A$, then:
\[\caltop[\sigma](T)=\{\sigma(X),X\in T\}\]
\end{itemize}
A quasi-poset is a pair $(A,\leq_P)$, where $A$ is a set and $\leq_P$ a quasi-order on $A$,
that is to say a transitive and reflexive relation on $A$.
By Alexandroff's theorem \cite{Alexandrov,Stong}, for any finite set $A$,
there is a bijection between topologies on $A$ and quasi-posets $P=(A,\leq_P)$.
This bijection associates to a quasi-order $\leq$ on $A$ the set $Top(\leq)$ of its open sets: 
$X\subseteq A$ is $\leq$-open if:
\begin{align*}
&\forall a,b\in A,&a\leq b\mbox{ and }a\in X\Longrightarrow b\in X.
\end{align*}
In the sequel, we shall identify in this way finite topologies and quasi-posets.
If $T=(A,\leq_T)$ is a quasi-poset, one defines an equivalence on $A$ by:
\[a \sim_T b\mbox{ if } a\leq_T b\mbox{ and }b\leq_T a.\]
The quotient space $A/\sim_T$  is given a partial order $\overline{\leq}_T$ by:
\[X\overline{\leq}_T Y\mbox{ if }\forall a\in X,\: \forall b\in Y,\: a\leq_T b.\]
Therefore, $(A/\sim_T ,\overline{\leq}_T)$ is a poset. 
This implies that the species $\caltop$ and $\calpos \circ \com$ are isomorphic. 
For example, if $A=\{a,b\}$, we represent finite topologies on $A$ by the Hasse graph of $\overline{\leq}_T$,
the vertices being decorated by the corresponding equivalence class of $\sim_T$:
\[\caltop[A]=\Vect(\tdun{$\{a\}$}\hspace{2mm}\tdun{$\{b\}$}\hspace{2mm},\tddeux{$\{a\}$}{$\{b\}$}\hspace{2mm},
\tddeux{$\{b\}$}{$\{a\}$}\hspace{2mm},\tdun{$\{a,b\}$}\hspace{4mm}).\]
The open sets of these topologies are given in the following table:
\[\begin{array}{|c|c|c|c|c|}
\hline&\emptyset&\{a\}&\{b\}&\{a,b\}\\
\hline\tdun{$\{a\}$}\hspace{2mm}\tdun{$\{b\}$}\hspace{2mm}&\times&\times&\times&\times\\
\hline \tddeux{$\{a\}$}{$\{b\}$}\hspace{2mm}&\times&&\times&\times\\
\hline \tddeux{$\{b\}$}{$\{a\}$}\hspace{2mm}&\times&\times&&\times\\
\hline \tdun{$\{a,b\}$}\hspace{4mm}&\times&&&\times\\
\hline\end{array}\]
\end{enumerate}\end{example}

\begin{notation}\begin{enumerate}
\item If $G$ is a graph, the number of its vertices is denoted by $\deg(G)$, and the number of its connected components
is denoted by $\cc(G)$.
\item If $T$ is a finite topology on a set $A$, we denote by $CL(T)$ the quotient space $A/\sim_T$ and by $\cl(T)$ its cardinal
(which is also the number of vertices of the Hasse graph of $T$), and we denote by $\cc(T)$ then number of
its connected components (which is also the number of connected components of the Hasse graph of $T$).
\end{enumerate}\end{notation}

\subsection{Algebras in the category of species}

Let $\calP$, $\calQ$ be two species. The Cauchy tensor product of species $\calP\otimes \calQ$ is defined as this:
\begin{itemize}
\item For any finite set $A$:
\[\calP\otimes \calQ[A]=\bigoplus_{I\subseteq A} \calP[I]\otimes \calQ[A\setminus I].\]
\item For any bijection  $\sigma:A\longrightarrow B$ between two finite sets:
\[\calP\otimes \calQ[\sigma]:\left\{\begin{array}{rcl}
\calP\otimes \calQ[A]&\longrightarrow&\calP\otimes \calQ[A]\\
x\otimes y \in \calP[I]\otimes \calQ[A\setminus I]&\longrightarrow&
\underbrace{\calP[\sigma_{\mid I}](x)\otimes \calQ[\sigma_{\mid A\setminus I}](y)}
_{\in \calP[\sigma(I)]\otimes \calQ[\sigma(A\setminus I)]}.
\end{array}\right.\]
\end{itemize}
For any species $\calP$, $\calQ$ and $\calR$, $\calP\otimes (\calQ\otimes \calR)
=(\calP\otimes \calQ)\otimes \calR$. The unit is the species $\I$:
\[\I[A]=\begin{cases}
\K\mbox{ if }A=\emptyset,\\
(0)\mbox{ otherwise}.
\end{cases}\]
For any species $\calP$, $\calP\otimes \I=\I\otimes \calP=\calP$.\\

If $f:\calP\longrightarrow \calP'$ and $g:\calQ\longrightarrow \calQ'$ are morphisms of species, 
then $f\otimes g:\calP\otimes \calQ\longrightarrow \calP'\otimes \calQ'$ is a morphism of species:
\[f\otimes g[A]:\left\{\begin{array}{rcl}
\calP\otimes \calQ[A]&\longrightarrow&\calP'\otimes \calQ'[A]\\
x\otimes y\in \calP[I]\otimes \calQ[A\setminus I]&\longrightarrow&\underbrace{f[I](x)\otimes g[A\setminus I](y)}
_{\in \calP'[I]\otimes \calQ'[A\setminus I]}.
\end{array}\right.\]
For any species $\calP,\calQ$, we define a species isomorphism 
$c_{\calP,\calQ}:\calP\otimes \calQ\longrightarrow \calQ\otimes \calP$ by:
\[c_{\calP,\calQ}[A]:\left\{\begin{array}{rcl}
\calP\otimes \calQ[A]&\longrightarrow&\calQ\otimes \calP[A]\\
x\otimes y\in \calP[I]\otimes \calQ[A\setminus I]&\longrightarrow&\underbrace{y\otimes x.}_
{\in \calQ[A\setminus I]\otimes \calP[I]}
\end{array}\right.\]

\begin{defi}
An algebra in the category of species, or twisted algebra,
is a pair $(\calP,m)$ where $\calP$ is a species and $m:\calP\otimes \calP\longrightarrow \calP$ is a morphism of species 
such that the following diagram commutes:
\[\xymatrix{\calP^{\otimes 3} \ar[r]^{m\otimes \id_\calP}\ar[d]_{\id_\calP\otimes m}&\calP^{\otimes 2}\ar[d]^{m}\\
\calP^{\otimes 2}\ar[r]_{m}&\calP}\]
Moreover, there exists a morphism of species $\iota:\I\longrightarrow \calP$ such that the following diagram commutes:
\[\xymatrix{\I\otimes \calP\ar[r]^{\iota \otimes \id_\calP}\ar[rd]_{\id}&\calP^{\otimes 2}\ar[d]^m
&\calP\otimes \I \ar[l]_{\id_\calP\otimes \iota}\ar[ld]^{\id}\\
&\calP&}\]
We shall say that $(\calP,m)$ is commutative if $m\circ c_{\calP,\calP}=m$.
\end{defi}

In other words, a twisted algebra $\calP$ is given, for any finite sets $A$ and $B$, a map 
$m_{A,B}:\calP[A]\otimes \calP[B]\longrightarrow \calP[A\sqcup B]$ such that:
\begin{enumerate}
\item For any bijections $\sigma:A\longrightarrow A'$ and $\tau:B\longrightarrow B'$ between finite sets, then:
\[m_{A',B'}\circ (\calP[\sigma]\otimes \calP[\tau])=\calP[\sigma \sqcup \tau]\circ m_{A,B}:\calP[A]\otimes \calP[B]
\longrightarrow \calP[A'\sqcup B'].\]
\item For any finite sets $A,B,C$:
\[m_{A\sqcup B,C}\circ (m_{A,B}\otimes \id_{\calP[C]})=m_{A,B\sqcup C}\circ (\id_{\calP[A]}\otimes m_{B,C}).\]
\item There exists an element $1_\calP\in \calP[\emptyset]$ such that for any finite set $A$, and $x\in \calP[A]$:
\[m_{\emptyset,A}(1_A\otimes x)=m_{A,\emptyset}(x\otimes 1_A)=x.\]
\end{enumerate}
Moreover, $\calP$ is commutative if, and only if, for any finite set $A,B$, in $\calP[A\sqcup B]$:
\begin{align*}
&\forall x\in \calP[A], \:\forall y\in \calP[B],&m_{A,B}(x\otimes y)=m_{B,A}(y\otimes x).
\end{align*}

\begin{example} The following examples can be found in \cite{Aguiar3}:
\begin{enumerate}
\item The product of $\K$ induces an algebra structure on $\com$: for any finite sets $A$ and $B$,
for any $\lambda\in \K=\com[A]$, $\mu\in \K=\com[B]$,
\[m_{A,B}(\lambda\otimes \mu)=\lambda \mu \in \K=\com[A\sqcup B].\]
The unit of $\com$ is the unit $1$ of $\K=\com[\emptyset]$.
\item The species $\calcomp$ is an algebra, with the concatenation product:
\[m_{A,B}((A_1,\ldots,A_k)\otimes (B_1,\ldots,B_l))=(A_1,\ldots,A_k,B_1,\ldots,B_l).\]
Its unit is the empty composition $\emptyset$.
\item The species of graphs $\calgr$ is an algebra with the product given by disjoint union of graphs.
The unit is the empty graph. For example, if $A$, $B$, $C$, $D$ and $E$ are finite sets, then:
\[m_{A\sqcup B\sqcup C,D\sqcup E} (\tdtroisun{$A$}{$C$}{$B$}\otimes \tddeux{$D$}{$E$})
=\tdtroisun{$A$}{$C$}{$B$}\tddeux{$D$}{$E$}.\]
\item The species $\caltop$ is a twisted algebra with the disjoint union product.
\end{enumerate}\end{example}

\begin{notation}
Let $\calP$ be a twisted algebra. If $(A_1,\ldots,A_k)\in \comp[A]$, we inductively define $m_{A_1,\ldots,A_k}$
from $\calP[A_1]\otimes \ldots \otimes \calP[A_k]$ into $\calP[A]$ by:
\begin{align*}
m_A&=\id_{\calP[A]},\\
m_{A_1,\ldots,A_k}&=m_{A_1\sqcup A_2,A_3,\ldots,A_k}\circ (m_{A_1,A_2}\otimes \id_{\calP[A_3]}\otimes \ldots \otimes
\id_{\calP[A_k]}).
\end{align*}
By associativity, if $(A_1,\ldots,A_{k+l})\in \comp[A]$:
\[m_{A_1\sqcup \ldots\sqcup A_k,A_{k+1}\sqcup \ldots \sqcup A_{k+l}}
\circ (m_{A_1,\ldots,A_k}\otimes m_{A_{k+1},\ldots,A_{k+l}})=m_{A_1,\ldots,A_{k+l}}.\]
\end{notation}

\subsection{Coalgebras and bialgebras in the category of species}

\begin{defi}
A coalgebra in the category of species, or twisted coalgebra,
is a pair $(\calP,\Delta)$ where $\calP$ is a species and $\Delta:\calP\longrightarrow \calP\otimes \calP$ 
is a morphism of species such that the following diagram commutes:
\[\xymatrix{\calP\ar[r]^{\Delta}\ar[d]_{\Delta}&\calP^{\otimes 2}\ar[d]^{\id_\calP\otimes \Delta}\\
\calP^{\otimes 2}\ar[r]_{\Delta \otimes \id_\calP}&\calP^{\otimes 3}}\]
Moreover, there exists a morphism of species $\varepsilon:\calP\longrightarrow \I$ such that the following diagram commutes:
\[\xymatrix{\I\otimes \calP\ar[r]^{\id}&\calP\ar[d]^\Delta&\calP\otimes \I \ar[l]_{\id}\\
&\calP^{\otimes 2}\ar[lu]^{\varepsilon \otimes \id_\calP}\ar[ru]_{\id_\calP\otimes \varepsilon}&}\]
We shall say that $(\calP,\Delta)$ is cocommutative if $c_{\calP,\calP}\circ \Delta=\Delta$.
We shall say that the coalgebra $(\calP,\Delta)$ is connected if $\calP[\emptyset]$ is one-dimensional.
\end{defi}

In other words, a twisted coalgebra $\calP$ is given, for any finite sets $A$ and $B$,
a map $\Delta_{A,B}:\calP[A\sqcup B]\longrightarrow \calP[A]\otimes \calP[B]$ such that:
\begin{enumerate}
\item For any bijections $\sigma:A\longrightarrow A'$ and $\tau:B\longrightarrow B'$ between finite sets, then:
\[(\calP[\sigma]\otimes \calP[\tau])\circ \Delta_{A,B}=\Delta_{A',B'}\circ \calP[\sigma\sqcup \tau]:
\calP[A\sqcup B]\longrightarrow \calP[A']\otimes \calP[B'].\]
\item For any finite sets $A,B,C$:
\[(\Delta_{A,B}\otimes \id_{\calP[C]})\circ \Delta_{A\sqcup B,C}=(\id_{\calP[A]}\otimes \Delta_{B,C})\circ \Delta_{A,B\sqcup C}.\]
\item There exists a linear map $\varepsilon_\calP:\calP[\emptyset]\longrightarrow \K$ 
such that for any finite set $A$:
\[(\varepsilon_\calP\otimes \id_{\calP[A]})\circ \Delta_{\emptyset,A}=(\id_{\calP[A]}\otimes \varepsilon_\calP)\circ 
\Delta_{A,\emptyset}=\id_{\calP[A]}.\]
\end{enumerate}

\begin{example}\begin{enumerate}
\item The species $\com$ is given a coalgebra structure by the following: for any finite sets $A,B$,
\begin{align*}
&\forall \lambda \in \K=\com[A\sqcup B],&\Delta_{A,B}(\lambda)=\lambda(1\otimes 1).
\end{align*}
The counit is the identity of $\K$.
\item The species $\calcomp$ is given a coalgebra structure by:
\[\Delta_{A,B}(A_1,\ldots,A_k)=\begin{cases}
(A_1,\ldots,A_p)\otimes (A_{p+1},\ldots,A_k) &\mbox{ if there exists a $p$ (necessarily unique)}\\
&\mbox{such that $A_1\sqcup \ldots \sqcup A_p=A$},\\
0\mbox{ otherwise}.
\end{cases}\]
The counit is given by $\varepsilon[\emptyset]=1$.
\end{enumerate}\end{example}

\begin{remark}
\begin{enumerate}
\item If $(\calP,m_\calP)$ and $(\calQ,m_\calQ)$ are two algebras in the category of species, 
then $\calP\otimes \calQ$ is too. Its product is given by:
\[m_{\calP\otimes \calQ}=(m_\calP\otimes m_\calQ)\circ (\id_\calP\otimes c_{\calP,\calQ}\otimes \id_\calQ).\]
Its unit is $1_\calP\otimes 1_\calQ$.
\item Dually, if $(\calP,\Delta_\calP)$ and $(\calQ,\Delta_\calQ)$ are two coalgebras in the category of species, 
then $\calP\otimes \calQ$ is too. Its coproduct is given by:
\[\Delta_{\calP\otimes \calQ}
=(\id_\calP\otimes c_{\calP,\calQ}\otimes \id_\calQ)\circ (\Delta_\calP\otimes \Delta_\calQ).\]
Its counit is $\varepsilon_\calP\otimes \varepsilon_\calQ$.
\end{enumerate}\end{remark}

\begin{lemma} \label{lemme3}
Let us assume that the twisted coalgebra $(\calP,\Delta)$ is connected. 
There exists a unique $1_{\calP}\in \calP[\emptyset]$
such that $\varepsilon(1_\calP)=1$. Moreover, for any finite set $A$, for any $x\in \calP[A]$:
\begin{align*}
\Delta_{A,\emptyset}(x)&=x\otimes 1_\calP,&
\Delta_{\emptyset,A}(x)&=1_\calP\otimes x.
\end{align*}
\end{lemma}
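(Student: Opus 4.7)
The plan is to proceed in two steps, first establishing existence and uniqueness of $1_\calP$ and then using the one-dimensionality of $\calP[\emptyset]$ together with the counit axiom to pin down the shape of $\Delta_{A,\emptyset}$ and $\Delta_{\emptyset,A}$.

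For the first part, I would observe that $\varepsilon_\calP : \calP[\emptyset] \to \K$ cannot be the zero map: if it were, then tensoring with $Id_{\calP[A]}$ would give the zero map on $\calP[\emptyset]\otimes\calP[A]$, and the counit axiom $(\varepsilon_\calP\otimes Id)\circ \Delta_{\emptyset,A}=Id_{\calP[A]}$ would force $\calP[A]=0$ for every $A$, contradicting the fact that $\calP[\emptyset]$ is one-dimensional (hence nonzero). Since $\calP[\emptyset]$ is one-dimensional, $\varepsilon_\calP$ is therefore a linear isomorphism onto $\K$, so there is a unique $1_\calP\in\calP[\emptyset]$ with $\varepsilon_\calP(1_\calP)=1$.

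For the second part, fix a finite set $A$ and $x\in\calP[A]$. Because $\calP[\emptyset]=\K\,1_\calP$, every element of $\calP[A]\otimes\calP[\emptyset]$ is uniquely of the form $y\otimes 1_\calP$ with $y\in\calP[A]$; write $\Delta_{A,\emptyset}(x)=y\otimes 1_\calP$. Applying $Id_{\calP[A]}\otimes \varepsilon_\calP$ and using the counit axiom gives
\[
x=(Id_{\calP[A]}\otimes\varepsilon_\calP)\circ\Delta_{A,\emptyset}(x)=(Id_{\calP[A]}\otimes\varepsilon_\calP)(y\otimes 1_\calP)=\varepsilon_\calP(1_\calP)\,y=y,
\]
so $\Delta_{A,\emptyset}(x)=x\otimes 1_\calP$. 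The same argument applied on the left, writing $\Delta_{\emptyset,A}(x)=1_\calP\otimes z$ and using $(\varepsilon_\calP\otimes Id_{\calP[A]})\circ\Delta_{\emptyset,A}=Id_{\calP[A]}$, gives $\Delta_{\emptyset,A}(x)=1_\calP\otimes x$.

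There is no serious obstacle here: the only subtle point is justifying that $\varepsilon_\calP$ is nonzero, which follows immediately from the counit axiom applied at $A=\emptyset$ together with connectedness. Everything else reduces to the one-dimensionality of $\calP[\emptyset]$, which makes the tensor factors in $\calP[A]\otimes\calP[\emptyset]$ and $\calP[\emptyset]\otimes\calP[A]$ canonically determined up to the scalar $\varepsilon_\calP(1_\calP)=1$.
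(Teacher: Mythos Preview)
Your proof is correct and follows essentially the same approach as the paper's own proof: both use the counit axiom to see that $\varepsilon_\calP$ is nonzero, invoke one-dimensionality of $\calP[\emptyset]$ to make it an isomorphism, and then write $\Delta_{\emptyset,A}(x)$ (resp.\ $\Delta_{A,\emptyset}(x)$) as $1_\calP\otimes y$ (resp.\ $y\otimes 1_\calP$) and apply the counit axiom to identify $y=x$. The only cosmetic difference is that the paper treats $\Delta_{\emptyset,A}$ first and you treat $\Delta_{A,\emptyset}$ first.
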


\begin{proof}
For any finite set $A$, $(\varepsilon\otimes \id)\circ \Delta_{\emptyset,A}=\id_{\calP[A]}$, 
so $\varepsilon\neq 0$. As $\calP[\emptyset]$ is one dimensional, $\varepsilon:\calP[\emptyset]\longrightarrow \K$
is bijective. Let $1_\calP$ be the unique element of $\calP[\emptyset]$ such that $\varepsilon(1_\calP)=1$.
For any $x\in \calP[A]$, there exists $y\in \calP[A]$ such that $\Delta_{\emptyset,A}(x)=1_\calP\otimes y$. Then:
\[(\varepsilon\otimes \id)\circ \Delta_{\emptyset,A}(x)=x=y,\]
so $\Delta_{\emptyset,A}(x)=1_\calP\otimes x$. Similarly, $\Delta_{A,\emptyset}(x)=x\otimes 1_\calP$.
\end{proof}

\begin{notation}
Let $\calP$ be a twisted coalgebra. If $A_1\sqcup \dots\sqcup A_k=A$, 
we inductively define $\Delta_{A_1,\ldots,A_k}$ by:
\begin{align*}
\Delta_A&=\id_{\calP[A]},\\
\Delta_{A_1,\ldots,A_{k+1}}&=(\Delta_{A_1,\ldots,A_k}\otimes \id_{\calP[A_{k+1}]})\circ 
\Delta_{A_1\sqcup \ldots \sqcup A_k,A_{k+1}} \mbox{ if }k\geq 1.
\end{align*}
By coassociativity, for any finite sets $A_1,\ldots,A_{k+l}$:
\[(\Delta_{A_1,\ldots,A_k}\otimes \Delta_{A_{k+1},\ldots,A_{k+l}})\circ \Delta_{A_1\sqcup \ldots \sqcup A_k,
A_{k+1}\sqcup \ldots \sqcup A_{k+l}}=\Delta_{A_1,\ldots,A_{k+l}}.\]
\end{notation}

As for "classical" bialgebras:

\begin{defi}
Let $\calP$ be a species, both a twisted  algebra $(\calP,m_\calP)$ and a twisted coalgebra $(\calP,\Delta_\calP)$. 
The following conditions are equivalent:
\begin{enumerate}
\item $\varepsilon_\calP:\calP\longrightarrow \I$ and $\Delta_\calP:\calP\longrightarrow \calP\otimes \calP$ 
are algebra morphisms.
\item $\iota:\I\longrightarrow \calP$ and $m:\calP\otimes \calP\longrightarrow \calP$ are coalgebra morphisms.
\end{enumerate}
If this holds, we shall say that $(\calP,m_\calP,\Delta_\calP)$ is a bialgebra in the category of species
or a twisted bialgebra \cite{Aguiar,PatrasReutenauer}.
\end{defi}

The compatibility between the product and coproduct can also be written in this way: 
if $A$ is a finite set and $A=I\sqcup J=I'\sqcup J'$,
\begin{align*}
\Delta_{I,J}\circ m_{I,'J'}&=(m_{I'\cap I,J'\cap I}\otimes m_{I'\cap J,J'\cap J})\circ (\id_{\calP[I\cap I]}
\otimes c_{\calP[I'\cap J],\calP[J'\cap I]}\otimes \id_{\calP[J'\cap J]})\\
&\circ (\Delta_{I'\cap I,I'\cap J}\otimes \Delta_{J'\cap I,J'\cap J}).
\end{align*}

\begin{example}
The species $\com$, with the product and coproduct defined below, is a twisted bialgebra.
\end{example}

\begin{remark}
If $\calP$ is a connected twisted bialgebra, 
that is to say if it is connected as a coalgebra, the elements $1_\calP$ defined
in Lemma \ref{lemme3} is necessarily the unit of the algebra $\calP$.
\end{remark}

\section{Cofreeness}

\subsection{Cofree coalgebras}

\begin{defi} \label{defi5}
Let $\calP$ be a species such that $\calP[\emptyset]=(0)$. The cofree coalgebra (co)generated by $\calP$ is the species:
\[\cot(\calP)=\bigoplus_{n=0}^\infty \calP^{\otimes n},\]
that is to say, for any finite set $A$:
\[\cot(\calP)[A]=\bigoplus_{(A_1,\ldots,A_k)\in \comp[A]} \calP[A_1]\otimes \ldots \otimes \calP[A_k].\]
For any $x_i\in \calP[A_i]$, $i\in \underline{k}$, we denote by $x_1\ldots x_k$ the tensor product of
the elements $x_1,\ldots,x_k$ in $\calP[A_1]\otimes \ldots \otimes \calP[A_k]
\subseteq \cot(\calP)[A_1\sqcup \ldots \sqcup A_k]$.
This species is given a coproduct making it a twisted coalgebra:
for any finite sets $A$, $B$, $(C_1,\ldots,C_k)\in \comp[A\sqcup B]$, $x_i \in \calP[A_i]$,
\[\Delta_{A,B}(x_1\ldots x_k)=\begin{cases}
x_1\dots x_i \otimes x_{i+1}\ldots x_k\mbox{ if there exists an $i$ (necessarily unique)} \\
\hspace{4cm} \mbox{such that }A_1\sqcup \ldots\sqcup A_i=A,\\
0\mbox{ otherwise}.
\end{cases}\]
We denote by $\pi:\cot(\calP)\longrightarrow \calP$ the canonical projection vanishing 
on $\calP^{\otimes k}$ if $k\neq 1$.
\end{defi}

More generally, we shall say that a twisted coalgebra is cofree if it is isomorphic to a 
coalgebra $\cot(\calP)$.

\begin{theo}\label{theo6}
Let $\calP$ be a species such that $\calP[\emptyset]=(0)$ and let $\calC$ be a connected twisted coalgebra. 
For any species morphism $\phi:\calC\longrightarrow \calP$,
there exists a unique coalgebra morphism $\Phi:\calC\longrightarrow \cot(\calP)$ such that the following diagram commutes:
\[\xymatrix{\calC\ar[r]^{\Phi} \ar[rd]_{\phi}&\cot(\calP)\ar[d]^{\pi}\\
&\calP}\]
Moreover, for any finite set $A$:
\begin{align}
\label{eq1} \Phi[A]&=\sum_{(A_1,\ldots,A_k)\in \comp[A]} (\phi[A_1]\otimes \ldots \otimes \phi[A_k])
\circ \Delta_{A_1,\ldots,A_k}.
\end{align}
\end{theo}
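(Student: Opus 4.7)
The plan is to establish uniqueness and existence in parallel by exploiting the graded decomposition $\cot(\calP)[A] = \bigoplus_{(A_1,\ldots,A_k) \in \comp[A]} \calP[A_1] \otimes \ldots \otimes \calP[A_k]$. Any candidate map $\Phi:\calC\longrightarrow \cot(\calP)$ then decomposes as $\Phi[A] = \sum_k \Phi_k[A]$ with $\Phi_k[A]$ landing in the ``length-$k$'' part, and each $\Phi_k[A]$ further decomposes over compositions $(A_1,\ldots,A_k)\in \comp[A]$ into pieces $\Phi_{(A_1,\ldots,A_k)}[A]:\calC[A]\longrightarrow \calP[A_1]\otimes\ldots\otimes\calP[A_k]$. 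The formula to establish is precisely $\Phi_{(A_1,\ldots,A_k)}[A] = (\phi[A_1]\otimes\ldots\otimes\phi[A_k])\circ\Delta_{A_1,\ldots,A_k}$.

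For uniqueness, I would first compute, directly from the deconcatenation coproduct of $\cot(\calP)$, the key identity on $\cot(\calP)$ itself: for $(A_1,\ldots,A_k)\in \comp[A]$ and $x = x_1\ldots x_n$ a pure tensor in $\cot(\calP)[A]$, the element $(\pi^{\otimes k})\circ \Delta_{A_1,\ldots,A_k}(x)$ vanishes unless $n=k$ and the composition $(A_1,\ldots,A_k)$ coincides with the one carrying $x$, in which case it equals $x_1\otimes\ldots\otimes x_k$. Combining this with the coalgebra morphism identity
\[
(\pi^{\otimes k})\circ \Delta_{A_1,\ldots,A_k}\circ \Phi[A] \;=\; (\phi[A_1]\otimes\ldots\otimes \phi[A_k])\circ \Delta_{A_1,\ldots,A_k},
\]
the left-hand side extracts exactly $\Phi_{(A_1,\ldots,A_k)}[A]$, forcing the required formula. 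The hypothesis $\calP[\emptyset]=(0)$ ensures that only compositions with nonempty parts contribute, so the sum over $\comp[A]$ is finite and well defined. The boundary case $A=\emptyset$ is handled by connectedness of $\calC$: the empty composition yields $\varepsilon_\calC:\calC[\emptyset]\longrightarrow \K = \cot(\calP)[\emptyset]$, consistent with the counit constraint on $\Phi$.

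For existence, I would define $\Phi$ by formula (\ref{eq1}) and check the coalgebra morphism axioms. Counit preservation reduces to $\varepsilon_\calC$ on $\calC[\emptyset]$, which is immediate. For coassociativity with $\Delta$, one applies $\Delta_{A,B}$ to $\Phi[A\sqcup B]$ and uses the explicit deconcatenation coproduct of $\cot(\calP)$: a composition $(C_1,\ldots,C_k)\in\comp[A\sqcup B]$ contributes to $\Delta_{A,B}$ only if there is an index $i$ with $C_1\sqcup\ldots\sqcup C_i = A$, in which case the composition factors as a pair $((C_1,\ldots,C_i),(C_{i+1},\ldots,C_k))\in \comp[A]\times\comp[B]$. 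Coassociativity of $\Delta_\calC$ yields $\Delta_{C_1,\ldots,C_k} = (\Delta_{C_1,\ldots,C_i}\otimes \Delta_{C_{i+1},\ldots,C_k})\circ \Delta_{A,B}$, which matches term by term with $(\Phi[A]\otimes \Phi[B])\circ \Delta_{A,B}$. The main technical point to watch is the careful bookkeeping of compositions and the identification $\comp[A\sqcup B]\cap \{(C_1,\ldots,C_k): C_1\sqcup\ldots\sqcup C_i=A\text{ for some }i\} \;\cong\; \comp[A]\times \comp[B]$; once this bijection is in hand, the verification is a direct application of coassociativity and the functoriality of the Cauchy tensor product.
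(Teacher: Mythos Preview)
Your proposal is correct. The existence argument is essentially the same as the paper's: define $\Phi$ by formula (\ref{eq1}) and verify the coalgebra morphism property via the bijection between those compositions of $A\sqcup B$ admitting a split at $A$ and pairs in $\comp[A]\times\comp[B]$, together with coassociativity of $\Delta_\calC$.

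Your uniqueness argument, however, differs from the paper's. The paper proceeds by induction on $\sharp A$: two candidate morphisms $\Phi,\Phi'$ agree on smaller sets by hypothesis, so $(\Delta_{I,J}\otimes\mbox{--})$ shows $\Phi[A]-\Phi'[A]$ lands in $\bigcap_{(I,J)}\mathrm{Ker}(\Delta_{I,J})=\calP[A]$, whence $\pi\circ\Phi=\phi=\pi\circ\Phi'$ finishes it. You instead establish directly that $(\pi^{\otimes k})\circ\Delta_{A_1,\ldots,A_k}$ is the projection of $\cot(\calP)[A]$ onto its $(A_1,\ldots,A_k)$-summand, and combine this with the iterated coalgebra morphism identity to read off each graded piece of $\Phi$ at once. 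Your route is more explicit and avoids induction, at the cost of verifying the projection identity; the paper's route avoids that computation but needs the identification of the joint kernel with $\calP[A]$. Both are clean.
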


\begin{proof}
\textit{Existence}. Let $\Phi$ be the species morphism defined by (\ref{eq1}). 
For any finite set $A$:
\[\pi[A]\circ \Phi[A]=\phi[A]\circ \Delta_A=\phi[A],\]
so $\pi\circ \Phi=\phi$. Let $A$ and $B$ be two finite sets. Let us prove that:
\[\Delta_{A,B}\circ \Phi[A\sqcup B]=(\Phi[A]\otimes \Phi[B])\circ \Delta_{A,B}.\]
If $A=B=\emptyset$, it is enough to apply this two maps on $1_\calC$:
\begin{align*}
\Delta_{\emptyset,\emptyset}\circ \Phi(\emptyset\sqcup \emptyset)(1_\calC)=1\otimes 1
=(\Phi[\emptyset]\otimes \Phi[\emptyset])\circ \Delta_{\emptyset,\emptyset}(1_\calC).
\end{align*}
If $A=\emptyset$ and $B\neq \emptyset$, for any $x\in \calC(A)$:
\begin{align*}
\Delta_{\emptyset,B}\circ \Phi[B](x)=1\otimes \Phi[B](x)
=(\Phi[\emptyset] \otimes \Phi[B])\circ \Delta_{\emptyset,B}(x).
\end{align*}
A similar computation works if $B=\emptyset$ and $A\neq \emptyset$.
Let us assume now that $A,B \neq \emptyset$.
\begin{align*}
\Delta_{A,B}\circ \Phi[A\sqcup B]&=\Delta_{A,B}\circ \left(\sum_{(A_1,\ldots,A_k)\in \comp[A\sqcup B]} 
\phi^{\otimes k}\circ\Delta_{A_1,\ldots,A_k}\right)\\
&=\Delta_{A,B}\circ \left(\sum_{\substack{(A_1,\ldots,A_p)\in \comp[A],\\ (B_1,\ldots,B_q)\in \comp[B]}}
\phi^{\otimes (p+q)}\circ \Delta_{A_1,\ldots,A_p,B_1,\ldots,B_q}\right)\\
&=\sum_{\substack{(A_1,\ldots,A_p)\in \comp[A],\\ (B_1,\ldots,B_q)\in \comp[B]}}
\phi^{\otimes p}\circ \Delta_{A_1,\ldots,A_k}\otimes \phi^{\otimes q}\circ \Delta_{B_1,\ldots,B_q}\\
&=(\Phi[A]\otimes \Phi[B])\circ \Delta_{A,B}.
\end{align*}
So $\Phi$ is a coalgebra morphism.\\

\textit{Unicity}. By definition of $\Delta$, for any nonempty set $A$, in $\cot(\calP)[A]$:
\[\bigcap_{(I,J)\in \comp[A]} \ker(\Delta_{I,J})=\calP[A].\]
Let $\Phi$ and $\Phi'$ be two coalgebra morphisms satisfying the required conditions.
Let us prove that $\Phi[A]=\Phi'[A]$ by induction on $\sharp A$. If $A=\emptyset$,
as $1$ is the unique nonzero element of $\cot(\calP)[\emptyset]$ such that 
$\Delta_{\emptyset,\emptyset}(1)=1\otimes 1$:
\[\Phi[\emptyset](1_\calC)=\Phi'[\emptyset](1_\calC)=1.\]
As $\calC[\emptyset]=\K 1_\calC$, $\Phi[\emptyset]=\Phi'[\emptyset]$. 
Let us assume the result at all ranks $<\sharp A$. For any $(I,J)\in \comp[A]$:
\begin{align*}
\Delta_{I,J}\circ \Phi[A]=(\Phi[I]\otimes \Phi[J])\circ \Delta_{I,J}
=(\Phi'[I]\otimes \Phi'[J])\circ \Delta_{I,J}=\Delta_{I,J}\circ \Phi'[A].
\end{align*}
Hence, $\Phi[A]-\Phi'[A]$ takes its values in $\calP[A]$, so:
\[\Phi[A]-\Phi'[A]=\pi[A]\circ \Phi[A]-\pi[A]\circ \Phi'[A]=\phi[A]-\phi[A]=0.\]
Therefore, $\Phi=\Phi'$. \end{proof}

\begin{example} Let us consider the species $\calP$ defined by:
\[\calP[A]=\begin{cases}
\K\mbox{ if }\sharp A=1,\\
(0)\mbox{ otherwise}.
\end{cases}\]
The cofree coalgebra $\cot(\calP)$ is denoted by $\fqsym$. For any finite set $A$, 
noticing that compositions of $A$ which parts have cardinality $1$ are in bijection with bijections 
from $A$ to $\underline{\sharp A}$:
\[\fqsym(A)=\bigoplus_{\substack{f:A\longrightarrow \underline{\sharp A},\\
\mbox{\scriptsize bijection}}} \K^{\otimes \sharp A}.\]
Hence, $\fqsym(A)$ has a basis indexed by bijections from $A$ to $\underline{\sharp A}$, or equivalently
a basis indexed by total orders on $A$. If $A=I\sqcup J$ and $\leq$ is a total order on $A$:
\[\Delta_{I,J}(\leq)=\begin{cases}
\leq_{\mid I}\otimes \leq_{\mid J}\mbox{ if for any $x\in I$, $y\in J$, $x\leq y$},\\
0\mbox{ otherwise}.
\end{cases}\]
\end{example}

\subsection{Quasishuffle products}

\begin{notation}
\begin{enumerate}
\item Let $n_1,\ldots,n_k$ be integers. Let us denote by $\qsh(n_1,\ldots,n_k)$ the set of 
$(n_1,\ldots,n_k)$-quasi-shuffles, that is to say surjective map
$\sigma:\underline{n_1+\ldots+n_k}\longrightarrow \underline{\max(\sigma)}$, such that for all $i\in \underline{k}$, 
$\sigma_{\mid\{n_1+\ldots+n_{i-1}+1,\ldots,n_1+\ldots+n_i\}}$ is non decreasing.
Bijective quasi-shuffles are called shuffles; the set of $(n_1,\ldots,n_k)$-shuffles is denoted by 
$Sh(n_1,\ldots,n_k)$.
\item Let $\calP$ be a species. We consider the species $\calP_+$ defined by:
\[\calP_+[A]=\begin{cases}
(0)\mbox{ if }A=\emptyset,\\
\calP[A]\mbox{ if }A\neq \emptyset.
\end{cases}\]
Note that if $\calP$ is a twisted algebra, then $\calP_+$ is stable under the product of $\calP$.
\item Let $\calP$ be a twisted algebra.
Let $(A_1,\ldots,A_k)\in \comp[A]$, where $A$ is a finite set, 
and let $\sigma:\underline{k}\longrightarrow \underline{\max(\sigma)}$ be a surjection.
For any $x_i\in \calP[A_i]$, $i\in \underline{k}$, we put:
\[\sigma\rightarrow x_1 \ldots x_k
=\left( \prod^{m}_{i,\sigma(i)=1}x_i\right)\ldots \left(\prod^m_{i,\sigma(i)=\max(\sigma)} x_i\right),\]
where each product is taken in $\calP$. This is an element of $\calP^{\otimes \max(\sigma)}[A]$.
\end{enumerate} \end{notation}

The construction of quasi-shuffle products \cite{Hoffman,FoissyPatrasShuffle} extends to the category of species:

\begin{theo}
Let $\calP$ be a twisted algebra. 
The cofree coalgebra $\cot(\calP_+)$ is given an associative product $\squplus$: 
if, for any $i\in \underline{k+l}$, $x_i \in \calP[A_i]$, then
\[x_1\ldots x_k\squplus x_{k+1}\ldots x_{k+l}=\sum_{\sigma \in \qsh(k,l)} \sigma \rightarrow (x_1\ldots x_{k+l}).\]
Then $(\cot(\calP_+),\squplus,\Delta)$ 
is a twisted bialgebra. Moreover, $\squplus$ is commutative if, and only if,
$m$ is commutative.
\end{theo}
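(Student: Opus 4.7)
The plan is to construct and analyse $\squplus$ using the cofree property of Theorem \ref{theo6}, which makes the proof almost mechanical.

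First, I regard the tensor product $\cot(\calP_+)\otimes \cot(\calP_+)$ as a connected twisted coalgebra with the tensor coproduct and define a species morphism
\[\mu:\cot(\calP_+)\otimes \cot(\calP_+)\longrightarrow \calP_+\]
by $\mu(x\otimes y)=m_{A,B}(x\otimes y)$ whenever $x\in\calP_+[A]$ and $y\in\calP_+[B]$ are both of tensor length one in $\cot(\calP_+)$, and $\mu=0$ on every other component. By Theorem \ref{theo6} there is a unique coalgebra morphism $\squplus$ from $\cot(\calP_+)\otimes \cot(\calP_+)$ to $\cot(\calP_+)$ with $\pi\circ\squplus=\mu$. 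That $\squplus$ is then a coalgebra morphism is already the bialgebra compatibility axiom, so that part of the statement comes for free.

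Next, I would identify this abstractly constructed $\squplus$ with the explicit quasi-shuffle formula in the statement. Applying the formula of Theorem \ref{theo6} on a pure tensor $u\otimes v$ produces a sum, indexed by set compositions $(C_1,\ldots,C_n)$ of the underlying set, of $(\mu\otimes\cdots\otimes\mu)$ composed with the iterated tensor coproduct. Unpacking the iterated coproduct into its $u$-piece and $v$-piece, the data of each such term amounts to specifying, for every position of $u$ and of $v$, which of the $n$ parts it is assigned to, subject to preserving the intra-$u$ and intra-$v$ orders and with each part containing at most one letter from each side. This is precisely the data of a quasi-shuffle $\sigma\in QSh(k,l)$ with $\max(\sigma)=n$; summing the $\mu$-values over the parts then recovers $\sigma\to (x_1\cdots x_{k+l})$, matching the stated formula.

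Associativity follows by a second uniqueness argument: both $\squplus\circ(\squplus\otimes \Id)$ and $\squplus\circ(\Id\otimes \squplus)$ are coalgebra morphisms from $\cot(\calP_+)^{\otimes 3}$ to $\cot(\calP_+)$, and using $\pi\circ\squplus=\mu$ together with associativity of $m$ in $\calP$, both of their $\pi$-projections coincide with the species map sending $x\otimes y\otimes z$ (all three factors of length one) to $m_{A,B,C}(x\otimes y\otimes z)$ and vanishing elsewhere. By the uniqueness clause of Theorem \ref{theo6}, the two compositions are equal. For the commutativity equivalence, if $m$ is commutative then the involution $QSh(k,l)\leftrightarrow QSh(l,k)$ interchanging the two blocks matches $u\squplus v$ with $v\squplus u$ term by term, using $m_{A,B}(x\otimes y)=m_{B,A}(y\otimes x)$ inside each block product; conversely, specialising to $x\in\calP[A]$ and $y\in\calP[B]$ of length one, the formula gives $x\squplus y=x\otimes y+y\otimes x+m_{A,B}(x\otimes y)$, and commutativity of $\squplus$ read off the length-one component forces $m_{A,B}(x\otimes y)=m_{B,A}(y\otimes x)$. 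The main obstacle in the whole argument is the identification step in the second paragraph, where the abstract cofree formula must be translated into the combinatorics of quasi-shuffles; once this translation is set up, associativity and the bialgebra axiom essentially fall out of uniqueness.
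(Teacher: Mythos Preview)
Your approach is essentially the paper's: construct $\squplus$ via the universal property of Theorem~\ref{theo6}, deduce associativity by uniqueness, derive the explicit formula from~(\ref{eq1}), and handle commutativity by looking at the length-one component. There is, however, a real slip in your definition of $\mu$. You set $\mu(x\otimes y)=m_{A,B}(x\otimes y)$ only when \emph{both} $x$ and $y$ have tensor length one, and $\mu=0$ on every other component. With this $\mu$, the lift produced by Theorem~\ref{theo6} fails to have $1$ as unit: for $y\in\calP_+[A]$ of length one, formula~(\ref{eq1}) collapses to $\squplus(1\otimes y)=\mu[A](1\otimes y)=0$, not $y$. The same defect breaks your identification step: with your $\mu$, every block $C_i$ must receive exactly one letter from $u$ \emph{and} one from $v$, so only the fully merged terms survive and all genuine shuffle terms are lost. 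Your own description in the second paragraph (``at most one letter from each side'') is the correct one, but it corresponds to the paper's map $\phi$, which is also nonzero on the components with $(k,l)\in\{(0,1),(1,0)\}$, sending $1\otimes y\mapsto y$ and $x\otimes 1\mapsto x$. Once $\mu$ is extended in this way, everything you wrote goes through.

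One minor stylistic difference: for the implication ``$m$ commutative $\Rightarrow$ $\squplus$ commutative'' you argue directly via the bijection $QSh(k,l)\leftrightarrow QSh(l,k)$, whereas the paper again invokes uniqueness in Theorem~\ref{theo6} (both $\squplus$ and $\squplus\circ c_{\cot(\calP_+),\cot(\calP_+)}$ are coalgebra morphisms with the same $\pi$-projection). Both arguments are valid.
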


\begin{proof}
Let $\calC$ be the coalgebra $\cot(\calP_+)\otimes \cot(\calP_+)$. Then $\calC[\emptyset]=\K 1\otimes 1$:
$\calC$ is connected. 
We shall identify the unit $1$ of $\K$ (also unit of $\cot(\calP_+)$) and the unit of $\calP$.
We consider the species morphism $\phi:\calC\longrightarrow \calP_+$ defined by:
\begin{align*}
\phi_{\mid \calP^{\otimes k}\otimes \calP^{\otimes l}}&=\begin{cases}
m\mbox{ if }(k,l)\in\{(0,1),(1,0),(1,1)\},\\
0\mbox{ otherwise.}
\end{cases}
\end{align*}
There exists a unique coalgebra $\squplus:\calC\longrightarrow \cot(\calP_+)$ such that $\pi\circ \squplus=\phi$.
Let us prove that $\squplus$ is associative. 
Let us consider the coalgebra morphisms $\squplus\circ (\squplus\otimes \id)$ 
and $\squplus\circ (\id \otimes \squplus)$ from $\cot(\calP_+)^{\otimes 3}$
to $\cot(\calP_+)$. For any $k,l,p\geq 0$:
\begin{align*}
\pi\circ \squplus\circ(\squplus\otimes \id)_{\mid \calP^{\otimes k}\otimes \calP^{\otimes l}\otimes \calP^{\otimes p}}
&=\begin{cases}
m\circ (m\otimes \id)\mbox{ if $k,l,p\leq 1$ and $(k,l,p)\neq (0,0,0)$},\\
0\mbox{ otherwise};
\end{cases}\\
\pi\circ \squplus\circ(\id\otimes \squplus)_{\mid \calP^{\otimes k}\otimes \calP^{\otimes l}\otimes \calP^{\otimes p}}
&=\begin{cases}
m\circ (\id\otimes m)\mbox{ if $k,l,p\leq 1$ and $(k,l,p)\neq (0,0,0)$},\\
0\mbox{ otherwise}.
\end{cases}
\end{align*}
As $m$ is associative, $\pi\circ \squplus\circ(\squplus\otimes \id)=\pi\circ \squplus\circ(\id\otimes \squplus)$.
By unicity in Theorem \ref{theo6}, $\squplus\circ(\squplus\otimes \id)=\squplus\circ(\id\otimes \squplus)$.
So $(\cot(\calP_+),\squplus,\Delta)$ is a twisted bialgebra.\\

Let $x_i\in \calP[A_i]$ for all $i$. We put $A=A_1\sqcup \ldots \sqcup A_{k+l}$. By Theorem \ref{theo6}:
\begin{align*}
&\squplus(x_1\ldots x_k\otimes x_{k+1}\ldots x_{k+l})\\
&=\sum_{(B_1,\ldots,B_n)\in \comp[A]} \phi^{\otimes n}\circ \Delta_{A_1,\ldots,A_n}(x_1\ldots x_k \otimes
x_{k+1}\ldots x_{k+l})\\
&=\sum_{\substack{1=i_0\leq i_1\leq \ldots \leq i_p=k,\\1=j_0\leq j_1\leq \ldots \leq j_p=l}}
\phi\left(x_1\ldots x_{i_1}\otimes x_{k+1}\ldots x_{k+j_1}\right)\ldots
\phi\left(x_{i_{p-1}+1}\ldots x_{i_p}\otimes x_{k+j_{p-1}+1}\ldots x_{k+j_p}\right)\\
&=\sum_{\substack{1=i_0\leq i_1\leq \ldots \leq i_p=k,\\1=j_0\leq j_1\leq \ldots \leq j_p=l,\\
\forall s,\: i_{s+1}-i_s\leq 1,\\
\forall t,\: j_{t+1}-i_t\leq 1}}
\phi\left(x_1\ldots x_{i_1}\otimes x_{k+1}\ldots x_{k+j_1}\right)\ldots
\phi\left(x_{i_{p-1}+1}\ldots x_{i_p}\otimes x_{k+j_{p-1}+1}\ldots x_{k+j_p}\right)\\
&=\sum_{\sigma \in \qsh(k,l)} \sigma \rightarrow (x_1\ldots x_{k+l}).
\end{align*}

Let us assume that $\squplus$ is commutative. For any finite set $A$ and $B$:
\begin{align*}
\squplus_{B,A}\circ c_{\cot(\calP_+)[A],\cot(\calP_+)[B]}&=\squplus_{A,B},\\
\pi[A\sqcup B]\circ \squplus_{B,A}\circ c_{\calP[A],\calP[B]}
&=(\pi[A\sqcup B]\circ \squplus_{A,B} )_{\mid \calP[A]\otimes \calP[B]},\\
m_{B,A}\circ c_{\calP[A],\calP[B]}&=m_{A,B}.
\end{align*}
So $m$ is commutative.\\

Let us assume that $m$ is commutative. Then $\squplus$ and $\squplus\circ  c$ are both coalgebra morphisms from 
$\cot(\calP_+)^{\otimes 2}$ to $\cot(\calP_+)$ and, as $m$ is commutative, $\pi\circ \squplus\circ c=\pi\circ m=\phi$.
By unicity in Theorem \ref{theo6}, $\squplus=\squplus\circ c$. \end{proof}

\begin{example}\begin{enumerate}
\item Let us consider the bialgebra $\com$. The cofree coalgebra $\cot(\com_+)$ 
is a twisted bialgebra. For any finite set $A$:
\[\cot(\com_+)[A]=\bigoplus_{(A_1,\ldots,A_k)\in \comp[A]} \com[A_1]\otimes \ldots \otimes \com[A_k].\]
The element $1\otimes \ldots \otimes 1$ of  $\com[A_1]\otimes \ldots \otimes \com[A_k]$
will be identified with $(A_1,\ldots,A_k) \in \comp[A]$: this defines a species isomorphism
between $\cot(\com_+)$ and $\calcomp$. The product $\squplus$ induced on $\calcomp$ is given by:
\[(A_1,\ldots,A_k)\squplus (B_1,\ldots,B_k)=\sum_{\sigma \in \qsh(k,l)} 
\left(\bigcup_{i \in \sigma^{-1}(1)} A_i\right)\ldots
\left(\bigcup_{i \in \sigma^{-1}(\max(\sigma))} A_i\right).\]
The coproduct is given by:
\[\Delta_{A,B}(A_1,\ldots,A_k)=\begin{cases}
(A_1,\ldots,A_p)\otimes (A_{p+1},\ldots,A_k) &\mbox{ if there exists $p$ (necessarily unique)}\\
&\mbox{such that $A_1\sqcup \ldots \sqcup A_p=A$},\\
0\mbox{ otherwise}.
\end{cases}\]
The counit is given by $\varepsilon_{\calcomp}[\emptyset]=1$.
For example, if $A$, $B$, $C$, $D$ are nonempty finite sets:
\begin{align*}
(A)\squplus (B)&=(A,B)+(B,A)+(A\sqcup B),\\
(A,B)\squplus (C)&=(A,B,C)+(A,C,B)+(C,A,B)+(A\sqcup B,C)+(A,B\sqcup C),\\
(A,B)\squplus (C,D)&=(A,B,C,D)+(A,C,B,D)+(C,A,B,D)\\
&+(A,C,D,B)+(C,A,D,B)+(C,D,A,B)\\
&+(A,B\sqcup C,D)+(A\sqcup C,B,D)+(A\sqcup C,D,B)\\
&+(A,C,B\sqcup D)+(C,A,B\sqcup D)+(C,A\sqcup D,B)\\
&+(A\sqcup C,B\sqcup D).
\end{align*}
\item Let $\calP$ be any species such that $\calP[\emptyset]=(0)$. 
We give it a trivial product $m=0$. If for all $i\in \underline{k+l}$, $x_i\in\calP[A_i]$ and $\sigma \in \qsh(k,l)$,
then $\sigma\rightarrow (x_1\ldots x_{k+l})=0$ if $\sigma$ is not bijective, that is to say if
$\sigma$ is not a shuffle. Hence, the product induced by $m$ on $\cot(\calP)$ is the shuffle product:
\[\shuffle(x_1\ldots x_k\otimes x_{k+1}\ldots x_{k+l})
=\sum_{\sigma \in Sh(k,l)} \sigma \rightarrow (x_1\ldots x_{k+l}).\]
We obtain the shuffle product of \cite{Aguiar3}.  In particular, this holds for $\fqsym$.
If $A$ and $B$ are finite sets and $\leq_A$, $\leq_B$ are total orders on respectively $A$ and $B$,
for any $\sigma \in \qsh(\sharp A,\sharp B)$, $\sigma \rightarrow (\leq_A \leq_B)$
is nonzero if, and only if, $\sigma$ is a bijective, that is to say if, and only if, $\sigma$ is a shuffle.
Consequently:
\[\leq_A\squplus \leq_B=\sum_{\substack{\mbox{\scriptsize $\leq$ total order on $A\sqcup B$},\\ 
\leq_{\mid A}=\leq_A,\: \leq_{\mid_B}=\leq_B}} \leq.\]
\end{enumerate} \end{example}

\begin{prop}
Let $(\calP,m)$ be a twisted algebra. 
The map $\varepsilon':\cot(\calP_+)\longrightarrow \calP$ defined by $\varepsilon'[\emptyset](1)=1_\calP$ and
$\varepsilon'[A]=\pi[A]$ if $A\neq \emptyset$ is an algebra morphism.
\end{prop}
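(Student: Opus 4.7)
The plan is to verify the two conditions defining an algebra morphism. Compatibility with the unit is immediate, since $\varepsilon'[\emptyset](1) = 1_\calP$ by definition. Naturality of $\varepsilon'$ (the species morphism condition) is similarly automatic, as both $\pi$ and the map $1 \mapsto 1_\calP$ are natural. What remains is to prove, for all finite sets $A$ and $B$, the identity
\[\varepsilon'[A \sqcup B] \circ \squplus_{A,B} = m_{A,B} \circ (\varepsilon'[A] \otimes \varepsilon'[B]).\]

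I would split this into cases according to whether $A$ and $B$ are empty. If $A = \emptyset$, then $\cot(\calP_+)[\emptyset] = \K \cdot 1$ and $\squplus_{\emptyset, B}$ reduces to scalar multiplication on the $\K$-factor, so the identity follows at once from the unit axiom for $m$ together with $\varepsilon'(1) = 1_\calP$; the case $B = \emptyset$ is symmetric. In the substantial case $A, B \neq \emptyset$, $\varepsilon'$ coincides with $\pi$ on both sides, and for $u = x_1 \ldots x_k$ and $v = y_1 \ldots y_l$ with $x_i \in \calP[A_i]$ and $y_j \in \calP[B_j]$, the formula for $\squplus$ from the previous theorem gives
\[u \squplus v = \sum_{\sigma \in QSh(k, l)} \sigma \to (x_1 \ldots x_{k+l}),\]
where each summand lies in the tensor-degree $\max(\sigma)$ component of $\cot(\calP_+)[A \sqcup B]$.

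The key observation is that $\pi$ vanishes outside tensor-degree $1$, so only those $\sigma$ with $\max(\sigma) = 1$ contribute to $\pi(u \squplus v)$. Such a $\sigma$ must be the constant map to $1$, and the quasi-shuffle condition on the restrictions to the two blocks then forces both blocks to have size at most one, i.e.\ $k \leq 1$ and $l \leq 1$. A short case analysis closes the proof: when $k = l = 1$, the unique contributing $\sigma$ produces $x_1 \cdot y_1 = m_{A_1, B_1}(\varepsilon'(u) \otimes \varepsilon'(v))$; when $k \geq 2$ or $l \geq 2$, no $\sigma$ contributes so the left-hand side vanishes, while the right-hand side is also zero because $\pi$ kills tensors of length at least $2$. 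The only mildly delicate point is the interface between the separate empty-set definition of $\varepsilon'$ and its identification with $\pi$ on nonempty sets, but this is precisely what the unit axiom of $m$ is designed to handle: $1 \in \cot(\calP_+)[\emptyset]$ is the unit of $\squplus$ and is sent to the unit $1_\calP$ of $m$.
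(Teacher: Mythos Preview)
Your proof is correct and follows essentially the same approach as the paper's: reduce to nonempty $A,B$ via the unit, then observe that $\pi$ picks out only the length-one component of $u\squplus v$, which is nonzero precisely when $k=l=1$ and then equals $m_{A,B}(x_1\otimes y_1)$. The paper compresses your quasi-shuffle analysis into the phrase ``by construction of $\squplus$'', but the content is identical.
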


\begin{proof}
As $1$ is the unit of $\squplus$, it is enough to prove that:
\[\varepsilon'[A\sqcup B]\circ \squplus_{A,B}=m_{A,B}\circ (\varepsilon'[A]\otimes \varepsilon'[B]),\]
for any nonempty sets $A$ and $B$. Let $(A_1,\ldots,A_k)\in \comp[A]$, 
$(B_1,\ldots,B_l)\in \comp[B]$, $x_i \in \calP[A_i]$, $x_{k+i}\in \calP[B_i]$. By construction of $\squplus$:
\begin{align*}
\varepsilon'[A\sqcup B]\circ \squplus_{A,B}(x_1\ldots x_k\otimes x_{k+1}\ldots x_{k+l})
&=\begin{cases}
m(x_1\otimes x_2)\mbox{ if }k=l=1,\\
0\mbox{ otherwise};
\end{cases}\\
&=m_{A,B}\circ (\varepsilon'[A]\otimes \varepsilon'[B])(x_1\ldots x_k\otimes x_{k+1}\ldots x_{k+l}).
\end{align*}
So $\varepsilon'$ is a twisted algebra morphism.
\end{proof}

As $\calcomp$ is isomorphic to $\cot(\com_+)$:

\begin{cor}\label{cor9}
The following map is an algebra morphism from $(\calcomp,\squplus)$ to $(\com,m)$:
\[\varepsilon':\left\{\begin{array}{rcl}
\calcomp[A]&\longrightarrow&\com[A]\\
(A_1,\ldots,A_k)&\longrightarrow&\delta_{k,1}.
\end{array}\right.\]
\end{cor}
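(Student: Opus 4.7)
The plan is to deduce this directly from the preceding proposition, using the species isomorphism $\calcomp \cong \cot(\com_+)$ that was set up in the example immediately before. First, I would recall how this isomorphism works: a set composition $(A_1,\ldots,A_k) \in \comp[A]$ with $k \geq 1$ corresponds to the pure tensor $1 \otimes \cdots \otimes 1$ in the summand $\com[A_1] \otimes \cdots \otimes \com[A_k]$ of $\cot(\com_+)[A]$, while the empty composition in $\calcomp[\emptyset]$ corresponds to the unit $1 \in \K = \cot(\com_+)[\emptyset]$. Under this identification, the quasi-shuffle product $\squplus$ on $\cot(\com_+)$ pulls back exactly to the $\squplus$ product on $\calcomp$ described earlier.

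Second, I would specialize the previous proposition to $\calP = \com$. This produces an algebra morphism $\varepsilon' : (\cot(\com_+), \squplus) \to (\com, m)$ characterized by $\varepsilon'[\emptyset](1) = 1_\com$ and $\varepsilon'[A] = \pi[A]$ for $A \neq \emptyset$, where $\pi$ is the canonical projection vanishing on $\com^{\otimes k}$ for $k \neq 1$ and equal to the identity on $\com^{\otimes 1} = \com$. Transferring back through the isomorphism, for a nonempty finite set $A$ the projection $\pi[A]$ annihilates every summand indexed by a composition with $k \neq 1$ parts and sends the single-part composition $(A)$ to $1 \in \com[A]$. This gives exactly the formula $(A_1,\ldots,A_k) \mapsto \delta_{k,1}$ asserted in the corollary, and the unit-preservation condition on $\calcomp[\emptyset]$ matches the required morphism property.

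There is no real obstacle: the corollary is essentially a translation of the previous proposition through the already-established isomorphism $\calcomp \cong \cot(\com_+)$, and no further verification beyond unpacking the definitions of the product, the projection $\pi$, and the isomorphism is required.
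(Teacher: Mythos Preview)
Your proposal is correct and follows exactly the same approach as the paper, which simply notes that the result comes from $\calcomp = \cot(\com_+)$ combined with the preceding proposition. You have provided more detail in unpacking the isomorphism and the projection $\pi$, but the underlying argument is identical.
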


\begin{theo} \label{theo10}
Let $\calP$ be a twisted algebra and 
let $\calB$ be a connected twisted bialgebra.
For any species morphism $\phi:\calB\longrightarrow \calP$, there exists a unique twisted coalgebra morphism
$\Phi$ from $\calB$ to  $(\cot(\calP_+),\squplus,\Delta)$ such that $\varepsilon'\circ \Phi=\phi$.
For any finite set $A$:
\[\Phi[A]=\sum_{(A_1,\ldots,A_k)\in \comp[A]} \phi^{\otimes k}\circ \Delta_{A_1,\ldots,A_k}.\]
Moreover, $\Phi$ is a twisted bialgebra morphism if, and only if, $\phi$ is  twisted algebra morphism.
\end{theo}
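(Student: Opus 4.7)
The plan is to reduce both assertions to applications of the universal property in Theorem~\ref{theo6}. Define the auxiliary species morphism $\tilde{\phi}:\calB \to \calP_+$ by $\tilde{\phi}[A] = \phi[A]$ for $A \neq \emptyset$ and $\tilde{\phi}[\emptyset] = 0$ (forced, since $\calP_+[\emptyset] = (0)$). Since $\calB$ is connected, Theorem~\ref{theo6} yields a unique twisted coalgebra morphism $\Phi: \calB \to \cot(\calP_+)$ with $\pi \circ \Phi = \tilde{\phi}$, and the formula \eqref{eq1} specializes to the displayed expression (for $A\neq\emptyset$ every part of a composition of $A$ is non-empty, so $\tilde{\phi}[A_i] = \phi[A_i]$). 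The identity $\varepsilon' \circ \Phi = \phi$ holds on any non-empty set $A$ because $\varepsilon'[A]=\pi[A]$, and on $A=\emptyset$ because $\Phi[\emptyset](1_\calB)=1$ by connectedness, while $\varepsilon'[\emptyset](1)=1_\calP$. For uniqueness, any coalgebra morphism $\Phi'$ with $\varepsilon'\circ\Phi'=\phi$ satisfies $\pi\circ\Phi'=\tilde{\phi}$, hence coincides with $\Phi$ by the uniqueness clause of Theorem~\ref{theo6}.

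One direction of the second assertion is immediate: if $\Phi$ is a twisted bialgebra morphism then $\phi=\varepsilon'\circ\Phi$ is a composition of algebra morphisms (the proposition preceding Corollary~\ref{cor9} shows $\varepsilon'$ is an algebra morphism). For the converse, assume $\phi$ is a twisted algebra morphism and consider
\[
\Psi_1 := \Phi \circ m_\calB, \qquad \Psi_2 := \squplus \circ (\Phi \otimes \Phi),
\]
viewed as species morphisms from the connected twisted coalgebra $\calB\otimes\calB$ to $\cot(\calP_+)$. Both are twisted coalgebra morphisms: $m_\calB$ is a coalgebra morphism because $\calB$ is a twisted bialgebra, $\squplus$ is a coalgebra morphism because $(\cot(\calP_+),\squplus,\Delta)$ is a twisted bialgebra (by the quasi-shuffle theorem above), and $\Phi$, $\Phi\otimes\Phi$ are coalgebra morphisms. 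By the uniqueness clause of Theorem~\ref{theo6} applied to $\calB\otimes\calB$, it suffices to prove $\pi\circ\Psi_1=\pi\circ\Psi_2$.

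This last equality is the main technical step. The left-hand side is $\tilde{\phi}\circ m_\calB$. For the right-hand side, the construction of $\squplus$ recalled in the quasi-shuffle theorem identifies $\pi\circ\squplus$ as the map vanishing on $\calP^{\otimes k}\otimes\calP^{\otimes l}$ except on $(k,l)\in\{(0,1),(1,0),(1,1)\}$, where it equals $m$. Writing $\Phi=\sum_{k\geq 0}\Phi_k$ with $\Phi_k$ landing in $\calP^{\otimes k}$, so that $\Phi_0=\iota\circ\varepsilon_\calB$ and $\pi\circ\Phi_1=\tilde{\phi}$, only the tensor-degrees at most one contribute to $\pi\circ\Psi_2$. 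A short case analysis on whether $I$ or $J$ is empty, together with the unit identities, then shows that $\pi\circ\Psi_2$ evaluated on $x\otimes y\in\calB[I]\otimes\calB[J]$ equals $m_\calP(\tilde{\phi}(x)\otimes\tilde{\phi}(y))$, which coincides with $\tilde{\phi}(m_{I,J}(x\otimes y))$ precisely because $\phi$ is a twisted algebra morphism; the case $A=\emptyset$ is trivial since $\calP_+[\emptyset]=(0)$. The bookkeeping through the degree decomposition of $\Phi$ combined with the explicit form of $\pi\circ\squplus$ is the only non-routine ingredient.
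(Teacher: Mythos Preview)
Your proof is correct and follows essentially the same strategy as the paper: reduce existence and uniqueness to Theorem~\ref{theo6} via the truncation $\tilde{\phi}$, and prove multiplicativity by applying the uniqueness clause to the two coalgebra morphisms $\Psi_1=\Phi\circ m_\calB$ and $\Psi_2=\squplus\circ(\Phi\otimes\Phi)$ on $\calB\otimes\calB$.

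The one difference is tactical. To show $\pi\circ\Psi_1=\pi\circ\Psi_2$, you unfold $\pi\circ\squplus$ via the degree decomposition $\Phi=\sum_k\Phi_k$ and run a case analysis on whether $I$ or $J$ is empty. The paper instead compares $\varepsilon'\circ\Psi_1$ and $\varepsilon'\circ\Psi_2$: since $\varepsilon'$ is an algebra morphism, one gets directly
\[
\varepsilon'\circ\squplus\circ(\Phi\otimes\Phi)=m_\calP\circ(\varepsilon'\otimes\varepsilon')\circ(\Phi\otimes\Phi)=m_\calP\circ(\phi\otimes\phi)=\phi\circ m_\calB=\varepsilon'\circ\Phi\circ m_\calB,
\]
and equality of $\varepsilon'\circ\Psi_i$ is equivalent to equality of $\pi\circ\Psi_i$ (both morphisms send $1_{\calB\otimes\calB}$ to $1$). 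This bypasses the degree bookkeeping and the case split; your route works just as well but is a bit longer.
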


\begin{proof}
Let $\phi'$ defined by:
\[\phi'[A]=\begin{cases}
0\mbox{ if }A=\emptyset,\\
\phi[A]\mbox{ if }A\neq \emptyset.
\end{cases}\]
For any coalgebra morphism $\Phi:\calB\longrightarrow \cot(\calP_+)$, $\Phi(1_{\calB})=1$. 
As $\calB[\emptyset]=\K 1_C$, $\varepsilon'\circ \Phi=\phi$ if, and only if, $\pi\circ \Phi=\phi'$.
By Theorem \ref{theo6}, such a morphism exists and is unique.\\

Let us assume that $\Phi$ is a bialgebra morphism. By composition, $\phi=\varepsilon'\circ \Phi$
is an algebra morphism. Let us assume that $\phi$ is an algebra morphism. Let us consider the coalgebra morphisms
$\Phi \circ m$ and $\squplus\circ (\Phi \otimes \Phi)$, both from $\calB\otimes \calB$ to $\cot(\calP_+)$.
\begin{align*}
\varepsilon'\circ \Phi\circ m&=\phi\circ m,\\
\varepsilon'\circ \squplus\circ (\Phi \otimes \Phi)&=m_\calP\circ (\varepsilon'\otimes \varepsilon')\circ (\Phi \otimes \Phi)\\
&=m_\calP \circ (\phi\otimes \phi)\\
&=\phi\circ m.
\end{align*}
Consequently, $\varepsilon'\circ \Phi\circ m=\varepsilon'\circ \squplus\circ (\Phi \otimes \Phi)$,
so $\pi\circ \Phi\circ m=\pi\circ \squplus\circ (\Phi \otimes \Phi)$. By unicity in Theorem \ref{theo6},
$\Phi\circ m=\squplus\circ (\Phi \otimes \Phi)$, so $\Phi$ is a bialgebra morphism. \end{proof}

\subsection{A criterion of cofreeness}

Let us give here a twisted version of Loday and Ronco's rigidity theorem for bialgebras \cite{LodayRonco}.

\begin{theo} \label{theo11}
Let $(\calP,\Delta)$ be a connected twisted coalgebra. 
We denote by $1$ the unique nonzero element of $\calP[\emptyset]$
such that $\Delta_{\emptyset,\emptyset}(1)=1\otimes 1$. The following propositions are equivalent:
\begin{enumerate}
\item $(\calP,\Delta)$ is isomorphic to a cofree coalgebra $\cot(\calQ)$.
\item There exists a structure of twisted algebra $(\calP,m)$ on $\calP$, of unit $1$, such that
for any finite sets $I$, $J$, $I'$ and $J'$ such that $I\sqcup J=I'\sqcup J'$, for any $x\in \calP[I']$,
$y \in \calP[J']$:
\[\Delta_{I,J}\circ m_{I',J'}(x\otimes y)=
\begin{cases}
(m_{I',I\setminus I}\otimes \id)(x\otimes \Delta_{I\setminus I',J}(y))\mbox{ if }I'\subseteq I,\\
(\id\otimes m_{J\setminus J',J'})(\Delta_{I,J\setminus J'}(x)\otimes y) \mbox{ if }J'\subseteq J,\\
0\mbox{ otherwise}.
\end{cases}\]
\end{enumerate}
\end{theo}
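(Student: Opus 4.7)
The plan is to prove the two implications separately, with $(1)\Rightarrow(2)$ a direct verification and $(2)\Rightarrow(1)$ the substantive direction.

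For $(1)\Rightarrow(2)$: assuming $\calP$ is a cofree coalgebra $\cot(\calQ)$, transport the concatenation product to $\calP$. A letter-by-letter inspection of the deconcatenation $\Delta_{I,J}(x_1\cdots x_p y_1\cdots y_q)$, where $x=x_1\cdots x_p$ has support $I'$ and $y=y_1\cdots y_q$ has support $J'$, shows that the only nonzero cuts lie at a letter boundary. A cut inside the $y$-block is allowed iff $I'\subseteq I$, reproducing case 1; a cut inside the $x$-block iff $J'\subseteq J$, reproducing case 2 (the intersection $I=I'$, $J=J'$ being consistent with both cases by Lemma \ref{lemme3}); all remaining configurations contribute $0$.

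For $(2)\Rightarrow(1)$, set $\calQ[\emptyset]=(0)$ and
\[\calQ[A]=\bigcap_{(I,J)\in\comp[A],\: I,J\neq\emptyset}\ker(\Delta_{I,J})\]
for $A\neq\emptyset$, and define $\Psi:\cot(\calQ)\longrightarrow\calP$ by $\Psi(1)=1$ and $\Psi(x_1\cdots x_k)=m_{A_1,\ldots,A_k}(x_1\otimes\cdots\otimes x_k)$ for $x_i\in\calQ[A_i]$. I will show $\Psi$ is an isomorphism of twisted coalgebras, yielding $\calP\simeq\cot(\calQ)$. The key technical step, proved by induction on $k$ via the factorisation $m_{A_1,\ldots,A_k}=m\circ(m_{A_1,\ldots,A_{k-1}}\otimes\mathrm{Id})$ together with (2) applied to $I'=A_1\sqcup\cdots\sqcup A_{k-1}$ and $J'=A_k$, is the formula
\[\Delta_{I,J}(\Psi(x_1\cdots x_k))=\begin{cases}\Psi(x_1\cdots x_r)\otimes\Psi(x_{r+1}\cdots x_k)&\text{if }I=A_1\sqcup\cdots\sqcup A_r\text{ for some }r,\\ 0&\text{otherwise.}\end{cases}\]
In the $I'\subseteq I$ branch, $\Delta_{I\setminus I',J}(x_k)$ vanishes unless $I\setminus I'$ or $J$ is empty (since $x_k\in\calQ[A_k]$), yielding the split $r=k-1$ or $r=k$; in the $J'\subseteq J$ branch, the inductive hypothesis for $m_{A_1,\ldots,A_{k-1}}$ supplies the splits $r<k-1$. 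The ``otherwise'' branch of (2) corresponds exactly to $I$ not being a union of blocks. This identifies $\Delta\circ\Psi$ with $(\Psi\otimes\Psi)\circ\Delta_{\cot(\calQ)}$.

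It remains to show bijectivity. Injectivity: iterating the key formula, $\tilde{\Delta}_{B_1,\ldots,B_k}\circ\Psi$ equals the identity on $\calQ[B_1]\otimes\cdots\otimes\calQ[B_k]$ and annihilates $\Psi(\calQ^{\otimes k'})$ for $k'<k$ (any such split would cut inside a $\calQ$-factor), so picking the maximal nonzero homogeneous component of an element of $\ker\Psi$ leads to a contradiction. Surjectivity: introduce the coradical filtration
\[F^n\calP[A]=\bigcap_{(A_1,\ldots,A_{n+1})\in\comp[A],\: A_i\neq\emptyset}\ker(\tilde{\Delta}_{A_1,\ldots,A_{n+1}}),\]
which exhausts $\calP[A]$ once $n+1>\sharp A$, and prove by induction on $n$ that $F^n\calP[A]$ is contained in $\Psi\bigl(\bigoplus_{k\leq n+1}\calQ^{\otimes k}[A]\bigr)$: for $x\in F^n\calP[A]$, the element
\[z=\sum_{(B_1,\ldots,B_{n+1})\in\comp[A]}m_{B_1,\ldots,B_{n+1}}(\tilde{\Delta}_{B_1,\ldots,B_{n+1}}(x))\]
has factors in $\calQ$ (applying a further $\Delta$ to $\tilde{\Delta}_{B_1,\ldots,B_{n+1}}(x)$ vanishes since $x\in F^n$), and the key formula forces $\tilde{\Delta}_{C_1,\ldots,C_{n+1}}(x-z)=0$ for every $(n+1)$-composition, so $x-z\in F^{n-1}$ and induction closes. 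The main obstacle is the careful case analysis in the key formula, where the three branches of (2) must be reconciled with the vanishing of non-trivial coproducts on $\calQ$; once established, injectivity and surjectivity follow by essentially automatic inductive arguments.
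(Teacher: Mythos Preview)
Your approach is essentially identical to the paper's: for $(1)\Rightarrow(2)$ both use the concatenation product on $\cot(\calQ)$, and for $(2)\Rightarrow(1)$ both define $\calQ$ as the primitive subspecies, build the same map $\Psi$ via iterated multiplication, establish the same key formula for $\Delta_{I,J}\circ\Psi$, and prove bijectivity by induction (the paper's $k(x)=\max\{l:\exists(I_1,\ldots,I_l),\ \Delta_{I_1,\ldots,I_l}(x)\neq 0\}$ is just another name for your coradical filtration level).

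One indexing slip to fix in the surjectivity step: for $x\in F^n$ you must sum over compositions of length $n$, not $n+1$, in the definition of $z$. As you have defined $F^n$, every $(n+1)$-fold reduced coproduct of $x$ already vanishes, so your $z$ as written is identically zero and the argument collapses. With $z=\sum_{(B_1,\ldots,B_n)}m_{B_1,\ldots,B_n}\bigl(\Delta_{B_1,\ldots,B_n}(x)\bigr)$, applying one more $\Delta$ to each factor gives an $(n+1)$-fold coproduct of $x$, which vanishes, so the factors lie in $\calQ$ as you intended; then the key formula yields $\Delta_{C_1,\ldots,C_n}(x-z)=0$ for every $n$-composition, hence $x-z\in F^{n-1}$, and induction closes. (Similarly, ``$\tilde\Delta_{C_1,\ldots,C_{n+1}}(x-z)=0$ implies $x-z\in F^{n-1}$'' is off by one with your definition of $F^n$.)
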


\begin{proof} $1\Longrightarrow 2$. Up to an isomorphism, we assume that the coalgebras $(\calP,\Delta)$ and
$\cot(\calQ)$ are equal. Let us denote by $m$ the concatenation product of $\calP$. 
Let us consider $x_i\in \calQ[A_i]$ for all $i\in \underline{k+l}$. Putting
$I'=A_1\sqcup \ldots \sqcup A_k$ and $J'=A_{k+1}\sqcup \ldots \sqcup A_{k+l}$, if $I\sqcup J=I'\sqcup J'$:
\begin{align*}
&\Delta_{I,J}\circ m_{I',J'}(x_1\ldots x_k\otimes x_{k+1}\ldots x_{k+l})\\
&=\Delta_{I,J}(x_1\ldots x_{k+l})\\
&=\begin{cases}
x_1\ldots x_j \otimes x_{j+1}\ldots x_{k+l}\mbox{ if there exists $j$ such that }A_1\sqcup \ldots \sqcup A_j=I,\\
0\mbox{ otherwise.}
\end{cases}
\end{align*}
Three cases are possible.
\begin{itemize}
\item Such a $j$ exists and $j\leq k$. Then $J'\subseteq J$ and:
\begin{align*}
&\Delta_{I,J}\circ m_{I',J'}(x_1\ldots x_k\otimes x_{k+1}\ldots x_{k+l})\\
&=x_1\ldots x_j \otimes x_{j+1}\ldots x_k.x_{k+1}\ldots x_{k+l}\\
&=(\id\otimes m_{J\setminus J',J'})(\Delta_{I,J\setminus J'}(x_1\ldots x_k)\otimes x_{k+1}\ldots x_{k+l}).
\end{align*}
\item Such a $j$ exists and $j\geq k$. Then $I'\subseteq I$ and:
\begin{align*}
&\Delta_{I,J}\circ m_{I',J'}(x_1\ldots x_k\otimes x_{k+1}\ldots x_{k+l})\\
&=x_1\ldots x_k.x_{k+1}\ldots x_l \otimes x_{j+1}\ldots x_{k+l}\\
&=(m_{I',I\setminus I}\otimes \id)(x_1\ldots x_k\otimes \Delta_{I\setminus I',J}(x_{k+1}\ldots x_{k+l}))
\end{align*}
\item No such $j$ exists. Then $I'\nsubseteq I$, $J'\nsubseteq J$ and:
\begin{align*}
\Delta_{I,J}\circ m_{I',J'}(x_1\ldots x_k\otimes x_{k+1}\ldots x_{k+l})&=0.
\end{align*} \end{itemize}

$2\Longrightarrow 1$. Let $\calQ$ be the subspecies of $\calP$ defined by:
\[\calQ[A]=\{x\in \calP[A],\forall (I,J)\in \comp(A), \Delta_{I,J}(x)=0\}.\]
We shall consider the following morphism:
\[
\Phi:\left\{\begin{array}{rcl}
\cot(\calQ)&\longrightarrow&\calP\\
x_1\ldots x_k\in \calQ[A_1]\otimes \ldots \calQ[A_k]&\longrightarrow&
m_{A_1,\ldots,A_k}(x_1\otimes \ldots \otimes x_k).
\end{array}\right.
\]
Let us first prove that $\Phi$ is a coalgebra morphism. Let $x_1\ldots x_k\in \calQ[A_1]\otimes \ldots \calQ[A_k]$.
\begin{itemize}
\item If $I=A_1\sqcup \ldots \sqcup A_i$ for a certain $i$:
\begin{align*}
\Delta_{I,J}\circ \Phi(x_1\ldots x_k)&=\Delta_{I,J}(x_1\cdot\ldots\cdot x_k)\\
&=\Delta_{I,J}((x_1\cdot \ldots \cdot x_i)\cdot (x_{i+1}\cdot \ldots \cdot x_k))\\
&=(m_{I\emptyset}\otimes \id)(x_1\cdot \ldots \cdot x_i\otimes \Delta_{\emptyset,J}(x_{i+1}\cdot \ldots \cdot x_k))\\
&=(m_{I\emptyset}\otimes \id)(x_1\cdot \ldots \cdot x_i\otimes 1\otimes x_{i+1}\cdot \ldots \cdot x_k)\\
&=x_1\cdot \ldots \cdot x_i\otimes x_{i+1}\cdot \ldots \cdot x_k\\
&=(\Phi \otimes \Phi)\circ \Delta_{I,J}(x_1\ldots x_k).
\end{align*}
\item Otherwise, let $i$ be the greatest integer such that $A_1\sqcup\ldots \sqcup A_i\subseteq I$.
Then $A_{i+1}\cap I\neq \emptyset$.
\begin{align*}
\Delta_{I,J}\circ \Phi(x_1\ldots x_k)&=\Delta_{I,J}(x_1\cdot\ldots\cdot x_k)\\
&=(x_1\cdot \ldots \cdot x_i\otimes 1)\cdot \Delta_{I\setminus A_1\sqcup \ldots \sqcup A_i,J}(x_{i+1}\cdot \ldots
\cdot x_k).
\end{align*}
If $I\nsubseteq A_1\sqcup \ldots A_{i+1}$, this is zero. Otherwise, $I\cap A_{i+1}\neq \emptyset$. Then:
\begin{align*}
\Delta_{I,J}\circ \Phi(x_1\ldots x_k)&=\Delta_{I,J}(x_1\cdot\ldots\cdot x_k)\\
&=(x_1\cdot \ldots \cdot x_i\otimes 1)\cdot \Delta_{I\setminus A_1\sqcup \ldots \sqcup A_i,J}(x_{i+1}\cdot \ldots
\cdot x_k)\\
&=(x_1\cdot \ldots \cdot x_i\otimes 1)\cdot \Delta_{A_{i+1}\cap I,A_{i+1}\cap J}(x_{i+1})
\cdot(1\otimes x_{i+2}\cdot\ldots \cdot x_k).
\end{align*}
By definition of $\calQ$, $\Delta_{A_{i+1}\cap I,A_{i+1}\cap J}(x_{i+1})=0$. Finally:
\begin{align*}
\Delta_{I,J}\circ \Phi(x_1\ldots x_k)=0&=(\Phi\otimes \Phi)\circ \Delta_{I,J}(x_1\ldots x_k).
\end{align*}
\end{itemize}
Hence, $\Phi$ is a coalgebra morphism.\\

Let us now prove that $\Phi$ is injective. Let $x \in \cot(\calQ)[A]$, nonzero, such that $\Phi[A](x)=0$.
We assume that $\sharp A$ is minimal. If $A=I\sqcup J$, with $I\neq \emptyset$ and $J\neq \emptyset$:
\begin{align*}
(\Phi[I]\otimes \Phi[J])\circ \Delta_{I,J}(x)&=\Delta_{I,J}\circ \Phi[A](x)=0.
\end{align*}
By hypothesis on $A$, $\Phi[I]$ and $\Phi[J]$ are injective, so $\Delta_{I,J}(x)=0$, and 
$x\in \calQ[A]$. Then $\Phi[A](x)=x\neq 0$: this is a contradiction. Therefore, $\Phi$ is injective.\\

Let us finally prove that $\Phi$ is surjective. As $\calP[\emptyset]=\K 1$, $\calP[\emptyset]=\Phi[\emptyset]
(\cot(\calP)[\emptyset])$. Let $A$ be a nonempty set. For any $x \in \calP[A]$, we put:
\[k(x)=\max\{l,\exists (I_1,\ldots,I_l)\in \comp(A), \Delta_{I_1,\ldots,I_l}(x)\neq 0\}.\]
Obviously, $k(x)$ exists and $k(x)\leq \sharp A$. Let us prove that $x\in \Phi[A](\cot(\calQ)[A])$
by induction on $k(x)$. If $k(x)=1$, then $x\in \calQ[A]$ and $x=\Phi[A](x)$. If $k=k(x)\geq 2$, we put,
for any $I=(I_1,\ldots,I_k)\in \comp(A)$:
\begin{align*}
\Delta_{I_1,\ldots,I_k}(x)=x_1^{(I)}\otimes \ldots \otimes x_k^{(I)}.
\end{align*}
By maximality of $k$, $x_i^{(I)}\in \calQ$ for any $I$, $i$. 
We consider:
\[y=\sum_{I=(I_1,\ldots,I_k)\in \comp(A)} x_1^{(I)}\cdot \ldots \cdot x_k^{(I)}.\]
Then, if $l>k$ and $(I_1,\ldots,I_l)\in \comp(A)$, $\Delta_{I_1,\ldots,I_l}(y)=0$.
If $I=(I_1,\ldots,I_k)\in \comp(A)$, 
$\Delta_{I_1,\ldots,I_l}(y)=x_1^{(I)}\otimes \ldots \otimes x_k^{(I)}=\Delta_{I_1,\ldots,I_k}(x)$.
Hence, $k(x-y)<k$, so $x-y\in \Phi[A](\cot(\calQ)[A])$. As $y\in \Phi[A](\cot(\calQ)[A])$,
$x \in \Phi[A](\cot(\calQ)[A])$. \end{proof}

\section{Convolution and characters}

\subsection{Convolution}

As for "usual" algebras and coalgebras:

\begin{prop}
Let $(\calA,m_\calA)$ be an algebra and $(\calC,\Delta_\calC)$ be a coalgebra, both in the category of species. 
The space $\Endo(\calC,\calA)$ of species morphisms from $\calC$ to $\calA$ is a monoid for the convolution product $*$:
\begin{align*}
&\forall f,g \in \Endo(\calC,\calA),&f*g=m_\calA\circ (f\otimes g)\circ \Delta_\calC.
\end{align*}
The unit is the morphism $u=\iota_\calA\circ \varepsilon_\calC$.
\end{prop}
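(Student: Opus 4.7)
The plan is to transcribe the standard convolution argument for bialgebras into the symmetric monoidal category of species under the Cauchy tensor product. The key observation is that every construction used below (Cauchy tensor product of morphisms, associator, unit isomorphism) preserves natural transformations, so the convolution product of two species morphisms is automatically a species morphism; there is nothing to check for the well-definedness of $f * g$ beyond that $m_\calA$ and $\Delta_\calC$ are themselves morphisms of species.

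First, I would verify associativity. Given $f,g,h \in \Endo(\calC,\calA)$, I expand both sides:
\[
(f*g)*h = m_\calA \circ (m_\calA \otimes Id_\calA)\circ (f\otimes g\otimes h)\circ (\Delta_\calC\otimes Id_\calC)\circ \Delta_\calC,
\]
\[
f*(g*h) = m_\calA \circ (Id_\calA \otimes m_\calA)\circ (f\otimes g\otimes h)\circ (Id_\calC\otimes \Delta_\calC)\circ \Delta_\calC.
\]
The associativity of $m_\calA$ (i.e.\ the commutative square in the definition of a twisted algebra) gives $m_\calA \circ (m_\calA\otimes Id_\calA) = m_\calA\circ (Id_\calA\otimes m_\calA)$, and the coassociativity of $\Delta_\calC$ gives $(\Delta_\calC\otimes Id_\calC)\circ \Delta_\calC = (Id_\calC\otimes \Delta_\calC)\circ \Delta_\calC$. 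Combining these two identities yields $(f*g)*h = f*(g*h)$.

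Next, I would check that $u = \iota_\calA \circ \varepsilon_\calC$ is a two-sided unit. For $u * f$, I factor $u\otimes f = (\iota_\calA \otimes Id_\calA)\circ (\varepsilon_\calC \otimes f)$ and compute
\[
u*f = m_\calA \circ (\iota_\calA\otimes Id_\calA)\circ (\varepsilon_\calC\otimes Id_\calC)\circ (Id_\calC\otimes f)\circ \Delta_\calC.
\]
The unit axiom for $(\calA, m_\calA, \iota_\calA)$ says $m_\calA\circ (\iota_\calA\otimes Id_\calA) = Id_\calA$ after the identification $\I\otimes \calA = \calA$, and the counit axiom for $(\calC,\Delta_\calC,\varepsilon_\calC)$ says $(\varepsilon_\calC\otimes Id_\calC)\circ \Delta_\calC = Id_\calC$ after the identification $\I\otimes \calC = \calC$. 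These collapse the composition above to $f$, and a symmetric computation handles $f*u = f$.

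There is really no serious obstacle: the whole argument is a formal diagram chase valid in any symmetric monoidal category, and the category of species equipped with the Cauchy tensor product is such a category. The only point to bear in mind is that the associator and unitor isomorphisms are being suppressed, but they are all natural transformations of species, so inserting them at the appropriate places does not affect the conclusion.
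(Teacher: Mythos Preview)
Your proof is correct and is exactly the standard argument. In fact the paper does not give a proof of this proposition at all: it is stated immediately after the remark ``As for `usual' algebras and coalgebras'' and left without proof, since the classical diagram chase you wrote out carries over verbatim to any symmetric monoidal category.
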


\begin{lemma}\label{lemme13}
Let $\calA$ be a commutative twisted algebra and $\calB$ be a twisted bialgebra. 
We denote by $\chara(\calB,\calA)$ the set of algebra morphisms from $\calB$ to $\calA$.
\begin{enumerate}
\item $\chara(\calB,\calA)$ is a submonoid of $(\Endo(\calB,\calA),*)$.
\item Let $f\in \chara(\calB,\calA)$, with an inverse $g\in \Endo(\calB,\calA)$. Then $g\in \chara(\calB,\calA)$.
\end{enumerate}
\end{lemma}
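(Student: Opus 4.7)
The plan is to adapt the classical convolution argument for characters of Hopf algebras to the twisted setting, with commutativity of $\calA$ playing its usual essential role.

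For part (1), I would first note that the unit $u = \iota_\calA \circ \varepsilon_\calB$ of $(\Endo(\calB,\calA),*)$ is an algebra morphism: $\varepsilon_\calB$ is an algebra morphism because it is a bialgebra morphism, and $\iota_\calA$ is one by definition. Given $f,g \in \chara(\calB,\calA)$, unit preservation of $f*g$ follows from $\Delta_\calB(1_\calB) = 1_\calB \otimes 1_\calB$ (Lemma \ref{lemme3} together with the bialgebra axiom) and $f(1_\calB) = g(1_\calB) = 1_\calA$. For multiplicativity, I would compute $(f*g) \circ m_\calB$ by expanding $\Delta_\calB \circ m_\calB = (m_\calB \otimes m_\calB) \circ (Id \otimes c_{\calB,\calB} \otimes Id) \circ (\Delta_\calB \otimes \Delta_\calB)$, then using multiplicativity of $f$ and $g$ to commute $f \otimes g$ past the inner $m_\calB$'s, and finally using commutativity of $\calA$ to absorb $c_{\calA,\calA}$ into $m_\calA$. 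Regrouping yields $m_\calA \circ ((f*g) \otimes (f*g))$, as required.

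For part (2), the strategy is to exhibit two left convolution inverses for the same element of $(\Endo(\calB \otimes \calB, \calA), *)$, where $\calB \otimes \calB$ carries its tensor-product coalgebra structure with coproduct $\Delta_{\calB \otimes \calB} = (Id \otimes c_{\calB,\calB} \otimes Id) \circ (\Delta_\calB \otimes \Delta_\calB)$, and conclude by uniqueness of inverses in a monoid. The common element to invert is $\phi := f \circ m_\calB = m_\calA \circ (f \otimes f)$, the two expressions being equal because $f$ is multiplicative. A direct computation gives $(g \circ m_\calB) * \phi = (g * f) \circ m_\calB = u \circ m_\calB$, which is the unit $\iota_\calA \circ \varepsilon_{\calB \otimes \calB}$ of $\Endo(\calB \otimes \calB, \calA)$ since $\varepsilon_\calB$ is multiplicative. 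A parallel computation, this time using commutativity of $\calA$ to reorder the four inner factors, yields $(m_\calA \circ (g \otimes g)) * \phi = m_\calA \circ ((g * f) \otimes (g * f))$, which again equals the unit. Uniqueness of left inverses then forces $g \circ m_\calB = m_\calA \circ (g \otimes g)$, i.e., $g$ is multiplicative; unit preservation, $g(1_\calB) = 1_\calA$, follows from evaluating $f * g = u$ at $1_\calB$.

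The main obstacle is bookkeeping rather than any deep difficulty: in the species setting, every identity must be read as a family of compatible maps indexed by decompositions $A = I \sqcup J$ (and for the bialgebra compatibility, also by a second decomposition $A = I' \sqcup J'$), so the compact manipulations above have to be unfolded as commutative diagrams. In particular, one must track carefully each insertion of $c_{\calA,\calA}$ and verify that commutativity of $\calA$ is invoked only where allowed. Both parts genuinely require $\calA$ to be commutative, exactly as in the classical case.
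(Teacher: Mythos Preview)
Your approach to part (1) is essentially identical to the paper's.

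For part (2), your overall strategy matches the paper's: work in $(\Endo(\calB\otimes\calB,\calA),*)$, set $\phi=f\circ m_\calB=m_\calA\circ(f\otimes f)$, and compare $g\circ m_\calB$ with $m_\calA\circ(g\otimes g)$ via convolution inverses of $\phi$. However, there is a genuine slip. You compute that both $g\circ m_\calB$ and $m_\calA\circ(g\otimes g)$ are \emph{left} inverses of $\phi$, and then invoke ``uniqueness of left inverses''. In a general monoid, left inverses are not unique (think of the shift map on $\mathbb{N}^{\mathbb{N}}$), so this step does not stand as written.

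The fix is immediate and you have all the ingredients. Since $g$ is a two-sided inverse of $f$, the same computation with $f*g$ in place of $g*f$ shows $\phi*(g\circ m_\calB)=(f*g)\circ m_\calB$ is also the unit, so $g\circ m_\calB$ is a two-sided inverse of $\phi$; any left inverse of an invertible element equals the inverse, and you are done. The paper arranges this slightly differently: it shows $\phi*(g\circ m_\calB)$ is the unit (so $g\circ m_\calB$ is a \emph{right} inverse) and $(m_\calA\circ(g\otimes g))*\phi$ is the unit (a \emph{left} inverse), and then uses the standard $H=H*(F*G)=(H*F)*G=G$ argument. Either route works; just make sure you produce at least one right inverse before concluding.
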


\begin{proof}
1. For any $f,g\in \chara(\calB,\calA)$:
\[f*g(1_{\calB})=f(1_{\calB})g(1_{\calB})=1_\calA1_\calA=1_\calA.\]
Moreover:
\begin{align*}
m_\calA\circ (f*g\otimes f*g)&=m_\calA\circ (m_\calA\otimes m_\calA)\circ (f\otimes g\otimes f\otimes g)
\circ (\Delta_{\calB}\otimes \Delta_{\calB}),\\
f*g\circ m_{\calB}&=m_\calA\circ (f\otimes g)\circ (\Delta_{\calB}\circ m_{\calB})\\
&=m_\calA\circ (f\otimes g)\circ (m_{\calB}\otimes m_{\calB})\circ (\id_{\calB}\otimes c_{\calB,\calB}\otimes \id_{\calB})
\circ (\Delta_{\calB}\otimes \Delta_{\calB})\\
&=m_\calA\circ (m_\calA\otimes m_\calA)\circ (f\otimes f\otimes g\otimes g)\circ  (\id_{\calB}\otimes c_{\calB,\calB}
\otimes \id_{\calB})\circ (\Delta_{\calB}\otimes \Delta_{\calB})\\
&=m_\calA\circ (m_\calA\otimes m_\calA)\circ  (\id_\calA\otimes c_{\calA,\calA}\otimes \id_\calA)
\circ (f\otimes g\otimes f\otimes g)\circ (\Delta_{\calB}\otimes \Delta_{\calB}).
\end{align*}
As $\calA$ is commutative, $m_\calA\circ (f*g\otimes f*g)=f*g\circ m_{\calB}$. So $f*g\in \chara(\calB,\calA)$.\\

2. As $\calB$ is a bialgebra, $\calB\otimes \calB$ is a bialgebra. Let us consider the maps 
$F=f\circ m_{\calB}=m_\calA\circ (f\otimes f)$,
$G=g\circ m_{\calB}$ and $H=m_\calA\circ (g\otimes g)$, all three in $\Endo(\calB\otimes \calB,\calA)$.
\begin{align*}
F*G&=m_\calA\circ (f\otimes g)\circ (m_{\calB}\otimes m_{\calB})\circ (\id_{\calB}\otimes c_{\calB,\calB}\otimes 
\id_{\calB})\circ (\Delta_{\calB}\otimes \Delta_{\calB})\\
&=m_\calA\circ (f\otimes g)\circ \Delta_{\calB}\circ m_{\calB}\\
&=f*g \circ m_{\calB}\\
&=\iota_\calA\circ \varepsilon_{\calB} \circ m_{\calB}\\
&=\iota_\calA\circ m_\I \circ (\varepsilon_{\calB}\otimes \varepsilon_{\calB})\\
&=\iota_\calA\circ \varepsilon_{\calB\otimes \calB}.
\end{align*}
So $G$ is a right inverse of $F$.
\begin{align*}
H*F&=m_\calA\circ (m_\calA\otimes m_\calA)\circ (f\otimes f\otimes g\otimes g)\circ (\id_{\calB}\otimes c_{\calB,\calB}
\otimes \id_{\calB})\circ (\Delta_{\calB}\otimes \Delta_{\calB})\\
&=m_\calA\circ (m_\calA\otimes m_\calA)\circ (\id_\calA\otimes c_{\calA,\calA}\otimes \id_\calA)
\circ (f\otimes g\otimes f\otimes g)\circ (\Delta_{\calB}\otimes \Delta_{\calB})\\
&=m_\calA\circ (m_\calA\otimes m_\calA)\circ(f\otimes g\otimes f\otimes g)\circ (\Delta_{\calB}\otimes \Delta_{\calB})\\
&=m_\calA\circ (f*g \otimes f*g)\\
&=m_\calA\circ (\iota_\calA\otimes \iota_\calA)\circ (\varepsilon_{\calB}\otimes \varepsilon_{\calB})\\
&=\iota_\calA \circ m_\I \circ (\varepsilon_{\calB}\otimes \varepsilon_{\calB})\\
&=\iota_\calA\circ \varepsilon_{\calB\otimes \calB}.
\end{align*}
So $H$ is a left inverse of $F$. Therefore, $H=H*(F*G)=(H*F)*G=G$, so $g\circ m_{\calB}=m_\calA\circ (g\otimes g)$. Finally:
\[f*g(1_{\calB})=1_\calA=f(1_{\calB})g(1_{\calB})=g(1_{\calB}),\]
Hence, $g(1_{\calB})=1_\calA$. \end{proof}

\subsection{Characters}

By Lemma \ref{lemme13}:

\begin{defi} \label{defi14}
Let $\calP$ be a twisted algebra. A character on $\calP$ is an algebra morphism from $\calP$ to $\com$.
The set of characters on $\calP$ is denoted by $\chara(\calP)$ instead of $\chara(\calP,\com)$.
Then $\chara(\calP)$ is given a monoid structure in the following way:
\begin{align*}
&\forall \alpha,\beta \in \chara(\calP),& \alpha*\beta&=m_{\com}\circ (\alpha \otimes \beta)\circ \Delta.
\end{align*}
The unit of this monoid is the counit $\varepsilon:\calP\longrightarrow \I\longrightarrow \com$, where the second
arrow is the canonical injection from $\I$ into $\com$.
\end{defi}

\begin{remark}
We proved in Corollary \ref{cor9} that $\varepsilon'$ is a character of $\calcomp$.
\end{remark}

Let $\calP$ be a twisted bialgebra and let $\lambda:\calP\longrightarrow \com$ be a morphism
such that $\lambda[\emptyset]=0$. Then, for any $k\in \N$,
\[\lambda^{*k}[A]=\sum_{(I_1,\ldots,I_k)\in \comp[A]}\lambda^{\otimes k}\circ \Delta_{I_1,\ldots,I_k}(x).\]
If $k>\sharp A$, this is zero: $\lambda$ is locally nilpotent for the convolution product $*$. We obtain an algebra morphism:
\[\left\{\begin{array}{rcl}
\K[[X]]&\longrightarrow&\Endo(\calP,\calcomp)\\
\displaystyle f(X)=\sum_{n=0}^\infty a_nX^n&\longrightarrow&\displaystyle f(\lambda)=\sum_{n=0}^\infty a_n\lambda^{*n}.
\end{array}\right.\]
If $\calP$ is connected, then for any character $\lambda \in \chara(\calP)$, $(\lambda-\varepsilon)[\emptyset]=0$,
so $f(\lambda-\varepsilon)$ exists for any $f\in \K[[X]]$. In particular, for any $q\in \K$, we put:
\[\lambda^q=(1+(\lambda-\varepsilon))^q=\sum_{n=0}^\infty H_n(q)(\lambda-\varepsilon)^{*n},\]
where for any $n\geq 0$, $H_n$ is the $n$-th Hilbert polynomial:
\[H_n(X)=\frac{X(X-1)\ldots (X-n+1)}{n!}.\]
For any $q,q'\in \K$, $(1+X)^q (1+X)^{q'}=(1+X)^{q+q'}$, so:
\[\lambda^q*\lambda^{q'}=\lambda^{q+q'}.\]
Moreover, if $f_q=(1+X)^q-1$, $(1+f_q(X))^{q'}=(1+X)^{qq'}$, so:
\[(\lambda^q)^{q'}=\lambda^{qq'}.\]

\begin{prop}
Let $\calP$ be a connected twisted bialgebra and let $\lambda \in \chara(\calP)$.
For any $q\in \K$, $\lambda^q\in \chara(\calP)$.
\end{prop}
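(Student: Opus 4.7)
The plan is to reduce the statement to a polynomial identity in $q$ and verify it on the non-negative integers, where it will follow from Lemma \ref{lemme13}.

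First, I would observe that for every $n \in \N$, $\lambda^n$ coincides with the $n$-fold convolution power $\lambda^{*n}$. Indeed, $\lambda^0 = \varepsilon$ (only the $k=0$ term survives since $H_k(0)=0$ for $k \geq 1$), $\lambda^1 = \varepsilon + (\lambda - \varepsilon) = \lambda$, and the already established identity $\lambda^q * \lambda^{q'} = \lambda^{q+q'}$ then gives the general case by induction. Since $\com$ is commutative, Lemma \ref{lemme13} ensures that $\chara(\calP) = \chara(\calP,\com)$ is a submonoid of $(\Endo(\calP,\com),*)$, so $\lambda^{*n} \in \chara(\calP)$ for every $n \in \N$.

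Next, fix finite sets $A, B$ and elements $x \in \calP[A]$, $y \in \calP[B]$. The local nilpotency of $\lambda - \varepsilon$ noted just before the statement gives $(\lambda - \varepsilon)^{*k}[C] = 0$ whenever $k > \sharp C$, so each of the expressions $\lambda^q(x)$, $\lambda^q(y)$, and $\lambda^q(m_{A,B}(x \otimes y))$ is a finite sum and, since $\com[C] = \K$ for every finite set $C$, becomes a polynomial in $q$ of degree at most $\sharp C$. By the first step, the identity
\[
\lambda^q(m_{A,B}(x \otimes y)) = \lambda^q(x)\lambda^q(y)
\]
holds for every $q \in \N$, so it holds as a polynomial identity in $q$ and therefore for all $q \in \K$. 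The trivial verification $\lambda^q(1_\calP) = H_0(q)\varepsilon(1_\calP) = 1$, the higher terms vanishing because $(\lambda - \varepsilon)(1_\calP) = 0$, then completes the proof that $\lambda^q \in \chara(\calP)$.

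The only genuinely delicate point is the polynomial-identity step, which requires $\K$ to contain $\N$ as an infinite subset (i.e.\ to have characteristic zero, which is implicit since the $H_n(q)$ already involve dividing by $n!$). Everything else is bookkeeping: the reduction to the $n \in \N$ case is the essential content, and it is handled directly by Lemma \ref{lemme13}.
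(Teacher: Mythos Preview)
Your proof is correct but takes a genuinely different route from the paper. The paper introduces the algebra morphism $M:L(\calP,\com)\to L(\calP\otimes\calP,\com)$, $\mu\mapsto\mu\circ m$, observes that $M(\lambda)=\lambda\otimes\lambda$ when $\lambda$ is a character, and then computes $M(\lambda^q)=\exp\bigl(q\,M(\ln(1+(\lambda-\varepsilon)))\bigr)=\lambda^q\otimes\lambda^q$ via the formal $\log/\exp$ identities, which yields $\lambda^q(xy)=\lambda^q(x)\lambda^q(y)$ for all $q$ simultaneously. Your polynomial-interpolation argument is more elementary: it only uses Lemma~\ref{lemme13} together with the already-established identity $\lambda^q*\lambda^{q'}=\lambda^{q+q'}$ to settle integer $q$, and then interpolates. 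Both proofs require characteristic zero (yours for the interpolation step, the paper's for $\ln$ and $\exp$ to make sense), so neither gains generality there; the paper's argument is more structural and reusable, while yours is shorter and avoids any formal power series manipulation beyond what is already set up before the proposition.
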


\begin{proof}
We define a species morphism:
\[M:\left\{\begin{array}{rcl}
L(\calP,\com)&\longrightarrow&L(\calP\otimes \calP,\com)\\
\lambda&\longrightarrow&\lambda \circ m.
\end{array}\right.\]
Let $\lambda,\mu\in L(\calP,\com)$. For any $x\in \calP[A]$, $y\in \calP[B]$:
\begin{align*}
M(\lambda)*M(\mu)(x\otimes y)&=M(\lambda)\otimes M(\mu)
(x^{(1)}\otimes y^{(1)}\otimes x^{(2)}\otimes y^{(2)})\\
&=\lambda(x^{(1)}y^{(1)})\mu(x^{(2)}y^{(2)})\\
&=\lambda*\mu(xy)\\
&=M(\lambda*\mu)(x\otimes y).
\end{align*}
So $M$ is an algebra morphism.
Moreover, if $\lambda \chara(\calP)$, then $M(\lambda)=\lambda \otimes \lambda$, so:
\begin{align*}
M(\ln(1+(\lambda-\varepsilon))&=\ln(1+\lambda\otimes \lambda-\varepsilon\otimes \varepsilon)\\
&=\ln(1+\lambda-\varepsilon)\otimes \varepsilon+\varepsilon\otimes \ln(1+\lambda-\varepsilon),\\
M(\lambda^q)&=\exp(q\ln(1+\lambda-\varepsilon)\otimes \varepsilon+\varepsilon\otimes q\ln(1+\lambda-\varepsilon))\\
&=\exp(q\ln(1+\lambda-\varepsilon)\otimes\exp(q\ln(1+\lambda-\varepsilon)\\\
&=\lambda^q\otimes \lambda^q.
\end{align*}
Hence, if $x\in \calP[A]$, $y\in \calP[B]$, $\lambda^q(xy)=\lambda^q(x)\lambda^q(y)$,
so $\lambda^q\in \chara(\calP)$. \end{proof}

\begin{cor}
Let $(\calP,m)$ be a connected twisted bialgebra. Then $(\chara(\calP),*)$ is a group.
\end{cor}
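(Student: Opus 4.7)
The plan is to read the corollary off from what is already in hand: Definition \ref{defi14} gives that $(\chara(\calP),*)$ is a monoid with unit $\varepsilon$, so the only thing left to establish is the existence of two-sided inverses. The natural candidate is $\lambda^{-1}$, obtained by specialising the power construction $\lambda^q$ (defined for any $q \in \K$ via the binomial series in $\lambda - \varepsilon$) at $q = -1$, which makes sense precisely because $\calP$ is connected, so that $(\lambda - \varepsilon)[\emptyset] = 0$ and the series terminates on each finite set.

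The first step is to observe that $\lambda^{-1}$ is a character: this is immediate from the preceding proposition applied with $q = -1$. Next, I would check that the identity $\lambda^q * \lambda^{q'} = \lambda^{q+q'}$, which has already been established at the level of formal power series and hence also in $\Endo(\calP,\com)$ via the locally nilpotent functional calculus, specialises to $\lambda * \lambda^{-1} = \lambda^{-1} * \lambda = \lambda^{0}$. Finally, evaluating the series definition
\[
\lambda^{0} = \sum_{n=0}^{\infty} H_n(0)\,(\lambda - \varepsilon)^{*n}
\]
and noting that $H_0(0) = 1$ while $H_n(0) = 0$ for every $n \geq 1$, we get $\lambda^{0} = \varepsilon$, the unit of the convolution monoid. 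Combined with the previous step, this exhibits $\lambda^{-1}$ as the two-sided inverse of $\lambda$ in $\chara(\calP)$.

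There is no real obstacle here; everything follows mechanically from results already established. The only point requiring a small amount of care is that all manipulations take place in the subalgebra of $(\Endo(\calP,\com),*)$ on which the formal power series calculus is well defined, namely morphisms $f$ with $f[\emptyset] = \varepsilon[\emptyset]$ (equivalently, $f - \varepsilon$ locally nilpotent), which is exactly where connectedness of $\calP$ is used.
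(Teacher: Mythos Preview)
Your proposal is correct and follows essentially the same line as the paper's proof, which simply states that $\lambda^{-1}*\lambda=\lambda*\lambda^{-1}=\lambda^0=\varepsilon$; you have just made explicit the verification that $\lambda^0=\varepsilon$ via $H_n(0)=\delta_{n,0}$ and the fact that $\lambda^{-1}\in\chara(\calP)$ via the preceding proposition.
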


\begin{proof}
For any $\lambda \in \chara(\calP)$,
$\lambda^{-1}*\lambda=\lambda*\lambda^{-1}=\lambda_0=\varepsilon$,
so $\lambda^{-1}$ is indeed an inverse of $\lambda$ for the convolution.
\end{proof}

Let us now give a species version of Aguiar, Bergeron and Sottile's Theorem on the Hopf algebra 
of quasisymmetric functions \cite{Aguiar2}:

\begin{theo} \label{theo17}
Let $\mathcal{B}$ be a connected twisted bialgebra.
Let $\alpha:\mathcal{B}\longrightarrow \com$ be a species morphism. There exists a unique coalgebra morphism
$\phi:\mathcal{B}\longrightarrow \calcomp$ such that $\varepsilon'\circ \phi=\alpha$.
For any $x\in \mathcal{B}[A]$, with $A\neq \emptyset$:
\begin{align*}
\phi(x)=\sum_{(A_1,\ldots,A_k)\in \comp[A]} 
\alpha^{\otimes k}\circ \Delta_{A_1,\ldots,A_k}(x) (A_1,\ldots,A_k).
\end{align*}
Moreover, $\phi$ is a bialgebra morphism if, and only if, $\phi$ is a character.
\end{theo}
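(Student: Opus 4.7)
The plan is to recognize Theorem \ref{theo17} as the specialization of Theorem \ref{theo10} to the case $\calP = \com$, combined with the bialgebra isomorphism $\calcomp \cong \cot(\com_+)$ established in the example after that theorem. Concretely, I would apply Theorem \ref{theo10} to the connected twisted bialgebra $\mathcal{B}$, the twisted algebra $\com$, and the species morphism $\alpha:\mathcal{B}\to \com$. This directly produces a unique coalgebra morphism $\phi:\mathcal{B}\to\cot(\com_+)$ satisfying $\varepsilon'\circ \phi = \alpha$, given by
\[\phi[A] \;=\; \sum_{(A_1,\ldots,A_k)\in\comp[A]} \alpha^{\otimes k}\circ \Delta_{A_1,\ldots,A_k}.\]

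Next, I would transport this morphism along the species isomorphism $\cot(\com_+)\cong \calcomp$ under which the pure tensor $1\otimes\cdots\otimes 1 \in \com[A_1]\otimes\cdots\otimes\com[A_k]$ corresponds to the set composition $(A_1,\ldots,A_k)$. Since each factor $\alpha[A_i]$ lands in $\com[A_i] = \K$, the expression $\alpha^{\otimes k}\circ\Delta_{A_1,\ldots,A_k}(x)$ is canonically a scalar in $\K^{\otimes k}\cong \K$; it becomes the coefficient of the basis element $(A_1,\ldots,A_k)$ in the claimed formula. This both establishes existence with the displayed expression and, via the uniqueness clause of Theorem \ref{theo10}, establishes uniqueness.

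For the final assertion, I would invoke the last clause of Theorem \ref{theo10}, which asserts that the lifted morphism is a twisted bialgebra morphism if and only if the original species morphism is a twisted algebra morphism. Combining this with Definition \ref{defi14}, which identifies characters of $\mathcal{B}$ with algebra morphisms $\mathcal{B}\to\com$, yields that $\phi$ is a bialgebra morphism precisely when $\alpha\in\chara(\mathcal{B})$. There is no real obstacle in this proof: everything is a direct corollary of Theorem \ref{theo10}, and the only point requiring care is parsing the formula correctly via the identification of $\cot(\com_+)$ with $\calcomp$, so that $\alpha^{\otimes k}\circ\Delta_{A_1,\ldots,A_k}(x)\,(A_1,\ldots,A_k)$ is read as a scalar multiplying a basis element of $\calcomp[A]$.
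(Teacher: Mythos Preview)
Your proposal is correct and follows essentially the same approach as the paper, which proves Theorem~\ref{theo17} in one line as an immediate corollary of Theorem~\ref{theo10} via the identification $\calcomp=\cot(\com_+)$. You have simply spelled out in more detail what that one-line reduction entails.
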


\begin{proof}
Immediate corollary of Theorem \ref{theo10}, as $\calcomp=\cot(\com_+)$.
\end{proof}

\subsection{Subalgebras of shuffle twisted bialgebras}

\begin{theo} \label{theo18}
Let $\calP$ be a twisted, connected and commutative bialgebra. \begin{enumerate}
\item There exists a species $\calQ$ and an injective morphism
of twisted bialgebras from $\calP$ to the shuffle bialgebra $(\cot(\calQ),\shuffle,\Delta)$. 
\item There exists a species $\calQ$ and an isomorphim
of twisted bialgebras from $\calP$ to the shuffle bialgebra $(\cot(\calQ),\shuffle,\Delta)$ if, and only if, 
$\calP$ is cofree.
\end{enumerate}
\end{theo}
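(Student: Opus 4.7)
The plan is to apply the universal property of quasi-shuffle bialgebras (Theorem \ref{theo10}) with an appropriately chosen target algebra whose product is trivial on the augmentation ideal, so that the resulting quasi-shuffle structure coincides with the shuffle product $\shuffle$.

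For part 1, I define $\calQ$ to be the species of indecomposables of $\calP$: for any nonempty finite set $A$, set
\[\calQ[A] \;=\; \calP[A] \Bigg/ \sum_{\substack{A = I \sqcup J,\\ I, J \neq \emptyset}} m_{I,J}\bigl(\calP[I] \otimes \calP[J]\bigr),\]
and $\calQ[\emptyset] = (0)$. Endow $\I \oplus \calQ$ with the unique twisted algebra structure whose product is trivial on $\calQ$, so that the cofree coalgebra $\cot(\calQ)$ equipped with its quasi-shuffle product is exactly the shuffle bialgebra $(\cot(\calQ), \shuffle, \Delta)$. Let $\phi : \calP \to \I \oplus \calQ$ be the canonical projection (sending $1_\calP$ to $1$ and projecting $\calP_+[A]$ onto its quotient for nonempty $A$). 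By construction, $\phi$ vanishes on $m(\calP_+ \otimes \calP_+)$, hence is a twisted algebra morphism, and Theorem \ref{theo10} then yields a twisted bialgebra morphism $\Phi : \calP \to (\cot(\calQ), \shuffle, \Delta)$ characterized by $\varepsilon' \circ \Phi = \phi$.

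The main step is to prove injectivity of $\Phi$, which I would handle by induction on $\sharp A$. The base case $A = \emptyset$ is immediate from connectedness. For nonempty $A$ and $x \in \ker \Phi[A]$, the coalgebra morphism identity $\Delta_{I,J} \circ \Phi[A] = (\Phi[I] \otimes \Phi[J]) \circ \Delta_{I,J}$, combined with the inductive hypothesis (giving injectivity of $\Phi[I]$ and $\Phi[J]$, hence of their tensor product), forces $\Delta_{I,J}(x) = 0$ for every nontrivial decomposition $I \sqcup J = A$. Projecting the vanishing $\Phi[A](x) = 0$ onto the degree-$1$ summand $\calQ[A]$ of $\cot(\calQ)[A]$ furthermore yields $\phi[A](x) = 0$, i.e.\ $x$ lies in the decomposable subspace $m(\calP_+ \otimes \calP_+)[A]$. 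The hard part is then to conclude $x = 0$: an element that is simultaneously primitive, in the sense of vanishing under every nontrivial coproduct, and decomposable must vanish. I expect this to follow from a filtration by the ``length of decomposition,'' using the bialgebra compatibility formula for $\Delta \circ m$ to descend to the associated graded, together with the commutativity inherited from the commutative shuffle target.

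For part 2, the forward direction is immediate: any isomorphism of twisted bialgebras $\calP \simeq (\cot(\calQ), \shuffle, \Delta)$ is in particular a coalgebra isomorphism onto the cofree coalgebra $\cot(\calQ)$, hence $(\calP, \Delta)$ is cofree. For the converse, assuming $(\calP, \Delta)$ is cofree, I would show that the injective morphism $\Phi$ constructed in part 1 is surjective. Cofreeness, applied through Theorem \ref{theo11}, identifies $(\calP, \Delta)$ with a coalgebra $\cot(\calR)$ for some species $\calR$, and reading off indecomposables from this cofree presentation yields a species-level isomorphism $\calR \simeq \calQ$. A dimension comparison, species by species between $\calR^{\otimes\bullet}$ and $\cot(\calQ)$, then forces $\Phi[A]$ to be bijective for every finite set $A$, whence $\Phi$ is an isomorphism of twisted bialgebras.
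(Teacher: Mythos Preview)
Your choice of $\calQ$ as the space of indecomposables differs from the paper, which takes $\calQ$ to be the primitives (the subspecies on which every nontrivial $\Delta_{I,J}$ vanishes). This difference is harmless for part~1 but fatal for part~2.

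In part~1, the crucial step is exactly the one you flag as ``the hard part'': a simultaneously primitive and decomposable element must vanish. Your proposed filtration argument is not fleshed out, and the appeal to ``commutativity inherited from the shuffle target'' is circular --- you cannot transport commutativity back to $\calP$ through an embedding you have not yet established. The paper resolves this with an Eulerian-idempotent argument: form $f=\ln(Id_\calP)$ in the convolution algebra $L(\calP,\calP)$ and, using commutativity of $\calP$, show that $f$ kills $m(\calP_+\otimes\calP_+)$ while restricting to the identity on primitives; hence their intersection is zero. (Commutativity of $\calP$ is in fact needed for the theorem, since any subbialgebra of a shuffle bialgebra is commutative.)

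Your argument for part~2 is wrong, not merely incomplete. The claim that ``reading off indecomposables from the cofree presentation yields $\calR\simeq\calQ$'' fails: primitives and indecomposables need not coincide. Take $\calP=\fqsym$, the shuffle bialgebra cogenerated by the species concentrated in cardinality~$1$. For $\sharp A=2$ the primitives are zero, while the indecomposables are one-dimensional (the two total orders on $A$ span a plane, and the decomposables form only the line through their sum). Thus $\dim\cot(\calQ)[A]=3>2=\dim\fqsym[A]$ and your $\Phi$ cannot be surjective. The paper's choice of primitives for $\calQ$ avoids this: in the cofree case surjectivity is proved directly by induction on word length, using that cofreeness supplies elements with prescribed iterated coproducts.
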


\begin{proof}
\textit{First step}. Let $\calP$ and $\calQ$ be species. Then the set of linear morphisms
from $\calP$ to $\calQ$ is a species, defined by:
\begin{enumerate}
\item $L(\calP,\calQ)[A]$ is the space of linear maps from $\calP[A]$ to $\calQ[A]$.
\item If $\sigma:A\longrightarrow B$ is a bijection and $f\in L(\calP,\calQ)[A]$,
$L(\calP,\calQ)[\sigma](f)=\calQ[\sigma]\circ f\circ \calP[\sigma^{-1}]$.
\end{enumerate}
If $\calP$ is a unitary twisted bialgebra, $m:\calP\otimes \calP \longrightarrow \calP$ is a surjective
map of species. Let us denote by $\overline{m}:(\calP\otimes \calP)/\ker(m) \longrightarrow \calP$ the canonically
associated isomorphism of species. If $\calP$ is commutative, then $m$ is a twisted algebra morphism: indeed,
if $x\in \calP[A]$, $y\in \calP[B]$, $x'\in \calP[A']$, $y'\in \calP[B']$:
\[m_{A\sqcup A',B\sqcup B'}((x\otimes y)\otimes (x'\otimes y'))=xx'yy'=xyx'y'
=m_{A,B}(x\otimes y)m_{A',B'}(x'\otimes y').\]
Therefore, $(\calP\otimes \calP)/\ker(m)$ becomes an algebra and $\overline{m}$ is an algebra isomorphism.\\

If now $\calP$ is a commutative twisted bialgebra, we shall consider the morphism of species defined by:
\[M:\left\{\begin{array}{rcl}
L(\calP,\calP)&\longrightarrow&L(\calP\otimes \calP,(\calP\otimes \calP)/\ker(m))\\
f&\longrightarrow&\overline{m}^{-1}\circ f\circ m.
\end{array}\right.\]
In other terms, for any $f:\calP[A]\longrightarrow \calP[A]$, if $A=I\sqcup J$:
\[f\circ m_{I,J}=\overline{m}_{I,J} \circ M[A](f).\]
As $\calP\otimes \calP$ is a twisted coalgebra and $(\calP\otimes \calP)/\ker(m)$ is a twisted algebra,
the species $L(\calP\otimes \calP,(\calP\otimes \calP)/\ker(m))$ is given a convolution product $*$.
Let $f,g\in L(\calP,\calP)[A]$. We put $F=M[A](f)$ and $G=M[A](g)$. 
Then, if $A=I\sqcup J$, $x\in \calP[I]$, $y\in \calP[J]$:
\begin{align*}
(f*g)\circ m_{I,J}(x\otimes y)&=\pi \circ m\circ (f\otimes g)\circ (m\otimes m)\circ (\id \otimes c\otimes \id)\circ 
(\Delta \otimes \Delta)(x\otimes y)\\
&=m\circ (\overline{m}\otimes \overline{m})\circ (F\otimes G)\circ (\id \otimes c\otimes \id)\circ 
(\Delta \otimes \Delta)(x\otimes y).
\end{align*}
Let us denote by $\pi$ the canonical projection from $\calP\otimes \calP$ onto $(\calP\otimes\calP)/\ker(m)$. 
We shall write:
\begin{align*}
F&=\pi\circ(\sum F'_j\otimes F''_j),&G&=\pi\circ(\sum G'_k\otimes G''_k).
\end{align*}
where $F'_j$, $F''_j$, $G'_k$, $G''_k$ are linear endomorphisms of certain $\calP[B]$, with $B\subseteq A$. 
Then, with Sweedler's notation:
\begin{align*}
(f*g)\circ m_{I,J}(x\otimes y)&=f(x^{(1)} y^{(1)})g(x^{(2)}y^{(2)})\\
&=\sum F'_j(x^{(1)})F''_j(y^{(1)})G'_k(x^{(2)})G''_k(y^{(2)})\\
&=\sum F'_j(x^{(1)})G'_k(x^{(2)})F''_j(y^{(1)})G''_k(y^{(2)})\\
&=\sum F'_j*G'_k(x) F''_j*G''_k(y)\\
&=F*G(xy).
\end{align*}
Hence, $M(f*g)=M(f)*M(g)$: $M$ is compatible with $*$.
Moreover, for any $x\in \calP[A]$, $y\in \calP[B]$:
\[\id\circ m(x\otimes y)=xy=\overline{m}\circ\pi(x\otimes y).\]
Hence, $M(\id)=\pi$. \\

\textit{Second step.} Let us denote by $\rho$ the canonical projection from $\calP$ to $\calP_+$.
As $\calP$ is connected, $\id=\nu+\rho$, where $\nu$ is the unit of the convolution product of $L(\calP,\calP)$.
For any finite set $A$, by connectivity of $\calP$:
\[\rho^{*k}[A]=\sum_{(A_1,\ldots,A_k)\in \comp(A)} m_{A_1,\ldots,A_k}\circ \Delta_{A_1,\ldots,A_k}.\]
In particular, if $k>\sharp A$, $\rho^{*k}[A]=0$. We can then consider the morphism defined by:
\[f=\sum_{k=1}^\infty \frac{(-1)^{k+1}}{k}\rho^{*k}=\ln(1+\rho)=\ln(\id).\]
As $M$ is compatible with $*$:
\[M(f)=\ln(1+M(\rho))=\ln(1+(\pi-\nu))=\ln(\pi)=\pi \circ \ln (\id \otimes \id).\]
Moreover, $\id \otimes \id=(\id \otimes \nu)*(\nu \otimes \id)$, by property of the $\ln$ formal series:
\[M(f)=\pi \circ (\ln(\id\otimes \nu)+\ln(\nu\otimes \id)=\pi\circ (\ln(\id)\otimes \nu+\nu\otimes \ln(\id)))
=\pi\circ (f\otimes 1+1\otimes f).\]
In other words, if $x\in \calP[A]$, $y\in \calP[B]$, with $A,B \neq \emptyset$:
\[f(xy)=f(x)\varepsilon(y)+\varepsilon(x)f(y)=0.\]
Let $\calQ$ be the subspecies of $\calP$ defined by:
\[\calQ[A]=\{x\in \calP[A],\:\forall (I,J)\in \comp[A],\:\Delta_{I,J}(x)=0\}.\]
If $x\in \calQ[A]$:
\[f(x)=\sum_{(A_1,\ldots,A_k)\in \comp(A)}\frac{(-1)^{k+1}}{k}
m_{A_1,\ldots,A_k}\circ \Delta_{A_1,\ldots,A_k}(x)=x+0.\]
As a consequence, for any finite set $A$:
\[\calQ[A]\cap m(\calP_+\otimes \calP_+)[A]=(0).\]

\textit{Third step.} For any $n\geq 1$, 
$ m(\calP_+\otimes \calP_+)[\underline{n}]$ is a $\mathfrak{S}_n$-submodule
of $\calP[\underline{n}]$. By semisimplicity, there exits a $\mathfrak{S}_n$-submodule $R_n$ 
of $\calP[\underline{n}]$
such that:
\begin{align*}
\calP[\underline{n}]&=\calQ[\underline{n}]\oplus R_n,&
m(\calP_+\otimes \calP_+)[\underline{n}]&\subseteq R_n.
\end{align*}

For any finite set $A$, of cardinality $n$, let $\sigma$ be a bijection from $\underline{n}$ to $A$.
We put $\calR[A]=\calP[\sigma](R_n)$. This does not depend on the choice of $\sigma$: indeed,
if $\tau$ is another bijection, then $\tau^{-1}\circ \sigma \in \mathfrak{S}_n$,
so $\calP[\tau]^{-1}\circ \calP[\sigma](R_n)=\calP[\tau^{-1}\circ \sigma](V_n)=V_n$,
and finally $\calP[\tau](R_n)=\calP[\sigma](R_n)$. We define in this way a subspecies $\calR$ of $\calP$ such that:
\begin{align*}
\calP&=\calQ\oplus \calR,& m(\calP_+\otimes \calP_+)\subseteq \calR.
\end{align*}
Let $\varpi:\calP\longrightarrow \calQ$ be the canonical projection.
By universal property,  there exists a unique coalgebra morphism $\Phi:\calP\longrightarrow \cot(\calQ)$,
such that $\pi \circ \phi=\varpi$, where $\pi:\cot(\calQ)\longrightarrow \calQ$ is the canonical projection.

Let us assume that $\Phi$ is not injective. Let $x\in \calP[A]$, nonzero, such that $\Phi[A](x)=0$.
We assume that the cardinality of $A$ is minimal. If $(I,J)\in \comp(A)$:
\[(\Phi[I]\otimes \Phi[J])\circ \Delta_{I,J}(x)=\Delta_{I,J}\circ \Phi[A](x)=0.\]
By minimality of $\sharp A$, $\Phi[I]$ and $\Phi[J]$ are injective, so $\Delta_{I,J}(x)=0$.
Hence, $x\in \calQ$, so $\Phi[A](x)=\pi \circ \Phi[A](x)=\varpi(x)=x$: this is a contradiction.
Therefore, $\Phi$ is injective.

Let us consider $\Phi_1=\shuffle \circ (\Phi\otimes \Phi)$
and $\Phi_2=\Phi\circ m$. Both are coalgebra morphims from $\calP\otimes \calP$ to $\cot(\calQ)$.
Moreover, if $x\in \calP[A]$, $y\in \calQ[B]$:
\begin{align*}
\pi\circ \Phi_1[A\sqcup B](x\otimes y)&=\pi[A\sqcup B](\Phi[A](x)\shuffle \Phi[B](y))\\
&=\begin{cases}
\varepsilon(x) \pi\circ \Phi[B](y)\mbox{ if }A=\emptyset,\\
\varepsilon(y)\pi\circ \Phi[B](x) \mbox{ if }B=\emptyset,\\
0\mbox{ otherwise},
\end{cases}\\
&=\begin{cases}
\varepsilon(x) \varpi[B](y)\mbox{ if }A=\emptyset,\\
\varepsilon(y)\varpi[B](x) \mbox{ if }B=\emptyset,\\
0\mbox{ otherwise}.
\end{cases}
\end{align*}
On the other hand, as $\varpi\circ m(\calP_+\otimes \calP_+))=(0)$:
\begin{align*}
\pi\circ \Phi_2[A\sqcup B](x\otimes y)&=\varpi \circ m(x\otimes y)\\
&=\begin{cases}
\varepsilon(x) \varpi[B](y)\mbox{ if }A=\emptyset,\\
\varepsilon(y)\varpi[B](x) \mbox{ if }B=\emptyset,\\
0\mbox{ otherwise}.
\end{cases}
\end{align*}
So $\pi \circ \Phi_1=\pi\circ \Phi_2$: by unicity in the universal property, $\Phi_1=\Phi_2$, so
$\Phi$ is a bialgebra morphism.\\

\textit{Last step}. Let us assume that $\calP$ is cofree, and let us prove that $\Phi$ is surjective.
Let $k\geq 0$, $x_i\in \calQ[A_i]$ for all $i$ and $A=A_1\sqcup \ldots \sqcup A_k$.
Let us prove that $x=x_1\ldots x_k\in \Phi[A](\calP[A])$ by induction on $k$. This is obvious if $k\leq 1$.
If $k\geq 2$, as $\calP$ is cofree, there exists an element $y\in \calP$ such that:
\[\Delta_{A_1,\ldots,A_k}(y)=x_1\otimes \ldots \otimes x_k.\]
Then:
\[\Delta_{A_1,\ldots,A_k}\circ \Phi[A](y)=\Phi[A_i](x_1)\otimes \ldots \otimes \Phi[A_k](x_k)
=x_1\otimes \ldots \otimes x_k.\]
Hence, the induction hypothesis can be applied to $x-\Phi[A](y)$, so $x\in \Phi[A](\calP[A])$. \end{proof}

\section{The Hadamard product of species}

\subsection{Double twisted bialgebras}

\begin{defi}[Hadamard product of species]\label{defi19}
\begin{enumerate}
\item Let $\calP$ and $\calQ$ be two species. 
\begin{itemize}
\item For any finite set $A$, we put $\calP\boxtimes \calQ[A]=\calP[A]\otimes \calQ[A]$.
\item For any bijection $\sigma:A\longrightarrow B$, we put $\calP\boxtimes \calQ[\sigma]
=\calP[\sigma]\otimes \calQ[\sigma]$.
\end{itemize}
Then $\calP\boxtimes \calQ$ is a species.
\item Let $\calP_1$, $\calP_2$, $\calQ_1$, $\calQ_2$ be species, 
and $\phi_1:\calP_1\longrightarrow \calQ_1$, $\phi_2:\calP_1\longrightarrow \calQ_2$ be species morphisms.
We define a morphism $\phi_1\boxtimes \phi_2$ of species from $\calP_1\boxtimes \calQ_1$ 
to $\calP_2\boxtimes \calQ_2$ by:
\[\phi_1\boxtimes \phi_2[A]=\phi_1[A]\otimes \phi_2[A]:\calP_1\boxtimes \calP_2[A]\longrightarrow 
\calQ_1\boxtimes \calQ_2[A].\]
\item Let $\calP$ and $\calQ$ be two species. The following defines a morphism of species from 
$\calP\boxtimes \calQ$ to $\calQ\boxtimes \calP$:
\[\tau_{\calP,\calQ}:\left\{\begin{array}{rcl}
\calP\boxtimes \calQ[A]&\longrightarrow&\calQ\boxtimes \calP[A]\\
x\otimes y&\longrightarrow&y\otimes x.
\end{array}\right.\]
\end{enumerate}
\end{defi}

The species $\com$ is the identity for this tensor product: for any species $\calP$,
\[\calP\boxtimes \com=\com\boxtimes \calP=\calP.\]

\begin{defi}
A twisted bialgebra of the second kind is a family $(\calP,m,\delta)$
where $(\calP,m)$ is a twisted algebra and $\delta:\calP\longrightarrow \calP\boxtimes \calP$ 
is a morphism of species such that: 
\begin{enumerate}
\item The following diagram commutes:
\[\xymatrix{\calP\ar[r]^{\delta}\ar[d]_{\delta}&\calP^{\boxtimes 2}\ar[d]^{\id_\calP\boxtimes \delta}\\
\calP^{\boxtimes 2}\ar[r]_{\delta \boxtimes \id_\calP}&\calP^{\boxtimes 3}}\]
\item There exists a morphism of species $\varepsilon':\calP\longrightarrow \com$
 such that the following diagram commutes:
\[\xymatrix{\com\boxtimes \calP\ar[r]^{\id}&\calP\ar[d]^\delta&\calP\boxtimes \com \ar[l]_{\id}\\
&\calP^{\boxtimes 2}\ar[lu]^{\varepsilon' \boxtimes \id_\calP}\ar[ru]_{\id_\calP\boxtimes \varepsilon'}&}\]
\item $\delta$ is an algebra morphism from $\calP$ to $\calP\boxtimes \calP$, that is to say, for any
finite sets $A,B$:
\[\delta_{A\sqcup B}\circ m_{A,B}=(m_{A,B}\otimes m_{A,B})\circ (\id_{\calP[A]}\otimes c_{A,B}\otimes \id_{\calP[B]})
\circ (\delta_A\otimes \delta_B),\]
and $\delta_\emptyset(1_\calP)=1_\calP\otimes 1_\calP$.
\end{enumerate}\end{defi}

In other words, for any finite set $A$, there exists a coproduct 
$\delta_A:\calP[A]\longrightarrow \calP[A]\otimes \calP[A]$, making $\calP[A]$ a coalgebra of counit $\varepsilon'_A$.
If $\sigma:A\longrightarrow B$ is a bijection, $\calP[\sigma]$ is a coalgebra isomorphism from 
$(\calP[A],\delta_A)$ to $(\calP[B],\delta_B)$. 
In other terms,  these objects are algebras in the category of species in the category of coalgebras, 
that is to say the category of  functors from the category of finite sets with bijections to the category of coalgebras.

\begin{notation}
If $\calP$ is a twisted bialgebra of the second kind, we adopt Sweedler's notation for its coproduct:
if $A$ in a finite set and $x\in \calP[A]$,
\begin{align*}
\delta_A(x)&=x'\otimes x'',\\
(\delta_A\otimes \id_{\calP[A]})\circ \delta_A(x)
&=(\id_{\calP[A]} \otimes \delta_A)\circ \delta_A(x)=x'\otimes x''\otimes x'''.
\end{align*}
\end{notation}

\begin{defi}\label{defi21}
A double twisted bialgebra is a family $(\calP,m,\Delta,\delta)$ such that:
\begin{enumerate}
\item $(\calP,m,\Delta)$ is a twisted bialgebra. Its counit is denoted by $\varepsilon$.
\item $(\calP,m,\delta)$ is a twisted bialgebra of the second kind. Its counit is denoted by $\varepsilon'$.
\item $\Delta$ is a right comodule morphism, that is, for any finite sets $A$, $B$:
\[(\Delta_{A,B}\otimes \id_{\calP[A\sqcup B]})\circ \delta_{A\sqcup B}=m_{1,3,24}\circ 
(\delta_A\otimes \delta_B)\circ \Delta_{A,B},\]
where:
\[m_{1,3,24}:\left\{\begin{array}{rcl}
\calP[A]\otimes \calP[A]\otimes \calP[B]\otimes \calP[B]&\longrightarrow&\calP[A]
\otimes \calP[B]\otimes \calP[A\sqcup B]\\
x\otimes y\otimes z\otimes t&\longrightarrow&x\otimes z\otimes m_{A,B}(y\otimes t).
\end{array}\right.\]

\item The counit $\varepsilon:\calP \longrightarrow \I$ is a right comodule morphism, that is, 
for any $x\in \calP[\emptyset]$:
\[(\varepsilon\otimes \id)\circ \delta_\emptyset(x)=\varepsilon(x)1_\calP\otimes 1_\calP.\]
\end{enumerate}
\end{defi}

\begin{example}
The species $\com$ is a double bialgebra with the coproduct defined on $\com[A]=\K$ by:
\[\delta_A(1)=1\otimes 1.\]
\end{example}

\begin{remark}
Let $(A,m,\Delta,\delta)$ be double twisted bialgebra.
For any finite sets $A$, $B$, $\calP[A]\otimes \calP[B]$ 
is a right $(\calP[A\sqcup B],\delta_{A\sqcup B})$-comodule with the coaction:
\[\rho_{A,B}=\underbrace{(\id^{\otimes 2} \otimes m_{A,B})\circ (\id_{\calP[A]}\otimes \tau_{A,B} \otimes \id_{\calP(B)})}_{=m_{1,3,24}}
\circ (\delta_A\otimes \delta_B).\] 
More generally, if $A_1,\ldots, A_k$ are finite sets, then $\calP[A_1]\otimes \ldots \otimes \calP[A_k]$ is a right 
$\calP[A_1\sqcup \ldots \sqcup A_k]$-comodule, with the coaction defined by:
\[\rho_{A_1,\ldots,A_k}=m_{1,3,\ldots,2n-1,24\ldots 2n}\circ (\delta_{A_1}\otimes \ldots \otimes \delta_{A_n}).\]
For any finite sets $A_1,\ldots,A_k$, $m_{A_1,\ldots,A_k}:\calP[A_1]\otimes \ldots \otimes \calP[A_k]
\longrightarrow \calP[A_1\sqcup \ldots \sqcup A_k]$ and 
$\Delta_{A_1,\ldots,A_k}:\calP[A_1\sqcup \ldots \sqcup A_k]\longrightarrow\calP[A_1]\otimes \ldots\otimes \calP[A_k]$
are right comodule morphisms.
\end{remark}

\begin{prop}
Let $(\calP,m,\Delta,\delta)$ be a connected twisted double bialgebra. Then $m$ is commutative.
\end{prop}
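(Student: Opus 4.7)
The plan is to prove $m_{A,B}(x\otimes y)=m_{B,A}(y\otimes x)$ for all finite sets $A,B$ and all $x\in\calP[A]$, $y\in\calP[B]$. If $A$ or $B$ is empty, connectedness forces $x$ or $y$ to be a scalar multiple of $1_\calP$ by Lemma \ref{lemme3}, and commutation follows from the unit axiom. So I assume $A$ and $B$ are both nonempty.

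The first key step is to establish the identity $\Delta_{A,B}(xy)=\Delta_{A,B}(yx)=x\otimes y$. I would apply the bialgebra compatibility (the displayed formula just before the definition of a twisted bialgebra) with $I=A$, $J=B$ and either $I'=A$, $J'=B$ or $I'=B$, $J'=A$. In both cases three of the four intersections $I'\cap I$, $I'\cap J$, $J'\cap I$, $J'\cap J$ are empty, so the formula collapses to $\Delta_{A,\emptyset}(x)=x\otimes 1_\calP$ and $\Delta_{\emptyset,B}(y)=1_\calP\otimes y$ from Lemma \ref{lemme3}, followed by a trivial unit multiplication. Note that this is where the noncommutativity of $m$ is \emph{not} visible: the coproduct $\Delta$ separates the disjoint supports $A$ and $B$ regardless of the order in which they were multiplied.

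Next I would apply the cointeraction axiom of Definition \ref{defi21} to the element $yx\in\calP[A\sqcup B]$. On the left-hand side, since $\delta$ is an algebra morphism, $\delta_{A\sqcup B}(yx)=\sum y^{(1)}x^{(1)}\otimes y^{(2)}x^{(2)}$; applying $\Delta_{A,B}\otimes Id$ and using $\Delta_{A,B}(y^{(1)}x^{(1)})=x^{(1)}\otimes y^{(1)}$ from the previous step yields $\sum x^{(1)}\otimes y^{(1)}\otimes y^{(2)}x^{(2)}$. On the right-hand side, $\Delta_{A,B}(yx)=x\otimes y$, so $m_{1,3,24}\circ(\delta_A\otimes\delta_B)\circ\Delta_{A,B}(yx)$ equals $\sum x^{(1)}\otimes y^{(1)}\otimes x^{(2)}y^{(2)}$. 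The axiom then gives
\[\sum x^{(1)}\otimes y^{(1)}\otimes y^{(2)}x^{(2)}=\sum x^{(1)}\otimes y^{(1)}\otimes x^{(2)}y^{(2)}.\]

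To conclude, I apply $\varepsilon'\otimes\varepsilon'\otimes Id_{\calP[A\sqcup B]}$ to both sides and use the counit axiom $\sum\varepsilon'(z^{(1)})z^{(2)}=z$ for $\delta$, which reduces the equation to $yx=xy$. The only subtle point of the argument is the computation of $\Delta_{A,B}(yx)$ in the second paragraph; everything else is straightforward bookkeeping with Sweedler notation.
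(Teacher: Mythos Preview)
Your proof is correct and follows essentially the same route as the paper's: compute $\Delta$ on a product of elements with disjoint supports using connectedness and the bialgebra axiom, feed this into the cointeraction identity $(\Delta\otimes Id)\circ\delta=m_{1,3,24}\circ(\delta\otimes\delta)\circ\Delta$, and kill the first two tensor factors with $\varepsilon'\otimes\varepsilon'$. The only cosmetic difference is that the paper works with $\Delta_{B,A}(xy)=y\otimes x$ applied to the element $xy$, whereas you work with $\Delta_{A,B}(yx)=x\otimes y$ applied to $yx$; these are the same computation up to relabelling.
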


\begin{proof}
Let $A$ and $B$ be finite sets, $x\in \calP[A]$, $y\in \calP[B]$. 
As $\calP[\emptyset]=\K 1_\calP$:
\begin{align*}
\Delta_{\emptyset,A}(x)&=1_\calP\otimes x,&
\Delta_{B,\emptyset}(y)&=y\otimes 1_\calP.
\end{align*}
By the compatibility between the product $m$ and the coproduct $\Delta$:
\begin{align*}
\Delta_{B,A}(xy)=(m_{B,B}\otimes m_{A,A})\circ(\id \otimes c_{\calP[B],\calP[A]}\otimes \id)
(\Delta_{\emptyset,A}(x)\otimes \Delta_{B,\emptyset}(y))=y\otimes x.
\end{align*}
Then:
\begin{align*}
m_{1,3,24}\circ (\delta_B\otimes \delta_A)\circ \Delta_{B,A}(xy)
&=m_{1,3,24}\circ (\delta_B\otimes \delta_A)(y\otimes x)\\
&=y'\otimes x'\otimes y''x''.
\end{align*}
Applying $\varepsilon'[B]\otimes \varepsilon'[A]\otimes \id$:
\[(\varepsilon'[B]\otimes \varepsilon'[A]\otimes \id)\circ m_{1,3,24}
\circ (\delta_B\otimes \delta_A)\circ \Delta_{B,A}(xy)=yx.\]
Moreover:
\begin{align*}
m_{1,3,24}\circ (\delta_B\otimes \delta_A)\circ \Delta_{B,A}(xy)&=
(\Delta_{B,A}\otimes \id)\circ \delta_{A\sqcup B}(xy)\\
&=\Delta_{B,A}((xy)')\otimes (xy)''\\
&=\Delta_{B,A}(x'y')\otimes x''y''\\
&=y'\otimes x'\otimes x''y''.
\end{align*}
Applying $\varepsilon'[B]\otimes \varepsilon'[A]\otimes \id$:
\[(\varepsilon'[B]\otimes \varepsilon'[A]\otimes \id)\circ m_{1,3,24}
\circ (\delta_B\otimes \delta_A)\circ \Delta_{B,A}(xy)=xy.\]
Hence, $xy=yx$: $m$ is commutative. \end{proof}

\begin{prop}\label{prop23}
Let $(\calP,\cdot,\delta_\calP)$ be a connected commutative twisted bialgebra of the second kind.
For any $k\geq 0$, we consider the set $\cont_k$ of pairs $(\sigma,\tau)$, where:
\begin{enumerate}
\item $\sigma:\underline{k}\longrightarrow \underline{\max(\sigma)}$ is a non decreasing surjection.
\item $\tau:\underline{k}\longrightarrow \underline{\max(\tau)}$ is a surjection.
\item For any $i,j\in \underline{k}$, if $i<j$ and $\sigma(i)=\sigma(j)$, then $\tau(i)<\tau(j)$.
\end{enumerate}
There exists a unique coproduct $\delta:\cot(\calP_+)\longrightarrow \cot(\calP_+)\otimes \cot(\calP_+)$ such that:
\begin{enumerate}
\item $(\cot(\calP_+),\squplus,\Delta,\delta)$ is a double twisted bialgebra.
\item $\delta_{\mid \calP}=\delta_\calP$. 
\item For any $(A_1,\ldots, A_k)\in \comp[A]$, $x_i\in \calP[A_i]$:
\[(\varepsilon'\otimes \id)\circ \delta(x_1\ldots x_k)=\varepsilon'(x_1'\cdot\dots\cdot x_k') x_1''\cdot\dots\cdot x_k''.\]
\end{enumerate}
Moreover, for any $(A_1,\ldots, A_k)\in \comp[A]$, $x_i\in \calP[A_i]$:
\begin{align}
\label{eq2}
\delta_A(x_1\ldots x_k)=\sum_{(\sigma,\tau)\in \cont_k} 
\sigma \rightarrow x'_1\ldots x'_k\otimes \tau\rightarrow x''_1\ldots x''_k.
\end{align}
The counit is given by:
\[\epsilon'_{\mid \calP^{\otimes k}}=\begin{cases}
\varepsilon'_\calP\mbox{ if }k=1,\\
0\mbox{ if }k\geq 2.
\end{cases}\]  
Moreover, $\varepsilon':(\cot(\calP_+),m,\delta)\longrightarrow (\calP,m_\calP,\delta_\calP)$ 
is a morphism of bialgebras of the second kind.
\end{prop}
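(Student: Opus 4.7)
My plan is to establish uniqueness first by pinning down $\delta(x)$ componentwise in a natural direct-sum decomposition, then to take the explicit formula~(\ref{eq2}) as the definition of $\delta$ and verify the required properties.

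For uniqueness, I would fix a word $x = x_1\cdots x_k \in \calP[A_1]\otimes\cdots\otimes\calP[A_k] \subset \cot(\calP_+)[A]$ with $A$ nonempty, and exploit the decomposition
\[\cot(\calP_+)[A] = \bigoplus_{l\geq 1}\bigoplus_{(B_1,\ldots,B_l)\in\comp[A]}\calP[B_1]\otimes\cdots\otimes\calP[B_l]\]
on the first tensor factor of $\delta(x)$. For $l \geq 2$, iterating condition~(3) of Definition~\ref{defi21} gives $(\Delta_{B_1,\ldots,B_l}\otimes Id)\circ\delta = \rho_{B_1,\ldots,B_l}\circ\delta^{\otimes l}\circ\Delta_{B_1,\ldots,B_l}$; since $\Delta_{B_1,\ldots,B_l}(x)$ either vanishes or splits $x$ into a tensor of strictly shorter words (when $l < k$), or into single letters on which $\delta = \delta_\calP$ by condition~(2) (when $l = k$), induction on $k$ forces each such projection. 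The residual $l = 1$ component is $(\varepsilon'\otimes Id)\circ\delta(x)$, which is prescribed explicitly by condition~(3) of the statement; and for $A = \emptyset$, the algebra morphism axiom forces $\delta(1) = 1\otimes 1$.

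For existence, I would take formula~(\ref{eq2}) as the definition. Conditions~(2) and~(3) are immediate inspections: $\cont_1$ is a singleton, and for~(3) only pairs with $\max(\sigma) = 1$ survive $\varepsilon'\otimes Id$, after which the order-preservation constraint forces $\tau = Id$. The bulk of the work is to show that $(\cot(\calP_+),\squplus,\Delta,\delta)$ is a double twisted bialgebra, which I would break into four checks: coassociativity of $\delta$ (reducing to coassociativity of $\delta_\calP$ together with a bijection on triples of surjections), the counit axiom for the stated $\epsilon'$, multiplicativity of $\delta$ relative to $\squplus$, and the comodule compatibility $(\Delta\otimes Id)\circ\delta = m_{1,3,24}\circ(\delta\otimes\delta)\circ\Delta$. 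The final assertion---that $\varepsilon':\cot(\calP_+)\to\calP$ is a morphism of bialgebras of the second kind---then follows because on any word of length $\geq 2$ both $\delta_\calP\circ\varepsilon'$ and $(\varepsilon'\otimes\varepsilon')\circ\delta$ vanish: in the latter sum, every $(\sigma,\tau)\in\cont_k$ has $\max(\sigma)\geq 2$ or $\max(\tau)\geq 2$, since the constraint relating $\sigma$ and $\tau$ rules out $\sigma$ and $\tau$ both constant.

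The genuine difficulty will be concentrated in the multiplicativity of $\delta$ for $\squplus$ and the comodule compatibility with $\Delta$. Both reduce to explicit bijections on $\cont$: for multiplicativity, one matches the pairs in $\cont_{k+l}$ arising from quasi-shuffles of two words against tuples of pairs in $\cont_k\times\cont_l$ decorated by shuffle data, with commutativity of $\cdot$ on $\calP$ needed to identify the products appearing in the second factors; for the $\Delta$-compatibility, one interprets the possible splittings of a non-decreasing $\sigma$ along a cut of the word, together with the analogous combinatorics for $\tau$. These are intricate but purely combinatorial checks, and the very design of the index set $\cont_k$---with $\sigma$ constrained to be non-decreasing while $\tau$ is strictly increasing on the blocks of $\sigma$---is tailored to make such bijections natural.
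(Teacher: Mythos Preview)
Your outline is correct and matches the paper's overall scheme: take formula~(\ref{eq2}) as definition, verify the axioms, then argue uniqueness by projecting via the comodule axiom and condition~(3). The combinatorial bijections you anticipate for multiplicativity and for the $\Delta$-compatibility are exactly what the paper carries out (it introduces auxiliary index sets $\cont'_{k,l}$ and $\cont''_{p,k}$ for precisely those two verifications, and commutativity of $\calP$ is used where you say).

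There is one genuine methodological divergence worth noting. You plan to prove coassociativity of $\delta$ \emph{directly}, by a bijection on triples of surjections refining $\cont_k$. The paper does not do this. Instead it establishes multiplicativity and the comodule compatibility with $\Delta$ \emph{first}, and only then proves coassociativity, by induction on $\sharp A$: for $(I,J)\in\comp[A]$ one applies $\Delta_{I,J}\otimes Id\otimes Id$ to both $(\delta_A\otimes Id)\circ\delta_A(x)$ and $(Id\otimes\delta_A)\circ\delta_A(x)$, uses the comodule axiom (twice) and multiplicativity together with the induction hypothesis on $I$ and $J$ to see they agree, so the defect lies in $\calP[A]\otimes\cot(\calP_+)[A]^{\otimes 2}$; projecting the first factor via $\pi$ then kills it. This ordering buys you coassociativity essentially for free once the other two checks are done, sparing a third (and the most delicate) combinatorial bijection. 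Your direct route is viable, but the paper's is shorter. Similarly, for uniqueness the paper inducts on $\sharp A$ using only the binary $\Delta_{I,J}$ (rather than on word length $k$ with iterated $\Delta_{B_1,\ldots,B_l}$), exploiting that $\bigcap_{(I,J)\in\comp[A]}\ker\Delta_{I,J}=\calP[A]$; this is equivalent to your version but slightly more economical.
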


\begin{proof} \textit{Existence}. Let us consider the coproduct defined by (\ref{eq2}).
If $x\in \calP[A]$, as $\cont_1=\{(\id,\id)\}$:
\[\delta_A(x)=x'\otimes x''=\delta_\calP(x).\]
We denote by $1_k:\underline{k}\longrightarrow \underline{1}$ the constant map.
For any $(A_1,\ldots, A_k)\in \comp[A]$, $x_i\in \calP[A_i]$:
\begin{align*}
(\epsilon'\otimes \id)\circ \delta_A(x_1\ldots x_k)&=
\sum_{(1_k,\tau)\in \cont_k}\varepsilon_\calP'(1_k\rightarrow x'_1\ldots x''_k)\tau\rightarrow x''_1\ldots x''_k\\
&=\varepsilon_\calP'(1_k\rightarrow x'_1\ldots x''_k)x''_1\ldots x''_k\\
&=\varepsilon_\calP'(x'_1\cdot \ldots \cdot x'_k)x''_1\ldots x''_k\\
&=\varepsilon_\calP'(x'_1)\ldots \varepsilon'(x'_k)x''_1\ldots x''_k\\
&=x_1\ldots x_k.
\end{align*}
Similarly, $(\id \otimes \epsilon')\circ \delta_A=\id_{\cot(\calP)[A]}$, so $\epsilon'$ is a counit for $\delta$.\\

For any $k,l\geq 0$, we denote by $\cont'_{k,l}$ the set of pairs $(\alpha,\beta)$ such that:
\begin{enumerate}
\item $\alpha:\underline{k+l}\longrightarrow \underline{m}$ is a surjection, non decreasing on $\underline{k}$ 
and on $\underline{k+l}\setminus \underline{k}$.
\item $\beta:\underline{k+l}\longrightarrow \underline{n}$ is a surjection.
\item For any $i,j\in \underline{k+l}$, if $i<j$ and $\alpha(i)=\alpha(j)$, then $\beta(i)<\beta(j)$.
\end{enumerate}
Let $a=a_1\ldots a_k\in \cot(\calP)[A]$ and $b=b_1\ldots b_l\in \cot(\calP)[B]$.
We put:
\begin{align*}
a'\otimes a''&=a'_1\ldots a'_k\otimes a''_1\ldots a''_k,&
b'\otimes b''&=b'_1\ldots b'_k\otimes b''_1\ldots b''_k.
\end{align*} 
\begin{align*}
\delta_{A\sqcup B}\circ m_{A,B}(a\otimes b)&=\sum_{\substack{\alpha \in \qsh(k,l),\\ (\sigma,\tau)\in
\cont_{\max(\alpha)}}} (\sigma \circ \alpha) \rightarrow  (a'b')
\otimes (\tau\circ \alpha)\rightarrow (a''b'')\\
&=\sum_{(\alpha,\beta)\in \cont'_{k,l}} \alpha\rightarrow (a'b')\otimes \beta \rightarrow (a''b'');\\
m_{13,24}\circ (\delta_A\otimes \delta_B)(a\otimes b)&=\sum_{\substack{(\sigma',\tau')\in \cont_k,\\
(\sigma'',\tau'')\in \cont_l,\\\alpha\in \qsh(\max(\sigma'),\max(\sigma'')),\\
\beta \in \qsh(\max(\tau'),\max(\tau''))}}(\alpha\circ (\sigma'\otimes \sigma''))\rightarrow (a'b')
\otimes (\beta \circ (\tau'\otimes \tau''))\rightarrow (a''b'')\\
&=\sum_{(\alpha,\beta)\in \cont'_{k,l}} \alpha\rightarrow (a'b')\otimes \beta \rightarrow (a''b'').
\end{align*}
So $\delta$ is multiplicative.\\

Let us prove that $\Delta$ is a right $(\cot(\calP_+),m,\delta)$-comodule. 
For any $k\geq p \geq 0$, let us denote by $\cont''_{p,k}$ the set of triples $(\alpha,\beta,\gamma)$ such that:
\begin{enumerate}
\item $\alpha:\underline{p}\longrightarrow \underline{l}$ and $\beta:\underline{k}\setminus \underline{p}
\longrightarrow \underline{m}$ are non decreasing bijections.
\item $\gamma:\underline{k}\longrightarrow \underline{n}$ is a surjection.
\item For any $i,j\in \underline{p}$, if $i<j$ and $\alpha(i)=\alpha(j)$, then $\gamma(i)<\gamma(j)$.
\item For any $i,j\in \underline{k}\setminus \underline{p}$, if $i<j$ and $\beta(i)=\beta(j)$, then $\gamma(i)<\gamma(j)$.
\end{enumerate}
Let $a=x_1\ldots x_k \in \cot(\calP_+)[A\sqcup B]$. If there exists a (unique) $p\leq k$ such that 
$x_1\ldots x_i \in \cot(\calP_+)[A]$, we put
$a'\otimes a''\otimes a'''=x'_1\ldots x'_p\otimes x'_{p+1}\ldots x''_k\otimes x''_1\ldots x''_k$.
Then:
\begin{align*}
(\Delta_{A,B}\otimes \id)\circ \delta_{A\sqcup B}(a)
&=\sum_{\substack{(\sigma,\tau)\in \cont_k,\\ \sigma(p)<\sigma(p+1)}} \sigma_{\mid \underline{p}}\rightarrow a'
\otimes \sigma_{\mid \underline{k}\setminus \underline{p}}\rightarrow a''
\otimes \tau \rightarrow a'''\\
&=\sum_{(\alpha,\beta,\gamma)\in \cont''_{p,k}} \alpha \rightarrow a'\otimes \beta 
\rightarrow a''\otimes \gamma \rightarrow a''';\\
m_{1,3,24}\circ (\delta_A\otimes \delta_B)\circ \Delta_{A,B}(a)
&=\sum_{\substack{(\sigma',\tau')\in \cont_p,\\(\sigma'',\tau'')\in \cont_{k-p},\\\alpha \in \qsh(\max(\tau'),\max(\tau''))}}
\sigma'\rightarrow a'\otimes \sigma''\rightarrow a''\otimes (\alpha \circ(\tau'\cdot\tau''))\rightarrow a'''\\
&=\sum_{(\alpha,\beta,\gamma)\in \cont''_{p,k}} \alpha \rightarrow a'\otimes \beta \rightarrow a''\otimes \gamma \rightarrow a'''.
\end{align*}
Otherwise, both are equal to $0$. So $\Delta$ is a right comodule morphism. \\

Let us prove that $\delta$ is coassociative. We work on $\cot(\calP_+)[A]$ and proceed
by induction on $\sharp A$. If $A=\emptyset$, then:
\[(\delta_\emptyset\otimes \id)\circ \delta_\emptyset(1)
(\id\otimes \delta_\emptyset)\circ \delta_\emptyset(1)=1\otimes 1\otimes 1.\]
Let us assume the result at all ranks $<\sharp A$. Let $(I,J)\in \comp[A]$. For any $x\in \cot(\calP_+)[A]$,
putting $\Delta_{I,J}(x)=x_I\otimes x_J$:
\begin{align*}
&(\Delta_{I,J}\otimes \id\otimes \id)\circ (\delta_A\otimes \id)\circ \delta_A(x)\\
&=(m_{1,3,24}\otimes \id)\circ (\delta_I\otimes \delta_J\otimes \id)\circ (\Delta_{I,J}\otimes \id)\circ \delta_A(x)\\
&=(m_{1,3,24}\otimes \id)\circ (\delta_I\otimes \delta_J\otimes \id)\circ m_{1,3,24}\circ (\delta_I\otimes \delta_J)
\circ \Delta_{I,J}(x)\\
&=(x_I)'\otimes (x_J)'\otimes (x_I)''(x_J)''\otimes (x_I)'''(x_J)''',\\ \\
&\Delta_{I,J}\otimes \id\otimes \id)\circ (\id\otimes \delta_A)\circ \delta_A(x)\\
&=(\id \otimes \id \otimes \delta_A)\circ m_{1,3,24}\circ (\delta_I\otimes \delta_J)\circ \Delta_{I,J}(x)\\
&=(x_I)'\otimes (x_J)'\otimes \delta_A((x_I)''(x_J)'')\\
&=(x_I)'\otimes (x_J)'\otimes (x_I)''(x_J)''\otimes (x_I)'''(x_J)'''.
\end{align*}
Hence, $(\delta_A\otimes \id)\circ \delta_A(x)-(\id\otimes \delta_A)\circ \delta_A(x)$ belongs to:
\[\bigcap_{(I,J)\in \comp[A]} \ker(\Delta_{I,J})\otimes \cot(\calP_+)[A]=\calP[A]\otimes \cot(\calP_+)[A].\]
If $x\in \calP^{\otimes k}(A)$:
\begin{align*}
(\pi\otimes \id\otimes \id)\circ (\delta_A\otimes \id)\circ \delta_A(x)&=
\sum_{(\sigma,\tau)\in\cont_k} x'\otimes \sigma\rightarrow x''\otimes \tau\rightarrow x'''\\
&=(\id\otimes \delta_A)\circ (\pi\otimes \id)\circ \delta_A(x)\\
&=(\pi\otimes \id\otimes \id)\circ (\id\otimes \delta_A)\circ \delta_A(x).
\end{align*}
Consequently, $\delta_A$ is coassociative.\\

\textit{Unicity}. Let $\delta'$ be another coproduct with the same properties.
Let us show that $\delta_A=\delta'_A$ by induction on $\sharp A$. This is obvious if $A=\emptyset$.
Otherwise, if $(I,J)\in \comp[A]$:
\begin{align*}
(\Delta_{I,J}\otimes \id)\circ \delta_A&=m_{1,3,24}\circ (\delta_I\otimes \delta_J)\circ \Delta_{I,J}\\
&=m_{1,3,24}\circ (\delta'_I\otimes \delta'_J)\circ \Delta_{I,J}\\
&=(\Delta_{I,J}\otimes \id)\circ \delta'_A.
\end{align*}
Hence, $\delta_A-\delta'_A$ takes its values in $\calP[A]\otimes \cot(\calP_+)[A]$. As $(\varepsilon'\otimes \id)\circ\delta_A
=(\varepsilon'\otimes \id)\circ \delta'_A$, $\delta_A=\delta'_A$.\\

We already proved that $\varepsilon'$ is an algebra morphism. Let $x\in \calP_+[A]^{\otimes k}$, with $k\geq 1$. 
Then:
\begin{align*}
\delta_A\circ \varepsilon'(x)&=\begin{cases}
0\mbox{ if }k \geq 2,\\
\delta_\calP(x)\mbox{ if }k=1;
\end{cases}\\
(\varepsilon'\otimes \varepsilon')\circ \delta_A(x)&=\begin{cases}
\varepsilon(1_k\longrightarrow x')\otimes \varepsilon(1_k\longrightarrow x'')\mbox{ if }(1_k,1_k)\in \cont'_1,\\
0\mbox{ otherwise}
\end{cases}\\
&=\begin{cases}
0\mbox{ if }k \geq 2,\\
\delta_\calP(x)\mbox{ if }k=1.
\end{cases}
\end{align*}
Therefore, $\delta_A\circ \varepsilon'=(\varepsilon'\otimes \varepsilon')\circ \delta_A$.
\end{proof}

\begin{example}
We denote by $\cdot$ the product of $\calP$ and we use Sweedler's notation for the coproduct of $\calP$.
If $x\in \calP[A]$, $y\in \calP[B]$, $z\in \calP[C]$:
\begin{align*}
\delta(x)&=x'\otimes x'',\\
\delta(xy)&=x'y'\otimes x''\squplus y''+x'\cdot y'\otimes x'' y''\\
&=x'y'\otimes (x''y''+y''x''+x''\cdot y'')+x'\cdot y'\otimes x'' y'',\\
\delta(xyz)&=x'y'z'\otimes x''\squplus y''\squplus z''+x'(y'\cdot z')\otimes x''\squplus y''z''\\
&+(x'\cdot y')z'\otimes x''y''\squplus z''+ x'\cdot y'\cdot z'\otimes x''y''z''.
\end{align*}
\end{example}

\begin{cor} \label{cor24}
The bialgebra $(\calcomp,\squplus,\Delta)$ is made a double bialgebra with the coproduct given by the following:
if $(A_1,\ldots,A_k)\in \comp[A]$,
\begin{align*}
&\delta_A(A_1,\ldots,A_k)\\
&=\sum_{(\sigma,\tau)\in \cont_k} \sigma \rightarrow (A_1,\ldots,A_k)\otimes \tau\rightarrow (A_1,\ldots,A_k)\\
&=\sum_{1\leq i_1<\ldots<i_p<k}(A_1\sqcup\ldots \sqcup A_{i_1},\ldots,A_{i_p+1}\sqcup\ldots \sqcup A_k)
\otimes (A_1,\ldots,  A_{i_1})\squplus \ldots\squplus (A_{i_p+1},\ldots,  A_k).
\end{align*}
The counit is given by $\varepsilon'(A_1,\ldots, A_k)=\delta_{k,1}$.
\end{cor}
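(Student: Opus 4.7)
The plan is to apply Proposition \ref{prop23} to the twisted bialgebra of the second kind $\calP = \com$. First I would observe that $\com$, equipped with the coproduct $\delta_\com[A](1) = 1 \otimes 1 \in \com[A] \otimes \com[A]$ and counit $\varepsilon'_\com = \mathrm{Id}_\K$, is a connected commutative twisted bialgebra of the second kind: coassociativity, counit axiom and multiplicativity of $\delta_\com$ are all immediate since every component is one-dimensional. Proposition \ref{prop23} then produces a unique coproduct $\delta$ on $\cot(\com_+)$ making $(\cot(\com_+), \squplus, \Delta, \delta)$ a double twisted bialgebra. Under the species isomorphism $\cot(\com_+) \cong \calcomp$ that sends $1 \otimes \cdots \otimes 1 \in \com[A_1] \otimes \cdots \otimes \com[A_k]$ to $(A_1,\ldots,A_k)$, this transports to the double bialgebra structure on $\calcomp$, with the counit formula $\varepsilon'(A_1,\ldots,A_k) = \delta_{k,1}$ coming straight from Proposition \ref{prop23}.

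Next I would unwind formula (\ref{eq2}) on a basis element $(A_1,\ldots,A_k) \in \comp[A]$. Taking $x_i = 1 \in \com[A_i]$ so that $x'_i = x''_i = 1$, the formula becomes
\[
\delta_A(A_1,\ldots,A_k) = \sum_{(\sigma,\tau) \in \cont_k} \bigl(\sigma \rightarrow (A_1,\ldots,A_k)\bigr) \otimes \bigl(\tau \rightarrow (A_1,\ldots,A_k)\bigr),
\]
where both arrows are interpreted in $\com$: the product in $\com$ identifies $1 \otimes 1$ with $1$, so $\sigma \rightarrow (A_1,\ldots,A_k)$ is the composition obtained by merging the parts indexed by each fibre of $\sigma$ in the order induced by $\sigma$ on $\underline{\max(\sigma)}$.

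Finally I would decompose the sum according to the first coordinate. A non-decreasing surjection $\sigma : \underline{k} \to \underline{p+1}$ is the same datum as an increasing sequence $1 \leq i_1 < i_2 < \cdots < i_p < k$ recording the positions of its jumps (the case $p = 0$ corresponds to the constant map), and for such a $\sigma$ one has
\[
\sigma \rightarrow (A_1,\ldots,A_k) = (A_1 \sqcup \cdots \sqcup A_{i_1},\; A_{i_1+1} \sqcup \cdots \sqcup A_{i_2},\; \ldots,\; A_{i_p+1} \sqcup \cdots \sqcup A_k).
\]
Fixing such $\sigma$, condition (3) defining $\cont_k$ says exactly that $\tau$ is strictly increasing on each of the $p+1$ blocks $\{i_j+1, \ldots, i_{j+1}\}$ of $\sigma$. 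Writing $\tau$ as a surjection onto $\underline{\max(\tau)}$ subject to this constraint and comparing with the formula of Theorem \ref{theo10} (and the quasi-shuffle formula already specialised to $\calcomp$ in the example following it), the sum $\sum_\tau \tau \rightarrow (A_1,\ldots,A_k)$, where $\tau$ runs over such surjections, is precisely the iterated quasi-shuffle
\[
(A_1,\ldots,A_{i_1}) \squplus (A_{i_1+1},\ldots,A_{i_2}) \squplus \cdots \squplus (A_{i_p+1},\ldots,A_k).
\]
Summing over all $(i_1, \ldots, i_p)$ yields the announced formula.

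The only step that is not bookkeeping is this last identification of the $\tau$-sum with an iterated quasi-shuffle; I expect it to be the main (though still routine) obstacle, since one must check carefully that the bijection between pairs $(\sigma,\tau) \in \cont_k$ with fixed $\sigma$ and tuples of quasi-shuffles of the $\sigma$-blocks matches the combinatorial definition of $\squplus$ given before the statement of the theorem establishing the quasi-shuffle product.
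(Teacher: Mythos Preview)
Your proposal is correct and follows the same approach as the paper: the paper's proof is essentially the single line ``This comes from $\calcomp=\cot(\com_+)$,'' i.e.\ apply Proposition~\ref{prop23} with $\calP=\com$ and note that the counits match. Your additional work unwinding the second displayed formula (the iterated quasi-shuffle expression) via the bijection between non-decreasing surjections $\sigma$ and jump sequences $i_1<\cdots<i_p$ is more detail than the paper provides---it simply states both formulas and leaves the equivalence to the reader, relying on the example after Proposition~\ref{prop23} as a template---so your verification is a genuine addition rather than a deviation.
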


\begin{proof}
This comes from $\calcomp=\cot(\com_+)$. Note that in this case, $\varepsilon'=\epsilon'$.
\end{proof}

\begin{example} In $\calcomp$, if $A$, $B$ and $C$ are finite sets:
\begin{align*}
\delta(A)&=(A)\otimes (A),\\
\delta(A,B)&=(A,B)\otimes ((A)\squplus (B))+(A\sqcup B)\otimes (A,B),\\
\delta(A,B,C)&=(A,B,C)\otimes ((A)\squplus (B)\squplus (C))+(A,B\sqcup C)\otimes ((A)\squplus (B,C))\\
&+(A\sqcup B,C)\otimes ((A,B)\squplus (C))+(A\sqcup B\sqcup C)\otimes (A,B,C).
\end{align*}
\end{example}

\subsection{Characters of a double bialgebra}

\begin{prop}
Let $\calP=(\calP,m,\Delta,\delta)$ be a double twisted bialgebra. The set of characters $\chara(\calP)$ 
has a second convolution product $\star$, making it a monoid:
\begin{align*}
&\forall f,g\in \chara(\calP),&f\star g=m_{\com}\circ (f\otimes g)\circ \delta.
\end{align*}
Its unit is $\varepsilon'$. For any $f,g,h \in \chara(\calP)$:
\[(f*g)\star h=(f\star h)*(g\star h).\]
We denote by $M_\calP$ the monoid $(\chara(\calP),\star)$.
\end{prop}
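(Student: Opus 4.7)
The plan is to establish the four assertions in order: (a) $\star$ sends pairs of characters to characters, (b) $\star$ is associative with unit $\varepsilon'$, and (c) the distributivity identity $(f*g)\star h=(f\star h)*(g\star h)$. Assertions (a) and (b) run parallel to Lemma \ref{lemme13} applied to the twisted bialgebra of the second kind $(\calP,m,\delta)$; the only genuinely new point is (c), which forces us to use the cointeraction axiom from Definition \ref{defi21}.

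For (a), given $f,g\in\chara(\calP)$ and $x\in\calP[A]$, $y\in\calP[B]$, I would write $\delta(x)=x'\otimes x''$, $\delta(y)=y'\otimes y''$ and exploit the multiplicativity of $\delta$:
\[
\delta\circ m_{A,B}=(m_{A,B}\otimes m_{A,B})\circ(Id\otimes c_{\calP[A],\calP[B]}\otimes Id)\circ(\delta_A\otimes \delta_B).
\]
Expanding $(f\star g)(xy)=m_{\com}(f\otimes g)\delta(xy)$ yields $f(x'y')\,g(x''y'')=f(x')f(y')g(x'')g(y'')$ since $f,g$ are characters, and commutativity of $\com$ rearranges this as $(f\star g)(x)(f\star g)(y)$; the condition $\delta(1_\calP)=1_\calP\otimes 1_\calP$ gives $(f\star g)(1_\calP)=1$. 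Assertion (b) is then immediate: associativity of $\star$ comes from coassociativity of $\delta$ together with associativity of the product of $\com$, and the counit axiom $(Id\otimes\varepsilon')\circ\delta=(\varepsilon'\otimes Id)\circ\delta=Id$ shows $f\star\varepsilon'=\varepsilon'\star f=f$.

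The main step is (c). For $x\in\calP[A\sqcup B]$, write $\Delta_{A,B}(x)=x^{(1)}\otimes x^{(2)}$, $\delta(x)=x'\otimes x''$, and $\delta(x^{(i)})=(x^{(i)})'\otimes (x^{(i)})''$. The cointeraction axiom
\[
(\Delta_{A,B}\otimes Id)\circ\delta_{A\sqcup B}=m_{1,3,24}\circ(\delta_A\otimes\delta_B)\circ\Delta_{A,B}
\]
rewrites as the identity
\[
\Delta_{A,B}(x')\otimes x''=(x^{(1)})'\otimes (x^{(2)})'\otimes (x^{(1)})''(x^{(2)})''
\]
in $\calP[A]\otimes\calP[B]\otimes\calP[A\sqcup B]$. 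Applying $f\otimes g\otimes h$, one side gives
\[
(f*g)(x')\,h(x'')=f((x^{(1)})')\,g((x^{(2)})')\,h\!\left((x^{(1)})''(x^{(2)})''\right),
\]
and since $h$ is a character the last factor splits as $h((x^{(1)})'')\,h((x^{(2)})'')$. The other side reads
\[
(f\star h)(x^{(1)})\,(g\star h)(x^{(2)})=f((x^{(1)})')\,h((x^{(1)})'')\,g((x^{(2)})')\,h((x^{(2)})''),
\]
and the two expressions coincide by commutativity of the product of $\com$. The same calculation handles the degenerate case $A=\emptyset$ or $B=\emptyset$ using $\Delta_{\emptyset,A}(x)=1_\calP\otimes x$ and $(\varepsilon\otimes Id)\circ\delta_\emptyset(1_\calP)=1_\calP\otimes 1_\calP$.

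The only delicate point is bookkeeping in step (c): one has to track which tensor factor lies in which $\calP[A]$, $\calP[B]$, or $\calP[A\sqcup B]$, and the cointeraction axiom is what makes the two reorderings of Sweedler indices compatible. The commutativity of $\com$ is used both in (a) and (c) to swap the $h$-factors into position.
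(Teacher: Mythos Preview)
Your proof is correct and follows essentially the same route as the paper: both arguments reduce part (c) to the cointeraction axiom $(\Delta\otimes Id)\circ\delta=m_{1,3,24}\circ(\delta\otimes\delta)\circ\Delta$ together with the fact that $h$ is multiplicative, while (a) and (b) follow from $\delta$ being an algebra morphism and coassociative with counit $\varepsilon'$. The paper compresses all of (c) into four lines of morphism composition, whereas you unpack the same computation into Sweedler notation.

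One small notational slip in your step (c): when you write ``one side gives $(f*g)(x')\,h(x'')=\ldots$'', the left-hand expression $(f*g)(x')$ is by definition a sum over \emph{all} decompositions of the underlying set, not just the fixed one $A\sqcup B$ you are working with. What you have actually computed is the $(A,B)$-summand of $((f*g)\star h)(x)$, and you have shown it equals the $(A,B)$-summand of $((f\star h)*(g\star h))(x)$. Summing over all decompositions $A\sqcup B$ of the underlying set then yields the desired equality of characters. This does not affect the correctness of the argument, but the labeling as written is misleading; the paper avoids this by staying at the level of compositions of species morphisms where the sum over decompositions is implicit in~$\Delta$.
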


\begin{proof}
As $\delta$ is an algebra morphism, if $f,g\in \chara(\calP)$, then $f\star g \in \chara(\calP)$. 
\begin{align*}
(f*g)\star h&=(f\otimes g\otimes h)\circ (\Delta \otimes \id_\calP)\circ \delta\\
&=(f\otimes g\otimes h)\circ m_{1,3,24}\circ (\delta \otimes \delta)\circ \Delta\\
&=(f\otimes h\otimes g\otimes h) \circ (\delta \otimes \delta)\circ \Delta\\
&=(f\star h)*(g\star h).
\end{align*}
The associativity of $\star$ comes from the coassociativity of $\delta$.
\end{proof}

\begin{prop} \label{prop26}
Let $\calP=(\calP,m,\Delta,\delta)$ be a double twisted bialgebra. 
Then $M_\calP$ acts on the space $\mor(\calP,\calQ)$ od species morphisms from $\calP$ to $\calQ$ by:
\[\left\{\begin{array}{rcl}
\mor(\calP,\calQ)\times M_\calP&\longrightarrow&\mor(\calP,\calQ)\\
(\phi,f)&\longrightarrow&\phi\leftarrow f=(\phi\otimes f)\circ \delta.
\end{array}\right.\]
Moreover:
\begin{enumerate}
\item Let $\calQ$ be a twisted algebra. We denote by $\mor_A(\calP,\calQ)$ the set of algebra morphisms from $\calP$ to $\calQ$.
Then $\mor_A(\calP,\calQ)$ is a $M_\calP$-submodule of $\mor(\calP,\calQ)$.
\item Let $\calQ$ be a twisted coalgebra. We denote by $\mor_C(\calP,\calQ)$ the set of coalgebra morphisms from $\calP$ to $\calQ$.
Then $\mor_C(\calP,\calQ)$ is a $M_\calP$-submodule of $\mor(\calP,\calQ)$.
\item Let $\calQ$ be a twisted bialgebra. We denote by $\mor_B(\calP,\calQ)$ 
the set of bialgebra morphisms from $\calP$ to $\calQ$.
Then $\mor_B(\calP,\calQ)$ is a $M_\calP$-submodule of $\mor(\calP,\calQ)$.
\end{enumerate}
\end{prop}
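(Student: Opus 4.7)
The plan is to verify each claim by a direct diagram chase from the three structure axioms defining a double twisted bialgebra. Throughout I write $\delta_A(x) = x' \otimes x''$ (Sweedler), and identify $\calQ \boxtimes \com$ with $\calQ$ so that $\phi \leftarrow f = (\phi \boxtimes f) \circ \delta$ genuinely lands in $\calQ$.

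First I check that $\leftarrow$ is a monoid action. The counit axiom $(Id \boxtimes \varepsilon') \circ \delta = Id$ immediately gives $\phi \leftarrow \varepsilon' = \phi$. For the compatibility with $\star$, I compute
\begin{align*}
(\phi \leftarrow f) \leftarrow g &= ((\phi \boxtimes f) \circ \delta \boxtimes g) \circ \delta \\
&= (\phi \boxtimes f \boxtimes g) \circ (\delta \boxtimes Id) \circ \delta,
\end{align*}
then apply coassociativity of $\delta$ to rewrite this as $(\phi \boxtimes f \boxtimes g) \circ (Id \boxtimes \delta) \circ \delta$, which, after identifying $\com \boxtimes \com = \com$ via the product of $\com$, is exactly $\phi \leftarrow (f \star g)$.

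Now come the three preservation statements. For the algebra case, I assume $\phi \in \mor_A(\calP, \calQ)$ and $f \in M_\calP = \chara(\calP)$, and I use the multiplicativity axiom for $\delta$, namely $\delta_{A \sqcup B} \circ m_{A,B} = (m_{A,B} \otimes m_{A,B}) \circ (Id \otimes c_{A,B} \otimes Id) \circ (\delta_A \otimes \delta_B)$. For $x \in \calP[A]$, $y \in \calP[B]$ this lets me write $(\phi \leftarrow f)_{A \sqcup B}(xy) = \phi_{A \sqcup B}(x'y') \, f_{A \sqcup B}(x''y'')$ and then split each factor using that both $\phi$ and $f$ are algebra morphisms, obtaining $(\phi \leftarrow f)_A(x) \cdot (\phi \leftarrow f)_B(y)$. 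For the coalgebra case, assume $\phi \in \mor_C(\calP, \calQ)$ and $f \in M_\calP$; the key input is the cointeraction axiom $(\Delta_{A,B} \otimes Id) \circ \delta_{A \sqcup B} = m_{1,3,24} \circ (\delta_A \otimes \delta_B) \circ \Delta_{A,B}$. Applied to $x \in \calP[A \sqcup B]$, it rewrites $\Delta_{A,B}(x') \otimes x''$ as $\sum x'_A \otimes x'_B \otimes x''_A x''_B$, and then I combine the fact that $\phi$ is a coalgebra morphism with the character property of $f$ (to split $f_{A\sqcup B}(x''_A x''_B) = f_A(x''_A) f_B(x''_B)$) to conclude $\Delta^\calQ_{A,B} \circ (\phi \leftarrow f)_{A \sqcup B} = ((\phi \leftarrow f)_A \otimes (\phi \leftarrow f)_B) \circ \Delta_{A,B}$. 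The counit compatibility follows from the fourth axiom of a double twisted bialgebra. The bialgebra case is then immediate from $\mor_B(\calP,\calQ) = \mor_A(\calP,\calQ) \cap \mor_C(\calP,\calQ)$.

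The main technical obstacle is the coalgebra step: the cointeraction axiom produces a factor $x''_A x''_B$ living in the second tensor slot via $m_{1,3,24}$, and it is precisely the character property of $f$ that allows one to split this product back into independent evaluations on each slot. Everything else reduces to the coassociativity and counit properties of $\delta$ together with the morphism properties of $\phi$ and $f$.
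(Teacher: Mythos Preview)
Your proof is correct and follows essentially the same approach as the paper: coassociativity of $\delta$ for the action property, multiplicativity of $\delta$ (together with $\phi$ and $f$ being algebra morphisms) for point~1, the cointeraction axiom $(\Delta \otimes Id) \circ \delta = m_{1,3,24} \circ (\delta \otimes \delta) \circ \Delta$ combined with the character property of $f$ for point~2, and the intersection $\mor_B = \mor_A \cap \mor_C$ for point~3. You are in fact slightly more thorough than the paper, since you also explicitly verify the unit of the action ($\phi \leftarrow \varepsilon' = \phi$) and mention the counit compatibility in the coalgebra case.
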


\begin{proof}
Let $\phi\in \mor(\calP,\calQ)$, $f,g\in M_\calP$.
\[(\phi\leftarrow f)\leftarrow g=(\phi\otimes f\otimes g)\circ (\delta \otimes \id_\calP)\circ \delta=
(\phi\otimes f\otimes g)\circ (\id_\calP\otimes \delta)\circ \delta=\phi\leftarrow (f\star g).\]
So $\leftarrow$ is indeed an action.\\

1. Let $\phi\in \mor_A(\calP,\calQ)$ and $f\in M_\calP$. As $\phi$, $f$ and $\delta$ are algebra morphisms, $\phi\leftarrow f$ 
is an algebra morphism, so belong to $\mor_A(\calP,\calQ)$.

2. Let $\phi \in \mor_C(\calP,\calQ)$ and $f\in M_\calP$.
\begin{align*}
\Delta \circ (\phi\leftarrow f)&=\Delta\circ (\phi\otimes f)\circ \delta\\
&=(\phi\otimes \phi\otimes f)\circ (\Delta \otimes \id_\calP)\circ \delta\\
&=(\phi\otimes \phi \otimes f)\circ m_{1,3,24}\circ (\delta \otimes \delta)\circ \Delta\\
&=(\phi \otimes f\otimes \phi\otimes f)\circ (\delta \otimes \delta)\circ \Delta\\
&=(\phi\leftarrow f\otimes \phi\leftarrow f)\circ \Delta.
\end{align*}
So $\phi\leftarrow f\in \mor_C(\calP,\calQ)$. The fact that $\leftarrow$ is an action is proved as in the first point.\\

3. This comes from $\mor_B(\calP,\calQ)=\mor_A(\calP,\calQ)\cap \mor_C(\calP,\calQ)$. \end{proof}

\begin{lemma}\label{lemme27}
Let $\calP$ be a double twisted bialgebra, $\calQ$ and $\calR$ 
be twisted bialgebras. For any species morphisms $\phi:\calP\longrightarrow \calQ$ 
and $\psi:\calQ\longrightarrow \calR$, for any $f\in M_\calP$:
\[(\psi \circ \phi)\leftarrow f=\psi \circ (\phi\leftarrow f).\]
\end{lemma}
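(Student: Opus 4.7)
The plan is to observe that this is a purely formal consequence of the definition of the action $\leftarrow$ together with the fact that $\com$ is the unit for the Hadamard tensor product. By definition of $\leftarrow$ in Proposition \ref{prop26}, we have $\phi \leftarrow f = (\phi \otimes f) \circ \delta$, viewed as a species morphism from $\calP$ to $\calQ$ via the natural identification $\calQ \boxtimes \com = \calQ$. Similarly $(\psi \circ \phi) \leftarrow f = ((\psi \circ \phi) \otimes f) \circ \delta$ as a morphism from $\calP$ to $\calR$.

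I would write out the computation
\[
\psi \circ (\phi \leftarrow f) = \psi \circ (\phi \otimes f) \circ \delta
= (\psi \otimes \mathrm{Id}_{\com}) \circ (\phi \otimes f) \circ \delta
= ((\psi \circ \phi) \otimes f) \circ \delta = (\psi \circ \phi) \leftarrow f,
\]
where the second equality inserts $\mathrm{Id}_{\com}$ under the identification $\calQ \boxtimes \com = \calQ$ and $\calR \boxtimes \com = \calR$, and the third equality is the functoriality of $\boxtimes$ (item 2 of Definition \ref{defi19}), namely $(\psi \otimes \mathrm{Id}_{\com}) \circ (\phi \otimes f) = (\psi \circ \phi) \otimes (\mathrm{Id}_{\com} \circ f)$.

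Equivalently, if a fully unpacked argument is preferred, one can evaluate both sides on an element $x \in \calP[A]$ using Sweedler notation $\delta_A(x) = x' \otimes x''$: since $f(x'') \in \com[A] = \K$ is a scalar, both $\psi \circ (\phi \leftarrow f)(x)$ and $((\psi \circ \phi) \leftarrow f)(x)$ reduce to $\sum \psi(\phi(x')) \, f(x'')$ by $\K$-linearity of $\psi[A]$. There is no genuine obstacle in this argument; the only point requiring care is the bookkeeping around the identification $\calR \boxtimes \com = \calR$, and no bialgebra compatibility between $\Delta$ and $\delta$ is used, only the definitions of $\leftarrow$ and of Hadamard tensor products of morphisms.
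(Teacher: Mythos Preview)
Your proposal is correct and essentially identical to the paper's proof. The paper uses precisely your ``unpacked'' Sweedler-notation computation, writing $\delta_A(x)=x'\otimes x''$ and using linearity of $\psi[A]$ to pull the scalar $f(x'')$ through; your first, more categorical formulation is just a compact restatement of the same argument.
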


\begin{proof}
For any finite set $A$, for any $x\in \calP[A]$:
\begin{align*}
(\psi \circ \phi)\leftarrow f(x)&=(\psi \circ \phi\otimes f)\circ \delta(x)\\
&=\psi \circ \phi(x')f(x'')\\
&=\psi(\phi(x')f(x''))\\
&=\psi(\phi\leftarrow f(x))\\
&=\psi\circ (\phi\leftarrow f)(x),
\end{align*}
where we put $\delta_A(x)=x'\otimes x''$.
\end{proof}

\begin{prop}
Let $\calP$ be a double twisted bialgebra.The following map is an injective morphism of monoids:
\[\chi_\calP:\left\{\begin{array}{rcl}
(M_\calP,\star)&\longrightarrow&(\mor_B(\calP,\calP),\circ)\\
f&\longrightarrow&\id_\calP\leftarrow f.
\end{array}\right.\]
\end{prop}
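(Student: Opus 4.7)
The plan is to verify the three pieces in order: well-definedness, monoid morphism (unit plus multiplicativity), and injectivity, using only the counit axiom for $\delta$, Proposition \ref{prop26}, and Lemma \ref{lemme27}.

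First I would observe that $\chi_\calP$ takes values in $\mor_B(\calP,\calP)$: the identity is trivially a twisted bialgebra morphism, and Proposition \ref{prop26}(3) guarantees that $\mor_B(\calP,\calP)$ is stable under the right action $\leftarrow$ of $M_\calP$, so $Id_\calP \leftarrow f \in \mor_B(\calP,\calP)$ for every $f \in M_\calP$. For the unit, the unit of $(M_\calP,\star)$ is $\varepsilon'$, and the counit axiom for $\delta$ gives
\[
\chi_\calP(\varepsilon') = Id_\calP\leftarrow \varepsilon' = (Id_\calP\otimes \varepsilon')\circ \delta = Id_\calP.
\]

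For multiplicativity, I would combine Lemma \ref{lemme27} and the action axiom. Applying Lemma \ref{lemme27} with $\psi = Id_\calP\leftarrow f$ and $\phi = Id_\calP$ yields
\[
(Id_\calP \leftarrow f)\circ (Id_\calP\leftarrow g) = \bigl((Id_\calP\leftarrow f)\circ Id_\calP\bigr)\leftarrow g = (Id_\calP\leftarrow f)\leftarrow g,
\]
and the fact (established at the beginning of the proof of Proposition \ref{prop26}) that $\leftarrow$ is indeed an action of $(M_\calP,\star)$ rewrites the right-hand side as $Id_\calP\leftarrow (f\star g)$. Thus $\chi_\calP(f)\circ \chi_\calP(g)=\chi_\calP(f\star g)$, proving that $\chi_\calP$ is a monoid morphism.

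Finally, for injectivity, I would exhibit an explicit left inverse by composing with $\varepsilon'$. For any $f\in M_\calP$ and any $x\in \calP[A]$, writing $\delta_A(x)=x'\otimes x''$, the counit axiom $(\varepsilon'\otimes Id)\circ \delta=Id$ gives
\[
\varepsilon'\bigl(\chi_\calP(f)(x)\bigr)=\varepsilon'(x')\,f(x'')=f\bigl(\varepsilon'(x')x''\bigr)=f(x),
\]
so $\varepsilon'\circ \chi_\calP(f)=f$. Hence $\chi_\calP(f)=\chi_\calP(g)$ forces $f=g$. The only minor subtlety is keeping the left/right conventions of $\leftarrow$ and of $\delta$ consistent when applying Lemma \ref{lemme27}; apart from that, the argument is purely formal and each step reduces to a single application of the cited results.
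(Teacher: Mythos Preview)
Your proof is correct and follows essentially the same approach as the paper: the multiplicativity step is the same application of Lemma \ref{lemme27} together with the action property, and your injectivity argument is precisely the paper's construction of the left inverse $\chi'_\calP(\phi)=\varepsilon'\circ\phi$, just written out pointwise rather than as $\varepsilon'\star f=f$. Your additional explicit checks of well-definedness and the unit are fine and do not depart from the paper's strategy.
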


\begin{proof} Let $f,g\in M_\calP$. By Lemma \ref{lemme27}:
\[(\id_\calP \leftarrow f)\circ (\id_\calP \leftarrow g)=((\id_\calP\leftarrow f)\circ \id_\calP)\leftarrow g
=(\id_\calP\leftarrow f)\leftarrow g=\id\leftarrow (f\star g).\]
So $\chi_\calP(f)\circ \chi_\calP(g)=\chi_\calP(f\star g)$. Let us consider the map:
\[\chi'_\calP:\left\{\begin{array}{rcl}
\mor_B(\calP,\calP)&\longrightarrow&M_\calP\\
\phi&\longrightarrow&\varepsilon'\circ \phi.
\end{array}\right.\]
For any $f\in M_\calP$:
\[\chi'\circ \chi_\calP(f)=\varepsilon'\circ (\id_\calP \otimes f)\circ \delta
=(\varepsilon'\otimes f)\circ \delta=\varepsilon'\star f=f,\]
so $\chi'_\calP\circ \chi_\calP=\id_{M_\calP}$. Consequently, $\chi_\calP$ is injective (and $\chi'_\calP$ is surjective). \end{proof}

\subsection{The terminal property of $\calcomp$}

\begin{theo}\label{theo29}
Let $\calP=(\calP,m,\Delta,\delta)$ be a connected double twisted bialgebra. 
There exists a unique morphism $\phi$ of double bialgebras from $\calP$ to $\calcomp$. Moreover, the following maps
are bijections, inverse one from the other:
\begin{align*}
\varsigma&:\left\{\begin{array}{rcl}
\chara(\calP)&\longrightarrow&\mor_B(\calP,\calcomp)\\
f&\longrightarrow&\phi\leftarrow f,
\end{array}\right.\\
\varsigma'&:\left\{\begin{array}{rcl}
\mor_B(\calP,\calcomp)&\longrightarrow&\chara(\calP)\\
\phi&\longrightarrow&\varepsilon'\circ \phi.
\end{array}\right.
\end{align*}
\end{theo}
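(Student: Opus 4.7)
The plan is to apply Theorem \ref{theo17} to the character $\varepsilon'_\calP:\calP\longrightarrow \com$, which is well-defined as an algebra morphism by Definition \ref{defi21}. This produces a unique twisted bialgebra morphism $\phi:\calP\longrightarrow \calcomp$ satisfying $\varepsilon'_{\calcomp}\circ \phi=\varepsilon'_\calP$, via the explicit formula of Theorem \ref{theo17}. The remaining substantive task is to verify that $\phi$ is also compatible with the second coproducts, namely $\delta_{\calcomp}\circ \phi=(\phi\otimes \phi)\circ \delta_\calP$. Once established, uniqueness of $\phi$ as a double bialgebra morphism is immediate: any candidate $\phi'$ is a twisted bialgebra morphism with $\varepsilon'_{\calcomp}\circ \phi'=\varepsilon'_\calP$, and Theorem \ref{theo17} forces $\phi'=\phi$.

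For the $\delta$-compatibility I would argue by induction on $\sharp A$ that $\Psi[A]:=\delta_{\calcomp,A}\circ \phi[A]-(\phi[A]\otimes \phi[A])\circ \delta_{\calP,A}$ is zero. The cases $\sharp A\leq 1$ are handled by hand: for $A=\emptyset$ both sides send $1_\calP$ to $1_{\calcomp}\otimes 1_{\calcomp}$, and for $A=\{a\}$ one checks directly, using $\phi[A](x)=\varepsilon'_\calP(x)(\{a\})$, $\delta_{\calcomp,A}((\{a\}))=(\{a\})\otimes (\{a\})$ and the counit axioms for $\delta_\calP$, that both sides evaluate to $\varepsilon'_\calP(x)(\{a\})\otimes (\{a\})$. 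For $\sharp A\geq 2$, the key ingredient is the one-sided compatibility $(\Delta_{I,J}\otimes Id)\circ \delta=m_{1,3,24}\circ (\delta_I\otimes \delta_J)\circ \Delta_{I,J}$ from Definition \ref{defi21}, which holds both in $\calP$ and in $\calcomp$. Combining it with the inductive hypothesis at the smaller sets $I$ and $J$, and with the fact that $\phi$ is a bialgebra morphism, a direct computation produces $(\Delta_{I,J}\otimes Id)\circ \Psi[A]=0$ for every $(I,J)\in \comp[A]$ with $I,J\neq \emptyset$. This forces $\Psi[A](x)\in \K(A)\otimes \calcomp[A]$, since $\K(A)$ is the intersection inside $\calcomp[A]$ of the kernels of all such $\Delta_{I,J}$. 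A second projection by $\varepsilon'_{\calcomp}\otimes Id$ then pins $\Psi[A](x)$ down to $0$: the left-hand side yields $\phi(x)$ by the counit axiom for $\delta_{\calcomp}$, and the right-hand side yields $\phi(x)$ using $\varepsilon'_{\calcomp}\circ \phi=\varepsilon'_\calP$ together with the left-counit axiom for $\delta_\calP$.

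For the bijection statement, $\varsigma$ lands in $\mor_B(\calP,\calcomp)$ by Proposition \ref{prop26}, while $\varsigma'$ is a composition of algebra morphisms and so takes values in $\chara(\calP)$. The identity $\varsigma'\circ \varsigma(f)=f$ unfolds to $(\varepsilon'_{\calcomp}\circ \phi \otimes f)\circ \delta_\calP=(\varepsilon'_\calP\otimes f)\circ \delta_\calP=f$, which is just the left-counit axiom for $\delta_\calP$. Conversely, $\varsigma\circ \varsigma'(\phi')$ is a bialgebra morphism from $\calP$ to $\calcomp$ whose post-composition with $\varepsilon'_{\calcomp}$ equals $\varepsilon'_{\calcomp}\circ \phi'$ by the previous calculation, so by uniqueness in Theorem \ref{theo17} it coincides with $\phi'$. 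The only genuine obstacle in the whole argument is the inductive step for the $\delta$-compatibility; its subtlety lies in Definition \ref{defi21} providing only a one-sided compatibility of $\Delta$ with $\delta$, so the induction shrinks $\Psi[A](x)$ only to $\K(A)\otimes \calcomp[A]$, after which the counit axiom for $\delta_{\calcomp}$ must be invoked via a second projection to eliminate the residue.
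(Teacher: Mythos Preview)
Your proof is correct and follows essentially the same approach as the paper: construct $\phi$ via Theorem~\ref{theo17} applied to $\varepsilon'_\calP$, verify $\delta$-compatibility by induction on $\sharp A$ using the cointeraction axiom to push the defect into $\K(A)\otimes\calcomp[A]$ and then kill it with $\varepsilon'\otimes Id$, and deduce the bijections from $\varsigma'\circ\varsigma=Id$ together with the uniqueness in Theorem~\ref{theo17}. The only cosmetic difference is that you treat $\sharp A=1$ as a separate base case, whereas the paper's inductive step already covers it (for a singleton there are no nontrivial $(I,J)$, so $\calcomp[A]=\K(A)$ and the $\varepsilon'\otimes Id$ projection alone suffices).
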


\begin{proof}
\textit{Unicity of $\phi$}. If $\phi$ is such a morphism, then $\varepsilon'\circ \phi=\varepsilon'_\calP$. 
By Theorem \ref{theo17}, $\phi$ is unique.\\

\textit{Existence of $\phi$}. Let $\phi$ be the unique morphism from $(\calP,m,\Delta)$ to $(\calcomp,m,\Delta)$, such that 
$\varepsilon'\circ \phi=\varepsilon'_\calP$.
Let us prove that it is compatible with $\delta$. Let $A$ be a finite set and $x\in \calP[A]$. We proceed by induction on $\sharp A$.
If $A=\emptyset$, we can assume that $x=1_\calP$. Then:
\[\delta_\emptyset\circ \phi(1_\calP)=\delta_\emptyset(1_{\calcomp})=1_{\calcomp}\otimes 1_{\calcomp}
=(\phi\otimes \phi)\circ \delta(1_\calP).\]
Let us now assume that $A\neq \emptyset$. Let $\emptyset \subsetneq I\subsetneq A$ and $J=A\setminus I$. Then:
\begin{align*}
(\Delta_{I,J}\otimes \id_{\calcomp(A)})\circ \delta_A \circ \phi(x)
&=m_{1,3,24}\circ (\delta_I\otimes \delta_J)\circ \Delta_{I,J}\circ \phi(x)\\
&=m_{1,3,24}\circ (\delta_I \otimes \delta_J)\circ (\phi\otimes \phi)\circ \Delta_{I,J}(x)\\
&=m_{1,3,24}\circ (\phi\otimes \phi\otimes \phi\otimes \phi)\circ  (\delta_I \otimes \delta_J)
\circ (\phi\otimes \phi)\circ \Delta_{I,J}(x)\\
&=(\phi\otimes \phi\otimes \phi)\circ m_{1,3,24}\circ (\delta_I\otimes \delta_J)\circ \Delta_{I,J}(x)\\
&=(\phi\otimes \phi\otimes \phi)\circ (\Delta_{I,J}\otimes \id_{\calP[A]})\circ \delta_A(x)\\
&=(\Delta_{I,J}\otimes \id_{\calcomp(A)})\circ (\phi\otimes \phi)\circ \delta_A(x).
\end{align*}
We put $y=(\phi\otimes \phi)\circ \delta_A(x)-\delta_A\circ \phi(x)$. Then:
\[y\in \bigcap_{(I,J)\in \comp(A)} \ker(\Delta_{I,J})\otimes \calcomp[A]=(A)\otimes \calcomp[A].\]
We put $y=(A)\otimes z$. Then:
\begin{align*}
z&=(\varepsilon'\otimes \id_{\calcomp(A)})(y)\\
&=(\varepsilon'\circ \phi \otimes \phi)\circ \delta_A(x)-(\varepsilon'\otimes \id_{\calcomp(A)})\circ \delta_A \circ \phi(x)\\
&=(\id \circ \phi)\circ (\varepsilon'\otimes \id)\circ \delta_A(x)-\phi(x)\\
&=\phi(x)-\phi(x)\\
&=0.
\end{align*}
Therefore, $y=0$, so $(\phi\otimes \phi)\circ \delta_A(x)=\delta_A\circ \phi(x)$. \\

\textit{Bijectivity of $\varsigma$ and $\varsigma'$}. Let $f\in \chara(\calP)$. 
\[\varepsilon'\circ (\phi\leftarrow f)=\varepsilon'\circ (\phi \otimes f)\circ \delta
=(\varepsilon'\circ \phi)\otimes f\circ \delta=(\varepsilon'_\calP \otimes f)\circ \delta=f.\]
Hence, $\Upsilon'\circ \Upsilon=\id_{\chara(\calP)}$. By Theorem \ref{theo10}, $\Upsilon'$ is bijective,
so its inverse is $\Upsilon$, which is bijective too.
\end{proof}

\section{The example of graphs}

\subsection{Double twisted bialgebra of graphs}

Let us now give the species $\calgr$ a structure of double bialgebra.

\begin{notation}
Let $G$ be a graph in $\calgr[A]$.
\begin{enumerate}
\item The relation $\sim_G$ is the equivalence whose classes are the vertices of $G$.
\item Let $I\subseteq A$. We assume that $I$ is a union of vertices of $A$.
The graph $G_{\mid I} \in \calgr[I]$ is defined by:
\begin{itemize}
\item $V(G_{\mid I})=\{x\in V(G),\: x\subseteq I\}$.
\item $E(G_{\mid I})=\{\{x,y\}\in E(G), x\cup y\subseteq I\}$.
\end{itemize}
\item Let $\sim$ be an equivalence on $A$. We shall write $\sim\triangleleft G$ if the following hold:
\begin{itemize}
\item For any $x,y\in A$, if $x\sim_G y$, then $x\sim y$.
\item For any class $I$ of $\sim$, $G_{\mid I}$ is a connected graph.
\end{itemize}
If so, we define two graphs $G\mid\sim$ and $G/\sim$:
\begin{enumerate}
\item If $C_1,\ldots,C_k$ are the classes of $\sim$, then $G|\sim=G_{\mid C_1}\ldots G_{\mid C_k}$.
\item $V(G/\sim)$ is the set of classes of $\sim$.
\item Two classes $C$ and $C'$ of $\sim$ are related by an edge in $G/\sim$ if there exist vertices $x$, $x'$ of $G$, 
related by an edge in $G$, such that $x\subseteq C$ and $x'\subseteq C'$. 
\end{enumerate}\end{enumerate}\end{notation}

\begin{prop}
The species $\calgr$ is a double twisted bialgebra, with the following coproducts:
\begin{enumerate}
\item If $G$ is a graph in $\calgr[I\sqcup J]$:
\[\Delta_{I,J}(G)=\begin{cases}
G_{\mid I}\otimes G_{\mid J}\mbox{ if $I$ is a union of vertices of $G$},\\
0\mbox{ otherwise}.
\end{cases}\]
\item If $G$ is a graph in $\calgr[A]$:
\[\delta_A(G)=\sum_{\sim\triangleleft G}G/\sim\otimes G|\sim.\]
The counit of $\delta$ is given by:
\[\varepsilon'_A(G)=\begin{cases}
1\mbox{ if $G$ has no edge},\\
0\mbox{ otherwise}.
\end{cases}\]
\end{enumerate}
\end{prop}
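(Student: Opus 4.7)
The plan is to verify the axioms of a double twisted bialgebra piecewise: first that $(\calgr, m, \Delta)$ is a twisted bialgebra, then that $(\calgr, m, \delta)$ is a twisted bialgebra of the second kind, and finally the compatibility between $\Delta$ and $\delta$ (together with the fact that $\varepsilon$ is a right comodule morphism, which is immediate since $\calgr[\emptyset] = \K\,1_\calgr$ and $\delta_\emptyset(1_\calgr) = 1_\calgr \otimes 1_\calgr$).

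For $\Delta$, coassociativity reduces to the observation that for a three-part decomposition $A = I \sqcup J \sqcup K$, both iterated coproducts vanish unless each of $I$, $J$, $K$ is a union of vertices of $G$, in which case both yield $G_{\mid I} \otimes G_{\mid J} \otimes G_{\mid K}$. The counit axiom is routine. Multiplicativity uses $V(GH) = V(G) \sqcup V(H)$: for $G \in \calgr[A]$, $H \in \calgr[B]$ and $A \sqcup B = I \sqcup J$, the subset $I$ is a union of vertices of $GH$ iff $I \cap A$ and $I \cap B$ are unions of vertices of $G$ and $H$, and then $(GH)_{\mid I} = G_{\mid I\cap A} \cdot H_{\mid I\cap B}$, which is exactly the bialgebra compatibility.

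For $\delta$, the counit identity $(\varepsilon' \otimes Id) \circ \delta_A(G) = G$ follows from noting that the only $\sim \triangleleft G$ with $G/\sim$ edgeless is the one whose classes are the connected components of $G$ (any finer splitting inside a component would create an edge in the quotient), and for that $\sim$ one has $G|\sim = G$; dually $(Id \otimes \varepsilon') \circ \delta_A(G) = G$ forces $\sim = \sim_G$, giving $G/\sim_G = G$. Multiplicativity of $\delta$ follows because any $\sim \triangleleft GH$ must have each class entirely in $A$ or entirely in $B$ (the class induces a connected subgraph of $GH$, which has no edges between $A$ and $B$), so such $\sim$ are in bijection with pairs $(\sim_1, \sim_2)$ with $\sim_1 \triangleleft G$ and $\sim_2 \triangleleft H$, and both $GH/\sim$ and $GH|\sim$ factor accordingly. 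Coassociativity of $\delta$ is proved by showing both $(\delta_A \otimes Id) \circ \delta_A(G)$ and $(Id \otimes \delta_A) \circ \delta_A(G)$ are indexed by nested pairs $\sim_G \subseteq \sim \subseteq \approx$ where $\sim$ has classes inducing connected subgraphs of $G$ and $\approx$ has classes inducing connected subgraphs of $G/\sim$, using the lattice isomorphism between equivalences on $A$ coarser than $\sim$ and equivalences on the vertex set of $G/\sim$.

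For the compatibility $(\Delta_{A,B} \otimes Id) \circ \delta_{A \sqcup B} = m_{1,3,24} \circ (\delta_A \otimes \delta_B) \circ \Delta_{A,B}$, I would split into cases. If $A$ is not a union of vertices of $G$, the right-hand side vanishes by definition of $\Delta$; on the left, $\Delta_{A,B}(G/\sim) = 0$ for every $\sim \triangleleft G$, because $A$ being a union of $\sim$-classes would force $A$ to be a union of vertices of $G$ (classes are themselves unions of vertices). If $A$ is a union of vertices, the $\sim \triangleleft G$ contributing to the left-hand side are exactly those for which no class straddles the $A/B$ split; these are in bijection with pairs $(\sim_1, \sim_2)$ with $\sim_1 \triangleleft G_{\mid A}$ and $\sim_2 \triangleleft G_{\mid B}$. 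The identities $(G/\sim)_{\mid A} = G_{\mid A}/\sim_1$, $(G/\sim)_{\mid B} = G_{\mid B}/\sim_2$ and $G|\sim = (G_{\mid A}|\sim_1) \cdot (G_{\mid B}|\sim_2)$ then match both sides after applying $m_{1,3,24}$. The main obstacle will be coassociativity of $\delta$: translating between equivalences on $A$ and iterated equivalences on quotients requires verifying that the connectedness condition passes through the quotient correspondence in both directions, using that a subgraph of $G/\sim$ is connected iff its preimage in $G$ (a union of $\sim$-classes, each already connected) is connected.
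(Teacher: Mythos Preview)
Your proposal is correct and follows essentially the same approach as the paper: the same case-split for coassociativity and multiplicativity of $\Delta$, the same bijection of $\sim \triangleleft GH$ with pairs $(\sim_1,\sim_2)$ for multiplicativity of $\delta$, the same nested-equivalence indexing set (the paper calls it $X$) for coassociativity of $\delta$, and the same case analysis on whether $A$ is a union of vertices for the $\Delta$--$\delta$ compatibility. The paper is slightly more explicit in writing out the identities $(G/\sim)/\sim' = G/\sim'$ and $(G|\sim')/\sim = (G/\sim)|\sim'$ needed to match the two iterated coproducts, but your sketch of the connectedness check under the quotient correspondence covers the same ground.
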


\begin{proof}
Let $G\in \calgr[I\sqcup J\sqcup K]$. Then:
\begin{align*}
&(\Delta_{I,J}\otimes \id_{\calgr(K)})\circ \Delta_{I\sqcup J,K}(G)\\
&=\begin{cases}
G_{\mid I}\otimes G_{\mid J}\otimes G_{\mid K}\mbox{ if $I$, $J$, $K$ are union of vertices of $G$},\\
0\mbox{ otherwise}.
\end{cases}\\
&=(\id_{\calgr[I]}\otimes \Delta_{J,K})\circ \Delta_{I,J\sqcup K}(G)
\end{align*}
Hence, $\Delta$ is coassocative.\\

Let $G$, $H$ be graphs in respectively $\calgr[A]$ and $\calgr[B]$. If $A\sqcup B=I\sqcup J$, then:
\begin{align*}
\Delta_{I,J}(GH)&=\begin{cases}
(GH)_{\mid I}\otimes (GH)_{\mid J} \mbox{ if $I$ is a union of vertices of $G$ and $H$},\\
0\mbox{ otherwise}
\end{cases}\\
&=\begin{cases}
G_{\mid A\cap I}H_{\mid B\cap I}\otimes G_{\mid A\cap J}H_{\mid B\cap J} 
\mbox{ if $I$ is a union of vertices of $G$ and $H$},\\
0\mbox{ otherwise}
\end{cases}\\
&=\Delta_{A\cap I,A\cap J}(G)\Delta_{A\cap J,B\cap J}(H).
\end{align*}
Hence, $(\calgr,m,\Delta)$ is a twisted bialgebra.\\

Let $A$ be a graph in $\calgr[A]$. We consider the set $X$ of pairs $(\sim,\sim')$ of equivalences on $A$ such that:
\begin{enumerate}
\item For any $x,y\in  A$, if $x\sim_G y$, then $x\sim y$; if $x\sim y$, then $x\sim' y$.
\item The equivalence classes of $\sim$ and $\sim'$ are connected graphs. 
\end{enumerate}
Then:
\begin{align*}
(\delta_A\otimes \id_{\calgr[A]})\circ \delta_A(G)&=\sum_{\substack{\sim \triangleleft G,\\ \sim'\triangleleft G/\sim}} 
(G/\sim)/\sim'\otimes (G/\sim)|\sim'\otimes G|\sim\\
&=\sum_{(\sim,\sim')\in X}  (G/\sim)/\sim'\otimes (G/\sim)|\sim'\otimes G|\sim\\
&=\sum_{(\sim,\sim')\in X} G/\sim'\otimes (G|\sim')/\sim\otimes (G|\sim')|\sim\\
&=\sum_{\substack{\sim' \triangleleft G,\\ \sim\triangleleft G|\sim'}}G/\sim'\otimes (G|\sim')/\sim\otimes (G|\sim')|\sim\\
&=(\id_{\calgr[A]}\otimes \delta_A)\circ \delta_A(G).
\end{align*}
So $\delta$ is coassociative. Let $\sim'_G$ be the equivalence whose classes are the connected classes of $G$. 
Then $\sim_G$, $\sim'_G\triangleleft G$ and:
\begin{align*}
(\varepsilon'\otimes \id_{\calgr[A]})\circ \delta_A(G)&=\varepsilon'(G/\sim_G')G|\sim_G'+0=G,\\
(\id_{\calgr[A]}\otimes \varepsilon')\circ \delta_A(G)&=\varepsilon'(G|\sim_G)G/\sim_G+0=G.
\end{align*}
So $\varepsilon'$ is indeed the counit of $\delta$.\\

Let $G$, $H$ be graphs in respectively $\calgr[A]$ and $\calgr[B]$.
 As the connected components of $GH$ are the connected components of $G$ and the connected components of 
$H$, for any equivalence $\sim$ on $A\sqcup B$,  $\sim \triangleleft GH$, if, and only if, $\sim=\sim'\sqcup \sim''$, 
with $\sim'\triangleleft G$ and $\sim''\triangleleft H$. Consequently:
\begin{align*}
\delta_{A\sqcup B}(GH)&=\sum_{\substack{\sim'\triangleleft G,\\\sim'\triangleleft H}}(GH)/\sim'\sqcup 
\sim''\otimes (GH)|\sim'\sqcup \sim''\\
&=\sum_{\substack{\sim'\triangleleft G,\\\sim'\triangleleft H}} (G/\sim')(H/\sim'')\otimes (G|\sim)'(H|\sim'')\\
&=\delta_{A\sqcup B}(G)\delta_{A\sqcup B}(H).
\end{align*}

Let $G$ be a graph in $\calgr[A\sqcup B]$. If $A$ is not a union of vertices of $G$, then:
\[(\Delta_{A,B}\otimes \id_{\calgr[A]})\circ \delta_{A\sqcup B}(G)
=m_{1,3,24}\circ (\delta_A\otimes \delta_B)\circ \Delta_{A,B}(G)=0.\]
Otherwise:
\begin{align*}
(\Delta_{A,B}\otimes \id_{Gr(A)})\circ \delta_{A\sqcup B}(G)&=\sum_{\substack{\sim\triangleleft G,\\
\mbox{\scriptsize $A$ and $B$ are union of classes of $\sim$}}}
(G/\sim)_{\mid A}\otimes (G/\sim)_{\mid B}\otimes G/\sim\\
&=\sum_{\substack{\sim'\triangleleft G_{\mid A},\\\sim'\triangleleft G_{\mid B}}}(G_{\mid A})/\sim' \otimes (G_{\mid B})
/\sim''\otimes (G_{\mid A})|\sim'(G_{\mid B})|\sim''\\
&=m_{1,3,24}\circ (\delta_A\otimes \delta_B)\circ \Delta_{A,B}(G).
\end{align*}
Consequently, $\calgr$ is a double twisted bialgebra. \end{proof}

\begin{example} If $A$, $B$, $C$ are nonempty finite sets:
\begin{align*}
\Delta(\tdun{$A$})&=\tdun{$A$}\otimes 1+1\otimes \tdun{$A$},\\
\Delta(\tddeux{$A$}{$B$})&=\tddeux{$A$}{$B$}\otimes 1+\tdun{$A$}\otimes \tdun{$B$}+
\tdun{$B$}\otimes \tdun{$A$}+1\otimes \tddeux{$A$}{$B$},\\
\Delta(\tdtroisun{$A$}{$C$}{$B$})&=\tdtroisun{$A$}{$C$}{$B$}\otimes 1
+\tddeux{$A$}{$B$}\otimes \tdun{$C$}+\tddeux{$A$}{$C$}\otimes \tdun{$B$}+
\tdun{$B$}\tdun{$C$}\otimes \tdun{$A$}\\
&+\tdun{$C$}\otimes \tddeux{$A$}{$B$}+\tdun{$B$}\otimes \tddeux{$A$}{$C$}+\tdun{$A$}\otimes \tdun{$B$}\tdun{$C$}+
1\otimes \tdtroisun{$A$}{$C$}{$B$};\\ \\
\delta(\tdun{$A$})&=\tdun{$A$}\otimes \tdun{$A$},\\
\delta(\tddeux{$A$}{$B$})&=\tddeux{$A$}{$B$}\otimes \tdun{$A$}\tdun{$B$}+
\tdun{$A\sqcup B$} \hspace{6mm} \otimes \tddeux{$A$}{$B$},\\
\delta(\tdtroisun{$A$}{$C$}{$B$})&=\tdtroisun{$A$}{$C$}{$B$}\otimes \tdun{$A$}\tdun{$B$}\tdun{$C$}
+\tddeux{$A\sqcup B$}{$C$}\hspace{6mm}\otimes \tddeux{$A$}{$B$}\tdun{$C$}+
\tddeux{$A\sqcup C$}{$B$}\hspace{6mm}\otimes \tddeux{$A$}{$C$}\tdun{$B$}+
\tdun{$A\sqcup B\sqcup C$}\hspace{12mm}\otimes \tdtroisun{$A$}{$C$}{$B$}.
\end{align*}
\end{example}

\subsection{Morphism to $\calcomp$}

We now look for the unique morphism $\phi:\calgr\longrightarrow \calcomp$ of Theorem \ref{theo29}.

\begin{defi}
Let $G$ be a graph in $\calgr[A]$. A valid packed coloration of $G$ is a surjective map 
$c:A\longrightarrow [\max(c)]$, such that:
\begin{enumerate}
\item For any $x,y \in A$, if $x\sim_G y$, then $c(x)=c(y)$.
\item For any $x,y \in A$, if there is an edge in $G$ between 
the $\sim_G$-equivalence classes of $x$ and $y$, then $c(x)\neq c(y)$.
\end{enumerate}
The set of valid colorations of $G$ is denoted by $VC(G)$.
\end{defi}

\begin{prop} \label{prop32}
The unique morphism of double bialgebras from $\calgr$ to $\calcomp$ of Theorem \ref{theo17} 
is given on any graph $G$ by:
\begin{align}
\label{eq3}
\phi(G)=\sum_{c\in VC(G)} (c^{-1}(1),\ldots,c^{-1}(\max(c))).
\end{align}
\end{prop}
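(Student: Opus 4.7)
The plan is to combine the existence-uniqueness of the double bialgebra morphism $\phi:\calgr\longrightarrow\calcomp$ guaranteed by Theorem \ref{theo29} with the explicit formula of Theorem \ref{theo17} for coalgebra morphisms into $\calcomp$, then translate the resulting sum over set compositions into a sum over valid packed colorations.

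The first step is to identify the character attached to $\phi$. Note that $\varepsilon'_\calgr:\calgr\longrightarrow\com$, which sends $G$ to $1$ if $G$ is edgeless and to $0$ otherwise, is multiplicative: a disjoint union $GH$ has no edges iff both $G$ and $H$ have none. Applying $\varepsilon'_\calcomp\otimes Id$ to both sides of the identity $\delta_\calcomp\circ\phi=(\phi\otimes\phi)\circ\delta_\calgr$ and using the counit axiom $(\varepsilon'_\calcomp\otimes Id)\circ\delta_\calcomp=Id$, I obtain $\phi\leftarrow(\varepsilon'_\calcomp\circ\phi)=\phi$. On the other hand, the counit axiom of $\delta_\calgr$ gives $\phi\leftarrow\varepsilon'_\calgr=\phi$ for free. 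By injectivity of $\varsigma$ in Theorem \ref{theo29}, we conclude $\varepsilon'_\calcomp\circ\phi=\varepsilon'_\calgr$.

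The second step is to feed this character into Theorem \ref{theo17}, which yields
\[\phi(G)=\sum_{(A_1,\ldots,A_k)\in\comp[A]}(\varepsilon'_\calgr)^{\otimes k}\circ\Delta_{A_1,\ldots,A_k}(G)\cdot(A_1,\ldots,A_k).\]
By the definition of $\Delta$ on $\calgr$, the term $\Delta_{A_1,\ldots,A_k}(G)$ vanishes unless each $A_i$ is a union of $\sim_G$-classes, in which case it equals $G_{\mid A_1}\otimes\cdots\otimes G_{\mid A_k}$. The coefficient $(\varepsilon'_\calgr)^{\otimes k}(G_{\mid A_1}\otimes\cdots\otimes G_{\mid A_k})$ then equals $1$ precisely when each restricted graph $G_{\mid A_i}$ is edgeless, i.e. when no edge of $G$ has both endpoints inside the same $A_i$.

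The final step is the combinatorial identification. Given a composition $(A_1,\ldots,A_k)$ satisfying both conditions above, the surjection $c:A\longrightarrow\{1,\ldots,k\}$ defined by $c(a)=i$ iff $a\in A_i$ is constant on $\sim_G$-classes (first condition) and takes distinct values on classes joined by an edge (second condition), hence $c\in VC(G)$. Conversely, any $c\in VC(G)$ yields such a composition via $A_i=c^{-1}(i)$. This bijection reindexes the sum and produces formula \eqref{eq3}. The only step requiring genuine thought is the identification of the character $\varepsilon'_\calgr$ attached to $\phi$; once this is in place, the rest is a translation of the formula from Theorem \ref{theo17} into the combinatorial language of proper packed colorings, and no serious obstacle remains.
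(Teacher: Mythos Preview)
Your proof is correct and follows the same approach as the paper: invoke Theorem~\ref{theo17} with the character $\varepsilon'_{\calgr}$, then reinterpret the surviving set compositions as valid packed colorations. Two small remarks. First, in your opening step you write ``applying $\varepsilon'_{\calcomp}\otimes Id$'' and then record the result as $\phi\leftarrow(\varepsilon'_{\calcomp}\circ\phi)$; since the paper's convention is $\phi\leftarrow f=(\phi\otimes f)\circ\delta$, you should apply $Id\otimes\varepsilon'_{\calcomp}$ instead---the argument is otherwise unchanged. Second, the paper treats this entire first step as implicit: the identity $\varepsilon'_{\calcomp}\circ\phi=\varepsilon'_{\calgr}$ is the standard fact that a morphism compatible with $\delta$ preserves its counit (and is exactly the uniqueness clause of Theorem~\ref{theo29}), so the detour through $\varsigma$ is unnecessary.
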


\begin{proof} By Theorem \ref{theo17}:
\begin{align*}
\phi(G)&=\sum_{(A_1,\ldots,A_k)} \varepsilon'^{\otimes k}\circ \Delta_{A_1,\ldots,A_k}(G)(A_1,\ldots,A_k)\\
&=\sum_{(A_1,\ldots,A_k)} \varepsilon'(G_{\mid A_1})\ldots \varepsilon'(G_{\mid A_k})(A_1,\ldots,A_k).
\end{align*}
By definition of $\varepsilon'$, $\varepsilon'(G_{\mid A_1})\ldots \varepsilon'(G_{\mid A_k})=1$ 
if the map $c$ sending any $a\in A_i$ to $i$ for any $i$ is a valid coloration of $G$,
and $0$ otherwise, which implies (\ref{eq3}).
\end{proof}

\begin{example} If $A$, $B$ and $C$ are finite sets:
\begin{align*}
\phi(\tdun{$A$})&=(A),\\
\phi(\tddeux{$A$}{$B$})&=(A,B)+(B,A),\\
\phi(\tdtroisun{$A$}{$C$}{$B$})&=(A,B,C)+(A,C,B)+(B,A,C)+(B,C,A)+(C,A,B)+(C,B,A)\\
&+(A,B\sqcup C)+(B\sqcup C,A).
\end{align*}
\end{example}

\section{The example of finite topologies}

\subsection{Double twisted bialgebra of finite topologies}

\begin{defi}
Let $T=(A,\leq)$ be a finite topology and let $\sim$ be an equivalence on $A$.
\begin{enumerate}
\item We define a second quasi-order $\leq_{T|\sim}$ on $A$ by the relation:
\begin{align*}
&\forall x,y\in A,&x\leq_{T|\sim} y&\mbox{ if }(x\leq y\mbox{ and }x\sim y).&
\end{align*}
\item We define a third quasi-order $\leq_{T/\sim}$ on $A$ as the transitive closure of the relation $R$ defined by:
\begin{align*}
&\forall x,y\in A,&x R y&\mbox{ if } (x\leq y \mbox{ or }x \sim y).
\end{align*}
In other words, $x \leq_{T/\sim} y$ if there exist $x_1,y_1,\ldots,x_n,y_n\in A$, such that:
\[x=x_1\sim y_1\leq_T \ldots \leq_T x_k\sim y_k=y.\]
\item We shall say that $\sim$ is $T$-compatible and we shall write $\sim \triangleleft T$ if the two following conditions are satisfied:
\begin{itemize}
\item The restriction of $T$ to any equivalence class of $\sim$ is connected.
\item The equivalences $\sim_{T/\sim}$ and $\sim$ are equal. In other words:
\begin{align*}
&\forall x,y\in A,& (x\leq_{T/\sim} y\mbox{ and }y\leq_{T/\sim} x) &\Longrightarrow x\sim y;
\end{align*}
note that the converse assertion trivially holds.
\end{itemize}
The set of $T$-compatible equivalences is denoted by $\CE(T)$. \end{enumerate}\end{defi}

\begin{lemma}
Let $T$ be a finite topology on a set $A$ and let $\sim$ be an equivalence on $A$.
\begin{enumerate}
\item The open sets of $T/\sim$ are the $\sim$-saturated open sets of $T$.
\item The open sets of $T|\sim$ are the sets of the form
\[(O_1\cap X_1)\cup \ldots \cup (O_n\cap X_n),\]
where $O_1,\ldots,O_n$ are open sets of $T$ and $X_1,\ldots,X_n$ are equivalence classes of $\sim$.
\item $\sim \in \CE(T)$ if, and only if, the following hold:
\begin{enumerate}
\item For any equivalence class $X$ of $\sim$, $T_{\mid X}$ is connected.
\item For any equivalence class $X$ of $\sim$, there exist an $\sim$-saturated open set $O$ and 
a $\sim$-saturated closed set $C$ such that $X=O\cap C$. 
\end{enumerate}
\end{enumerate}
\end{lemma}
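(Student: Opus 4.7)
The plan is to establish (1), (2), (3) in turn, deriving (3) from (1) via the identification of $\sim$-saturated $T$-open sets with open sets of $T/\sim$.

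For (1), I would just unpack the definitions. Open sets of $T/\sim$ are the up-sets for $\leq_{T/\sim}$, and because the up-sets of a binary relation coincide with the up-sets of its transitive closure, they are exactly the up-sets for the generating relation $xRy \Leftrightarrow (x\leq_T y \text{ or } x\sim y)$. A set $X$ is an $R$-up-set iff it is a $\leq_T$-up-set (i.e., $T$-open) and is saturated under $\sim$, which is the desired statement.

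For (2), the key point is that $\leq_{T|\sim}$ is the disjoint union of the restrictions of $\leq_T$ to the $\sim$-classes, since $x \leq_{T|\sim} y$ forces $x\sim y$. Hence a $\leq_{T|\sim}$-up-set $U$ decomposes as $\bigsqcup_X (U\cap X)$ with each trace $U\cap X$ an up-set of $X$ for the restriction of $\leq_T$. I would then take $O_X$ to be the $\leq_T$-upper closure in $A$ of $U\cap X$; transitivity of $\leq_T$ makes $O_X$ a $T$-open set, and the up-set property of $U\cap X$ inside $X$ gives $O_X\cap X = U\cap X$, whence $U=\bigcup_X(O_X\cap X)$. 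The converse direction---that every union $\bigcup_i (O_i\cap X_i)$ with $O_i$ open in $T$ and $X_i$ a $\sim$-class is $\leq_{T|\sim}$-up-closed---is immediate from the defining conditions of $\leq_{T|\sim}$ and of $T$-openness.

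For (3), condition (a) is by definition the first clause of $\sim \triangleleft T$, so the task reduces to showing that (b) is equivalent to the clause $\sim_{T/\sim}\subseteq \sim$. By (1), sets of the form $O\cap C$ with $O$ a $\sim$-saturated $T$-open and $C$ a $\sim$-saturated $T$-closed are precisely the intersections of a $\leq_{T/\sim}$-up-set with a $\leq_{T/\sim}$-down-set. The crucial observation is that for each $x$, the $\sim_{T/\sim}$-class of $x$ is by definition $\{y:x\leq_{T/\sim} y\}\cap \{y:y\leq_{T/\sim} x\}$, which is exactly such an up-down intersection. Hence if $\sim_{T/\sim} = \sim$, every $\sim$-class equals the $\sim_{T/\sim}$-class of any of its points and so has the required form, yielding (b). Conversely, assuming (b) and $x\sim_{T/\sim} y$, writing the $\sim$-class of $x$ as $O\cap C$, the up-set property of $O$ forces $y\in O$ from $x\in O$ and $x\leq_{T/\sim} y$, and the down-set property of $C$ forces $y\in C$ from $x\in C$ and $y\leq_{T/\sim} x$; thus $y$ lies in the $\sim$-class of $x$, giving $x\sim y$. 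I expect no serious obstacle here: the only step requiring mild care is the extension in (2) of an up-set of $X$ for the restricted order to a $T$-open set of $A$ via upper closure, which is a routine transitivity argument.
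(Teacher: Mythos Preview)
Your proposal is correct and follows essentially the same approach as the paper: parts (1) and (2) match the paper's argument almost verbatim (including the choice of $O_X$ as the $\leq_T$-upper closure of $U\cap X$), and for (3) both you and the paper reduce the question to the principal up-set and down-set of a $\sim$-class in $T/\sim$. Your converse direction in (3) is in fact slightly cleaner---you argue directly with the given $O$ and $C$, whereas the paper first normalizes to the canonical $O(X)=\{y:\exists x\in X,\ x\leq_{T/\sim}y\}$ and $C(X)$ before running the same up-set/down-set argument.
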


\begin{proof}
1. Let $O$ be an open set of $T/\sim$. If $x\in O$ and $y\in A$, such that $x\leq_T y$ or $x\sim y$, then $x\leq_{T/\sim} y$, so $y\in O$:
$O$ is an $\sim$-saturated open set of $T$. Conversely, let $O$ be a $\sim$-saturated open set of $T$.
Let $x\in O$ and $x\leq_{T/\sim} y$. There exist $x_1,y_1,\ldots,x_n,y_n\in A$, such that:
\[x=x_1\sim y_1\leq_T \ldots \leq_T x_k\sim y_k=y.\]
As $O$ is open and saturated, for any $i$, $x_i,y_i \in O$, so $y\in O$: $O$ is an open set of $T/\sim$. \\

2. Let $O$ be an open set of $T|\sim$. For any equivalence class $X$ of $\sim$, we put:
\[O_X=\{y\in A,\exists x\in O\cap X, x\leq_T y\}.\]
Then $O_X$ is obviously an open set of $T$ containing $O\cap X$. If $y\in O_X\cap X$, there exists $x\in O_X$, such that $x\leq_T y$.
As $O$ is open, $y\in O$, so $y\in O\cap X$. We finally obtain that $O_X\cap X=O\cap X$, and:
\[O=\bigcup_{\mbox{\scriptsize $X$ class of $\sim$}} (O_X\cap X).\]
Conversely, let us assume that $O$ has the form 
\[O=(O_1\cap X_1)\cup \ldots \cup (O_n\cap X_n).\]
Let $x\in O$ and $x\leq_{T|\sim} y$. Then $x\leq_T y$ and $x\sim y$.
There exists $i$ such that $x\in O_i\cap X_i$. As $O_i$ is open, $y\in O_i$. As $X_i$ is an equivalence class of $\sim$, $y\in X_i$,
 so $y\in O_i\cap X_i\subseteq O$.\\

3. For any $X\subseteq A$, we put:
\begin{align*}
O(X)&=\{y\in A,\exists x\in X, x\leq_{T/\sim} y\},&
C(X)&=\{y\in A,\exists x\in X, y\leq_{T/\sim} x\}.
\end{align*}
Then $O(X)$ is an open set of $T/\sim$, so is a $\sim$-saturated open set of $T$; $C(X)$ is a closed set of $T/\sim$, 
so is a $\sim$-saturated closed set of $T$. Moreover, $X\subseteq O(X)\cap C(X)$.\\

$\Longleftarrow$. Let us assume that $\sim \in \CE(T)$ and let $X$ be an equivalence class of $\sim$.
By hypothesis, $T_{\mid X}$ is connected. Let $y\in O(X)\cap C(X)$. There exists $x,x'\in X$, 
such that $x\leq_{T/\sim} y\leq_{T/\sim} x'$. 
As $x\sim x'$, $x\sim_{T/\sim} x'$ so $x\sim_{T/\sim} y\sim_{T/\sim} x'$. Moreover, $\sim\in \CE(T)$, so $x\sim y\sim x'$, 
and $y\in X$. We proved that $X=O(X)\cap C(X)$.\\

$\Longrightarrow$. Let $O'$ be a $\sim$-saturated open set and  $C'$ be a $\sim$-saturated open set such that $X=O'\cap C'$.
Let $y\in O(X)$. There exists $x\in X$, such that $x\leq_{T/\sim} y$. As $O'$ is a open set of $T/\sim$ by the first point, 
$y\in O'$, so $O(X)\subset O'$. Similarly, $C(X)\subset C'$. Hence:
\[X\subseteq O(X)\cap C(X)\cap O'\cap C'=X.\]
We proved that $X=O(X)\cap C(X)$. 

Let $x,y\in A$, such that $x\sim_{T/\sim} y$. Let us denote by $X$ the equivalence class of $x$.
As $x\leq_{T/\sim} y$, $y\in O(X)$; as $y\leq_{T/\sim} x$, $y\in C(X)$. So $y\in X$ and $x\sim y$. 
\end{proof}

\begin{lemma} \label{lemme35}
Let $T$ be a finite topology on a set $A$ and $\sim,\sim'$ be equivalences on $A$ such that:
\begin{align*}
&\forall x,y\in A,&x\sim y\Longrightarrow x\sim' y.
\end{align*}
Then:
\begin{align*}
(T|\sim')|\sim&=T|\sim,&(T/\sim)/\sim'&=T/\sim'.
\end{align*}
If moreover $\sim'\in \CE(T)$, then:
\begin{align*}
(T|\sim')/\sim&=(T/\sim)|\sim'.
\end{align*}
\end{lemma}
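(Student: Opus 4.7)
The plan is to establish the three identities separately. Identities (i) and (ii) will follow from direct unpacking of definitions using only the inclusion $\sim\,\subseteq\,\sim'$, whereas (iii) will require a stability lemma that essentially uses $\sim'\in\CE(T)$.

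For (i), I would unfold $x\leq_{(T|\sim')|\sim}y$ as the conjunction of $x\leq_T y$, $x\sim' y$ and $x\sim y$; the hypothesis $\sim\,\subseteq\,\sim'$ makes the middle condition redundant, yielding $x\leq_{T|\sim}y$. For (ii), $\leq_{(T/\sim)/\sim'}$ is by construction the transitive closure of $\leq_{T/\sim}\cup\sim'$, hence of $\leq_T\cup\sim\cup\sim'$; since $\sim\,\subseteq\,\sim'$, this collapses to the transitive closure of $\leq_T\cup\sim'$, which is $\leq_{T/\sim'}$.

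Identity (iii) is the crux. The easy inclusion $\leq_{(T|\sim')/\sim}\subseteq\leq_{(T/\sim)|\sim'}$ follows by reading off a zigzag chain $x=x_1\sim y_1\leq_{T|\sim'}x_2\sim y_2\leq_{T|\sim'}\cdots\leq_{T|\sim'}x_n\sim y_n=y$: from $\sim\,\subseteq\,\sim'$ one has $x_i\sim' y_i$, and from the definition of $\leq_{T|\sim'}$ one has $y_i\sim' x_{i+1}$, so the whole chain lives in a single $\sim'$-class (hence $x\sim' y$), while forgetting the $\sim'$-conditions gives $x\leq_{T/\sim}y$. For the reverse inclusion I would start from a chain witnessing $x\leq_{T/\sim}y$ together with $x\sim' y$, and prove the following stability lemma: every $x_i,y_i$ lies in the $\sim'$-class $X$ of $x$. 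Once this is known, each step $y_i\leq_T x_{i+1}$ is automatically a step in $\leq_{T|\sim'}$, giving the desired chain in $(T|\sim')/\sim$.

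The stability lemma, which is the main obstacle, is proved by contradiction. Assuming $i\geq 2$ is the smallest index with $x_i\notin X$, one has $y_{i-1}\in X$ and the step $y_{i-1}\leq_T x_i$ yields $y_{i-1}\leq_{T/\sim'}x_i$; the remaining chain gives $x_i\leq_{T/\sim}y$, hence $x_i\leq_{T/\sim'}y$, and using $y\sim'y_{i-1}$ one deduces $x_i\leq_{T/\sim'}y_{i-1}$. Therefore $x_i\sim_{T/\sim'}y_{i-1}$, and the defining property $\sim_{T/\sim'}\,=\,\sim'$ of a $T$-compatible equivalence forces $x_i\sim' y_{i-1}\in X$, contradicting $x_i\notin X$. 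The entire difficulty of (iii) concentrates in this step: without $\sim'\in\CE(T)$ a chain in $T/\sim$ could escape and re-enter the $\sim'$-class of its endpoints, and it is precisely the identification $\sim_{T/\sim'}\,=\,\sim'$ that rules such excursions out.
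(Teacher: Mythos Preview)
Your proposal is correct and follows essentially the same route as the paper. Parts (i) and (ii) match the paper's proof verbatim. For (iii), the paper argues slightly more directly than you do: rather than isolating a first escape point by contradiction, it observes that the whole chain $x=x_1,y_1,\ldots,x_k,y_k=y$ satisfies $x_1\leq_{T/\sim'}y_1\leq_{T/\sim'}\cdots\leq_{T/\sim'}y_k\leq_{T/\sim'}x$ (the last step coming from $x\sim' y$), hence every term is $\sim_{T/\sim'}$-equivalent to $x$, and then invokes $\sim_{T/\sim'}=\sim'$ in one shot. Your stability lemma is the same mechanism unwound as a minimal-counterexample argument; the key use of $\sim'\in\CE(T)$ is identical in both versions.
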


\begin{proof}
Let $x,y\in A$.
\begin{align*}
x\leq_{(T\sim')|\sim} y&\Longleftrightarrow x\leq_{T|\sim'} y\mbox{ and } x\sim y\\
&\Longleftrightarrow x\leq_T y\mbox{ and }x\sim' y\mbox{ and } x\sim y\\
&\Longleftrightarrow x\leq_T y\mbox{ and } x\sim y\\
&\Longleftrightarrow x\leq_{T|\sim} y.
\end{align*}
So $(T|\sim')|\sim=T|\sim$. 

By definition, $\leq_{(T/\sim)/\sim'}$ is the transitive closure of the relation given by:
\[ x\leq_T y\mbox{ or }x\sim y\mbox{ or }x\sim' y,\]
or equivalently:
\[ x\leq_T y\mbox{ or }x\sim' y.\]
Therefore, $(T/\sim)/\sim'=T/\sim'$.

Let $x,y\in A$. If $x\leq_{(T|\sim')/\sim} y$, there exist $x_1,y_1,\ldots,x_k,y_k\in A$ such that:
\[x=x_1\sim y_1 \leq_{T|\sim'} x_2\sim y_2\leq_{T|\sim'}\ldots \leq_{T|\sim'} x_k\sim y_k=y.\]
Consequently:
\[x=x_1\sim y_1 \leq_T x_2\sim y_2\leq_T\ldots \leq_T x_k\sim y_k=y,\]
so $x\leq_{T/\sim} y$. Moreover,
\[x=x_1\sim' y_1\sim'\ldots\sim' x_k\sim' y_k=y,\]
so $x\sim' y$ and $x\leq_{(T/\sim)|\sim'} y$.
Let us assume that $\sim' \in \CE(T)$. If $x\leq_{(T/\sim)|\sim'} y$, then $x\leq_{T/\sim} y$ and $x\sim' y$.
There exist $x_1,y_1,\ldots,x_k,y_k\in A$, such that:
\[x=x_1\sim y_1 \leq_T x_2\sim y_2\leq_T\ldots \leq_T x_k\sim y_k=y.\]
Consequently:
\[x=x_1\leq_{T/\sim'} y_1\leq_{T/\sim'}\ldots \leq_{T/\sim'} x_k\leq_{T/\sim'} y_k\leq_{T/\sim'} y \leq_{T/\sim'} x,\]
so all the elements of $A$ appearing here are $\sim_{T/\sim'}$-equivalent. As $\sim'\in \CE(T)$, they are $\sim'$-equivalent. Hence:
 \[x=x_1\sim y_1 \leq_{T|\sim'} x_2\sim y_2\leq_{T|\sim'}\ldots \leq_{T|\sim'} x_k\sim y_k=y.\]
So $x\leq_{(T|\sim')/\sim} y$. Finally, $(T|\sim')/\sim=(T/\sim)|\sim'$. \end{proof}

\begin{prop}
Let $T$ be a finite topology on a set $A$ and $\sim \in \CE(T)$. 
\begin{enumerate}
\item The Hasse graph of $T|\sim$ is obtained from the Hasse graph of $T$ by deleting the edges 
of the graph of $T$ between non $\sim$-equivalent vertices.
\item The Hasse graph of $T/\sim$ is obtained from the Hasse graph of $T$ by:
\begin{itemize}
\item Contracting any equivalence class of $\sim$ to a single vertex.
\item Deleting the superfluous edges created in this process.
\end{itemize}
\end{enumerate}
\end{prop}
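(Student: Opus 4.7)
My plan is to analyze both parts by first identifying the vertex sets of the relevant Hasse graphs and then matching their edge sets, using crucially the second $\CE(T)$ axiom. A preliminary observation is that $\sim_T\subseteq\sim$: if $x\sim_T y$, then $x\leq_T y$ and $y\leq_T x$, so $x\sim_{T/\sim} y$, whence $x\sim y$ by the $\CE(T)$ implication $x\sim_{T/\sim} y\Rightarrow x\sim y$. Hence every $\sim$-class is a union of $\sim_T$-classes, and the vertex set $A/\sim_T$ of the Hasse graph of $T$ partitions naturally into $\sim$-classes.

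For Part 1, unfolding the definitions gives $x\sim_{T|\sim} y\Longleftrightarrow (x\sim_T y\text{ and }x\sim y)$, which reduces to $x\sim_T y$ by the preliminary observation; so $\sim_{T|\sim}=\sim_T$ and the two Hasse graphs share the vertex set $A/\sim_T$. An edge $[x]\to[y]$ appears in the Hasse graph of $T|\sim$ iff $x<_T y$, $x\sim y$, and no $z$ with $x\sim z$ satisfies $x<_T z<_T y$; I claim these are exactly the edges of the Hasse graph of $T$ joining $\sim$-equivalent vertices. The easy direction is immediate. For the converse, if such an edge existed together with a $T$-intermediate $z\notin [x]_\sim$ satisfying $x<_T z<_T y$, then $x\leq_{T/\sim} z\leq_{T/\sim} y$; combining with $y\leq_{T/\sim} x$ (from $x\sim y$) gives $z\sim_{T/\sim} y$, so $z\sim y$ by the second $\CE(T)$ axiom, contradicting $z\notin [x]_\sim=[y]_\sim$.

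For Part 2, the axiom $\sim_{T/\sim}=\sim$ identifies the vertex set of the Hasse graph of $T/\sim$ as $A/\sim$, which, since $\sim_T\subseteq\sim$, is exactly the quotient of $A/\sim_T$ obtained by collapsing each $\sim$-class to a single vertex: this is the contraction step. The order $\leq_{T/\sim}$ on $A/\sim$ is the transitive closure of the pushforward of $\leq_T$ along the contraction, so the covers of $(A/\sim,\leq_{T/\sim})$ arise from the contracted covers of $(A/\sim_T,\leq_T)$ joining distinct $\sim$-classes, after discarding contracted edges that either become loops inside a single $\sim$-class or are rendered non-cover by longer contracted chains (the superfluous edges).

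The main obstacle is the covering correspondence in Part 1, where one must forbid a $T$-intermediate lying outside the common $\sim$-class of $x$ and $y$; this is precisely where the second $\CE(T)$ axiom enters non-trivially, via the $T/\sim$-collapse argument above. The remainder of the proof is a routine unfolding of definitions, combined with the containment $\sim_T\subseteq\sim$ and (for Part 2) the observation that the contracted order on $A/\sim$ is generated by the images of the cover relations of $T$.
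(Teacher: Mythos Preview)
Your argument is correct. Part 1 coincides with the paper's approach: the same preliminary containment $\sim_T\subseteq\sim$, the same identification $\sim_{T|\sim}=\sim_T$, and the same key step of forcing any $T$-intermediate $z$ into $[x]_\sim$ via $x\leq_{T/\sim}z\leq_{T/\sim}y\leq_{T/\sim}x$ together with the second $\CE(T)$ axiom.

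For Part 2 you take a genuinely different route. The paper proceeds constructively: given a cover $cl_\sim(x)\to cl_\sim(y)$ in $T/\sim$, it first extracts from the defining chain for $\leq_{T/\sim}$ some $x'\sim x$, $y'\sim y$ with $x'\leq_T y'$, then selects $x'$ $\leq_T$-maximal and subsequently $y'$ $\leq_T$-minimal with this property, and checks directly that $cl_T(x')\to cl_T(y')$ is a cover in $T$ (any $T$-intermediate would lie in $[x]_\sim$ or $[y]_\sim$, contradicting the extremal choices). Your argument is purely order-theoretic: the pushforward to $A/\sim$ of the $T$-covers is a generating set for $\leq_{T/\sim}$, and in a finite poset the cover relations lie in every generating set, so each $T/\sim$-cover is automatically the image of some $T$-cover. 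Your approach is slicker and avoids the extremal selection; the paper's is fully self-contained (it does not invoke the ``covers lie in every generating set'' fact) and exhibits the witnessing $T$-cover explicitly.
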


\begin{proof}
First, if $x\sim_T y$, then $x\sim_{T/\sim} y$, so $x\sim y$ as $\sim\in \CE(T)$. 
Hence, the sentence describing the construction of the Hasse graph of $T|\sim$ makes sense.

1. Obviously, $\sim_{T|\sim}=\sim_T$, so the vertices of the Hasse graph of $T|\sim$ are the vertices of the Hasse graph of $T$,
that is to say the classes of $\sim_T$. For any $x\in A$, we denote by $cl_T(x)$ its equivalence class for $\sim_T$.

Let us prove that the edges of the Hasse graph of $T|\sim$ are of the form $(cl_T(x),cl_T(y))$, with $x\sim y$.
Firstly, if $(cl_T(x),cl_T(y))$ is an edge of the Hasse graph of $T|\sim$, then $x\leq_{T|\sim} y$, 
so $x\leq_T y$ and $x\sim y$.
If $x\leq_T z\leq_T y$, then $x\leq_{T/\sim} z\leq_{T/\sim} y \leq_{T/\sim} x$, so $x\sim_{T/\sim} z\sim_{T/\sim} y$.
As $\sim\in \CE(T)$, $x\sim z\sim y$, so $x\leq_{T|\sim} z\leq_{T|\sim} y$. Consequently, $x\sim_T z$ or $y\sim_T z$.
Secondly, if $(cl_T(x),cl_T(y))$ is an edge of the Hasse graph of $T$ and $x\sim y$, then $x\leq_{T|\sim} y$.
If $x\leq_{T\sim} z\leq_{T|\sim} y$, then $x\leq_T z\leq_T y$, so $x\sim_T y$ or $y\sim_T z$. \\

2. As $\sim\in \CE(T)$, the vertices of the Hasse graph of $T/\sim$ are the classes of $\sim$.
Let us prove that for any edge $(cl_\sim(x),cl_\sim(y))$ of the Hasse graph of $T/\sim$, there exist $x'\sim x$ and $y'\sim y$
such that $(cl_T(x'),cl_T(y'))$ is an edge of the Hasse graph of $T$: this will prove the second point.
As $x\leq_T y$, there exist $x_1,y_1,\ldots,x_k,y_k \in A$ such that:
\[x=x_1\sim y_1 \leq_T \ldots \leq_T x_k\sim y_k=y.\]
Consequently:
\[x=x_1\leq_{T/\sim} y_1\leq_{T/\sim} \ldots \leq_{T/\sim} x_k \leq_{T/\sim} y_k=y.\]
As $(cl_\sim(x),cl_\sim(y))$ is an edge, there exists $i$ such that:
\[x=x_1\sim y_1\sim \ldots \sim y_i \leq_T x_{i+1}\sim \ldots \sim x_k\sim y_k=y.\]
We consider the set:
\[X=\{x'\in A,x'\sim x,\exists y'\in A, y'\sim y\mbox{ and }x'\leq_T y'\}.\]
We proved that $X$ is nonempty: let $x'\in X$, maximal for $\leq_T$. We now consider:
\[Y=\{y'\in A, y'\sim y\mbox{ and }x'\leq_T y'\}.\]
By definition of $x'$, this is nonempty: let $y\in Y'$, minimal for $\leq_T$. Then $x'\leq_T y'$, $x'\sim x$ and $y'\sim y$.
Let us assume that $x'\leq_T z\leq_T y$. Then $x'\leq_{T/\sim} z\leq_{T/\sim} y$, so $x'\sim z$ or $z\sim y'$.
\begin{itemize}
\item If $x'\sim z$, by maximality of $x'$, $x'\sim_T z$.
\item If $z\sim y'$, by minimality of $y'$, $z\sim_T y'$.
\end{itemize}  
So $(cl_T(x'),cl_T(y'))$ is an edge of the Hasse graph of $T$. \end{proof}

\begin{cor} \label{cor37}
Let $T$ be a finite topology on a set $A$ and let $\sim \in \CE(T)$.
\begin{enumerate}
\item Let $X \subseteq A$, $\sim$-saturated. Then $T_{\mid X}$ is connected if, and only if, $(T/\sim)_{\mid X}$ is connected.
\item Let $X\subseteq A$, included in a class of $\sim$. Then $T_{\mid X}$ is connected if, and only if, $(T|\sim)_{\mid X}$ is connected.
\end{enumerate}
\end{cor}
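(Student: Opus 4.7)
The plan is to deduce both statements directly from the explicit description of the Hasse graphs of $T/\sim$ and $T|\sim$ obtained in the preceding proposition, given that connectedness of a finite topology is by definition the connectedness of its Hasse graph.

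For the first point, since $X$ is $\sim$-saturated, it decomposes as $X=Y_1\sqcup\ldots\sqcup Y_k$, where the $Y_i$ are exactly the $\sim$-classes contained in $X$. The vertices of the Hasse graph of $(T/\sim)_{\mid X}$ are then $Y_1,\ldots,Y_k$. By the proposition, two classes $Y_i$ and $Y_j$ are adjacent in the Hasse graph of $(T/\sim)_{\mid X}$ if and only if there exist $y_i\in Y_i$ and $y_j\in Y_j$ whose $\sim_T$-classes are adjacent in the Hasse graph of $T_{\mid X}$. First, a path in $T_{\mid X}$ projects to a sequence of (possibly equal) $\sim$-classes, which after removing repetitions gives a walk in $(T/\sim)_{\mid X}$; this shows that connectedness of $T_{\mid X}$ implies connectedness of $(T/\sim)_{\mid X}$. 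Conversely, given a path between $Y_i$ and $Y_j$ in $(T/\sim)_{\mid X}$, I can lift each step to an actual edge in $T_{\mid X}$ joining some representatives, and then stitch the lifted steps together by using, inside each intermediate class $Y_l$, a path in $T_{\mid Y_l}$ between the two representatives produced at the entry and exit of $Y_l$; such a path exists because the hypothesis $\sim\in\CE(T)$ guarantees that each $T_{\mid Y_l}$ is connected.

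For the second point, since $X$ is contained in a single $\sim$-class, any two elements of $X$ are $\sim$-equivalent; in particular every edge of the Hasse graph of $T_{\mid X}$ joins $\sim$-equivalent vertices. By the proposition, the Hasse graph of $T|\sim$ has the same vertex set as that of $T$ (the $\sim_T$-classes, noting that $\sim_{T|\sim}=\sim_T$) and its edges are precisely those edges of the Hasse graph of $T$ that join $\sim$-equivalent vertices. Restricting to $X$, this means that the Hasse graphs of $T_{\mid X}$ and of $(T|\sim)_{\mid X}$ have identical vertex sets and identical edge sets, so they coincide as graphs; connectedness of one is then tautologically equivalent to connectedness of the other.

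The main obstacle will be the forward lifting step in point 1, where one must carefully combine individual lifted edges across successive $\sim$-classes by inserting intra-class paths; every other step is a bookkeeping consequence of the structural description of the Hasse graphs and of the definition of $\CE(T)$.
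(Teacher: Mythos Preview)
Your approach mirrors the paper's for both parts, but there is a genuine gap in the converse direction of part~1 that neither argument closes --- and in fact the statement as written is false.

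You assert that two $\sim$-classes $Y_i,Y_j\subseteq X$ adjacent in the Hasse graph of $(T/\sim)_{\mid X}$ can be lifted, via the preceding proposition, to a Hasse edge of $T$ between suitable representatives. But the proposition only describes the Hasse edges of the \emph{full} quasi-order $T/\sim$; passing to the restriction $(T/\sim)_{\mid X}$ can create new covering relations (when an intermediate $\sim$-class lies outside $X$), and those need not lift to anything in $T_{\mid X}$. Concretely, take $A=\{1,2,3,4\}$ with $T$ the $N$-shaped poset $1<2$, $3<2$, $3<4$, and let $\sim$ have classes $\{1\},\{2,3\},\{4\}$. One checks that $\sim\in\CE(T)$. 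With $X=\{1,4\}$ (which is $\sim$-saturated), we have $1\leq_{T/\sim}4$ via $1\leq_T 2\sim 3\leq_T 4$, so $(T/\sim)_{\mid X}$ is the chain $1<4$ and is connected; yet $1$ and $4$ are $T$-incomparable, so $T_{\mid X}$ is discrete and disconnected. The paper's proof invokes ``a path in the Hasse graph of $T/\sim$ with all its vertices in $X$'', which fails for exactly the same reason.

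Where the corollary is actually applied (in the coassociativity argument for $\delta$ on $\caltop$), the set $X$ is an equivalence class of some $\sim'\in\CE(T/\sim)$. Under that extra hypothesis one can show that every intermediate element in a chain witnessing $a\leq_{T/\sim}b$ (with $a,b\in X$) is forced to lie in $X$, because it is $\sim_{(T/\sim)/\sim'}$-equivalent to $a$ and $\sim'\in\CE(T/\sim)$ gives $\sim_{(T/\sim)/\sim'}=\sim'$. With that in hand your stitching argument goes through. So the downstream theorem is fine, but part~1 of the corollary needs a stronger hypothesis on $X$.

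Part~2 is correct. A slightly cleaner route than going through the proposition: for $x,y\in X$ one has $x\leq_{T|\sim}y\iff(x\leq_T y\text{ and }x\sim y)\iff x\leq_T y$, since any two elements of $X$ are $\sim$-equivalent. Hence $(T|\sim)_{\mid X}=T_{\mid X}$ as quasi-orders, and the two are trivially equiconnected.
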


\begin{proof}
1. $\Longleftarrow$. Let $x$, $y \in X$. There exists a path from $cl_T(x)$ to $cl_T(y)$ in the Hasse graph of $T$, 
with all its vertices in $X$. By construction of the Hasse graph of $T/\sim$, there exists a path from $cl_\sim(x)$ 
to $cl_\sim(y)$ in the Hasse graph of $T/\sim$, with all its vertices in $X$. So $X$ is $T/\sim$-connected.

$\Longrightarrow$. Let $x$, $y\in X$. There exists a path from $cl_\sim(x)$ to $cl_\sim(y)$ in the Hasse graph of $T/\sim$, 
with all its internal vertices $x=x_0,x_1,\ldots,x_k=y$ be classes of elements of $x$. 
By construction of the Hasse graph of $T/\sim$,  for any $i$ there exist $x'_i,x''_i$ with an edge between 
$cl_T(x'_i)$ and $cl_T(x''_i)$ in the Hasse graph of $T$, $x'_i\sim x_i$ and $x''_i\sim x_{i+1}$.
As the classes of $\sim$ are connected, there exists a path in the Hasse graph of $T$ between $cl_T(x'_i)$ and $cl_T(x_i)$, 
with all its vertices being equivalent to $x'_i$. As $X$ is $\sim$-saturated, all these vertices belong to $X$. 
Similarly, there exists a path in the Hasse graph of $T$ between $cl_T(x''_i)$ and 
$cl_T(x_{i+1})$, with all its vertices in $X$. We finally obtain a path in the Hasse graph of $T$ between $cl_T(x)$ and $cl_T(y)$,
so $X$ is $T$-connected.\\

2. $\Longleftarrow$. There exists a path from $cl_T(x)$ to $cl_T(y)$ in the Hasse graph of $T$, with all its vertices in $X$.
As $X$ is included in a single class of $\sim$, all the edges between them belong to the Hasse graph of $T|\sim$, 
so $X$ is $T|\sim$-connected.

$\Longrightarrow$. Immediate, as the Hasse graph of $(T|\sim)_{\mid X}$ 
is a subgraph of the Hasse graph of $T_{\mid X}$.
\end{proof}

\begin{theo}
The species $\caltop$ is a double algebra with the following coproducts:
\begin{enumerate}
\item For any quasi-poset $T\in \caltop[A\sqcup B]$,
\[\Delta_{A,B}(T)=\begin{cases}
T_{\mid A}\otimes T_{\mid B}\mbox{ if $B$ is an open set of $T$},\\
0\mbox{ otherwise}.
\end{cases}\]
\item For any quasi-poset $T\in \caltop[A]$,
\[\delta_A(T)=\sum_{\sim \in \CE(T)} T/\sim \otimes T|\sim.\]
The counit $\varepsilon'$ of $\delta$ is given by:
\[\varepsilon'_A(T)=\begin{cases}
1\mbox{ if $\leq_T$ is an equivalence},\\
0\mbox{ otherwise}.
\end{cases}\]
\end{enumerate} \end{theo}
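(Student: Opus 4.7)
The plan is to verify the axioms of a double twisted bialgebra in turn, in the style of the preceding section on graphs, while carefully tracking the interplay between the quotients $T/\sim$ and the restrictions $T|\sim$ via Lemma~\ref{lemme35} and Corollary~\ref{cor37}.

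First, the twisted bialgebra structure $(\caltop,m,\Delta)$ is routine: coassociativity of $\Delta$ follows because both $(\Delta_{A,B}\otimes Id)\Delta_{A\sqcup B,C}$ and $(Id\otimes \Delta_{B,C})\Delta_{A,B\sqcup C}$ applied to $T\in\caltop[A\sqcup B\sqcup C]$ yield $T_{\mid A}\otimes T_{\mid B}\otimes T_{\mid C}$ precisely when $C$ and $B\sqcup C$ are both open in $T$, and zero otherwise. Multiplicativity of $\Delta$ reduces to the fact that the open sets of a disjoint union $T_1T_2$ are exactly disjoint unions of an open set of $T_1$ and an open set of $T_2$.

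For the bialgebra of the second kind $(\caltop,m,\delta)$, the counit axioms come from a case analysis: in $(\varepsilon'\otimes Id)\circ\delta(T)$ the only $\sim\in\CE(T)$ for which $\leq_{T/\sim}$ is an equivalence is the one whose classes are the connected components of $T$ (any $\leq_T$-edge between distinct classes would force a pair that is $\sim_{T/\sim}$- but not $\sim$-related), and for this $\sim$ one has $T|\sim=T$; symmetrically, $(Id\otimes\varepsilon')\circ\delta(T)$ singles out $\sim=\sim_T$, for which $T/\sim=T$. Multiplicativity of $\delta$ reduces to the observation that any $\sim\in\CE(T_1T_2)$ splits as $\sim_1\sqcup\sim_2$ with $\sim_i\in\CE(T_i)$, since each class is required to be connected. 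Coassociativity of $\delta$ amounts to a bijection between pairs $(\sim_1,\sim_2)$ with $\sim_1\in\CE(T)$ and $\sim_2\in\CE(T/\sim_1)$, and pairs $(\sim',\sim'')$ with $\sim'\in\CE(T)$, $\sim''\in\CE(T|\sim')$ and $\sim''\subseteq\sim'$; the correspondence sends $(\sim_1,\sim_2)$ to $(\sim_2,\sim_1)$, regarded as equivalences on $A$. The three identities of Lemma~\ref{lemme35} then match the iterated coproducts term by term, and both implications of Corollary~\ref{cor37} ensure that the connectivity conditions transport correctly.

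For the cointeraction between $\Delta$ and $\delta$, given $T\in\caltop[A\sqcup B]$ with $B$ open, I would identify the set of $\sim\in\CE(T)$ for which $A$ and $B$ are both $\sim$-saturated with $\CE(T_{\mid A})\times\CE(T_{\mid B})$ via $\sim=\sim_A\sqcup\sim_B$. A short chase using openness of $B$ shows that any chain witnessing $x\leq_{T/\sim}y$ with $x,y\in A$ stays inside $A$, so that $(T/\sim)_{\mid A}=T_{\mid A}/\sim_A$ and $(T/\sim)_{\mid B}=T_{\mid B}/\sim_B$; the corresponding identities for $T|\sim$ are immediate, and the factorization $T|\sim=(T_{\mid A}|\sim_A)(T_{\mid B}|\sim_B)$ then gives the required equality $(\Delta_{A,B}\otimes Id)\circ\delta_{A\sqcup B}=m_{1,3,24}\circ(\delta_A\otimes\delta_B)\circ\Delta_{A,B}$. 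The remaining counit condition $(\varepsilon\otimes Id)\circ\delta_\emptyset(1_{\caltop})=1_{\caltop}\otimes 1_{\caltop}$ is trivial. The main obstacle will be the coassociativity of $\delta$: setting up the bijection and checking that both compatibility conditions of $\CE$ transport correctly under nested quotients and restrictions genuinely requires all three identities of Lemma~\ref{lemme35} together with both directions of Corollary~\ref{cor37}.
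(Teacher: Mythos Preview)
Your proposal is correct and follows essentially the same route as the paper: the same reduction of coassociativity of $\delta$ to a bijection between the index sets (the paper's $\mathcal{X}$ and $\mathcal{Y}$), using exactly Lemma~\ref{lemme35} for the identification of the three tensor factors and Corollary~\ref{cor37} to transport the connectivity conditions; and the same reduction of the cointeraction axiom to identifying $\sim$-saturated $\sim\in\CE(T)$ with $\CE(T_{\mid A})\times\CE(T_{\mid B})$ when $B$ is open. The only cosmetic difference is that the paper records the refinement $\sim_1\subseteq\sim_2$ as a preliminary observation (it is automatic from $\sim_{T/\sim_1}\subseteq\sim_2$) rather than putting it in the definition of the target set of the bijection, and phrases the cointeraction step via ``$J$ is a $\sim$-saturated open set of $T$'' rather than your chain-stays-in-$A$ argument, but these amount to the same thing.
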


\begin{proof}
Let $T$ be a quasi-poset on a set $E$, and let $E=A\sqcup B\sqcup C$.  
\begin{align*}
&(\Delta_{A,B}\otimes \id)\circ \Delta_{A\sqcup B,C}(T)\\
&=\begin{cases}
T_{\mid A}\otimes T_{\mid B}\otimes T_{\mid C}\mbox{ if $C$ is an open set of $T$ and $B$ is an open set of $T_{\mid A\sqcup B}$},\\
0\mbox{ otherwise},
\end{cases}\\
&=\begin{cases}
T_{\mid A}\otimes T_{\mid B}\otimes T_{\mid C}\mbox{ if $B \sqcup C$ and $C$ are open sets of $T$},\\
0\mbox{ otherwise},
\end{cases}\\
&=\begin{cases}
T_{\mid A}\otimes T_{\mid B}\otimes T_{\mid C}
\mbox{ if $B\sqcup C$ is an open set of $T$ and $C$ is an open set of $T_{\mid B\sqcup C}$},\\
0\mbox{ otherwise},
\end{cases}\\
&=(\id \otimes \Delta_{B,C})\circ \Delta_{A,B\sqcup C}(T).
\end{align*}
So $\Delta$ is coassociative. 

Let $T$ and $T'$ be a finite topology on $A$ and $A'$, respectively. Let $B$ and $B'$ be finite sets such that $A\sqcup A'=B\sqcup B'$. 
\begin{align*}
\Delta_{B,B'}(TT')&=\begin{cases}
(TT')_{\mid B}\otimes (TT')_{\mid B'}\mbox{ if $B'$ is an open set of $TT'$},\\
0\mbox{ otherwise},
\end{cases}\\
&=\begin{cases}
T_{\mid A\cap B}T'_{\mid A'\cap B}\otimes T_{\mid A\cap B'}T'_{\mid A'\cap B'}\mbox{ if $A\cap B'$
and $A'\cap B'$ are open sets of $T$ and $T'$},\\
0\mbox{ otherwise},\\
\end{cases}\\
&=\Delta_{A\cap B,A\cap B'}(T)\Delta_{A'\cap B,A'\cap B'}(T').
\end{align*}
So $(\caltop,m,\Delta)$ is a twisted bialgebra.

Let $T\in \caltop[A]$ and $T'\in \caltop[A']$. If $\sim \in \CE(TT')$, as its classes are connected, $\sim$ can be uniquely written as 
$\sim=\sim'\sqcup \sim''$, with $\sim'\in \CE(T)$ and $\sim''\in \CE(T'')$. 
Conversely, if  $\sim'\in \CE(T)$ and $\sim''\in \CE(T'')$, then $\sim=\sim'\sqcup \sim'' \in \CE(TT')$. Hence:
\begin{align*}
\delta_{A\sqcup A'}(TT')&=\sum_{\sim'\in \CE(T),\:\sim''\in \CE(T')} (TT')/(\sim' \sqcup \sim'') \otimes (TT')|(\sim'\sqcup \sim'')\\
&=\sum_{\sim'\in \CE(T),\:\sim''\in \CE(T')}  (T/\sim')(T'/\sim'')\otimes (T|\sim')(T'|\sim'')\\
&=\delta_A(T)\delta_{A'}(T').
\end{align*}

Let $T\in \caltop[A]$. Let $\sim_0=\sim_T$ and $\sim_1$ be the equivalence which classes are the connected components of $T$.
Then both belong to $\CE(T)$. For any $\sim \in \CE(T)$:
\begin{align*}
\leq_{T/\sim} \mbox{ is an equivalence}&\Longleftrightarrow \sim=\sim_1,\\
\leq_{T|\sim} \mbox{ is an equivalence}&\Longleftrightarrow \sim=\sim_0.
\end{align*}
Consequently:
\begin{align*}
(\varepsilon'\otimes \id)\circ \delta(A)&=T|\sim_1+0=T,&
(\id \otimes \varepsilon')\circ \delta_A(T)&=T/\sim_0=T.
\end{align*}
So $\varepsilon'$ is the counit of $\delta$.\\

Let $T\in \caltop[A]$. 
\begin{align*}
(\delta_A\otimes \id)\circ \delta(T)&=\sum_{\sim\in \CE(T), \sim'\in \CE(T/\sim)} 
(T/\sim)/\sim' \otimes (T/\sim)|\sim'\otimes T|\sim,\\
(\id \otimes \delta_A)\circ \delta_A(T)&=\sum_{\sim'\in \CE(T),\sim\in \CE(T|\sim')} 
T/\sim'\otimes (T|\sim')/\sim \otimes (T|\sim')|\sim.
\end{align*}
We consider the two sets:
\begin{align*}
\mathcal{X}&=\{(\sim,\sim'), \sim\in \CE(T), \sim'\in \CE(T/\sim)\},&
\mathcal{Y}&=\{(\sim,\sim'),\sim'\in \CE(T),\sim\in \CE(T|\sim')\}.
\end{align*}
Let $(\sim,\sim') \in \mathcal{X}$. If $x\sim y$, $x\sim_{T/\sim} y$ and $x\sim_{(T/\sim)/\sim'} y$. As $\sim' \in \CE(T/\sim)$, $x\sim' y$.
Let $(\sim,\sim') \in \mathcal{Y}$. If $x\sim y$, then $x$ and $y$ are in the same connected component of $T|\sim'$ 
as the classes of $\sim$ are $T|\sim'$-connected. Hence, $x\sim' y$. By Lemma \ref{lemme35}:
\begin{align*}
(\delta_A\otimes \id)\circ \delta(T)&=\sum_{(\sim,\sim')\in \mathcal{X}} T/\sim' \otimes (T/\sim)|\sim'\otimes T|\sim,\\
(\id \otimes \delta_A)\circ \delta_A(T)&=\sum_{(\sim,\sim')\in \mathcal{Y}} T/\sim'\otimes  (T/\sim)|\sim'\otimes T|\sim.
\end{align*}
In order to prove the coassociativity of $\delta$, it is enough to prove that $\mathcal{X}=\mathcal{Y}$. 

Let $(\sim,\sim')\in \mathcal{X}$. Let $X'$ be an equivalence class of $\sim'$. Then it is $\sim'$-saturated, so also $\sim$-saturated.
By Corollary \ref{cor37}, as $X'$ is $T/\sim$-connected, it is also $T$-connected. Let $X$ be an equivalence class of $\sim$.
Then it is included in a single class of $\sim'$. By Corollary \ref{cor37}, 
as $X$ is $T$-connected, it is also $T|\sim'$-connected.

By Lemma \ref{lemme35}, $\sim_{T/\sim'}=\sim_{(T/\sim)/\sim'}=\sim'$ as $\sim'\in \CE(T/\sim)$. So $\sim'\in \CE(T)$.
Consequently, $\sim_{(T|\sim')/\sim}=\sim_{(T/\sim)|\sim'}$. For any $x,y\in A$, as $\sim \in \CE(T)$:
\begin{align*}
x\sim_{(T/\sim)|\sim'} y&\Longleftrightarrow x\sim' y\mbox{ and } x\sim_{T/\sim} y\\
&\Longleftrightarrow x\sim' y\mbox{ and }x\sim y\\
&\Longleftrightarrow x\sim y.
\end{align*}
 We obtain that $\sim \in \CE(T|\sim')$. Therefore, $\mathcal{X}\subseteq \mathcal{Y}$.\\
 
 Let $(\sim,\sim')\in \mathcal{Y}$. Let $X$ be a class of $\sim$. Then $X$ is $T|\sim'$-connected. 
 By Corollary \ref{cor37}, it is $T$-connected.
 Let $X'$ be a class of $\sim'$. Then it is $\sim$-saturated and $T$-connected. By Corollary \ref{cor37}, 
 it is $T/\sim$-connected.\\
 
 By Lemma \ref{lemme35}, $\sim_{(T/\sim)/\sim'}=\sim_{T/\sim'}=\sim'$ as $\sim' \in \CE(T)$, so $\sim'\in \CE(T/\sim)$.
 Let $x, y \in A$. If $x\sim_{T/\sim} y$, as $\sim \in \CE(T|\sim')$, $x$ and $y$ are 
 in the same connected component of $T|\sim'$,
 that is to say in the same class of $\sim'$ as $\sim'\in \CE(T)$, so $x\sim' y$. Consequently, $x\sim_{(T/\sim)|\sim'} y$, 
 so $x\sim_{(T|\sim')/\sim} y$ and $x\sim y$, as $\sim \in \CE(T|\sim')$.
 Hence, $\sim\in \CE(T)$. We proved that $\mathcal{Y}\subseteq \mathcal{X}$.\\
 
Let $T\in \caltop[A]$ and $A=I\sqcup J$. 
\begin{align*}
(\Delta_{I,J} \otimes \id)\circ \delta_A(T)&=\sum_{\substack{\sim \in \CE(T),\\ J\in O(T/\sim)}}
 (T/\sim)_{\mid I} \otimes (T/\sim)_{\mid J} \otimes T|\sim\\
&=\sum_{\substack{\sim \in \CE(T),\\ J\in O(T), \:\mbox{\scriptsize $\sim$-saturated}}} 
(T/\sim)_{\mid I} \otimes (T/\sim)_{\mid J} \otimes T|\sim\\
&=\begin{cases}
\displaystyle \sum_{\substack{\sim' \in \CE(T_{\mid I}),\\ \sim''\in \CE(T_{\mid J})}} 
(T_{\mid I})/\sim'\otimes (T_{\mid J})/\sim''\otimes T|(\sim' \sqcup \sim'') \mbox{ if $J\in O(T)$},\\
0\mbox{ otherwise},
\end{cases}\\
&=\begin{cases}
\displaystyle \sum_{\substack{\sim' \in \CE(T_{\mid I}),\\ \sim''\in \CE(T_{\mid J})}}
(T_{\mid I})/\sim'\otimes (T_{\mid J})/\sim''\otimes (T_{\mid I})|\sim' (T_{\mid J})|\sim''  \mbox{ if $J\in O(T)$},\\
0\mbox{ otherwise},
\end{cases}\\
&=m_{1,3,24}\circ (\delta_I \otimes \delta_J)\circ \Delta_{I,J}(T).
\end{align*} 
So $T$ is indeed a double twisted bialgebra. \end{proof}

\begin{example} If $A$, $B$, $C$ are finite sets:
\begin{align*}
\Delta(\tdun{$A$})&=\tdun{$A$}\otimes 1+1\otimes \tdun{$A$},\\
\Delta(\tddeux{$A$}{$B$})&=\tddeux{$A$}{$B$}\otimes 1+\tdun{$A$}\otimes \tdun{$B$}+1\otimes \tddeux{$A$}{$B$},\\
\Delta(\tdtroisun{$A$}{$C$}{$B$})&=\tdtroisun{$A$}{$C$}{$B$}\otimes 1
+\tddeux{$A$}{$B$}\otimes \tdun{$C$}+\tddeux{$A$}{$C$}\otimes \tdun{$B$}+
\tdun{$A$}\otimes \tdun{$B$}\tdun{$C$}+1\otimes \tdtroisun{$A$}{$C$}{$B$},\\
\Delta(\tdtroisdeux{$A$}{$B$}{$C$})&=\tdtroisdeux{$A$}{$B$}{$C$}\otimes 1+
\tddeux{$A$}{$B$}\otimes \tdun{$C$}+\tdun{$A$}\otimes \tddeux{$B$}{$C$}+
1\otimes \tdtroisdeux{$A$}{$B$}{$C$};\\ \\
\delta(\tdun{$A$})&=\tdun{$A$}\otimes \tdun{$A$},\\
\delta(\tddeux{$A$}{$B$})&=\tddeux{$A$}{$B$}\otimes \tdun{$A$}\tdun{$B$}+
\tdun{$A\sqcup B$} \hspace{6mm} \otimes \tddeux{$A$}{$B$},\\
\delta(\tdtroisun{$A$}{$C$}{$B$})&=\tdtroisun{$A$}{$C$}{$B$}\otimes \tdun{$A$}\tdun{$B$}\tdun{$C$}
+\tddeux{$A\sqcup B$}{$C$}\hspace{6mm}\otimes \tddeux{$A$}{$B$}\tdun{$C$}+
\tddeux{$A\sqcup C$}{$B$}\hspace{6mm}\otimes \tddeux{$A$}{$C$}\tdun{$B$}+
\tdun{$A\sqcup B\sqcup C$}\hspace{12mm}\otimes \tdtroisun{$A$}{$C$}{$B$},\\
\delta(\tdtroisdeux{$A$}{$B$}{$C$})&=\tdtroisdeux{$A$}{$B$}{$C$}\otimes \tdun{$A$}\tdun{$B$}\tdun{$C$}
+\tddeux{$A\sqcup B$}{$C$}\hspace{6mm}\otimes \tddeux{$A$}{$B$}\tdun{$C$}+
\tddeux{$A$}{$B\sqcup C$}\hspace{6mm}\otimes \tdun{$A$}\tddeux{$B$}{$C$}+
\tdun{$A\sqcup B\sqcup C$}\hspace{12mm}\otimes \tdtroisdeux{$A$}{$B$}{$C$}.
\end{align*}
\end{example}

\subsection{Morphism to $\calcomp$}

Let us describe the unique morphism $\phi:\caltop\longrightarrow \calcomp$ of double twisted bialgebra.

\begin{defi}
Let $T=(A,\leq_T)$ be a quasi-poset. 
\begin{enumerate}
\item We denote by $L'(T)$ the set of surjections $f:A\longrightarrow \underline{\max(f)}$ such that:
\begin{itemize}
\item For any $a,b\in A$, if $a\leq_T b$, then $f(a)\leq f(b)$.
\end{itemize}
\item We denote by $L(T)$ the set of surjections $f:A\longrightarrow \underline{\max(f)}$ such that:
\begin{itemize}
\item For any $a,b\in A$, if $a\leq_T b$, then $f(a)\leq f(b)$.
\item For any $a,b\in A$, if $a\leq_T b$ and $f(a)=f(b)$, then $a\sim_T b$.
\end{itemize}\end{enumerate}\end{defi}

\begin{prop} \label{prop40}
Let $\lambda \in \chara(\caltop)$. The unique twisted bialgebra morphism $\psi:\caltop\longrightarrow \calcomp$
such that $\varepsilon'\circ \psi=\lambda$ (Theorem \ref{theo17}) is given by the following:
for any quasi-poset $T=(A,\leq_T)$,
\begin{align*}
\psi(T)&=\sum_{f\in L'(T)}\lambda(T_{\mid f^{-1}(1)})\ldots\lambda(T_{\mid f^{-1}(\max(f))})
(f^{-1}(1),\ldots,f^{-1}(\max(f)).
\end{align*}
\end{prop}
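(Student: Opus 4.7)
The plan is to apply Theorem \ref{theo17} directly with $\alpha=\lambda$, and then to unwind what the iterated coproduct $\Delta_{A_1,\ldots,A_k}$ looks like on a quasi-poset. Since $\caltop$ is a connected twisted bialgebra and $\lambda$ is a species morphism $\caltop\longrightarrow \com$, Theorem \ref{theo17} yields a unique coalgebra morphism $\psi:\caltop\longrightarrow \calcomp$ with $\varepsilon'\circ \psi=\lambda$, given on a quasi-poset $T\in \caltop[A]$ by
\[
\psi(T)=\sum_{(A_1,\ldots,A_k)\in\comp[A]}\lambda^{\otimes k}\circ\Delta_{A_1,\ldots,A_k}(T)\;(A_1,\ldots,A_k).
\]
So the whole task reduces to identifying $\Delta_{A_1,\ldots,A_k}(T)$.

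First I would prove by induction on $k$ (using the inductive definition of the iterated coproduct and the formula for $\Delta_{I,J}$) that for any $(A_1,\ldots,A_k)\in\comp[A]$,
\[
\Delta_{A_1,\ldots,A_k}(T)=\begin{cases} T_{\mid A_1}\otimes\cdots\otimes T_{\mid A_k} & \text{if } A_k,\,A_{k-1}\sqcup A_k,\,\ldots,\,A_2\sqcup\cdots\sqcup A_k \text{ are open in } T,\\ 0 & \text{otherwise.} \end{cases}
\]
The base case $k=1$ is trivial, and the inductive step uses that $\Delta_{A_1\sqcup\cdots\sqcup A_{k-1},A_k}(T)$ is nonzero exactly when $A_k$ is open in $T$, in which case it equals $T_{\mid A_1\sqcup\cdots\sqcup A_{k-1}}\otimes T_{\mid A_k}$; applying $\Delta_{A_1,\ldots,A_{k-1}}$ to the first tensorand and noting that a set $B\subseteq A_1\sqcup\cdots\sqcup A_{k-1}$ is open in $T_{\mid A_1\sqcup\cdots\sqcup A_{k-1}}$ iff $B\sqcup A_k$ is open in $T$ closes the induction.

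Next I would translate the "nested open" condition into a condition on surjections. Given $(A_1,\ldots,A_k)\in\comp[A]$, define $f:A\longrightarrow \underline{k}$ by $f(a)=i$ iff $a\in A_i$; this is a surjection with $\max(f)=k$, and conversely every such surjection arises from a unique composition. Unwinding the definition of the open sets of $T$, the condition that each $A_i\sqcup A_{i+1}\sqcup\cdots\sqcup A_k$ be open in $T$ is equivalent to saying: whenever $a\leq_T b$, one has $f(a)\leq f(b)$, i.e. $f\in L'(T)$. Thus the compositions giving a nonzero contribution to $\psi(T)$ are exactly those associated to elements of $L'(T)$, and for such a composition $\lambda^{\otimes k}\circ\Delta_{A_1,\ldots,A_k}(T)=\lambda(T_{\mid f^{-1}(1)})\cdots\lambda(T_{\mid f^{-1}(\max(f))})$. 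Substituting into the formula from Theorem \ref{theo17} gives exactly the claimed expression for $\psi(T)$.

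There is no real obstacle: the main point is the inductive verification of the iterated coproduct formula and the straightforward dictionary between compositions of $A$ and surjections $A\to\underline{k}$. Every other ingredient is already supplied by Theorem \ref{theo17} and the definition of $\Delta$ on $\caltop$.
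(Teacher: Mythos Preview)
Your proposal is correct and follows essentially the same approach as the paper: both apply Theorem \ref{theo17}, identify the compositions $(A_1,\ldots,A_k)$ giving a nonzero iterated coproduct as those with $A_i\sqcup\cdots\sqcup A_k$ open in $T$ for each $i$, and then set up the bijection with $L'(T)$. The paper simply packages the nonvanishing compositions into a set $S(T)$ and states the bijection $f\mapsto(f^{-1}(1),\ldots,f^{-1}(\max(f)))$ without the explicit induction you sketch, but the content is the same.
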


\begin{proof}
We denote by $S(T)$ the set of sequences $(A_1,\ldots,A_k)$ of subsets of $A$ such that:
\begin{enumerate}
\item For any $i\in \underline{k}$, $A_i$ is nonempty and $A_1\sqcup \ldots \sqcup A_k=A$.
\item For any $i\in \underline{k}$, $A_i\sqcup \ldots \sqcup A_k$ is an open set of $T$.
\end{enumerate}
By Theorem \ref{theo17}:
\[\psi(T)=\sum_{(A_1,\ldots,A_k)\in S(T)}\lambda(T_{\mid A_1}\ldots T_{\mid A_k})
(A_1,\ldots,A_k).\]
The following map is a bijection:
\[\left\{\begin{array}{rcl}
L'(T)&\longrightarrow&S(T)\\
f&\longrightarrow&(f^{-1}(1),\ldots,f^{-1}(\max(f))),
\end{array}\right.\]
which implies the result.
\end{proof}

In order to obtain the morphism $\phi$ of Theorem \ref{theo29}, we  consider $\lambda=\varepsilon'$. 
Firstly, observe that $L(T)\subseteq L'(T)$. Moreover, for any $f\in L'(T)$, the following assertions are equivalent:
\begin{enumerate}
\item $f\in L(T)$.
\item $\forall i \in \underline{\max(f)}$, $\leq_{T_{\mid f^{-1}(i)}}$ is an equivalence.
\item $\forall i \in \underline{\max(f)}$, $\varepsilon'(T_{\mid f^{-1}(i)})=1$.
\item $\forall i \in \underline{\max(f)}$, $\varepsilon'(T_{\mid f^{-1}(i)})\neq 0$.
\end{enumerate}
We obtain:

\begin{theo}\label{theo41}
The unique double bialgebra morphism from $\caltop$ to $\calcomp$ of Theorem \ref{theo29} is given 
by the following: for any quasi-poset $T$,
\[\phi(T)=\sum_{f\in L(T)} (f^{-1}(1),\ldots,f^{-1}(\max(f))). \]
\end{theo}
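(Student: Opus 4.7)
The plan is to deduce this as a straightforward specialization of Proposition \ref{prop40}. By Theorem \ref{theo29}, the unique morphism $\phi:\caltop\longrightarrow \calcomp$ of double twisted bialgebras is characterized by the condition $\varepsilon'_{\calcomp}\circ \phi=\varepsilon'_{\caltop}$, and this morphism is in particular a twisted bialgebra morphism. Hence $\phi$ coincides with the morphism $\psi$ produced by Proposition \ref{prop40} applied to the character $\lambda=\varepsilon'_{\caltop}\in \chara(\caltop)$.

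Thus I would start from the formula given by Proposition \ref{prop40}:
\[
\phi(T)=\sum_{f\in L'(T)}\varepsilon'(T_{\mid f^{-1}(1)})\ldots \varepsilon'(T_{\mid f^{-1}(\max(f))})
\,(f^{-1}(1),\ldots,f^{-1}(\max(f))),
\]
and then show that the scalar coefficient collapses to the indicator function of $L(T)\subseteq L'(T)$. For this, I would invoke the four-way equivalence already recorded in the excerpt just before the theorem: for $f\in L'(T)$, the condition $f\in L(T)$ is equivalent to each $\leq_{T_{\mid f^{-1}(i)}}$ being an equivalence relation, which is equivalent to each $\varepsilon'(T_{\mid f^{-1}(i)})$ being nonzero (equal to $1$), which is equivalent to the full product being nonzero (equal to $1$). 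Substituting this dichotomy into the sum yields exactly
\[
\phi(T)=\sum_{f\in L(T)}(f^{-1}(1),\ldots,f^{-1}(\max(f))),
\]
as claimed.

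There is essentially no hard step: the existence and uniqueness of $\phi$ is Theorem \ref{theo29}, the explicit sum over $L'(T)$ is Proposition \ref{prop40}, and the characterization of $L(T)$ within $L'(T)$ via the counit $\varepsilon'$ is the definitional chain of equivalences listed just above the theorem. The only minor point worth verifying carefully is that the character of $\caltop$ picked out by Theorem \ref{theo29} is indeed $\varepsilon'_{\caltop}$ (and not some other character on $\caltop$), which follows from Theorem \ref{theo29} because the morphism $\phi$ to $\calcomp$ is required to be a morphism of double twisted bialgebras, so in particular it must commute with the counits $\varepsilon'$ of the second coproducts, forcing $\varepsilon'_{\calcomp}\circ \phi=\varepsilon'_{\caltop}$.
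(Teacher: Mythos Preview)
Your proposal is correct and follows exactly the same route as the paper: specialize Proposition \ref{prop40} to $\lambda=\varepsilon'_{\caltop}$ (which Theorem \ref{theo29} forces), then use the equivalences listed just before the theorem to collapse the sum over $L'(T)$ to one over $L(T)$. There is nothing to add.
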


\begin{example} If $A$, $B$ and $C$ are finite sets:
\begin{align*}
\phi(\tdun{$A$})&=(A),\\
\phi(\tddeux{$A$}{$B$})&=(A,B),\\
\phi(\tdtroisun{$A$}{$C$}{$B$})&=(A,B,C)+(A,C,B)+(A,B\sqcup C),\\
\phi(\tdtroisdeux{$A$}{$B$}{$C$})&=(A,B,C).
\end{align*}
\end{example}

\subsection{Structure of $\caltop$}

\begin{theo}\label{theo42}
The bialgebra $\caltop$ is isomorphic to a shuffle twisted bialgebra.
\end{theo}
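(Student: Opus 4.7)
The plan is to combine Theorem \ref{theo18}(2) with the cofreeness criterion of Theorem \ref{theo11}. Since $\caltop$ equipped with its disjoint union product $\cdot$ is a connected \emph{commutative} twisted bialgebra, Theorem \ref{theo18}(2) reduces the statement to the assertion that $(\caltop,\Delta)$ is cofree as a twisted coalgebra.

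To apply Theorem \ref{theo11}, I will introduce an auxiliary associative product $\star$ on $\caltop$, unrelated to the disjoint union. For quasi-posets $T=(A,\leq_T)$ and $T'=(A',\leq_{T'})$, define $T\star T'\in \caltop[A\sqcup A']$ to be the quasi-poset whose order is generated by $\leq_T$ on $A$, by $\leq_{T'}$ on $A'$, and by the extra relations $a\leq a'$ for every $a\in A$ and $a'\in A'$; in other words, $T'$ is placed entirely above $T$. This operation is natural in bijections, associative, and admits the empty topology as unit, so $(\caltop,\star)$ is a twisted algebra.

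The heart of the argument is the verification of the compatibility condition of Theorem \ref{theo11}(2) between $\star$ and $\Delta$: for all $I\sqcup J=I'\sqcup J'$ and all $T\in \caltop[I']$, $T'\in \caltop[J']$,
\[\Delta_{I,J}(T\star T')=\begin{cases}
(\star_{I',I\setminus I'}\otimes Id)(T\otimes \Delta_{I\setminus I',J}(T'))& \text{if }I'\subseteq I,\\
(Id\otimes \star_{J\setminus J',J'})(\Delta_{I,J\setminus J'}(T)\otimes T')& \text{if }J'\subseteq J,\\
0 & \text{otherwise.}
\end{cases}\]
This reduces, via the definition of $\Delta$, to determining when $J$ is an open set of $T\star T'$. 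The key observation is that because every element of $I'$ lies below every element of $J'$ in $T\star T'$, the set $J$ is open in $T\star T'$ if and only if either (i) $I'\subseteq I$ and $J$ is open in $T'$, or (ii) $J'\subseteq J$ and $J\setminus J'$ is open in $T$. If neither $I'\subseteq I$ nor $J'\subseteq J$, one picks $x\in I'\cap J$ and $y\in J'\cap I$, observes $x\leq y$ in $T\star T'$ while $x\in J$, $y\notin J$, so $J$ fails to be open and both sides vanish. In case (i) the restrictions factor as $(T\star T')_{\mid I}=T\star T'_{\mid I\setminus I'}$ and $(T\star T')_{\mid J}=T'_{\mid J}$, and these match the right-hand side after expanding $\Delta_{I\setminus I',J}(T')$ (which is nonzero precisely when $J$ is open in $T'$); case (ii) is symmetric. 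I expect this case analysis, though elementary, to be the only nontrivial step.

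Theorem \ref{theo11} then yields the cofreeness of $(\caltop,\Delta)$, and Theorem \ref{theo18}(2), applied to the connected commutative twisted bialgebra $(\caltop,\cdot,\Delta)$, produces the desired isomorphism of twisted bialgebras between $\caltop$ and a shuffle twisted bialgebra $(\cot(\calQ),\shuffle,\Delta)$.
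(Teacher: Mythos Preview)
Your proposal is correct and follows essentially the same route as the paper: both reduce the claim via Theorem \ref{theo18}(2) to cofreeness of $(\caltop,\Delta)$, and both verify cofreeness by applying Theorem \ref{theo11} with the ordinal-sum product (your $\star$, the paper's $\oast$) that stacks the second topology above the first. Your open-set analysis for $J$ in $T\star T'$ is exactly the paper's case split, just phrased slightly more explicitly.
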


\begin{proof} By theorem \ref{theo18}, it is enough to prove that $\caltop$ is cofree.
We shall apply Theorem \ref{theo11}. Let us consider the product $\oast$ (joint product)
defined on $\caltop$ by the following:
if $T=(A,\leq_T)$ and $S=(B,\leq_S)$ are quasi-posets, then, for any $x,y\in A\sqcup B$,
\begin{align*}
x\leq_{S\oast T} y&\mbox{ if }(x,y)\in A^2\mbox{ and }x\leq_S y\\
&\mbox{ or }(x,y)\in B^2\mbox{ and }x\leq_T y\\
&\mbox{ or }(x,y)\in A\times B.
\end{align*}
The open sets of $S\oast T$ are:
\begin{itemize}
\item The open sets of $T$;
\item The sets $O\sqcup B$, where $O$ is an open set of $S$.
\end{itemize}
Hence, if $A\sqcup B=A'\sqcup B'$:
\begin{enumerate}
\item If $B'$ is an open set of $T$, then $B'\subseteq B$ and:
\[\Delta_{A',B'}(S\oast T)=S\oast T_{\mid B\setminus B'}\otimes T_{\mid B'}
=(S\otimes 1)\oast\Delta_{B\setminus B',B'}(T).\]
\item If $A'$ is a closed set of $S$, then $A'\subseteq A$ and:
\[\Delta_{A',B'}(S\oast T)=S_{\mid A'}\otimes S_{\mid A\setminus A'}\oast T
=\Delta_{A',A\setminus A'}(S)\oast(1\otimes T).\]
\item Otherwise, $\Delta_{A',B'}(S\oast T)=0$; if $A'\subseteq A$, $\Delta_{A',A\setminus A}(S)=0$;
if $B'\subseteq B$, $\Delta_{B\setminus B',B'}(T)=0$.
\end{enumerate}
Therefore, the product $\oast$ satisfies the properties of Theorem \ref{theo11}. \end{proof}

\section{Graduation and homogeneous morphisms}

\subsection{Definition}

\begin{defi} \label{defi43}
Let $\calP$ be a twisted bialgebra. We shall say that $\calP$ is graded if there exists a  family of subspecies
$(\calP_n)_{n\geq 0}$ such that
\[\calP=\bigoplus_{n\geq 0} \calP_n,\]
with:
\begin{align*}
&\forall k,l\in \N,&m(\calP_k\otimes \calP_l)&\subseteq \calP_{k+l},\\
&\forall n\in \N,&\Delta(\calP_n)&\subseteq \bigoplus_{k+l=n}\calP_k\otimes \calP_l.
\end{align*}
We also  assume that for any finite set $A$:
\[\calP_0[A]=\begin{cases}
\calP[A]\mbox{ if }A=\emptyset,\\
(0)\mbox{ otherwise}.
\end{cases}\]
If $\calP$ and $\calQ$ are two graded twisted bialgebras and if $\phi:\calP\longrightarrow \calQ$ is a morphism,
we shall say that $\phi$ is homogeneous if for any $n\geq 0$, $\phi(\calP_n)\subseteq \calQ_n$.
\end{defi}

\begin{example}
\begin{enumerate}
\item $\calgr$ is graded: for any $n\in \N$, for any finite set $A$, $\calgr_n[A]$ is the vector space of 
graphs in $\calgr[A]$ with $n$ vertices. For example, if $A$ is a finite set:
\begin{align*}
\calgr_1[A]&=\Vect(\tdun{$A$}),\\
\calgr_2[A]&=\bigoplus_{(B,C)\in \comp[A]}\Vect(\tddeux{$B$}{$C$},\tdun{$B$}\tdun{$C$}).
\end{align*}
\item $\caltop$ is graded: for any $n\in \N$, for any finite set $A$, $\caltop_n[A]$ is the vector space of 
finite topologies on $A$ with $n$ equivalence classes. For example, if $A$ is a finite set:
\begin{align*}
\caltop_1[A]&=\Vect(\tdun{$A$}),\\
\caltop_2[A]&=\bigoplus_{(B,C)\in \comp[A]}\Vect(\tddeux{$B$}{$C$},\tddeux{$C$}{$B$},\tdun{$B$}\tdun{$C$}).
\end{align*}
\item Let $\calP$ be a species with $\calP[\emptyset]=(0)$. The shuffle twisted bialgebra $(coT(\calP),\shuffle,\Delta)$ 
is graded by the length: for any $n\in \N$, $coT(\calP)_n=\calP^{\otimes n}$.
\end{enumerate}
\end{example}

If $\calP$ is graded, for any $q\in \K$, we obtain a twisted bialgebra endomorphism:
\[\iota_q:\left\{\begin{array}{rcl}
\calP&\longrightarrow&\calP\\
x\in \calP_n[A]&\longrightarrow&q^n x.
\end{array}\right.\]

\begin{remark}
For any $q,q'\in \K$, $\iota_q\circ \iota_{q'}=\iota_{qq'}$, and $\iota_1=\id_{\calP}$.
Moreover, if $\phi:\calP\longrightarrow\calQ$ is a morphism between two graded twisted bialgebra,
it is homogeneous if, and only if, for any $q\in K$, $\iota_q\circ\phi=\phi\circ \iota_q$.
Conversely, if $\calP$ is a twisted bialgebra and $(\iota_q)_{q\in \K}$ is a family of twisted bialgebra endomorphims such that:
\begin{itemize}
\item For any $q,q'\in \K$, $\iota_q\circ \iota_{q'}=\iota_{qq'}$.
\item $\iota_1=\id_{\calP}$.
\end{itemize}
For any $n\in \N$, any finite set $A$, we put:
\[\calP_n[A]=\{x\in A,\:\forall q\in \K,\: \iota_q(x)=q^n x\}.\]
Then the direct sum $\calP'$ of the subspecies $\calP_n$ is a graded twisted bialgebra. 
\end{remark}

\subsection{Graduation of quasi-shuffles bialgebras}

\begin{prop}\label{prop44}
Let $\calQ$ be a commutative twisted bialgebra of the second type and let $\calP=(coT(\calQ),\squplus,\Delta,\delta)$ 
be the associated double twisted bialgebra. We denote by $\epsilon':\calP\longrightarrow \com$ the counit of $\delta$.
\begin{enumerate}
\item For any $q\in \K$, we put $\lambda_q=\epsilon'^q$. Then:
\[\lambda_q:\left\{\begin{array}{rcl}
\calP&\longrightarrow&\com\\
x_1\ldots x_n&\longrightarrow&H_n(q)\epsilon'(x_1)\ldots \epsilon'(x_n).
\end{array}\right.\]
\item For any $q\in \K$, we put $\theta_q=\id_{\calP}\leftarrow \lambda_q$. 
Then $\theta_q$ is a twisted bialgebra endomorphism
and, for any $q,q'\in \K$, $\theta_q\circ \theta_{q'}=\theta_{qq'}$.
\item For any $n\in \N$, for any finite set $A$, we put:
\[\calP_n[A]=\{x\in \calP[A],\:\forall q\in \K,\: \theta_q(x)=q^n x\}.\]
This gives $(\calP,\squplus,\Delta)$ a graduation.
\end{enumerate} \end{prop}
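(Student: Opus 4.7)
The plan is to dispatch the three parts in sequence; the essential content is the monoid identity $\lambda_q\star\lambda_{q'}=\lambda_{qq'}$ in Part 2, from which Parts 1 and 3 are essentially formal.

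For Part 1, I would unfold the definition $\lambda_q=(\epsilon')^q=\sum_{k\geq 0}H_k(q)(\epsilon'-\varepsilon)^{*k}$, where $\varepsilon$ denotes the counit of $\Delta$. The difference $\epsilon'-\varepsilon$ vanishes on $\calP[\emptyset]$ (where both equal the scalar identity) and on every $\calP^{\otimes k}$ with $k\geq 2$ (where $\epsilon'$ is zero by definition and $\varepsilon$ is zero on any element with nonempty underlying set), while it restricts to $\varepsilon'_\calQ$ on $\calQ=\calP^{\otimes 1}$. Iterating $\Delta$ on a length-$n$ word $x_1\cdots x_n$ produces a sum over all decompositions $(y_1\mid\cdots\mid y_k)$ into contiguous subwords; the only term not annihilated by $(\epsilon'-\varepsilon)^{\otimes k}$ is the unique one with $k=n$ and $y_i=x_i$. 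This yields $\lambda_q(x_1\cdots x_n)=H_n(q)\varepsilon'_\calQ(x_1)\cdots\varepsilon'_\calQ(x_n)$.

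For Part 2, Proposition \ref{prop26} guarantees $\theta_q\in\mor_B(\calP,\calP)$, since $\lambda_q\in\chara(\calP)$ and $Id_\calP\in\mor_B(\calP,\calP)$. Applying Lemma \ref{lemme27} with $\psi=\theta_q$, $\phi=Id_\calP$, $f=\lambda_{q'}$, and then the associativity of $\leftarrow$, yields
\[
\theta_q\circ\theta_{q'}=\theta_q\circ(Id_\calP\leftarrow\lambda_{q'})=(\theta_q\circ Id_\calP)\leftarrow\lambda_{q'}=(Id_\calP\leftarrow\lambda_q)\leftarrow\lambda_{q'}=Id_\calP\leftarrow(\lambda_q\star\lambda_{q'}).
\]
Thus $\theta_q\circ\theta_{q'}=\theta_{qq'}$ reduces to the identity $\lambda_q\star\lambda_{q'}=\lambda_{qq'}$. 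To establish it, I would invoke the distributivity $(\lambda*\mu)\star\nu=(\lambda\star\nu)*(\mu\star\nu)$: for each $\nu\in\chara(\calP)$, right translation $R_\nu:\lambda\mapsto\lambda\star\nu$ is a monoid endomorphism of $(\chara(\calP),*)$, and it fixes the $*$-unit $\varepsilon$ (a short verification: since $\delta_A$ preserves the underlying set $A$, $\varepsilon\star\nu$ vanishes outside $\calP[\emptyset]$ and equals $1$ on $1_\calP$). For a positive integer $q$, $\lambda_q=(\epsilon')^{*q}$, so
\[
\lambda_q\star\lambda_{q'}=R_{\lambda_{q'}}\bigl((\epsilon')^{*q}\bigr)=(\epsilon'\star\lambda_{q'})^{*q}=\lambda_{q'}^{*q}=\lambda_{qq'},
\]
using that $\epsilon'$ is the $\star$-unit and the convolution identity $(\epsilon'^{q'})^{*q}=\epsilon'^{qq'}$ from Section 3. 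For arbitrary $q,q'\in\K$ and fixed $x\in\calP[A]$, both $\lambda_q\star\lambda_{q'}(x)$ and $\lambda_{qq'}(x)$ are polynomial in $(q,q')$ (via the explicit Part 1 formula and the finiteness of $\delta(x)$); agreement at positive integer values of $q$ then forces equality as polynomials, hence for all $q,q'\in\K$.

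For Part 3, the same polynomial observation writes $\theta_q=\sum_{n\geq 0}q^n p_n$ with each $p_n$ a linear endomorphism of $\calP$ having finite support on each element. The relations $\theta_1=Id_\calP$ and $\theta_q\circ\theta_{q'}=\theta_{qq'}$ translate into $\sum_n p_n=Id_\calP$ and $p_n\circ p_m=\delta_{n,m}p_n$, so the $(p_n)$ form a system of orthogonal idempotents summing to the identity, producing the decomposition $\calP=\bigoplus_n \mathrm{Im}(p_n)=\bigoplus_n\calP_n$ with the announced characterization of $\calP_n$. Since every $\theta_q$ is a bialgebra endomorphism, comparing the coefficients of $q^n$ in $\theta_q\circ\squplus=\squplus\circ(\theta_q\otimes\theta_q)$ and $\Delta\circ\theta_q=(\theta_q\otimes\theta_q)\circ\Delta$ yields $\squplus(\calP_k\otimes\calP_l)\subseteq\calP_{k+l}$ and $\Delta(\calP_n)\subseteq\bigoplus_{k+l=n}\calP_k\otimes\calP_l$. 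Finally, $\lambda_0=\varepsilon$, so $\theta_0=(Id\otimes\varepsilon)\circ\delta$ vanishes on every $\calP[A]$ with $A\neq\emptyset$ (the second tensor factor of $\delta(x)$ lies in $\calP[A]$, not in $\calP[\emptyset]$) and fixes $1_\calP$; this identifies $\calP_0[A]$ with $\calP[A]$ when $A=\emptyset$ and with $(0)$ otherwise.

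The main obstacle is the monoid identity $\lambda_q\star\lambda_{q'}=\lambda_{qq'}$: evaluated on $\calcomp$ it is equivalent to the combinatorial identity $\sum_{(\sigma,\tau)\in\cont_n}H_{\max\sigma}(q)H_{\max\tau}(q')=H_n(qq')$, which is not obvious from the description of $\cont_n$, but the conceptual argument via $R_\nu$ combined with polynomial extension sidesteps any direct combinatorial manipulation.
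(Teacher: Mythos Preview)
Your proof is correct. Parts 1 and 2 follow the paper's route closely; in fact you are more complete than the paper, which simply asserts $\lambda_q\star\lambda_{q'}=\lambda_{qq'}$ without justification, whereas you supply a proof via the distributivity $(\lambda*\mu)\star\nu=(\lambda\star\nu)*(\mu\star\nu)$ and polynomial extension in $q$. (An alternative that avoids the extension step: distributivity holds with the first two arguments arbitrary linear maps and only $\nu$ a character, so $R_\nu$ is an algebra endomorphism of the full convolution algebra; since $\varepsilon\star\nu=\varepsilon$ and $\epsilon'\star\nu=\nu$, one gets $(\epsilon'-\varepsilon)\star\nu=\nu-\varepsilon$, hence $\epsilon'^q\star\nu=\sum_k H_k(q)(\nu-\varepsilon)^{*k}=\nu^q$ directly for all $q$.)

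For Part 3 you and the paper diverge. The paper computes explicitly that $\theta_q(x_1\cdots x_n)=q^n x_1\cdots x_n+(\text{shorter words})$, so $\theta_2$ is triangular with distinct diagonal entries, hence diagonalizable; it then defines $\calP_n$ as the $2^n$-eigenspace of $\theta_2$, uses the bijection $\varpi_n:\calP_n\to\calP^{\otimes n}$ induced by the length-$n$ projection, and bootstraps from $\theta_q\circ\theta_2=\theta_2\circ\theta_q$ to show $\theta_q$ acts by $q^n$ on $\calP_n$. Your argument bypasses the triangular form entirely: writing $\theta_q=\sum_n q^n p_n$ and reading off $p_mp_n=\delta_{m,n}p_n$ and $\sum p_n=Id$ from $\theta_q\theta_{q'}=\theta_{qq'}$ and $\theta_1=Id$ is cleaner and uses nothing about $\cot(\calQ)$ beyond finiteness of $\delta$ on each $\calP[A]$. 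The paper's concrete approach has the side benefit of identifying $\calP_n$ with $\calP^{\otimes n}$ via $\varpi_n$ (used later in Proposition~\ref{prop46}); yours yields the graduation with less work but without that explicit description.
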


\begin{proof} First, note that for any $q,q'\in \K$, $\lambda_q*\lambda_{q'}=\lambda_{q+q'}$
and $\lambda_q\star \lambda_{q'}=\lambda_{qq'}$. \\

1. Let $x_1\ldots x_n\in \calP[A]$. Then:
\begin{align*}
\lambda_q(x_1\ldots x_n)&=\sum_{k=1}^\infty H_k(q)\sum_{1\leq i_1<\ldots<i_k<n}
\epsilon'^{\otimes k}(x_1 \ldots x_{i_1}\otimes \ldots \otimes x_{i_k+1}\ldots x_n)\\
&=H_n(q)\epsilon'^{\otimes n}(x_1\otimes \ldots \otimes x_n)+0\\
&=H_n(q)\epsilon'(x_1)\ldots \epsilon'(x_n).
\end{align*}

2. For any $q,q'\in \K$:
\[\theta_q\circ \theta_{q'}=(\id\leftarrow \lambda_q)\circ (\id\leftarrow \lambda_q)
=\id\leftarrow (\lambda_q\star \lambda_{q'})=\id\leftarrow \lambda_{qq'}=\lambda_{qq'}.\]

3. For any $x_1\ldots x_n\in \calP[A]$:
\begin{align*}
\theta_q(x_1\ldots x_n)&=x'_1\ldots x'_n\lambda_q(x''_1\squplus \ldots \squplus x''_n)
+\mbox{ a span of tensors of length $<n$}\\
&=x'_1\ldots x'_n\lambda_q(x''_1) \ldots \lambda_q( x''_n)+\mbox{ a span of tensors of length $<n$}\\
&=x'_1\ldots x'_nq\epsilon'(x''_1) \ldots q\epsilon'( x''_n)+\mbox{ a span of tensors of length $<n$}\\
&=q^nx_1\ldots x_n+\mbox{ a span of tensors of length $<n$}.
\end{align*}
Hence, for any $q\in \K$, $\theta_q[A]$ is diagonalisable and its eigenvalues are powers of $q$. 
For any integer $n$, we put:
\[\calP_n[A]=\{x\in \calP[A], \theta_2(x)=2^nx\}.\]
Then:
\[\calP=\bigoplus_{n\in \N} \calP_n.\]
Moreover, for any $n$, the restriction of the canonical projection $\pi_n$ on $\calP^{\otimes n}$ to $\calP_n$
is a bijection $\varpi_n:\calP_n\longrightarrow \calP^{\otimes n}$.
For any $q\in \K$, $\theta_q\circ \theta_2=\theta_2\circ \theta_q=\theta_{2q}$,
so for any $n$, $\theta_q(\calP_n)\subseteq \calP_n$. Moreover, if $x\in \calP_n[A]$,
\[\varpi_n \circ \theta_q(x)=q^n \varpi_n(x)=\varpi_n(q^nx).\]
As $\varpi_n$ is a bijection, $\theta_q(x)=q^nx$. We obtain:
\[\calP_n[A]=\{x\in \calP[A],\:\forall q\in \K,\: \theta_q(x)=q^n x\}.\]
If $x\in \calP_k[A]$ and $y\in \calP_l[B]$:
\[\theta_2(x\squplus y)=\theta_2(x)\squplus \theta_2(y)=(2^k x)\squplus (2^l y)=2^{k+l}(x\squplus y),\]
so $\squplus (\calP_k\otimes \calP_l)\subseteq \calP_{k+l}$.
If $x\in \calP_n[A]$ and $A=A_1\sqcup A_2$:
\[(\theta_2\otimes \theta_2)\circ \Delta_{A_1,A_2}(x)=\Delta_{A_1,A_2}\circ \theta_2(x)=2^n \Delta_{A_1,A_2}(x),\]
so:
\[\Delta_{A_1,A_2}(x)\in \bigoplus_{k+l=n} \calP_k[A_1]\otimes \calP_l[A_2].\]
Hence, $\calP$ is a graded twisted bialgebra.  \end{proof}

\begin{example}We obtain:
\begin{align*}
\theta_q(x_1)&=qx_1,\\
\theta_q(x_1x_2)&=q^2 x_1x_2+\frac{q(q-1)}{2}(x_1\cdot x_2),\\
\theta_q(x_1x_2x_3)&=q^3 x_1x_2x_3+\frac{q^2(q-1)}{2}(x_1\cdot x_2)x_3\\
&+\frac{q^2(q-1)}{2}x_1(x_2\cdot x_3)+\frac{q(q-1)(q-2)}{6}(x_1\cdot x_2\cdot x_3),\\
\theta_q(x_1x_2x_3x_4)&=q^4x_1x_2x_3x_4\\
&+\frac{q^2(q-1)}{2}(x_1\cdot x_2)x_3x_4+\frac{q^2(q-1)}{2}x_1(x_2\cdot x_3)x_4\\
&+\frac{q^2(q-1)}{2}x_1x_2(x_3\cdot x_4)+\frac{q^2(q-1)^2}{4}(x_1\cdot x_2)(x_3\cdot x_4)\\
&+\frac{q^2(q-1)(q-2)}{6}(x_1\cdot x_2\cdot x_3)x_4+\frac{q^2(q-1)(q-2)}{6}x_1(x_2\cdot x_3\cdot x_4)\\
&+\frac{q(q-1)(q-2)(q-3)}{24}(x_1\cdot x_2\cdot x_3\cdot x_4).
\end{align*}\end{example}

We shall give in Proposition \ref{prop46} another description of the graduation of $\calP$.

\subsection{Homogeneous morphisms to quasi-shuffle bialgebras}

\begin{prop} 
\label{prop46}
Let $\calQ$ be a commutative twisted bialgebra of the second type and let $\calP=coT(\calQ)$.
Let $\varrho:(\calP,\shuffle,\Delta)\longrightarrow(\calP,\squplus,\Delta)$ defined by:
\[\varrho((x_1\ldots x_n))=\sum_{1\leq i_1<\ldots<i_k<n}
\frac{1}{i_1!(i_2-i_1)!\ldots (n-i_k)!} (x_1\cdot \ldots \cdot x_{i_1})\ldots(x_{i_k+1}\cdot\ldots \cdot x_n).\]
This is a homogeneous twisted bialgebra morphism.
\end{prop}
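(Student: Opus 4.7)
The strategy is to apply the universal properties of Theorems \ref{theo6} and \ref{theo10} to identify $\varrho$ through its projection onto $\calQ$, use triangularity for bijectivity, and reduce homogeneity to a classical Stirling-number identity.

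First, I would verify that $\varrho$ commutes with the deconcatenation $\Delta$ (common to both bialgebra structures). Applied to a summand of $\varrho(x_1\ldots x_n)$ indexed by $1 \leq i_1 < \cdots < i_k < n$, the map $\Delta_{A',A''}$ yields zero unless $A' = A_1 \sqcup \cdots \sqcup A_{i_j}$ for some $j$ (the split must fall between blocks, not inside one); the coefficient $\tfrac{1}{i_1!(i_2-i_1)!\cdots(n-i_k)!}$ factors multiplicatively across the split, and summing reproduces $(\varrho \otimes \varrho) \circ \Delta_{A',A''}(x_1\ldots x_n)$. Setting $\phi := \pi \circ \varrho$, only the single-block summand contributes to length $1$, giving $\phi(x_1\ldots x_n) = \tfrac{1}{n!}\, x_1 \cdot x_2 \cdots x_n$. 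By Theorem \ref{theo10} applied to $\calB = (\cot(\calQ_+), \shuffle, \Delta)$, the coalgebra morphism $\varrho$ is a bialgebra morphism exactly when $\phi$ is an algebra morphism from $(\cot(\calQ_+), \shuffle)$ to $\calQ$. For $a = x_1\cdots x_k$ and $b = y_1\cdots y_l$, the $\binom{k+l}{k}$ shuffles are permutations of the same letters, and the commutativity of $\calQ$ sends each to $\tfrac{1}{(k+l)!}(x_1\cdots x_k) \cdot (y_1\cdots y_l)$; summing gives $\phi(a \shuffle b) = \tfrac{1}{k!\,l!}(x_1\cdots x_k) \cdot (y_1\cdots y_l) = \phi(a) \cdot \phi(b)$, as required.

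For bijectivity I would use the length filtration $F_m = \bigoplus_{k \leq m} \calQ_+^{\otimes k}$: the summand of $\varrho(x_1\ldots x_n)$ with all blocks of size $1$ is $x_1 x_2 \cdots x_n$ with coefficient $1$, while every other summand lies in $F_{n-1}$, so $\varrho$ preserves the filtration and induces the identity on the associated graded, hence is a bijection. For homogeneity, let $\iota_q$ denote scaling by $q^n$ on $\calQ_+^{\otimes n}$ (a bialgebra endomorphism of the shuffle bialgebra). Both $\theta_q \circ \varrho$ and $\varrho \circ \iota_q$ are coalgebra endomorphisms of $(\cot(\calQ_+), \Delta)$, so by the uniqueness in Theorem \ref{theo6} they coincide iff $\pi \circ \theta_q \circ \varrho = \pi \circ \varrho \circ \iota_q$. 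The right side equals $\tfrac{q^n}{n!}\, x_1 \cdots x_n$. For the left, I compute the length-$1$ component of $\theta_q(y_1\ldots y_r) = (Id \otimes \lambda_q) \circ \delta(y_1\ldots y_r)$: only the pair $(\sigma, \tau) \in \cont_r$ with $\sigma$ constant and $\tau = Id$ contributes, and applying the counit axiom for $\delta_\calQ$ to each Sweedler factor reduces it to $H_r(q)\, y_1 \cdot y_2 \cdots y_r$.

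Substituting into the sum defining $\varrho(x_1\ldots x_n)$ and observing that the total $\calQ$-product of the blocks of any $y_\pi$ is $x_1 \cdot \ldots \cdot x_n$, I obtain
\[
\pi \circ \theta_q \circ \varrho(x_1 \ldots x_n) = (x_1 \cdot \ldots \cdot x_n) \sum_{r=1}^n H_r(q) \sum_{\substack{b_1 + \cdots + b_r = n \\ b_j \geq 1}} \frac{1}{b_1! \cdots b_r!}.
\]
The inner sum equals $\tfrac{r!\,S(n,r)}{n!}$ (coefficient of $x^n/n!$ in $(e^x - 1)^r$), and $r!\, H_r(q) = q^{\underline{r}}$; the classical Stirling identity $\sum_r S(n,r)\, q^{\underline{r}} = q^n$ then produces $\tfrac{q^n}{n!}(x_1 \cdots x_n)$, matching the right side. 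The main obstacle is this final homogeneity computation: one must isolate the length-$1$ part of $\theta_q$ on a tensor of block products, combine it with the sum defining $\varrho$, and recognize the Stirling identity; the remaining steps are formal consequences of the universal property and a triangular-matrix argument.
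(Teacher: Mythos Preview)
Your argument follows essentially the same route as the paper's: identify $\varrho$ via Theorem~\ref{theo10} through the algebra morphism $x_1\ldots x_n \mapsto \tfrac{1}{n!}\,x_1\cdot\ldots\cdot x_n$, then establish homogeneity by comparing $\pi\circ\theta_q\circ\varrho$ with $\pi\circ\varrho\circ\iota_q$ and invoking the uniqueness in the universal property. The only differences are cosmetic: the paper verifies the key scalar identity combinatorially for $q\in\N$ (counting maps $\underline{n}\to\underline{q}$ by image size) where you quote the polynomial Stirling identity $\sum_r S(n,r)\,q^{\underline r}=q^n$ directly, and you supply the triangularity argument for bijectivity, which the paper's own proof actually leaves unstated.
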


\begin{proof}
Let $\lambda:\calP \longrightarrow \calQ$, defined by
\[\lambda(x_1\ldots x_n)=\frac{x_1\cdot \ldots \cdot x_n}{n!}.\]
This is a twisted algebra  morphism from $(\calP,\shuffle)$ to $(\calQ,\cdot)$: for any $x_1\ldots x_k$
 and $x_{k+1}\ldots x_{k+l}\in \cal P$, as $\calQ$ is commutative:
\begin{align*}
\lambda (x_1\ldots x_k\shuffle x_{k+1}\ldots x_{k+l})&=\sum_{\sigma \in Sh(k,l)}
\frac{1}{(k+l)!} x_{\sigma^{-1}(1)}\cdot \ldots \cdot x_{\sigma^{-1}(k+l)}\\
&=\frac{1}{(k+l)!} \sum_{\sigma \in Sh(k,l)} x_1\cdot \ldots \cdot x_{k+l}\\
&=\frac{1}{k!}{l!}  x_1\cdot \ldots \cdot x_{k+l}\\
&=\lambda(x_1\ldots x_k)\lambda(x_{k+1}\ldots x_{k+l}).
\end{align*}
Moreover, $\varrho$ is the unique twisted bialgebra morphism such that $\epsilon'\circ \varrho=\lambda$
of Theorem \ref{theo10}. \\

Let $x_1\ldots x_n \in \calQ$. For any $q\in \K$:
\[\pi \circ \varrho \circ \iota_q(x_1\ldots x_n)=\frac{q^n}{n!}x_1\cdot \ldots \cdot x_n.\]
Moreover, if $q\in \N$:
\begin{align*}
&\pi \circ \theta_q \circ \rho(x_1\ldots x_n)\\
&=\sum_{1\leqslant i_1<\ldots<i_k\leqslant n} \frac{1}{i_1!(i_2-i_1)!\ldots (n-i_k)!} 
\pi \circ \theta_q((x_1\cdot \ldots \cdot x_{i_1})\ldots (x_{i_k+1}\cdot \ldots \cdot x_n))\\
&=\sum_{1\leqslant i_1<\ldots<i_k\leqslant n} \frac{1}{i_1!(i_2-i_1)!\ldots (n-i_k)!} 
H_{k+1}(q)x_1\cdot \ldots \cdot \cdot x_n\\
&=\sum_{l=1}^n \sum_{a_1+\ldots+a_l=n}\frac{1}{a_1!\ldots a_l!} H_l(q)x_1\cdot \ldots \cdot \cdot x_n\\
&=\frac{1}{n!} \sum_{k=1}^n \sum_{a_1+\ldots+a_k=n}
\frac{(a_1+\ldots+a_k)!}{a_1!\ldots a_k!}\binom{q}{k}x_1\cdot \ldots \cdot \cdot x_n\\
&=\frac{1}{n!}\sum_{k=1}^{\min(n,q)}|\{f:\underline{n}\longrightarrow \underline{q},\: |f(\underline{n})|=k\}|
x_1\cdot \ldots \cdot \cdot x_n\\
&=\frac{1}{n!}|\{f:\underline{n}\longrightarrow \underline{q}\}|x_1\cdot \ldots \cdot \cdot x_n\\
&=\frac{q^n}{n!}x_1\cdot \ldots \cdot \cdot x_n\\
&=\pi \circ \rho \circ \iota_q(x_1\ldots x_n).
\end{align*}
As $\theta_q\circ \rho$ and $\rho \circ \iota_q$ are both coalgebra morphisms, by unicity in the universal property
of $coT(\calP)$, $\theta_q\circ \rho=\rho \circ \iota_q$ for any $q\in \N$. 
Hence, for any $n\geq 2$,  for any finite set $A$, we have:
\begin{align*}
coT(\calP)_n[A]&=\{x\in \calP[A], \theta_2(x)=2^nx\},\\
\calP^{\otimes n}[A]&=\{x\in \calP[A],\iota_2(x)=2^nx\},
\end{align*}
we obtain that $\rho(\calP^{\otimes n})\subseteq coT(\calP)_n$. So $\rho$ is homogeneous. \end{proof}

Consequently, in the particular case of $\calcomp$, for any $n\in \N$, for any finite set $A$:
\[\calcomp_n[A]=\Vect(\varrho((A_1,\ldots,A_n)),\: (A_1,\ldots,A_n)\in \comp[A]).\]
For example:
\begin{align*}
\calcomp_1[A]&=\Vect((A)),\\
\calcomp_2[A]&=Vect\left((A_1,A_2)+\frac{1}{2}(A_1\sqcup A_2),\: (A_1,A_2)\in \comp[A]\right),\\
\calcomp_3[A]&=Vect\left(\begin{array}{c}
(A_1,A_2,A_3)+\frac{1}{2}(A_1\sqcup A_2,A_3)+
\frac{1}{2}(A_1, A_2\sqcup A_3)+\frac{1}{6}(A_1\sqcup A_2\sqcup A_3),\\
 (A_1,A_2,A_3)\in \comp[A])
 \end{array}\right).
\end{align*}

\begin{cor}\label{cor47}
Let $\calP$ be a graded, connected twisted bialgebra. We denote by $\mor_B^0(\calP,\calcomp)$ the
set of homogeneous twisted bialgebra morphisms from $\calP$ to $\calcomp$
and by $L(\calP_1,\com)$ the set of species morphisms from $\calP_1$ to $\com$. The following map is a bijection:
\[\left\{\begin{array}{rcl}
\mor_B^0(\calP,\calcomp)&\longrightarrow&L(\calP_1,\com)\\
\phi&\longrightarrow&\varepsilon'\circ \phi_{\mid \calP_1}.
\end{array}\right.\]
\end{cor}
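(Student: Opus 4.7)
The plan is to reduce the statement to the shuffle cofree twisted bialgebra $(\cot(\com_+),\shuffle,\Delta)$ via the isomorphism $\varrho$ of Proposition \ref{prop46}, and then apply the universal property of the cofree coalgebra (Theorem \ref{theo6}). Since $\varrho:(\cot(\com_+),\shuffle,\Delta)\longrightarrow \calcomp$ is a homogeneous twisted bialgebra isomorphism, the assignment $\phi\mapsto \tilde\phi:=\varrho^{-1}\circ \phi$ gives a bijection between $\mor_B^0(\calP,\calcomp)$ and the set of homogeneous twisted bialgebra morphisms from $\calP$ to $(\cot(\com_+),\shuffle,\Delta)$. By Theorem \ref{theo6}, coalgebra morphisms $\tilde\phi:\calP\longrightarrow \cot(\com_+)$ correspond bijectively to species morphisms $\tilde\alpha:=\pi\circ\tilde\phi:\calP\longrightarrow \com_+$, via the reconstruction formula
\[
\tilde\phi[A]=\sum_{(A_1,\ldots,A_k)\in\comp[A]}\tilde\alpha^{\otimes k}\circ \Delta_{A_1,\ldots,A_k}.
\]

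I would then prove two equivalences. First, $\tilde\phi$ is homogeneous if and only if $\tilde\alpha|_{\calP_n}=0$ for every $n\neq 1$. The direction $\Rightarrow$ is immediate, since $\pi$ vanishes on $\com_+^{\otimes n}$ for $n\neq 1$. For $\Leftarrow$, the grading hypothesis on $\calP$ ensures that each $\Delta_{A_1,\ldots,A_k}(x)$ with $x\in \calP_n$ decomposes into components lying in $\calP_{n_1}[A_1]\otimes\cdots\otimes \calP_{n_k}[A_k]$ with $n_1+\cdots+n_k=n$; when $\tilde\alpha$ is concentrated in degree one, only the summands with all $n_i=1$ survive, forcing $k=n$ and therefore $\tilde\phi(x)\in \com_+^{\otimes n}$.

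Second, I would show that when $\tilde\alpha$ is supported on $\calP_1$, the map $\tilde\phi$ is automatically an algebra morphism for the shuffle product. Both $\tilde\phi\circ m$ and $\shuffle\circ(\tilde\phi\otimes\tilde\phi)$ are coalgebra morphisms $\calP\otimes\calP\longrightarrow \cot(\com_+)$, so by the uniqueness statement in Theorem \ref{theo6} it suffices to compare their images under $\pi$. On $\calP_+\otimes\calP_+$ both projections vanish: the shuffle of two elements of $\bigoplus_{k\geq 1}\com_+^{\otimes k}$ has no length-one component, while $xy\in \calP_{\geq 2}$ yields $\tilde\alpha(xy)=0$; the cases where one factor is $1_\calP$ are trivial. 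Combining the two equivalences produces a bijection between $\mor_B^0(\calP,\calcomp)$ and species morphisms $\tilde\alpha:\calP\longrightarrow \com_+$ supported on $\calP_1$, which identifies naturally with $L(\calP_1,\com)$ via $\beta:=\tilde\alpha|_{\calP_1}$, as $\calP_1[\emptyset]=0$ and $\com$ agrees with $\com_+$ on nonempty sets.

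Finally I would match this bijection with the one in the statement. For $x\in\calP_1[A]$, homogeneity forces $\phi(x)\in\calcomp_1[A]=\K\,(A)$, so $\phi(x)=c\,(A)$ for some $c\in\K$, giving $\varepsilon'(\phi(x))=c$ by definition of $\varepsilon'$. Since $\varrho$ fixes length-one elements, $\tilde\phi(x)=c\,(A)$ and $\beta(x)=\tilde\alpha(x)=c$, so $\beta=\varepsilon'\circ\phi|_{\calP_1}$. The main obstacle is step two above: one must use the specific structure of the shuffle product to show that multiplicativity is automatic under the support condition, the crucial point being that the shuffle of two elements of positive length has no length-one component, which reduces the check to the identity $\tilde\alpha(xy)=0$ on $\calP_+\otimes\calP_+$, itself forced by the vanishing of $\tilde\alpha$ outside $\calP_1$.
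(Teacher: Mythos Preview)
Your argument is correct and takes a genuinely different route from the paper's proof.

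The paper works directly with $\calcomp$ and its graduation defined through the endomorphisms $\theta_q$. For injectivity it shows that if $\phi$ is homogeneous then the associated character $\lambda=\varepsilon'\circ\phi$ satisfies $\lambda\circ\iota_q=\lambda^q$; expanding $\lambda^q$ via iterated coproducts and specializing $q=2$ yields an inductive determination of $\lambda_{\mid\calP_n}$ from lower degrees, hence $\lambda$ (and therefore $\phi$, by Theorem~\ref{theo17}) is uniquely determined by $\lambda_{\mid\calP_1}$. For surjectivity it \emph{constructs} the character by an explicit recursion $\lambda_n(x)=\tfrac{1}{2^n-2}\sum_{(A_1,A_2)}\lambda(x^{(1)})\lambda(x^{(2)})$, checks by induction that the resulting $\lambda$ is multiplicative, and then verifies that the induced $\phi$ intertwines $\iota_2$ and $\theta_2$, hence is homogeneous.

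Your approach instead transports the problem through the homogeneous isomorphism $\varrho$ of Proposition~\ref{prop46} to the shuffle bialgebra $(\cot(\com_+),\shuffle,\Delta)$, where the graduation is simply tensor length. There the universal property of the cofree coalgebra (Theorem~\ref{theo6}) parametrizes coalgebra morphisms by their projection $\tilde\alpha$ to $\com_+$; homogeneity becomes the transparent condition that $\tilde\alpha$ be supported on $\calP_1$, and multiplicativity for $\shuffle$ falls out automatically because the shuffle of two positive-length tensors has no length-one component. This is cleaner and avoids both the inductive construction of $\lambda$ and the verification that it is a character; the price is that you must invoke Proposition~\ref{prop46}, whereas the paper's argument is self-contained and, as a byproduct, produces an explicit recursive formula for the character extending a given $\lambda_1$.
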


\begin{proof}  \textit{Injectivity}. 
 and let $\phi,\phi':\calP\longrightarrow \calcomp$
be two homogeneous twisted bialgebra morphism, such that 
$\varepsilon'\circ \phi_{\mid \calP_1}=\varepsilon'\circ \phi'_{\mid \calP_1}$.
We put $\lambda=\varepsilon'\circ \phi$ and $\lambda'=\varepsilon'\circ \phi'$.
Let us prove that $\phi=\phi'$. By Theorem \ref{theo29}, it is enough to prove that $\lambda=\lambda'$:
let us prove that  $\lambda_{\mid \calP_n}=\lambda'_{\mid \calP_n}$ by induction on $n$.
This is obvious if $n=0$ or $1$. If $n\geq 2$, then for any $q\in \K$, if $x\in \calP_n[A]$, by the induction hypothesis:
\begin{align*}
q^n \lambda(x)&=\lambda \circ \iota_q(x)\\
&=\lambda^q(x)\\
&=H_1(q)\lambda(x)+\sum_{k=2}^\infty \sum_{(A_1,\ldots,A_k)\in \comp[A]}
(\lambda-\varepsilon)^{\otimes k}\circ \Delta_{A_1,\ldots,A_k}(x).
\end{align*}
Hence, by the induction hypothesis:
\begin{align*}
(q^n-q)\lambda(x)&=\sum_{k=2}^\infty \sum_{(A_1,\ldots,A_k)\in \comp[A]}
(\lambda-\varepsilon)^{\otimes k}\circ \Delta_{A_1,\ldots,A_k}(x)\\
&=\sum_{k=2}^\infty \sum_{(A_1,\ldots,A_k)\in \comp[A]}
(\lambda'-\varepsilon)^{\otimes k}\circ \Delta_{A_1,\ldots,A_k}(x)\\
&=(q^n-q)\lambda'(x).
\end{align*}
Choosing $q=2$, as $n\geq 2$, we obtain $\lambda(x)=\lambda'(x)$. \\

\textit{Surjectivity}. Let $\lambda_1:\calP_1\longrightarrow \com$ be a species morphism.
We define $\lambda_0:\calP_0\longrightarrow \com$ by $\lambda_0(1_\calP)=1$.
Let us define $\lambda_n:\calP_n\longrightarrow \com$ for $n\geq 2$ by induction on $n$. If $x\in \calP_n[A]$, we put:
\begin{align*}
\lambda_n(x)&=\frac{1}{2^n-2}\sum_{k=2}^n \sum_{(A_1,\ldots,A_k)\in \comp[A]}
H_k(2)(\lambda_{\sharp A_1}\otimes \ldots \otimes \lambda_{\sharp A_k})\circ \Delta_{A_1,\ldots,A_k}(x)\\
&=\frac{1}{2^n-2}\sum_{(A_1,A_2)\in \comp[A]} (\lambda_{\sharp A_1} \otimes \lambda_{\sharp A_2})\circ \Delta_{A_1,A_2}(x),
\end{align*}
which we shortly write as:
\begin{align*}
\lambda_n(x)&=\frac{1}{2^n-2}\lambda(x^{(1)})\lambda(x^{(2)}).
\end{align*}
We define in this way a map $\lambda:\calP\longrightarrow \calcomp$, such that $\lambda^2=\lambda \circ \iota_2$.
Let us prove that $\lambda$ is a character: let $x\in \calP_k[A]$, $y\in\calP_l[B]$, and let us prove
that $\lambda(xy)=\lambda(x)\lambda(y)$ by induction on $k+l=n$.
If $k=0$ or $l=0$, we can assume that $x=1_\calP$ or $y=1_\calP$ and the result is obvious.
We now assume that $k,l\geq 1$. There is nothing to prove if $n=0$ or $1$. Otherwise, by the induction hypothesis:
\begin{align*}
\lambda(xy)&=\frac{1}{2^{k+l}-2}\left(\begin{array}{c}
2\lambda(x)\lambda(y)+\lambda(x^{(1)}y)\lambda(x^{(2)})\\
+\lambda(x^{(1)})\lambda(x^{(2)}y)+\lambda(xy^{(1)})\lambda(y^{(2)})\\
+\lambda(y^{(1)})\lambda(xy^{(2)})+\lambda(x^{(1)}y^{(1)})\lambda(x^{(2)}y^{(2)})
 \end{array}\right)\\
 &=\frac{1}{2^{k+l}-2}\left(\begin{array}{c}
2\lambda(x)\lambda(y)+2\lambda(x^{(1)})\lambda(x^{(2)})\lambda(y)\\
+2\lambda(x)\lambda(y^{(1)})\lambda(y^{(2)})+\lambda(x^{(1)})\lambda(y^{(1)})\lambda(x^{(2)})\lambda(y^{(2)})
 \end{array}\right)\\
 &=\frac{\lambda(x)\lambda(y)}{2^{k+l}-2} (2+2(2^k-2)+2(2^l-2)+(2^k-2)(2^l-2))\\
 &=\lambda(x)\lambda(y).
\end{align*}
So $\lambda$ is a character, $\lambda_{\mid \calP_1}=\lambda_1$ and $\lambda^2=\lambda \circ \iota_2$.
Let $\phi:\calP\longrightarrow\calcomp$ be the unique twisted bialgebra morphism such that $\varepsilon'\circ \phi=\lambda$. \\

 Let $q\in \K$. For any $x\in \calP[A]$:
\begin{align*}
\lambda_q\circ \phi(x)&=\sum_{k\geq 0}\sum_{(A_1,\ldots,A_k)\in \comp[A]}
\lambda^{\otimes k} \circ \Delta_{A_1,\ldots,A_k}(x) \lambda_q(A_1,\ldots,A_k)\\
&=\sum_{k\geq 0}H_k(q)\sum_{(A_1,\ldots,A_k)\in \comp[A]} \lambda^{\otimes k} \circ \Delta_{A_1,\ldots,A_k}(x)\\
&=\sum_{k\geq 0} H_k(q)(\lambda-\varepsilon)^{*k}(x)\\
&=\lambda^q(x).
\end{align*}
So $\lambda_q \circ \phi=\lambda^q$. For any finite set $A$:
\begin{align*}
\calcomp_n[A]&=\{x\in \calcomp[A],\: \theta_2(x)=2^nx\},\\
\calP_n[A]&=\{x\in \calP_n[A],\: \iota_2(x)=2^nx\}.
\end{align*}
As the characters $\lambda \circ \iota_2=\varepsilon'\circ \phi\circ \iota_2$ 
and $\lambda^2=\lambda_2\circ \phi=\varepsilon'\circ \theta_2\circ \phi$ are equal, by Theorem \ref{theo29}
$\phi\circ \iota_2=\theta_2\circ \phi$. Consequently, if $x\in \calP_n[A]$:
\[\theta_2\circ\phi(x)=\phi\circ \iota_2(x)=2^n \phi(x),\]
so $\phi(x)\in \calcomp_n[A]$: $\phi$ is homogeneous. Moreover, $\varepsilon'\circ \phi_{\mid \calP_1}=\lambda_1$. \end{proof}

\subsection{The example of graphs}

\begin{prop} \label{prop48}
\begin{enumerate}
\item Let $u=(u_k)_{k\geq 1}$ be a sequence of scalars. The following map is a homogeneous morphism of twisted bialgebras:
\[\phi_u:\left\{\begin{array}{rcl}
\calgr&\longrightarrow&\calcomp\\
G\in \calgr[A]&\longrightarrow&\displaystyle \left(\prod_{I\in V(G)}u_{\sharp I} \right)\prod^{\squplus}_{I\in V(G)} (I).
\end{array}\right.\]
All homogeneous morphisms from $\calgr$ to $\calcomp$ are obtained in this way.
\item Let $\lambda_0:\calgr\longrightarrow \comp$ be defined by $\lambda_0(G)=1$ for any graph $G$.
Then $\lambda_0$ is a character of $\calgr$ and for any $q\in \K$:
\[\lambda_0^q(G)=q^{\deg(G)} G.\]
\end{enumerate} 
\end{prop}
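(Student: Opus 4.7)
The plan is to use Corollary \ref{cor47}, which identifies homogeneous twisted bialgebra morphisms $\calgr\to\calcomp$ with species morphisms $\calgr_1\to\com$. Since $\calgr_1[A]$ is one-dimensional, spanned by $\tdun{$A$}$, and every bijection $\sigma:A\to B$ sends $\tdun{$A$}$ to $\tdun{$B$}$, naturality forces $\lambda_1(\tdun{$A$})=u_{\sharp A}$ for some scalar $u_k\in\K$ depending only on $k=\sharp A$. Consequently, homogeneous morphisms are parametrized by sequences $u=(u_k)_{k\geq 1}$, and the task reduces to identifying the morphism attached to $u$ with the stated $\phi_u$.

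I would introduce the character $\lambda_u:\calgr\to\com$ defined by $\lambda_u(G)=\prod_{I\in V(G)}u_{\sharp I}$. Multiplicativity $\lambda_u(GH)=\lambda_u(G)\lambda_u(H)$ follows from $V(GH)=V(G)\sqcup V(H)$, and clearly $\lambda_u(\tdun{$A$})=u_{\sharp A}$. By Theorem \ref{theo17}, the unique twisted bialgebra morphism $\psi:\calgr\to\calcomp$ satisfying $\varepsilon'\circ\psi=\lambda_u$ is
\[\psi(G)=\sum_{(A_1,\ldots,A_k)\in\comp[A]}\lambda_u^{\otimes k}\circ\Delta_{A_1,\ldots,A_k}(G)\,(A_1,\ldots,A_k).\]
Since $\Delta_{A_1,\ldots,A_k}(G)=G_{\mid A_1}\otimes\cdots\otimes G_{\mid A_k}$ vanishes unless each $A_i$ is a union of vertices of $G$, and in that case $(V(G_{\mid A_i}))_{i}$ is an ordered partition of $V(G)$, the coefficient $\lambda_u^{\otimes k}\circ\Delta_{A_1,\ldots,A_k}(G)$ equals $\prod_{I\in V(G)}u_{\sharp I}$ independently of the partition. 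A direct induction on $\sharp V(G)$, comparing the two ways to obtain an ordered set partition of $V(G)\cup\{J\}$ from one of $V(G)$ (insert $J$ as a new singleton block or merge it into an existing one) with the two kinds of contributions to $(\cdots)\squplus(J)$, would then identify the remaining sum of compositions of $A$ with parts unions of vertices of $G$ with $\prod^{\squplus}_{I\in V(G)}(I)$, giving $\psi=\phi_u$.

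Homogeneity of $\phi_u$ follows from Proposition \ref{prop44}: since $\theta_q$ acts on length-one elements by multiplication by $q$ and $\squplus$ is a graded product, $\prod^{\squplus}_{I\in V(G)}(I)\in\calcomp_{\sharp V(G)}$, so $\phi_u(\calgr_n)\subseteq\calcomp_n$. The bijection of Corollary \ref{cor47} then implies that every homogeneous twisted bialgebra morphism $\calgr\to\calcomp$ is some $\phi_u$. For part (2), $\lambda_0\equiv 1$ on graphs is trivially multiplicative. Since $(\lambda_0-\varepsilon)(G)$ equals $1$ for $G$ with at least one vertex and $0$ on $1_{\calgr}$,
\[(\lambda_0-\varepsilon)^{*n}(G)=\#\{(A_1,\ldots,A_n)\in\comp[A]:\Delta_{A_1,\ldots,A_n}(G)\neq 0\}=n!\,S(deg(G),n),\]
where $S$ denotes the Stirling numbers of the second kind (set partitions of a $d$-element set into $n$ nonempty blocks). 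Summing $\lambda_0^q=\sum_{n\geq 0}H_n(q)(\lambda_0-\varepsilon)^{*n}$ and invoking the classical identity $\sum_{n\geq 0}S(d,n)\,q(q-1)\cdots(q-n+1)=q^d$ will then give $\lambda_0^q(G)=q^{deg(G)}$ in $\com[A]=\K$, matching the stated formula. The main obstacle is establishing the quasi-shuffle combinatorial identification of the second paragraph; once this is in place, the rest is a direct application of the universal properties of $\calcomp$ already proved.
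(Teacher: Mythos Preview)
Your approach is correct and reaches the same conclusion, but the route is genuinely different from the paper's.

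The paper argues in the reverse direction. It first invokes Corollary~\ref{cor47} to obtain, for each sequence $u$, an \emph{abstract} homogeneous morphism $\phi_u$; it then reduces to $u=\mathbf{1}$ via the bialgebra endomorphism $\iota_u:G\mapsto (\prod_I u_{\sharp I})G$ and the uniqueness statement $\phi_{uv}=\phi_u\circ\iota_v$. The key computation is then to determine the character $\lambda_0=\varepsilon'\circ\phi_{\mathbf{1}}$: since $\phi_{\mathbf{1}}$ is homogeneous, one has $\lambda_0\circ\iota_2=\lambda_0^2$, and this functional equation yields $\lambda_0(G)=1$ by a short induction on $\deg(G)$ (the inductive step reduces to $\sum_{k=1}^{n-1}\binom{n}{k}=2^n-2$). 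Part~(2) then falls out for free: homogeneity gives $\lambda_0^q=\lambda_0\circ\iota_q$, hence $\lambda_0^q(G)=q^{\deg(G)}$. The explicit formula for $\phi_{\mathbf{1}}$ is left implicit, recoverable from Theorem~\ref{theo17} once $\lambda_0\equiv 1$ is known.

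You instead start from the explicit character $\lambda_u(G)=\prod_I u_{\sharp I}$, build $\phi_u$ via Theorem~\ref{theo17}, identify the resulting sum with $\prod^{\squplus}_{I\in V(G)}(I)$ combinatorially, and verify homogeneity by hand using $(I)\in\calcomp_1$ and the gradedness of $\squplus$ from Proposition~\ref{prop44}. For Part~(2) you compute $(\lambda_0-\varepsilon)^{*n}(G)=n!\,S(\deg(G),n)$ and invoke the Stirling identity $\sum_n S(d,n)\,q^{\underline{n}}=q^d$. This is more concrete and self-contained, at the cost of importing an external combinatorial identity; the paper's route is slicker for Part~(2) since the answer is forced by homogeneity without any explicit summation. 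Your quasi-shuffle identification (second paragraph) can be shortened: quasi-shuffling $n$ singletons is a sum over all surjections $\underline{n}\twoheadrightarrow\underline{k}$, which bijects directly with set compositions of $A$ whose blocks are unions of vertices, so no induction is needed.
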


\begin{proof} For any finite set $A$, $\calgr_1[A]$ is one-dimensional, generated by $\tdun{$A$}$.
Hence, the species morphisms from $\calgr_1$ to $\com$ are given by:
\[\mu_u(\tdun{$A$})=u_{\sharp A},\]
where $u$ is a sequence of scalars. By Corollary \ref{cor47}, there exists a  unique homogeneous morphism 
of twisted bialgebra $\phi_u:\calgr\longrightarrow\calcomp$ such that $(\varepsilon'\circ \phi_u)_{\mid \calgr_1}=\mu_u$.
Let us consider the map:
\[\iota_u:\left\{\begin{array}{rcl}
\calgr&\longrightarrow&\calgr\\
G&\longrightarrow&\displaystyle \left(\prod_{I\in V(G)}u_{\sharp I} \right)G.
\end{array}\right.\]
This is obviously a homogeneous endomorphism of twisted bialgebras, and for any sequences $u$, $v$,
then $\phi_u\circ \iota_v$ is a homogeneous morphism
and for any finite set $A$ of cardinal $n$:
\[\varepsilon'\circ \phi_u\circ \iota_v(\tdun{$A$})=u_nv_n,\]
so, putting $uv=(u_nv_n)_{n\geq 1}$, $\phi_{uv}=\phi_u\circ \iota_v$. It is now enough to describe
$\phi_\mathbf{1}$, with $\mathbf{1}_n=1$ for any $n$. Let us put $\lambda_0=\varepsilon'\circ \phi_\mathbf{1}$,
and let us prove that $\lambda_0(G)=1$ for any graph $G$.  We proceed by induction on $\deg(G)=n$.
If $n=1$, then $G=\tdun{$A$}$ and $\lambda_0(G)=\mathbf{1}_n=1$. Otherwise,
as $\lambda_0^2=\lambda_0 \circ \iota_2$:
\begin{align*}
\lambda_0(G)&=\frac{1}{2^n-2}\sum_{(A,B)\in \comp[V(G)]} \lambda(G_{\mid A})\lambda(G_{\mid B})\\
&=\frac{1}{2^n-2}\sum_{(A,B)\in \comp[V(G)]}1\\
&=\frac{1}{2^n-2}\sum_{k=1}^{n-1} \binom{n}{k}\\
&=1.
\end{align*}
So $\lambda_0(G)=1$ for any $G$. 
As $\phi_\mathbf{1}$ is homogeneous, $\lambda_0^q=\lambda_0\circ \iota_q$ for any $q\in \K$. \end{proof}

\begin{example} If $A$, $B$ and $C$ are finite sets:
\begin{align*}
\phi_{\mathbf{1}}(\tdun{$A$})&=(A),\\
\phi_{\mathbf{1}}(\tddeux{$A$}{$B$})&=(A)\squplus (B)\\
&=(A,B)+(B,A)+(A\sqcup B),\\
\phi_{\mathbf{1}}(\tdtroisun{$A$}{$C$}{$B$})&=(A)\squplus (B)\squplus (C)\\
&=(A,B,C)+(A,C,B)+(B,A,C)+(B,C,A)+(C,A,B)+(C,B,A)\\
&+(A\sqcup B,C)+(A\sqcup C,B)+(B\sqcup C,A)\\
&+(A,B\sqcup C)+(B,A\sqcup C)+(C,A\sqcup B)+(A\sqcup B\sqcup C).
\end{align*}\end{example}

\subsection{Application: acyclic orientations}

Let $q\in \K$. We shall consider the homogeneous morphism defined in Proposition \ref{prop48}, with
$u_n=q$ for any $n$:
\[\phi_q:\left\{\begin{array}{rcl}
\calgr&\longrightarrow&\calcomp\\
G\in \calgr[A]&\longrightarrow&\displaystyle q^{\deg(G)}\prod^{\squplus}_{I\in V(G)} (I).
\end{array}\right.\]
Then:
\[\phi_q=\phi\leftarrow\lambda_q,\]
where $\lambda_q=\varepsilon'\circ \phi_q$ is the character given by:
\[\lambda_q(G)=q^{\deg(G)}.\]

If $q\neq 0$, we put:
\[\phi_{chr_q}=\theta_{q^{-1}}\circ \phi\circ \iota_q.\]
This is a twisted bialgebra morphism from $\calgr$ to $\calcomp$. In particular, $\phi_{chr_1}=\phi$.

\begin{example} If $A$, $B$ and $C$ are finite sets: 
\begin{align*}
\phi_{chr_q}(\tdun{$A$})&=(A),\\
\phi_{chr_q}(\tddeux{$A$}{$B$})&=(A,B)+(B,A)+(1-q)(A\sqcup B),\\
\phi_{chr_q}(\tdtroisun{$A$}{$C$}{$B$})&=(A,B,C)+(A,C,B)+(B,A,C)+(B,C,A)+(C,A,B)+(C,B,A)\\
&+(1-q)(A\sqcup B,C)+(1-q)(A\sqcup C,B)+(B\sqcup C,A)\\
&+(1-q)(C,A\sqcup B)+(1-q)(B,A\sqcup C)+(A,B\sqcup C)+2(1-q)(A\sqcup B\sqcup C).
\end{align*}\end{example}

\begin{lemma} \label{lemme49}
Let $\lambda \in \chara(\calgr)$. Then $\lambda$ is invertible for $\star$ if, and only if, for any finite set $A$,
$\lambda(\tdun{$A$})\neq 0$.
\end{lemma}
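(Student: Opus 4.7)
The forward implication is immediate and I will get it out of the way first: if $\mu \in \chara(\calgr)$ satisfies $\lambda \star \mu = \varepsilon'$, then evaluating on $\tdun{$A$}$ and using $\delta(\tdun{$A$}) = \tdun{$A$} \otimes \tdun{$A$}$ (the only compatible equivalence on a one-vertex graph is trivial) together with $\varepsilon'(\tdun{$A$}) = 1$ forces $\lambda(\tdun{$A$})\mu(\tdun{$A$}) = 1$, so $\lambda(\tdun{$A$}) \neq 0$ for every finite set $A$.

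For the converse, assume $\lambda(\tdun{$A$}) \neq 0$ for every $A$. The plan is to construct a right $\star$-inverse $\mu$ and a left $\star$-inverse $\mu'$ of $\lambda$ in $\chara(\calgr)$ separately, and then to conclude $\mu = \mu'$ by the standard monoid identity. The key structural fact I will exploit is that, for a connected graph $G$, the coproduct $\delta(G) = \sum_{\sim \triangleleft G} G/\sim \otimes G|\sim$ has two extremal terms: the coarsest compatible equivalence $\sim_{\max}$ (single class $V(G)$, valid because $G$ is connected) contributes $\tdun{$V(G)$} \otimes G$, while the finest $\sim_{\min}$ (singleton classes) contributes $G \otimes \prod_{v \in V(G)} \tdun{$v$}$. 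For any other $\sim \triangleleft G$, at least one edge of $G$ joins different classes, so $G|\sim$ has strictly fewer edges than $G$; for any $\sim \neq \sim_{\min}$, at least one class has size $\geq 2$, so $G/\sim$ has strictly fewer vertices than $G$.

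To build the right inverse $\mu$, I will induct on $e(G)$ over connected graphs and extend multiplicatively along disjoint unions. Setting $\mu(\tdun{$A$}) = \lambda(\tdun{$A$})^{-1}$ handles the base case. For a connected $G$ with $e(G) \geq 1$, I isolate the $\sim_{\max}$-term in $\delta(G)$ and solve $(\lambda \star \mu)(G) = \varepsilon'(G) = 0$ for $\mu(G)$, obtaining
\[
\mu(G) = -\lambda(\tdun{$V(G)$})^{-1} \sum_{\sim \triangleleft G,\ \sim \neq \sim_{\max}} \lambda(G/\sim)\,\mu(G|\sim),
\]
in which every $\mu(G|\sim)$ is inductively defined, since the connected components of $G|\sim$ all have strictly fewer edges than $G$. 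Naturality of restriction, quotient and disjoint-union decomposition makes $\mu$ a species morphism, and multiplicativity makes it a character. Because $\delta$ is an algebra morphism, the $\star$-product of two characters is again a character, so $\lambda \star \mu$ is a character; by construction it agrees with $\varepsilon'$ on every connected graph, and therefore on all of $\calgr$ since characters are determined by their values on the multiplicative generators.

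Symmetrically, I will construct a left inverse $\mu'$ by inducting on the number of vertices of connected graphs, isolating the $\sim_{\min}$-term in $\delta(G)$ and using $\lambda(\prod_v \tdun{$v$}) = \prod_v \lambda(\tdun{$A_v$}) \neq 0$. The standard monoid identity $\mu' = \mu' \star \varepsilon' = \mu' \star (\lambda \star \mu) = (\mu' \star \lambda) \star \mu = \varepsilon' \star \mu = \mu$ then yields $\mu = \mu'$, so $\mu$ is a two-sided $\star$-inverse of $\lambda$. The main technical obstacle will be the bookkeeping needed to verify that the recursive formulas define bona fide species morphisms and genuine characters; this reduces to routine naturality and functoriality checks on the ingredients of $\delta$.
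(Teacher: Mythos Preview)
Your proof is correct and follows essentially the same route as the paper's: isolate the two extremal terms of $\delta(G)$, build a left and a right inverse recursively, and conclude $\mu=\mu'$ by the standard monoid identity. The paper's only organisational differences are that it inducts on edge count for \emph{both} inverses (noting that for every intermediate $\sim$ both $G/\sim$ and $G|\sim$ have strictly fewer edges than $G$) and works directly with arbitrary graphs rather than defining the inverse on connected graphs first and extending multiplicatively.
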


\begin{proof}
$\Longrightarrow$. Let $\mu$ be the inverse of $\lambda$. For any finite set $A$,
$\lambda\star \mu(\tdun{$A$})=\lambda(\tdun{$A$})\mu(\tdun{$A$})=\varepsilon(\tdun{$A$})=1$,
so $\lambda(\tdun{$A$})\neq 0$. \\

$\Longleftarrow$. We define inductively scalars $\mu_1(G)$ and $\mu_2(G)$ for any graph $G$ by induction on the 
number $k$ of edges of $G$. If $k=0$, then $G=\tdun{$A_1$}\hspace{2mm}\ldots \tdun{$A_n$}\hspace{2mm}$, and we put:
\[\mu_1(G)=\mu_2(G)=\frac{1}{\lambda(\tdun{$A_1$}\hspace{2mm})\ldots \lambda(\tdun{$A_n$}\hspace{2mm})}.\]
Otherwise, let $C_1,\ldots,C_l$ be the connected components of $G$, and let 
$A_1,\ldots,A_k$ be the decorations of its vertices. Then:
\[\delta(G)=G\otimes  \tdun{$A_1$}\hspace{2mm}\ldots \tdun{$A_k$}\hspace{2mm}
+ \tdun{$C_1$}\hspace{2mm}\ldots \tdun{$C_l$}\hspace{2mm}\otimes G
+\mbox{sum of terms $G'\otimes G''$, with $\sharp E(G'),\: \sharp E(G'')<\sharp  E(G)$}.\]
We then put:
\begin{align*}
\mu_1(G)&=-\frac{1}{\lambda(\tdun{$C_1$}\hspace{2mm})\ldots \lambda(\tdun{$C_l$}\hspace{2mm})}
(\mu_1(\tdun{$A_1$}\hspace{2mm}\ldots \tdun{$A_k$}\hspace{2mm})\lambda(G)+\mu_1(G')\lambda(G'')),\\
\mu_2(G)&=-\frac{1}{\lambda(\tdun{$A_1$}\hspace{2mm})\ldots \lambda(\tdun{$A_k$}\hspace{2mm})}
(\lambda(G)\mu_2(\tdun{$C_1$}\hspace{2mm}\ldots \tdun{$C_l$}\hspace{2mm})+\lambda(G')\mu_2(G'')).
\end{align*} 
By construction, $\mu_1\star\lambda=\lambda\star \mu_2=\varepsilon'$. Hence, $\lambda$ is invertible,
and its inverse is $\mu_1=\mu_2$. \end{proof}

Consequently, if $q\neq 0$, $\lambda_q$ is invertible. 
We denote by $\lambda_{chr}$ the inverse of $\lambda_1$ and for any $q\neq 0$, 
$\lambda_{chr_q}=\lambda_{chr}\star (\varepsilon'\circ \phi_{chr_q})$. Then:
\[\phi_{chr_q}=\phi\leftarrow  (\varepsilon'\circ \phi_{chr_q})
=(\phi_1\leftarrow \lambda_{chr})\leftarrow  (\varepsilon'\circ \phi_{chr_q})
=\phi_1\leftarrow \lambda_{chr}.\]
In particular, $\lambda_{chr_q}=\lambda_{chr}$.
Consequently:

\begin{cor}
For any graph $G\in \gr'[A]$:
\begin{align*}
\phi_{chr_q}(G)	=\sum_{\sim\triangleleft G} \lambda_{chr_q}(G\mid \sim)
 C_\sim^{(1)}\squplus\ldots \squplus C_\sim^{(\cl(\sim))},
\end{align*}
where $C_\sim^{(1)},\ldots,C_\sim^{(\cl(\sim))}$ are the equivalence classes of $\sim$.
\end{cor}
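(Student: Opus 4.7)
The strategy is simply to unfold the identity $\phi_{chr_q} = \phi_1 \leftarrow \lambda_{chr_q}$ established just above, and to combine it with the explicit formula for $\delta$ on $\calgr$ and with the description of $\phi_1$ given in Proposition \ref{prop48}.

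First, applying the definition of the action $\leftarrow$ of $\chara(\calgr)$ on $\mor_B(\calgr,\calcomp)$ (Proposition \ref{prop26}), we have, for any graph $G\in \calgr[A]$,
\begin{align*}
\phi_{chr_q}(G) &= (\phi_1\leftarrow \lambda_{chr_q})(G)\\
&= (\phi_1\otimes \lambda_{chr_q})\circ \delta_A(G).
\end{align*}
By the formula giving $\delta_A$ on $\calgr$ established in Section~6.1, this rewrites as
\[
\phi_{chr_q}(G)=\sum_{\sim\triangleleft G}\phi_1(G/\!\sim)\,\lambda_{chr_q}(G|\!\sim).
\]

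Next, recall that $\phi_1$ is the homogeneous morphism $\phi_u$ of Proposition \ref{prop48} with $u_n=1$ for all $n\geq 1$, that is,
\[
\phi_1(H)=\prod^{\squplus}_{I\in V(H)}(I)
\]
for any graph $H$. Applying this to $H=G/\!\sim$, whose vertex set is exactly the set of equivalence classes $\{C_\sim^{(1)},\dots,C_\sim^{(cl(\sim))}\}$ of $\sim$, we obtain
\[
\phi_1(G/\!\sim)=(C_\sim^{(1)})\squplus\dots \squplus(C_\sim^{(cl(\sim))}).
\]
Substituting into the previous equality yields
\[
\phi_{chr_q}(G)=\sum_{\sim\triangleleft G}\lambda_{chr_q}(G|\!\sim)\,C_\sim^{(1)}\squplus\dots \squplus C_\sim^{(cl(\sim))},
\]
which is the claimed formula.

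The argument is almost purely definitional: there is no genuine obstacle, only the need to keep the notations straight. The one point deserving care is the identification of the morphism $\phi_1$ appearing in $\phi_{chr_q}=\phi_1\leftarrow\lambda_{chr_q}$ with the specific homogeneous morphism $\phi_\mathbf{1}$ of Proposition \ref{prop48}, so that the factor $\phi_1(G/\!\sim)$ really does collapse to the shuffle of the singletons indexed by the classes of $\sim$.
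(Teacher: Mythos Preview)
Your proof is correct and follows exactly the approach implicit in the paper: the corollary is stated there immediately after the line $\phi_{chr_q}=\phi_1\leftarrow\lambda_{chr_q}$, and your argument is precisely the unfolding of this identity via the formula for $\delta_A$ and the description of $\phi_1$ from Proposition~\ref{prop48}.
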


\begin{prop}[Extraction-contraction principle]
Let $G$ be a graph and $e$ be an edge of $G$. We denote by $G\setminus e$ the graph obtained in deleting the edge $e$
and by $G/e$ the graph obtained in  contracting the edge $e$, the two extremities of $e$ being identified with their union.
Then, for any graph $G$, for any edge $e$ of $G$:
\begin{align*}
\phi_{chr_1}(G)&=\phi_{chr_1}(G\setminus e)-\phi_{chr_1}(G/e).
\end{align*}
For any $q\neq 0$:
\begin{align*}
\phi_{chr_q}(G)&=\phi_{chr_q}(G\setminus e)-q\phi_{chr_q}(G/e).
\end{align*}
In particular, $\lambda_{ao}(G)=\varepsilon'\circ \phi_{chr_{-1}}(G)$ is the number of acyclic orientations of $G$.
\end{prop}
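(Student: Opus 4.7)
The plan is to reduce the general case to the $q = 1$ case, which will be proved directly from the explicit description of $\phi = \phi_{chr_1}$ given in Proposition \ref{prop32}. Fix a graph $G \in \calgr[A]$ and an edge $e = \{V_1, V_2\}$, with vertices $V_1, V_2$ being subsets of $A$. First I would show that the set $VC(G \setminus e)$ of valid packed colorations of $G \setminus e$ partitions as the disjoint union of $VC(G)$ and (a copy of) $VC(G/e)$. Indeed, a valid coloration of $G \setminus e$ is either one in which $V_1$ and $V_2$ receive different colors---these are exactly the valid colorations of $G$---or one in which they receive the same color. In the latter case, merging $V_1$ and $V_2$ into the single vertex $V_1 \cup V_2$ of $G/e$ yields a bijection with $VC(G/e)$, as adjacency with any remaining $V_i$ is unaffected. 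This bijection preserves the underlying set composition $(c^{-1}(1), \ldots, c^{-1}(\max c))$, so Proposition \ref{prop32} gives $\phi(G \setminus e) = \phi(G) + \phi(G/e)$, i.e.\ $\phi_{chr_1}(G) = \phi_{chr_1}(G \setminus e) - \phi_{chr_1}(G/e)$.

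Next I would deduce the formula for arbitrary $q \neq 0$ from the identity $\phi_{chr_q} = \theta_{q^{-1}} \circ \phi \circ \iota_q$. Since $\iota_q$ acts on $\calgr[B]$ by the scalar $q^{deg(\cdot)}$ and $deg(G) = deg(G \setminus e) = deg(G/e) + 1$, rescaling the step~1 identity by $q^{deg(G)}$ yields
\[
\phi \circ \iota_q(G) = \phi \circ \iota_q(G \setminus e) - q \, \phi \circ \iota_q(G/e).
\]
The linearity of $\theta_{q^{-1}}$ then produces the desired formula $\phi_{chr_q}(G) = \phi_{chr_q}(G \setminus e) - q\, \phi_{chr_q}(G/e)$.

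For the final claim about acyclic orientations, set $\lambda_{ao} = \varepsilon' \circ \phi_{chr_{-1}}$. Applying the linear functional $\varepsilon'$ (a character of $\calcomp$) to the $q = -1$ relation gives
\[
\lambda_{ao}(G) = \lambda_{ao}(G \setminus e) + \lambda_{ao}(G/e).
\]
Since $\phi_{chr_{-1}}$ is a twisted bialgebra morphism and $\varepsilon'$ is a character, $\lambda_{ao}$ is a character of $\calgr$, hence multiplicative with respect to disjoint union. Moreover, as shown in the example preceding the corollary, $\phi_{chr_q}(\tdun{$A$}) = (A)$ for every $q$, so $\lambda_{ao}(\tdun{$A$}) = \varepsilon'((A)) = 1$. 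On the other hand, the number $a(G)$ of acyclic orientations of $G$ satisfies exactly the same multiplicativity on disjoint unions, the same initial value $a(\tdun{$A$}) = 1$, and the classical deletion-contraction recursion $a(G) = a(G\setminus e) + a(G/e)$. Induction on the number of edges of $G$ therefore gives $\lambda_{ao}(G) = a(G)$.

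The main obstacle is the combinatorial identity of Step~1, namely the clean description of $VC(G \setminus e)$ as $VC(G) \sqcup VC(G/e)$; this requires carefully unwinding Definition~31 to ensure that the edges of $G/e$ are precisely those inherited from $G \setminus e$ after merging $V_1$ and $V_2$, and that the ``packed'' and ``surjective'' conditions match under the merging bijection. The remaining steps are routine algebraic manipulations.
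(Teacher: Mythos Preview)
Your proof is correct and follows essentially the same route as the paper: the $q=1$ case via the partition $VC(G\setminus e)=VC(G)\sqcup VC(G/e)$, the general $q$ via $\phi_{chr_q}=\theta_{q^{-1}}\circ\phi\circ\iota_q$ and the degree count $deg(G)=deg(G\setminus e)=deg(G/e)+1$, and the acyclic-orientation statement by matching the recursion $\lambda_{ao}(G)=\lambda_{ao}(G\setminus e)+\lambda_{ao}(G/e)$ with the one for $a(G)$. The only difference is that the paper actually supplies a short proof of the recursion $a(G)=a(G\setminus e)+a(G/e)$ (via the ``flip the edge $e$'' pairing of orientations), whereas you invoke it as classical; both are acceptable.
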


\begin{proof} We denote by $X$ and $Y$ the 
set of valid colorations of $G\setminus e$ is the set of colorations $c$ of $G$
such that the conditions for $c$ to be valid is satisfied, except maybe for pairs of elements $(x,y)\in X\times Y$.
The set of valid colorations of $G/e$ is the set of colorations $c$ of $G$
such that the conditions for $c$ to be valid is satisfied, except for any pair of elements $(x,y)\in X\times Y$.
Hence, $VC(G\setminus e)=VC(G)\sqcup VC(G/e)$, which gives
$\phi_{chr_1}(G\setminus e)=\phi_{chr_1}(G)-\phi_{chr_1}(G/e)$.\\

If $q\neq 0$:
\begin{align*}
\phi_{chr_q}(G)&=q^{\deg(G)} \theta_{q^{-1}}\circ\phi_{chr_1}(G)\\
&=q^{\deg(G)}\theta_{q^{-1}}\left(\phi_{chr_1}(G\setminus e)-\phi_{chr_1}(G/e)\right)\\
&=\theta_{q^{-1}}\left(q^{\deg(G\setminus e)}\phi_{chr_1}(G\setminus e)-q^{\deg(G/e)+1}\phi_{chr_1}(G/e)\right)\\
&=\phi_{chr_q}(G\setminus e)-q \phi_{chr_q}(G/e).
\end{align*}

We denote by $\lambda_{ao}(G)$ the number of acyclic orientations of $G$
and we put $\mu(G)=\varepsilon'\circ \phi_{chr_{-1}}(G)$.  
Let us show that $\mu(G)=\lambda_{ao}(G)$  by induction on the number $k$ of edges of $G$. 
If $k=0$, then $\mu(G)=1$ and the result is obvious. If $k\geq 1$, let $e$ be an edge of $G$. 
Let $O$ be an orientation of $G$ and let $O'$ be the orientation of $G$ obtained by changing the orientation of $e$ in $G$.
Both $O$ and $O'$ give the same orientation of $G\setminus e$, which is acyclic if, and only if, $O$ or $O'$ is acyclic.
Both $O$ and $O'$ induce an orientation of $G/e$, which is acyclic if, and only if, both $O$ and $O'$ are acyclic.
Consequently, $\lambda_{ao}(G)=\lambda_{ao}(G\setminus e)+\lambda_{ao}(G/e)
=\mu(G\setminus e)+\mu(G/e)=\mu(G)$.  \end{proof}

\begin{cor} Let $q\neq 0$. Moreover for any graph $G$:
\[\lambda_{chr_q}(G)=q^{\deg(G)-\cc(G)}\lambda_{chr_1}(G).\]
Moreover, the character $\lambda_{chr_q}$ is invertible and for any graph $G$,
$\lambda^{\star-1}_{chr_q}(G)=q^{\deg(G)-\cc(G)}$. 
\end{cor}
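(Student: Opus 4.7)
My plan is to reduce both claims to the single identity $\lambda_{chr_q}\star \mu_q = \varepsilon'$, where $\mu_q:\calgr\longrightarrow \K$ is the character defined by $\mu_q(G) = q^{deg(G)-cc(G)}$. Multiplicativity of $\mu_q$ over disjoint union is immediate from the additivity of $deg$ and $cc$. Moreover, $\lambda_{chr_1} = \lambda_{chr}$, because the defining formula $\lambda_{chr_q}=\lambda_{chr}\star(\varepsilon'\circ \phi_{chr_q})$ specialized at $q=1$ gives $\lambda_{chr}\star (\varepsilon'\circ\phi) = \lambda_{chr}\star \varepsilon'_{\calgr} = \lambda_{chr}$, where $\varepsilon'\circ\phi = \varepsilon'_{\calgr}$ is built into Theorem \ref{theo29}. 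Granted $\lambda_{chr_q}\star\mu_q=\varepsilon'$, the second claim follows directly as $\lambda^{\star-1}_{chr_q}=\mu_q$, and the first follows by combining it with an independent calculation of $\mu_q^{\star-1}$.

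The evaluation of $\mu_q^{\star-1}$ rests on the following key combinatorial fact: for any $\sim\triangleleft G$, each $\sim$-class is connected in $G$ and hence contained in a single connected component, so the quotient map induces a bijection on connected components, giving $cc(G/\sim)=cc(G)$. Since the components of $G|\sim$ are exactly the $\sim$-classes, $cc(G|\sim)=k(\sim)$. Hence the function $d:=deg-cc$ is additive along $\delta$: $d(G)=d(G/\sim)+d(G|\sim)$. This immediately implies the twisting identity $(f\cdot\mu_q)\star(g\cdot\mu_q)=(f\star g)\cdot\mu_q$ for any $f,g\in\chara(\calgr)$. Applied to the relation $\lambda_1\star\lambda_{chr}=\varepsilon'$, using $\lambda_1=\mathbf{1}$ (so that $\lambda_1\cdot\mu_q=\mu_q$) and $\varepsilon'\cdot\mu_q=\varepsilon'$ (since $d=0$ on edgeless graphs), this yields $\mu_q\star(\lambda_{chr}\cdot\mu_q)=\varepsilon'$, hence $\mu_q^{\star-1}=\lambda_{chr}\cdot\mu_q=q^{deg-cc}\lambda_{chr_1}$.

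To establish $\lambda_{chr_q}\star\mu_q=\varepsilon'$, the plan is first to prove the morphism identity $\phi_{chr_q}\leftarrow\mu_q=\phi_{\mathbf{1}}$ and then apply $\varepsilon'$. On $\calcomp$, Propositions \ref{prop44} and \ref{prop46} force $\theta_q=\iota_q$ (both act on $\calcomp_n$ as multiplication by $q^n$), so the definition $\phi_{chr_q}=\theta_{q^{-1}}\circ\phi\circ\iota_q$ reads $\phi_{chr_q}(G)=q^{deg(G)}\iota_{q^{-1}}\phi(G)$. Using $deg(G/\sim)=k(\sim)$, for $G\in\calgr[A]$ with $|A|=n$,
\[(\phi_{chr_q}\leftarrow\mu_q)(G)=\sum_{\sim\triangleleft G}q^{k(\sim)}\iota_{q^{-1}}\phi(G/\sim)\cdot q^{n-k(\sim)}=q^n\iota_{q^{-1}}\Big(\sum_{\sim\triangleleft G}\phi(G/\sim)\Big).\]
The inner sum equals $(\phi\leftarrow\lambda_1)(G)=\phi_1(G)=\phi_{\mathbf{1}}(G)$ by the definitions $\phi_q=\phi\leftarrow\lambda_q$ and $\phi_{\mathbf{1}}=\phi_1$, while homogeneity of $\phi_{\mathbf{1}}$ (Proposition \ref{prop48}) gives $\iota_{q^{-1}}\phi_{\mathbf{1}}(G)=q^{-n}\phi_{\mathbf{1}}(G)$, so $(\phi_{chr_q}\leftarrow\mu_q)(G)=\phi_{\mathbf{1}}(G)$. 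Composing with $\varepsilon'$ yields $\alpha_q\star\mu_q=\varepsilon'\circ\phi_{\mathbf{1}}=\lambda_1$, where $\alpha_q=\varepsilon'\circ\phi_{chr_q}$. Convolving on the left with $\lambda_{chr}$ then gives $\lambda_{chr_q}\star\mu_q=\lambda_{chr}\star\lambda_1=\varepsilon'$, completing the proof of the second claim; combined with $\mu_q^{\star-1}=\lambda_{chr}\cdot\mu_q$ from the previous paragraph, the first claim reads $\lambda_{chr_q}=\mu_q^{\star-1}=q^{deg(G)-cc(G)}\lambda_{chr_1}(G)$.

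The only substantive step is the identification $\theta_q=\iota_q$ on $\calcomp$ and the recognition that $\sum_{\sim}\phi(G/\sim)=\phi_{\mathbf{1}}(G)$ is just $\phi\leftarrow\lambda_1$ for the constant character $\lambda_1=\mathbf{1}$; once these are in place, the entire argument is formal manipulation of characters and of the action $\leftarrow$.
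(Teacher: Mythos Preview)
Your proof is correct, but there is a minor notational slip: when you write ``for $G\in\calgr[A]$ with $|A|=n$'' and then use $q^{n-k(\sim)}$ for $\mu_q(G|\sim)$ and $q^{-n}$ for the effect of $\iota_{q^{-1}}$ on $\phi_{\mathbf{1}}(G)$, the correct quantity throughout is $n=deg(G)$ (the number of vertices of $G$), not $|A|$. In $\calgr=\gr\circ\com$ the vertices form a partition of $A$, so in general $deg(G)\leq |A|$. With this correction your computation goes through unchanged, since $deg(G|\sim)=deg(G)$, $cc(G|\sim)=k(\sim)$, and homogeneity of $\phi_{\mathbf{1}}$ places $\phi_{\mathbf{1}}(G)$ in $\calcomp_{deg(G)}$.

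Your route differs from the paper's in organization rather than substance. The paper starts from $\phi_{chr_1}\circ\iota_q=(\theta_q\circ\phi_1)\leftarrow\lambda_{chr_q}$, applies $\varepsilon'$, and computes $\varepsilon'\circ\theta_q\circ\phi_1(G)=q^{deg(G)}$ to obtain the single identity
\[\varepsilon'(G)=\sum_{\sim\triangleleft G} q^{cc(G|\sim)-deg(G|\sim)}\,\lambda_{chr_q}(G|\sim),\]
from which it reads off that $q^{cc-deg}\lambda_{chr_q}$ is the $\star$-inverse of the constant character $\mathbf{1}$, independently of $q$; specializing $q=1$ gives $\lambda_{chr_q}=q^{deg-cc}\lambda_{chr_1}$, and rearranging gives $\lambda_{chr_q}^{\star-1}=q^{deg-cc}$. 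You instead isolate the additivity of $d=deg-cc$ along $\delta$ as a standalone ``twisting identity'' $(f\cdot\mu_q)\star(g\cdot\mu_q)=(f\star g)\cdot\mu_q$, use it to compute $\mu_q^{\star-1}=\mu_q\cdot\lambda_{chr_1}$, and separately establish $\lambda_{chr_q}\star\mu_q=\varepsilon'$ via $\phi_{chr_q}\leftarrow\mu_q=\phi_{\mathbf{1}}$. Both arguments rest on the same combinatorial facts ($cc(G/\sim)=cc(G)$, $deg(G/\sim)=cc(G|\sim)$) and on $\theta_q=\iota_q$ on $\calcomp$; the paper's packaging is slightly more economical, while your twisting identity is a reusable lemma that makes the role of the cocycle $\mu_q$ explicit.
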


\begin{proof}
As $\phi_{chr_q}=\phi_1\leftarrow \lambda_{chr_q}=\theta_{q^{-1}}\circ \phi_{chr_1}\circ \iota_q$:
\begin{align*}
\phi_{chr_1}\circ \iota_q&=\theta_q\circ (\phi_1\leftarrow \lambda_{chr_q})\\
&=(\theta_q\circ\phi_1)\leftarrow \lambda_{chr_q},\\
\varepsilon'\circ \iota_q&=\varepsilon'\circ \phi_{chr_1}\circ \iota_q\\
&=(\varepsilon'\circ \theta_q\circ \phi_1)\star \lambda_{chr_q}.
\end{align*}
For any graph $G$, if $V(G)=\{A_1,\ldots,A_k\}$:
\begin{align*}
\varepsilon'\circ \theta_q\circ \phi_1(G)&=\varepsilon'\circ \theta_q((A_1)\squplus\ldots \squplus (A_k))\\
&=\varepsilon'\circ \theta_q((A_1))\ldots \varepsilon'\circ \theta_q((A_k))\\
&=q^k\\
&=q^{\deg(G)}.
\end{align*}
Therefore:
\begin{align*}
\varepsilon'\circ\iota_q(G)&=q^{\deg(G)}\varepsilon'(G)\\
&=\sum_{\sim\triangleleft G} q^{\deg(G/\sim)} \lambda_{chr_q}(G\mid \sim)\\
&=\sum_{\sim\triangleleft G} q^{\cc(G\mid \sim)}\lambda_{chr_q}(G\mid \sim).
\end{align*}
So:
\[\varepsilon'(G)=\sum_{\sim\triangleleft G}  q^{\cc(G\mid \sim)-\deg(G\mid \sim)} \lambda_{chr_q}(G\mid \sim).\]
Hence, the inverse of the character defined by $\lambda(G)=1$ for any graph $G$ is given by:
\[\lambda^{\star-1}(G)=q^{\cc(G)-\deg(G)}\lambda_{chr_q}(G).\]
If $q=1$, $\lambda^{\star-1}(G)=\lambda_{chr_1}(G)$ and, if $q\neq 0$:
\[\lambda_{chr_q}(G)=q^{\deg(G)-\cc(G)} \lambda_{chr_1}(G).\]
Moreover:
\begin{align*}
\varepsilon'(G)&=\sum_{\sim\triangleleft G}  q^{\cc(G\mid \sim)-\deg(G\mid \sim)} \lambda_{chr_q}(G\mid \sim)\\
&=q^{\cc(G)-\deg(G)}\sum_{\sim\triangleleft G}  q^{\deg(G/\sim)-\cc(G/\sim)} \lambda_{chr_q}(G\mid \sim).
\end{align*}
If $\varepsilon'(G)\neq 0$, then $\cc(G)=\deg(G)$,so:
\[\sum_{\sim\triangleleft G}  q^{\deg(G/\sim)-\cc(G/\sim)} \lambda_{chr_q}(G\mid \sim)=\varepsilon'(G).\]
Therefore, $\lambda_{chr_q}^{\star-1}(G)=q^{\deg(G)-\cc(G)}$. \end{proof}

\begin{cor}\label{cor53}
\begin{enumerate}
\item The character $\lambda_{ao}$ is invertible and, for any graph $G$:
\[\lambda_{ao}^{\star-1}(G)=(-1)^{\deg(G)+\cc(G)} \lambda_{ao}(G).\]
\item The following maps are twisted bialgebra automorphisms, inverse one from the other:
\begin{align*}
\Gamma&:\left\{\begin{array}{rcl}
\calgr&\longrightarrow&\calgr\\
G\in \gr'[A]&\longrightarrow&\displaystyle\sum_{\sim\triangleleft G} \lambda_{ao}(G|\sim) G/\sim,
\end{array}\right.\\
\Gamma'&:\left\{\begin{array}{rcl}
\calgr&\longrightarrow&\calgr\\
G\in \gr'[A]&\longrightarrow&\displaystyle\sum_{\sim\triangleleft G} (-1)^{\cl(\sim)+\deg(G)}\lambda_{ao}(G|\sim) G/\sim.
\end{array}\right.
\end{align*}
Moreover, $\phi_{chr_{-1}}=\phi_{chr_1}\circ \Gamma$ and $\phi_{chr_1}=\phi_{chr_{-1}}\circ \Gamma'$.
\end{enumerate}
\end{cor}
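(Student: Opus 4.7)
The plan is to prove Part 1 first and then obtain Part 2 by formal manipulations via Lemma \ref{lemme27} and the monoid morphism property of $\chi_{\calgr}$.

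For Part 1, invertibility of $\lambda_{ao}$ is immediate from Lemma \ref{lemme49}, since $\lambda_{ao}(\tdun{$A$})=1$ for every finite set $A$. To compute the inverse, I will first establish the factorization $\lambda_{ao}=\lambda_1\star\lambda_{chr_{-1}}$ by applying $\varepsilon'$ to the identity $\phi_{chr_{-1}}=\phi_1\leftarrow\lambda_{chr_{-1}}$ (proved just above in the text) and checking that $\varepsilon'\circ\phi_1=\lambda_1$: indeed $\phi_1(G)=(I_1)\squplus\cdots\squplus(I_n)$ for $V(G)=\{I_1,\ldots,I_n\}$, and $\varepsilon'$ retains only the single-part composition $(I_1\sqcup\cdots\sqcup I_n)$, with coefficient $1$. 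Taking $\star$-inverses then gives $\lambda_{ao}^{\star-1}=\lambda_{chr_{-1}}^{\star-1}\star\lambda_{chr}$, and the preceding corollary identifies $\lambda_{chr_{-1}}^{\star-1}=\eta$ with $\eta(G):=(-1)^{deg(G)-cc(G)}=(-1)^{deg(G)+cc(G)}$, as well as $\lambda_{chr_{-1}}=\eta\cdot\lambda_{chr}$ (pointwise product).

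It remains to show $\eta\star\lambda_{chr}=\eta\cdot\lambda_{ao}$. Expanding both sides through the coproduct $\delta$ on $\calgr$ and using the two displayed formulas, this identity reduces term-by-term to the pointwise assertion $\eta(G/\sim)=\eta(G)\,\eta(G|\sim)$ for every $\sim\triangleleft G$; this in turn follows from the four bookkeeping identities $deg(G/\sim)=cl(\sim)$, $cc(G/\sim)=cc(G)$, $deg(G|\sim)=deg(G)$, $cc(G|\sim)=cl(\sim)$, since both sides simplify to $(-1)^{cc(G)+cl(\sim)}$.

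For Part 2, set $\mu:=\lambda_{ao}^{\star-1}$, a character by Lemma \ref{lemme13}. Part 1 gives $\mu(G)=(-1)^{deg(G)+cc(G)}\lambda_{ao}(G)$, and the same bookkeeping identities yield $\mu(G|\sim)=(-1)^{cl(\sim)+deg(G)}\lambda_{ao}(G|\sim)$. Hence $\Gamma=\mathrm{Id}_{\calgr}\leftarrow\lambda_{ao}=\chi_{\calgr}(\lambda_{ao})$ and $\Gamma'=\mathrm{Id}_{\calgr}\leftarrow\mu=\chi_{\calgr}(\mu)$; both are twisted bialgebra endomorphisms of $\calgr$ by Proposition \ref{prop26}. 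The monoid morphism property of $\chi_{\calgr}$ (established just after Lemma \ref{lemme27}) yields $\Gamma\circ\Gamma'=\chi_{\calgr}(\lambda_{ao}\star\mu)=\chi_{\calgr}(\varepsilon')=\mathrm{Id}_{\calgr}\leftarrow\varepsilon'=\mathrm{Id}_{\calgr}$, and symmetrically $\Gamma'\circ\Gamma=\mathrm{Id}_{\calgr}$. For the diagram, Theorem \ref{theo29} gives $\phi_{chr_{-1}}=\phi\leftarrow\lambda_{ao}=\phi_{chr_1}\leftarrow\lambda_{ao}$, and Lemma \ref{lemme27} rewrites this as $\phi_{chr_1}\circ(\mathrm{Id}_{\calgr}\leftarrow\lambda_{ao})=\phi_{chr_1}\circ\Gamma$; composing with $\Gamma^{-1}=\Gamma'$ on the right gives $\phi_{chr_1}=\phi_{chr_{-1}}\circ\Gamma'$. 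The main obstacle is Part 1: one must recognize the factorization $\lambda_{ao}=\lambda_1\star\lambda_{chr_{-1}}$ and reduce the inverse formula to the clean combinatorial identity on $\eta$; everything else is a direct application of the $\chi_{\calgr}$-machinery.
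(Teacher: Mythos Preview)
Your proof is correct and follows essentially the same route as the paper's. The paper writes the factorization as $\lambda_{ao}=\lambda_{chr_1}^{\star-1}\star\lambda_{chr_{-1}}$ (equivalent to your $\lambda_1\star\lambda_{chr_{-1}}$ since $\lambda_{chr_1}=\lambda_{chr}=\lambda_1^{\star-1}$), and then performs the same sign computation, using the identities $deg(G/\sim)=cl(\sim)$, $cc(G/\sim)=cc(G)$, $deg(G|\sim)=deg(G)$, $cc(G|\sim)=cl(\sim)$ to pull the factor $(-1)^{deg(G)+cc(G)}$ out of the sum and recognize the remainder as $\lambda_{ao}(G)$; your reformulation of this step as the multiplicativity $\eta(G/\sim)=\eta(G)\,\eta(G|\sim)$ is a clean way to package the same identity. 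For Part~2 the paper likewise writes $\Gamma=Id\leftarrow\lambda_{ao}$, $\Gamma'=Id\leftarrow\lambda_{ao}^{\star-1}$ and uses Lemma~\ref{lemme27} to get $\phi_{chr_1}\circ\Gamma=\phi_{chr_1}\leftarrow\lambda_{ao}=\phi_{chr_{-1}}$, exactly as you do.
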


\begin{proof}
1. By definition of $\lambda_{ao}$:
\[\lambda_{ao}=\varepsilon'\circ \phi_{chr_{-1}}=\varepsilon'\circ (\phi_1\leftarrow \lambda_{chr_{-1}})
=\lambda_{chr_1}^{\star-1}\star \lambda_{chr_{-1}}.\]
So $\lambda_{ao}^{\star-1}=\lambda_{chr_{-1}}^{\star-1}\star \lambda_{chr_1}$, and for  any graph $G$:
\begin{align*}
\lambda_{ao}^{\star-1}(G)&=\sum_{\sim \triangleleft G} (-1)^{\cl(\sim)-\cc(G)}\lambda_{chr_1}(G\mid \sim)\\
&=(-1)^{\deg(G)+\cc(G)}\sum_{\sim\triangleleft G}(-1)^{\deg(G\mid \sim)+\cc(G\mid \sim)} \lambda_{chr_1}(G\mid \sim)\\
&=(-1)^{\deg(G)+\cc(G)}\sum_{\sim\triangleleft G} \lambda_{chr_{-1}}(G\mid \sim)\\
&=(-1)^{\deg(G)+\cc(G)}\lambda_{chr_1}^{\star-1}\star \lambda_{chr_{-1}}(G)\\
&=(-1)^{\deg(G)+\cc(G)}\lambda_{ao}(G).
\end{align*}
2.  We obtain:
\[\phi_{chr_1} \circ \Gamma=\phi_{chr_1} \circ (\id \leftarrow \lambda_{ao})
=\phi_{chr_1}\leftarrow \lambda_{ao}=\phi_{chr_1}\leftarrow (\lambda_{chr_1}^{\star-1}\star \lambda_{chr_{-1}})
=\phi_1\leftarrow \lambda_{chr_{-1}}=\phi_{chr_{-1}}.\]
Note that $\Gamma=\id \leftarrow \lambda_{ao}$ and $\Gamma'=\id \leftarrow \lambda_{ao}^{\star-1}$,
so they are indeed twisted bialgebra morphisms, inverse one from each other. 
\end{proof}

To summarize, we obtain  twisted bialgebra morphisms $\phi_q$ and $\phi_{chr_q}$ from
$\calgr$ to $\calcomp$, such that the following diagrams are commutative:
\begin{align*}
&\xymatrix{\calgr\ar[r]^{\iota_q}\ar[d]_{\phi_{chr_{q'}}}&\calgr\ar[d]^{\phi_{chr_{\frac{q'}{q}}}}\\
\calcomp\ar[r]_{\theta_q}&\calcomp}&
&\xymatrix{\calgr\ar[r]^{\iota_q}\ar[d]_{\phi_{q'}}&\calgr\ar[d]^{\phi_{q'}}\\
\calcomp\ar[r]_{\theta_q}&\calcomp}
\end{align*}
We also have a twisted bialgebra automorphism $\Gamma$ of $\calgr$ making the following diagram commuting:
\[\xymatrix{\calgr\ar[rr]^{\Gamma} \ar[rd]_{\phi_{chr_{-1}}}&&\calgr\ar[ld]^{\phi_{chr_1}}\\
&\calcomp&}\]
They are related to several characters of $\calgr$:
\begin{align*}
\phi_{chr_q}&=\phi_1\leftarrow \lambda_{chr_q},&\Gamma&=\id\leftarrow \lambda_{ao}.
\end{align*}
For any graph $G$:
\begin{align*}
\lambda_{chr_q}(G)&=q^{\deg(G)-\cc(G)}\lambda_{chr_1}(G),&
\lambda_{chr_q}^{\star-1}(G)&=q^{\deg(G)-\cc(G)},&
\lambda_{ao}^{\star-1}(G)&=(-1)^{\deg(G)+\cc(G)}\lambda_{ao}(G).
\end{align*}

\begin{remark}
Taking the limit when $q$ goes to $0$, one gives a meaning to $\phi_{chr_0}$ and to $\lambda_{chr_0}$.
For any graph $G$:
\begin{align*}
\lambda_{chr_0}(G)&=\begin{cases}
1\mbox{ if }\deg(G)=\cc(G),\\
0\mbox{ otherwise}
\end{cases}\\
&=\varepsilon'(G).
\end{align*}
So $\phi_{chr_0}=\phi_1\leftarrow \lambda_{chr_0}=\phi_1\leftarrow \varepsilon'=\phi_1$.
\end{remark}

\subsection{The example of finite topologies}

\begin{defi}
For any quasi-poset $T=(A,\leq_T)$, we denote by $\HO(T)$ the set of heap-orders on $T$, that is to
say surjections $f:A\longrightarrow \underline{\max(f)}$ such that:
\begin{enumerate}
\item For any $a,b\in A$, if $a\leq_T b$, then $f(a)\leq f(b)$.
\item For any $a,b\in A$, $f(a)=f(b)$ if, and only if, $a\sim_T b$.
\end{enumerate}
Note that for any $f\in \HO(T)$, $\max(f)=\cl(T)$. We denote by $\ho(T)$ the cardinality of $\HO(T)$.
\end{defi}

\begin{lemma}
Let $T$ be a finite topology on a finite set $A$. Then:
\[\sum_{ \substack{\mbox{\scriptsize  $O$ non trivial}\\\mbox{\scriptsize  open set of $T$}}}
\frac{\cl(T)!}{\cl(T_{\mid O})!\cl(T_{\mid A\setminus O})!}\ho(T_{\mid A\setminus O})
\ho(T_{\mid O})=(2^{\cl(T)}-2)\ho(T).\]
\end{lemma}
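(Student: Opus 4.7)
The plan is to rephrase the identity entirely in terms of the quotient poset $P=(CL(T),\overline{\leq}_T)$ and then use a classical cut bijection for linear extensions. Since heap orders of $T$ are exactly surjections $f\colon A\to\underline{cl(T)}$ whose fibers are the $\sim_T$-classes and which descend to an order-preserving bijection $\overline{f}\colon CL(T)\to\underline{cl(T)}$, the number $ho(T)$ equals $e(P)$, the number of linear extensions of $P$. Set $n=cl(T)$. Because any open set of $T$ is $\sim_T$-saturated (equivalent points are comparable in both directions), the nontrivial open sets of $T$ are in bijection with the nontrivial upper sets $U$ of $P$. In this correspondence, $cl(T_{\mid O})=|U|$, $ho(T_{\mid O})=e(P|_U)$, $cl(T_{\mid A\setminus O})=n-|U|$, and $ho(T_{\mid A\setminus O})=e(P\setminus U)$, so the claimed identity becomes
\[
\sum_{\substack{U\text{ upper set of }P\\ U\neq\emptyset,\,U\neq P}}\binom{n}{|U|}e(P|_U)\,e(P\setminus U)=(2^n-2)\,e(P).
\]

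The key step will be to establish the classical refinement: for each fixed $k\in\{1,\ldots,n-1\}$,
\[
\sum_{\substack{U\text{ upper set of }P\\ |U|=k}}e(P|_U)\,e(P\setminus U)=e(P).
\]
This comes from the \emph{cut bijection}: given a linear extension $L\colon P\to\underline{n}$, the preimage $U:=L^{-1}(\{n-k+1,\ldots,n\})$ is an upper set of size $k$, and the restrictions produce a linear extension $L_1$ of $P|_U$ (with values in $\{n-k+1,\ldots,n\}$, shifted down to $\underline{k}$) and a linear extension $L_2$ of $P\setminus U$. Conversely, given an upper set $U$ of size $k$ with linear extensions $L_1,L_2$ of $P|_U,P\setminus U$, placing $L_2$ on values $\{1,\ldots,n-k\}$ and the shifted $L_1$ on values $\{n-k+1,\ldots,n\}$ yields a linear extension of $P$: the only comparabilities between $U$ and $P\setminus U$ go from $P\setminus U$ to $U$ (since $U$ is upper), and these are automatically respected.

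Summing this identity over $k=1,\ldots,n-1$ after multiplying by $\binom{n}{k}$ gives
\[
\sum_{U\neq\emptyset,P}\binom{n}{|U|}e(P|_U)\,e(P\setminus U)=\sum_{k=1}^{n-1}\binom{n}{k}\,e(P)=(2^n-2)\,e(P),
\]
which is the required equality. There is no real obstacle: the only point requiring care is verifying cleanly that $O\mapsto U$ is a bijection between nontrivial open sets of $T$ and nontrivial upper sets of $P$ and that this identification transports $ho$ and $cl$ correctly; once this reduction is made, the identity is a straightforward consequence of the cut bijection and the binomial identity $\sum_{k=1}^{n-1}\binom{n}{k}=2^n-2$.
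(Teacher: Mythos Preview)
Your proposal is correct and follows essentially the same approach as the paper. The paper works directly with heap orders on $T$ rather than passing to the quotient poset $P=CL(T)$, but its bijection $\vartheta$ between pairs $(h,k)$ with $h\in\ho(T)$, $1\le k<cl(T)$, and triples $(O,h',h'')$ is exactly your cut bijection; the final step in both arguments is the identity $\sum_{k=1}^{n-1}\binom{n}{k}=2^n-2$.
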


\begin{proof}
We consider the two following sets:
\begin{enumerate}
\item $X$ is the set of pairs $(h,k)$, where $h\in \HO(T) $ and $1\leq k\leq \cl(T)$.
\item $Y$ is the set of triples $(O,h',h'')$, where $O$ is a non trivial open set of $T$,
$h'\in \HO(T_{\mid A\setminus O})$ and $h''\in \HO(T_{\mid O})$.
\end{enumerate}
For any $(h,k)$ in $X$, we put:
\begin{itemize}
\item $O=h^{-1}(\{k+1,\ldots,\cl(T)\})$. As $h\in \HO(T)$, this is an open set of $T$,
and as $1\leq k<\cl(T)$, it is non trivial.
\item $h'=h_{\mid A\setminus O}$ and $h''=h_{\mid O}-k$. As $h\in \HO(T)$,
$(O,h',h'')\in B$.
\end{itemize}
We define in this way a map $\vartheta:X\longrightarrow Y$.\\

If $(O,h,h')\in Y$, we put:
\begin{enumerate}
\item $k=\cl(A\setminus O)=\max(h')$.
\item $h:A\longrightarrow \underline{\cl(T)}$ defined by:
\[h(x)=\begin{cases}
h'(x)\mbox{ if }x\notin O,\\
h''(x)+k\mbox{ if }x\in O.
\end{cases}\]
It is not difficult to show that $h\in \HO(T)$.
\end{enumerate}
We define in this way a map $\vartheta':Y\longrightarrow X$, and it is immediate that $\vartheta\circ \vartheta'=\id_Y$
and $\vartheta'\circ \vartheta=\id_X$. So $\vartheta$ and $\vartheta'$ are bijections. We obtain:
\begin{align*}
&\sum_{ \substack{\mbox{\scriptsize  $O$ non trivial}\\\mbox{\scriptsize  open set of $T$}}}
 \frac{\cl(T)!}{\cl(T_{\mid O})!\cl(T_{\mid A\setminus O})!}\ho(T_{\mid A\setminus O})\ho(T_{\mid O})\\
&=\sum_{(O,h',h'')\in Y}\frac{\cl(T)!}{\cl(T_{\mid O})!\cl(T_{\mid A\setminus O})!}\\
&=\sum_{(h,k)\in X} \frac{n!}{k!(n-k)!}\\
&=\sum_{h\in \HO(T)} \sum_{k=1}^{n-1} \frac{n!}{k!(n-k)!}\\
&=\sum_{h\in \HO(T)}(2^n-2)\\
&=(2^n-2)\ho(T),
\end{align*}
where $n=\cl(T)$. \end{proof}

\begin{prop}\label{prop56}
\begin{enumerate}
\item Let $u=(u_k)_{k\geq 1}$ be a sequence of scalars. The following map is a homogeneous morphism of twisted bialgebras:
\[\phi_u:\left\{\begin{array}{rcl}
\caltop&\longrightarrow&\calcomp\\
T\in \caltop[A]&\longrightarrow&\displaystyle \left(\prod_{I\in CL(T)}u_{\sharp I} \right)\phi\leftarrow \lambda_{ho}(T).
\end{array}\right.\]
All homogeneous morphisms from $\caltop$ to $\calcomp$ are obtained in this way. 
\item We put:
\begin{align*}
\lambda_{ho}(T)=\frac{\ho(T)}{\cl(T)!}.
\end{align*}
Then $\lambda_{ho}$ is a character of $\caltop$. Moreover, for any $q\in \K$,
$\lambda_{ho}\circ \iota_q=\lambda_{ho}^q$.
\end{enumerate}\end{prop}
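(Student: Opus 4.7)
The strategy mirrors the proof of Proposition~\ref{prop48} for graphs, with the combinatorial lemma preceding the proposition playing the role of the binomial identity used there.

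Since $\caltop_1[A]$ is one-dimensional, spanned by $\tdun{$A$}$, species morphisms $\caltop_1 \to \com$ are parametrized by scalar sequences $u=(u_n)_{n\geq 1}$ via $\mu_u(\tdun{$A$})=u_{\sharp A}$. By Corollary~\ref{cor47}, each $\mu_u$ extends to a unique homogeneous twisted bialgebra morphism $\phi_u:\caltop\to\calcomp$, and every homogeneous morphism arises this way. Introducing the rescaling
\[\iota_u:T\in\caltop[A]\longmapsto \Big(\prod_{I\in CL(T)}u_{\sharp I}\Big)T,\]
one checks that $\iota_u$ is a homogeneous twisted bialgebra endomorphism of $\caltop$: multiplicativity uses $CL(TT')=CL(T)\sqcup CL(T')$, and coproduct compatibility uses $CL(T)=CL(T_{\mid A\setminus B})\sqcup CL(T_{\mid B})$ when $B$ is an open set of $T$. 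Hence $\phi_{\mathbf{1}}\circ\iota_u$ is a homogeneous bialgebra morphism whose associated species map on $\caltop_1$ is $\mu_u$, so by the bijection of Corollary~\ref{cor47}, $\phi_u=\phi_{\mathbf{1}}\circ\iota_u$. The proposition is therefore reduced to identifying $\phi_{\mathbf{1}}=\phi\leftarrow\lambda_{ho}$.

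Set $\lambda=\varepsilon'\circ\phi_{\mathbf{1}}$. Since $\phi_{\mathbf{1}}$ is homogeneous, $\theta_q\circ\phi_{\mathbf{1}}=\phi_{\mathbf{1}}\circ\iota_q$ for every $q$; combined with $\varepsilon'\circ\theta_q=\varepsilon'^q$ (which follows from $\theta_q=Id_{\calcomp}\leftarrow\varepsilon'^q$ together with $\varepsilon'\star f=f$) and with the fact that the pullback $\alpha\mapsto\alpha\circ\phi_{\mathbf{1}}$ is a morphism of $*$-monoids from $\chara(\calcomp)$ to $\chara(\caltop)$, this yields $\lambda\circ\iota_q=\lambda^q$ for all $q\in\K$. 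Evaluating at $q=2$ gives the functional equation
\[(2^{cl(T)}-2)\,\lambda(T)=\sum_{\substack{O\text{ nontrivial}\\ \text{open set of }T}}\lambda(T_{\mid A\setminus O})\,\lambda(T_{\mid O}).\]
An induction on $cl(T)$, with base case $\lambda(\tdun{$A$})=1=\lambda_{ho}(\tdun{$A$})$, identifies $\lambda$ with $\lambda_{ho}$: after dividing by $cl(T)!$, the corresponding relation for $\lambda_{ho}$ is precisely the combinatorial identity proved in the preceding lemma. Hence $\varepsilon'\circ\phi_{\mathbf{1}}=\lambda_{ho}$, and Theorem~\ref{theo29} gives $\phi_{\mathbf{1}}=\phi\leftarrow\lambda_{ho}$, establishing point~1. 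As a by-product, $\lambda_{ho}=\varepsilon'\circ\phi_{\mathbf{1}}$ is indeed a character of $\caltop$. Finally, specializing $\lambda\circ\iota_q=\lambda^q$ gives $\lambda_{ho}\circ\iota_q=\lambda_{ho}^q$, which is point~2. The main technical work is concentrated in the combinatorial lemma invoked at the inductive step; the rest is straightforward assembly from Corollary~\ref{cor47} and Theorem~\ref{theo29}.
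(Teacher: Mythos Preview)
Your proof is correct and follows essentially the same approach as the paper: both reduce to identifying the character $\varepsilon'\circ\phi_{\mathbf{1}}$ via the functional equation $\lambda\circ\iota_2=\lambda^2$, then induct on $cl(T)$ using the preceding combinatorial lemma. You are somewhat more explicit than the paper in justifying $\lambda\circ\iota_q=\lambda^q$ and in invoking Theorem~\ref{theo29} for the identification $\phi_{\mathbf{1}}=\phi\leftarrow\lambda_{ho}$, but the argument is structurally identical.
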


\begin{proof}
For any finite set $A$, $\caltop_1[A]$ is one-dimensional, generated by $\tdun{$A$}$.
Hence, the species morphisms from $\caltop_1$ to $\com$ are given by:
\[\mu_u(\tdun{$A$})=u_{\sharp A},\]
where $u$ is a sequence of scalars. Corollary \ref{cor47} implies that there exists a unique homogeneous
morphism of twisted bialgebra $\phi_u:\caltop\longrightarrow\calcomp$ such that 
$(\varepsilon'\circ \phi_u)_{\mid \calgr_1}=\mu_u$.

Let us consider the map:
\[\iota_u:\left\{\begin{array}{rcl}
\caltop&\longrightarrow&\caltop\\
T&\longrightarrow&\displaystyle \left(\prod_{I\in CL(T)}u_{\sharp I} \right)T.
\end{array}\right.\]
This is obviously a homogeneous endomorphism of twisted bialgebras, and for any sequences $u$, $v$,
then $\phi_u\circ \iota_v$ is a homogeneous morphism
and for any finite set $A$ of cardinal $n$:
\[\varepsilon'\circ \phi_u\circ \iota_v(\tdun{$A$})=u_nv_n.\]
It is now enough to describe
$\phi_\mathbf{1}$, with $\mathbf{1}_n=1$ for any $n$. Let us put $\lambda_{ho}=\varepsilon'\circ \phi_\mathbf{1}$,
and let us prove that for any finite topology $T$ on a finite set $A$:
\[\lambda_{ho}(T)=\frac{\ho(T)}{\cl(T)!}.\]
We proceed by induction on $\cl(T)=n$.
If $n=1$, then $T=\tdun{$A$}$ and $\lambda_{ho}(T)=\mathbf{1}_n=1$. Otherwise,
as $\lambda_{ho}^2=\lambda_{ho} \circ \iota_2$, by the preceding Lemma:
\begin{align*}
\lambda_{ho}(T)&=\frac{1}{2^n-2}\sum_{\mbox{\scriptsize  $O$ non trivial open set of $T$}} 
\lambda_{ho}(T_{\mid V(T)\setminus O})\lambda_{ho}(T_{\mid O})\\
&=\frac{1}{2^n-2}\sum_{\mbox{\scriptsize  $O$ non trivial open set of $T$}} 
\frac{\cl(T)!}{\cl(T_{\mid O})!\cl(T_{\mid A\setminus O})!}\ho(T_{\mid O})\ho(T_{\mid A\setminus O})\\
&=\frac{\ho(T)}{n!}.
\end{align*}

As $\phi_\mathbf{1}$ is homogeneous, $\lambda_{ho}^q=\lambda_{ho}\circ \iota_q$ for any $q\in \K$. 
\end{proof}

\begin{example} If $A$, $B$ and $C$ are finite sets:
\begin{align*}
\phi_\mathbf{1}(\tdun{$A$})&=(A),\\
\phi_\mathbf{1}(\tddeux{$A$}{$B$})&=(A,B)+\frac{1}{2}(A\sqcup B),\\
\phi_\mathbf{1}(\tdtroisun{$A$}{$C$}{$B$})&=(A,B,C)+(A,C,B)+(A,B\sqcup C)\\
&+\frac{1}{2}(A\sqcup B,C)+\frac{1}{2}(A\sqcup C,B)+\frac{1}{3}
(A\sqcup B\sqcup C),\\
\phi_\mathbf{1}(\tdtroisdeux{$A$}{$B$}{$C$})&=(A,B,C)+\frac{1}{2}(A,B\sqcup C)+\frac{1}{2}(A\sqcup B,C)
+\frac{1}{6}(A\sqcup B\sqcup C).
\end{align*}
\end{example}

\subsection{Application: duality principle}

\begin{lemma}
Let $\lambda$ be a character on $\caltop$. It is inversible for $\star$ if, and only if, for any finite set $A$,
$\lambda(\tdun{$A$})\neq 0$. 
\end{lemma}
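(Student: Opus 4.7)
The plan is to mimic the argument of Lemma~\ref{lemme49}, replacing graphs with finite topologies and the number of edges of $G$ by the number $e(T)$ of edges in the Hasse graph of $T$. For $\Longrightarrow$, the two defining conditions of $\CE(T)$ force the only compatible equivalence on $\tdun{$A$}$ to be the one-class equivalence on $A$, so that $\delta_A(\tdun{$A$})=\tdun{$A$}\otimes\tdun{$A$}$; if $\mu$ is a $\star$-inverse of $\lambda$, evaluating $\lambda\star\mu$ at $\tdun{$A$}$ yields $\lambda(\tdun{$A$})\mu(\tdun{$A$})=\varepsilon'(\tdun{$A$})=1$, so $\lambda(\tdun{$A$})\neq 0$.

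For $\Longleftarrow$, the starting observation is the structural decomposition
\[
\delta_A(T)\;=\;T\otimes (T|\sim_T)\;+\;(T/\sim_{cc})\otimes T\;+\sum_{\substack{\sim\in\CE(T)\\\sim\neq\sim_T,\;\sim\neq\sim_{cc}}}(T/\sim)\otimes(T|\sim),
\]
where $\sim_T$ is the identity equivalence (so $T|\sim_T=\tdun{$C_1$}\ldots\tdun{$C_k$}$, with $C_i$ the $T$-equivalence classes) and $\sim_{cc}$ is the connected-components equivalence (so $T/\sim_{cc}=\tdun{$D_1$}\ldots\tdun{$D_l$}$, with $D_j$ the connected components of $T$). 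The key combinatorial fact, read off from the Hasse-graph descriptions of $T/\sim$ and $T|\sim$ supplied by the Proposition, is that for every $\sim\in\CE(T)$ with $\sim\neq\sim_T$ one has $e(T/\sim)<e(T)$ (any $\sim$-class strictly larger than a $\sim_T$-class is $T$-connected, hence absorbs at least one Hasse edge under contraction), and for every $\sim\in\CE(T)$ with $\sim\neq\sim_{cc}$ one has $e(T|\sim)<e(T)$ (at least one connected component is split between distinct $\sim$-classes, producing a deleted Hasse edge). In particular, all terms in the sum above have both tensor factors of strictly smaller $e$.

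Using this decomposition, I define scalars $\mu_1(T)$ and $\mu_2(T)$ for every finite topology $T$ by induction on $e(T)$, so that $\mu_1\star\lambda=\varepsilon'=\lambda\star\mu_2$ in $\Endo(\caltop,\com)$. In the base case $e(T)=0$, one has $T=\tdun{$C_1$}\ldots\tdun{$C_k$}$, $\CE(T)=\{\sim_T\}$ and $\delta_A(T)=T\otimes T$, forcing
\[
\mu_1(T)=\mu_2(T)=\frac{1}{\lambda(\tdun{$C_1$})\ldots\lambda(\tdun{$C_k$})},
\]
which is well defined by hypothesis. In the inductive step $e(T)>0$, the equation $(\mu_1\star\lambda)(T)=\varepsilon'(T)$ combined with the decomposition determines
\[
\mu_1(T)=\frac{1}{\lambda(T|\sim_T)}\!\left(\varepsilon'(T)-\mu_1(T/\sim_{cc})\lambda(T)-\!\!\sum_{\sim\neq\sim_T,\sim_{cc}}\!\!\mu_1(T/\sim)\lambda(T|\sim)\right)\!,
\]
in which $\lambda(T|\sim_T)=\prod_i\lambda(\tdun{$C_i$})\neq 0$ and the right-hand side involves $\mu_1$ only on topologies of strictly smaller $e$; $\mu_2(T)$ is defined symmetrically, this time using $\lambda(T/\sim_{cc})=\prod_j\lambda(\tdun{$D_j$})\neq 0$. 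Naturality of $\mu_1,\mu_2$ under bijections is immediate since all ingredients in the recursion are intrinsic.

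By construction, $\mu_1\star\lambda=\varepsilon'=\lambda\star\mu_2$, so associativity of $\star$ gives $\mu_1=\mu_1\star\varepsilon'=\mu_1\star\lambda\star\mu_2=\varepsilon'\star\mu_2=\mu_2$, producing a two-sided $\star$-inverse of $\lambda$; the analog of Lemma~\ref{lemme13} for $\star$, applied to the commutative algebra $\com$ and the bialgebra of the second kind $(\caltop,m,\delta)$, then upgrades this common inverse to a character, so $\lambda$ is invertible in $\chara(\caltop)$. The main obstacle is establishing the strict inequalities $e(T/\sim),e(T|\sim)<e(T)$ for every non-extremal $\sim\in\CE(T)$; this rests on the explicit Hasse-graph descriptions of $T/\sim$ and $T|\sim$ together with the $T_{\mid X}$-connectedness imposed on every $\sim$-class $X$ by membership in $\CE(T)$.
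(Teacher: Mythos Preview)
Your proof is correct and follows exactly the approach the paper intends: the paper's own proof simply reads ``Similar to the proof of Lemma~\ref{lemme49}'', and your argument is precisely the careful transposition of that lemma to $\caltop$, replacing the edge count of a graph by the edge count $e(T)$ of the Hasse graph. Your explicit justification of the strict inequalities $e(T/\sim),\,e(T|\sim)<e(T)$ via the Hasse-graph Proposition, and your appeal to the $\star$-analog of Lemma~\ref{lemme13} to ensure the inverse is again a character, fill in details left implicit in the paper.
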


\begin{proof}
Similar to the proof of Lemma \ref{lemme49}.
\end{proof}

Let $q\in\K$. We denote by $\phi_q$ the homogeneous morphism associated to the sequence defined by $u_n=q$
for any $n$ in Proposition \ref{prop56}:
\[\phi_q:\left\{\begin{array}{rcl}
\caltop&\longrightarrow&\calcomp\\
T\in \caltop[A]&\longrightarrow&\displaystyle q^{\cl(T)}\phi\leftarrow \lambda_{ho}(T).
\end{array}\right.\]
The associated character $\lambda_q=\varepsilon'\circ \phi_q$ is defined by:
\[\lambda_q(T)=q^{\cl(T)}\frac{\ho(T)}{\cl(T)!}.\]
If $q\neq 0$, for any finite set $A$, $\lambda_q(\tdun{$A$})=q\neq 0$, so $\lambda_q$ is invertible.\\

For any $q\in \K$, nonzero, we put $\phi_{ehr_q}=\theta_{q^{-1}}\circ \phi\circ \iota_q:\caltop\longrightarrow \calcomp$.
These are twisted bialgebra morphisms, and $\phi_{ehr_1}=\phi$. We put:
\[\lambda_{ehr_q}=\lambda_1^{\star-1}\star (\varepsilon'\circ \phi_{ehr_q}),\]
which implies that for any $q\in \K$:
\[\phi_{ehr_q}=\phi_1\leftarrow \lambda_{ehr_q}.\]

\begin{example} If $A$, $B$ and $C$ are finite sets:
\begin{align*}
\phi_{ehr_q}(\tdun{$A$})&=(A),\\
\phi_{ehr_q}(\tddeux{$A$}{$B$})&=(A,B)+\frac{1-q}{2}(A\sqcup B),\\
\phi_{ehr_q}(\tdtroisun{$A$}{$C$}{$B$})&=(A,B,C)+(A,C,B)+(A,B\sqcup C)\\
&+\frac{1-q}{2}(A\sqcup B,C)+\frac{1-q}{2}(A\sqcup C,B)+\frac{(1-q)(2-q)}{6}(A\sqcup B\sqcup C),\\
\phi_{ehr_q}(\tdtroisdeux{$A$}{$B$}{$C$})&=(A,B,C)+\frac{1-q}{2}(A,B\sqcup C)+\frac{1-q}{2}(A\sqcup B,C)+
\frac{(1-q)(2-q)}{6}(A\sqcup B\sqcup C).
\end{align*}\end{example}

\begin{prop} \label{prop58}
For any finite topology $T$, $\varepsilon'\circ \phi_{ehr_{-1}}(T)=1$ and:
\[\phi_{ehr_{-1}}(T)=\sum_{f\in L'(T)} (\sharp f^{-1}(1),\ldots,\sharp f^{-1}(\max(f))).\]
\end{prop}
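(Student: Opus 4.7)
The strategy is to show that the character $\mu := \varepsilon'\circ \phi_{ehr_{-1}}$ of $\caltop$ is identically equal to $1$, and then to invoke Proposition~\ref{prop40} with $\lambda = \mu$; both assertions of the proposition then follow at once, the first being the defining equation of $\mu$ and the second being the general formula of Proposition~\ref{prop40} specialised to $\lambda \equiv 1$.

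First I would unfold $\phi_{ehr_{-1}} = \theta_{-1}\circ \phi\circ \iota_{-1}$. By Proposition~\ref{prop44}, $\theta_{-1} = Id_{\calcomp}\leftarrow \lambda_{-1}$, and since $\varepsilon'$ is the unit of the $\star$-convolution on $\chara(\calcomp)$ one has $\varepsilon'\circ \theta_{-1} = \varepsilon'\star \lambda_{-1} = \lambda_{-1}$. Combined with $\iota_{-1}(T) = (-1)^{cl(T)}T$, this yields $\mu(T) = (-1)^{cl(T)}\lambda_{-1}(\phi(T))$. Because $\phi:\caltop\to\calcomp$ is a morphism of double twisted bialgebras (Theorem~\ref{theo29}), composition with $\phi$ is a morphism of monoids $(\chara(\calcomp),*)\to (\chara(\caltop),*)$ which preserves the unit $\varepsilon$; moreover $\varepsilon'_{\calcomp}\circ \phi = \varepsilon'_{\caltop}$, so the $*$-inverse is preserved by pull-back and, using $\lambda_{-1} = (\varepsilon'_{\calcomp})^{*-1}$ in $\calcomp$,
\[
\lambda_{-1}\circ \phi = (\varepsilon'_{\caltop})^{*-1}.
\]

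The heart of the argument is to prove $(\varepsilon'_{\caltop})^{*-1}(T) = (-1)^{cl(T)}$. Setting $\eta(T) := (-1)^{cl(T)}$, which is a character of $\caltop$ because $cl$ is additive under disjoint union, it is equivalent to check $\eta*\varepsilon'_{\caltop} = \varepsilon$. The empty case is immediate. For $T\in\caltop[A]$ with $A\neq\emptyset$, the convolution unfolds as
\[
\eta*\varepsilon'(T) = \sum_{\substack{A = I\sqcup J,\ J\in O(T)\\ \varepsilon'(T_{\mid J}) = 1}} (-1)^{cl(T_{\mid I})}.
\]
An open set $J$ of $T$ satisfies $\varepsilon'(T_{\mid J})=1$ iff no two distinct $\sim_T$-classes contained in $J$ are comparable in the poset $P := T/\sim_T$; thus the contributing $J$'s are in bijection, via $J \mapsto \alpha(J) := \{[a]\in P : [a]\subseteq J\}$, with subsets $\alpha\subseteq P$ that are simultaneously up-sets of $P$ (reflecting that $J$ is open) and antichains (reflecting the triviality of $T_{\mid J}$). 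A one-line check shows that an up-set antichain of $P$ is nothing but a subset of the set $\mathrm{Max}(P)$ of maximal elements. Since $cl(T_{\mid I}) = cl(T) - |\alpha|$, we get
\[
\eta*\varepsilon'(T) = (-1)^{cl(T)}\sum_{\alpha\subseteq \mathrm{Max}(P)}(-1)^{|\alpha|} = (-1)^{cl(T)}(1-1)^{|\mathrm{Max}(P)|} = 0,
\]
because $P$ is nonempty and so $|\mathrm{Max}(P)|\geq 1$.

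Putting everything together, $\mu(T) = (-1)^{cl(T)}\cdot(-1)^{cl(T)} = 1$ for every finite topology $T$, and Proposition~\ref{prop40} applied to $\phi_{ehr_{-1}}$ with $\lambda=\mu$ yields both conclusions simultaneously. The main obstacle is the combinatorial identification of the up-set antichains of $P$ with subsets of $\mathrm{Max}(P)$ and the resulting collapse of the alternating sum; everything else is a formal unfolding of the machinery of characters and convolutions developed in Sections~3 and~4.
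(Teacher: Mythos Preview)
Your argument is correct. Both assertions do follow from Proposition~\ref{prop40} once $\varepsilon'\circ\phi_{ehr_{-1}}\equiv 1$ is established, and your verification of the latter via the identity $(\varepsilon'_{\caltop})^{*-1}(T)=(-1)^{cl(T)}$ is sound: the key observation that the open sets $J$ with $\varepsilon'(T_{\mid J})=1$ correspond precisely to subsets of $\mathrm{Max}(P)$, where $P=T/\sim_T$, is correct, and the alternating sum collapses as you say.

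Your route differs from the paper's in an interesting way. The paper does not pass through the abstract identification $\lambda_{-1}\circ\phi=(\varepsilon'_{\caltop})^{*-1}$; instead it expands $\phi(T)$ explicitly via Theorem~\ref{theo41} to obtain $\varepsilon'\circ\phi_{ehr_{-1}}(T)=\sum_{f\in L(T)}(-1)^{cl(T)+\max(f)}$, and then proves this sum equals $1$ by induction on $cl(T)$, conditioning on the nonempty subset $f^{-1}(1)\subseteq\min(T)$. Thus the paper's alternating sum runs over subsets of $\min(T)$ inside an inductive scheme, while yours runs over subsets of $\mathrm{Max}(P)$ in a single convolution identity. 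Your approach is more conceptual, exploiting the monoid morphism $\chara(\calcomp)\to\chara(\caltop)$ induced by $\phi$ and avoiding induction entirely; the paper's is more concrete, staying close to the combinatorics of linear extensions. Both are short, and the underlying combinatorial content (an alternating sum over antichains of extremal elements) is essentially dual.
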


\begin{proof}
For any finite topology  $T$:
\begin{align*}
\varepsilon'\circ \phi_{ehr_{-1}}(T)&=(-1)^{\cl(T)}\varepsilon'\circ \theta_{-1} \circ \phi_{ehr_1}(T)\\
&=\sum_{f\in L(T)} (-1)^{\cl(T)+\max(f)}.
\end{align*}
Let us prove that this is equal to $1$ for any quasi-poset $T$. We proceed by induction on $n=\cl(T)$.
This is obvious if $n=0$. If $n\geq 1$, let us denote by $\min(T)$ the set of minimal classes (for $\leq_T$) 
of $T$. For any linear extension $f$ of $T$, $f^{-1}(T)$ is a subset of $\min(T)$; we obtain a bijection:
\begin{align*}
&\left\{\begin{array}{rcl}
\displaystyle \bigsqcup_{\emptyset \subsetneq I\subseteq \min(T)}
L(T_{\mid V(T)\setminus I})&\longrightarrow&L(T)\\
f \in L(T_{\mid V(T)\setminus I})&\longrightarrow&\tilde{f}:\left\{\begin{array}{rcl}
V(T)&\longrightarrow&\mathbb{N}\\
x&\longrightarrow&\begin{cases}
1\mbox{ if }x\in I,\\
f(x)+1\mbox{ otherwise}.
\end{cases}
\end{array}\right.
\end{array}\right.
\end{align*}
Hence, using the induction hypothesis on the quasi-posets $T_{\mid V(T)\setminus I}$:
\begin{align*}
\sum_{f\in L(T)} (-1)^{\cl(T)+\max(f)}&=\sum_{\emptyset \subsetneq I\subseteq \min(T)}
\sum_{f\in L(T_{\mid V(T)\setminus I})} (-1)^{\sharp I+\cl(T_{\mid V(T)\setminus I})+1+\max(f)}\\
&=-\sum_{\emptyset \subsetneq I\subseteq \min(T)}(-1)^{\sharp I}\\
&=-\sum_{I\subseteq \min(T)}(-1)^{\sharp I}+1\\
&=1.
\end{align*}
Hence, $\varepsilon'\circ \phi_{ehr_{-1}}(T)=1$ for any $T$. The formula for $\phi_{ehr_{-1}}(T)$ comes from 
Proposition \ref{prop40}. \end{proof}

\begin{prop}
For any quasi-poset $T$, for any $q\in \K$, nonzero:
\begin{align*}
\lambda_{ehr_q}(T)&=q^{\cl(T)-\cc(T)}\lambda_{ehr_1}(T),&
\lambda_{ehr_q}^{\star-1}(T)&=q^{\cl(T)-\cc(T)}\frac{\ho(T)}{\cl(T)!}.
\end{align*}\end{prop}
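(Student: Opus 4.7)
The plan is to follow the strategy of Corollary \ref{cor53} with appropriate adjustments. Starting from $\phi_{ehr_q} = \theta_{q^{-1}} \circ \phi \circ \iota_q$ (with $\phi = \phi_{ehr_1}$), one has $\phi_{ehr_1} \circ \iota_q = \theta_q \circ \phi_{ehr_q} = \theta_q \circ (\phi_1 \leftarrow \lambda_{ehr_q})$, which by Lemma \ref{lemme27} equals $(\theta_q \circ \phi_1) \leftarrow \lambda_{ehr_q}$. Applying the character $\varepsilon'$ of $\calcomp$ yields
\[
\varepsilon'_{\caltop} \circ \iota_q \;=\; \bigl(\varepsilon' \circ \theta_q \circ \phi_1\bigr) \star \lambda_{ehr_q}.
\]
On $\calcomp$, $\theta_q = Id \leftarrow \epsilon'^q$, so $\varepsilon' \circ \theta_q = \varepsilon' \star \epsilon'^q = \epsilon'^q$, which sends a composition of length $n$ to $H_n(q)$. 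Composing with $\phi_1$, which by Theorem \ref{theo17} is built from the character $\lambda_{ho}$, and using Proposition \ref{prop56}'s identity $\lambda_{ho}^q = \lambda_{ho} \circ \iota_q$, one computes
\[
\varepsilon' \circ \theta_q \circ \phi_1(T) = \sum_{k\geq 0} H_k(q)(\lambda_{ho} - \varepsilon)^{*k}(T) = \lambda_{ho}^q(T) = q^{cl(T)}\lambda_{ho}(T) = \lambda_q(T),
\]
so the central identity becomes
\[
\lambda_q \star \lambda_{ehr_q}(T) \;=\; q^{cl(T)}\,\varepsilon'_{\caltop}(T). \qquad (\ast)
\]

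Setting $q=1$ in $(\ast)$ gives $\lambda_{ho} \star \lambda_{ehr_1} = \varepsilon'_{\caltop}$, hence $\lambda_{ehr_1} = \lambda_{ho}^{\star-1}$. For the first formula, set $\widetilde{\lambda}(T) := q^{cl(T)-cc(T)}\lambda_{ehr_1}(T)$. Using the combinatorial identities $cl(T/\sim) = cl(\sim)$, $cl(T|\sim) = cl(T)$, and $cc(T|\sim) = cl(\sim)$ (each $\sim$-class being $T|\sim$-connected with no $T|\sim$-edges between distinct classes), one computes
\[
\lambda_q \star \widetilde{\lambda}(T) = \sum_{\sim\in\CE(T)} q^{cl(\sim)+cl(T)-cl(\sim)}\lambda_{ho}(T/\sim)\lambda_{ehr_1}(T|\sim) = q^{cl(T)}\varepsilon'_{\caltop}(T) = \lambda_q \star \lambda_{ehr_q}(T).
\]
Since $\lambda_q(\tdun{$A$}) = q \neq 0$, $\lambda_q$ is $\star$-invertible in $\chara(\caltop)$ by the preceding lemma, and cancellation gives $\widetilde{\lambda} = \lambda_{ehr_q}$.

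For the second formula, set $\mu_q(T) := q^{cl(T)-cc(T)}\lambda_{ho}(T)$. Using additionally $cc(T/\sim) = cc(T)$ (contracting the $T$-connected $\sim$-classes leaves the Hasse-graph components unchanged) together with the identities above,
\[
\mu_q \star \lambda_{ehr_q}(T) = \sum_{\sim\in\CE(T)} q^{cl(\sim)-cc(T)}\lambda_{ho}(T/\sim)\lambda_{ehr_q}(T|\sim) = q^{-cc(T)}\lambda_q \star \lambda_{ehr_q}(T) = q^{cl(T)-cc(T)}\varepsilon'_{\caltop}(T).
\]
When $\varepsilon'_{\caltop}(T)\neq 0$, $T$ has no nontrivial Hasse edges so $cl(T)=cc(T)$ and the prefactor equals $1$; otherwise both sides vanish. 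Thus $\mu_q \star \lambda_{ehr_q} = \varepsilon'_{\caltop}$, giving $\mu_q = \lambda_{ehr_q}^{\star-1}$. The main bookkeeping obstacle is establishing the combinatorial identities relating $cl$ and $cc$ of $T$, $T/\sim$, and $T|\sim$; each follows from the Hasse-graph descriptions of these quasi-posets given earlier in the section.
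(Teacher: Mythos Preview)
Your proof is correct and follows essentially the same route as the paper: both start from $\phi_{ehr_1}\circ\iota_q=(\theta_q\circ\phi_1)\leftarrow\lambda_{ehr_q}$, compose with $\varepsilon'$ to obtain the key convolution identity $\lambda_q\star\lambda_{ehr_q}(T)=q^{cl(T)}\varepsilon'(T)$, and then exploit the combinatorial identities $cl(T/\sim)=cl(\sim)$, $cl(T|\sim)=cl(T)$, $cc(T|\sim)=cl(\sim)$, $cc(T/\sim)=cc(T)$ to read off the two formulas. The only cosmetic difference is that the paper rewrites the identity as $\varepsilon'(T)=\sum_{\sim}\lambda_{ho}(T/\sim)\,q^{cc(T|\sim)-cl(T|\sim)}\lambda_{ehr_q}(T|\sim)$ and observes that the right-hand side is then $q$-independent (hence equal to its value at $q=1$), while you set $q=1$ first and then verify the candidate $\widetilde\lambda$ by a cancellation argument using the $\star$-invertibility of $\lambda_q$; these are two phrasings of the same computation.
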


\begin{proof}
As $\phi_{ehr_q}=\phi_1\leftarrow\lambda_{ehr_q}=\theta_{q^{-1}}\circ \phi_{ehr_1}\circ \iota_q$:
\begin{align*}
\theta_q\circ \phi_{ehr_q}&=\phi_{ehr_1}\circ \iota_q&\varepsilon'\circ \iota_q&=\varepsilon'\circ \phi_{ehr_1}\circ \iota_q\\
&=(\theta_q\circ \phi_1)\leftarrow \lambda_{ehr_q},&&=\varepsilon'\circ \theta_q\circ \phi_{ehr_q}\\
&&&=(\varepsilon'\circ \theta_q\circ \phi_1)\star \lambda_{ehr_q}.
\end{align*}
Let $T$ be a finite topology. Then:
\begin{align*}
\varepsilon'\circ \theta_q\circ \phi_1(T)&=\frac{q^{\cl(T)}\ho(T)}{\cl(T)!} .
\end{align*}
Hence:
\begin{align*}
q^{\cl(T)}\varepsilon'(T)&=\sum_{\sim\triangleleft T}\frac{q^{\cl(\sim)}\ho(T/\sim)}{\cl(\sim)}\lambda_{ehr_q}(T\mid \sim),\\
\varepsilon'(T)&=\sum_{\sim\triangleleft T} \frac{\ho(T/\sim)}{\cl(T/\sim)!}
q^{\cc(T\mid\sim)-\cl(T\mid\sim)}\lambda_{ehr_q}(T\mid \sim)\\
&=\sum_{\sim\triangleleft T} \lambda_{ho}(T/\sim)q^{\cc(T\mid\sim)-\cl(T\mid\sim)}\lambda_{ehr_q}(T\mid \sim).
\end{align*}
Hence, the inverse of the character $\lambda_{ho}$ is given by:
\[\lambda_{ho}^{\star-1}(T)=q^{\cc(T)-\cl(T)} \lambda_{ehr_q}(T).\]
Consequently, for $q=1$, $\lambda_{ho}^{\star-1}=\lambda_{ehr_1}$ and, for any $q\neq 0$:
\[\lambda_{ehr_q}(T)=q^{\cl(T)-\cc(T)}\lambda_{ehr_1}(T).\]
Moreover:
\begin{align*}
\varepsilon'(T)&=q^{\cc(T)-\cl(T)}\sum_{\sim\triangleleft T} \lambda_{ho}(T/\sim)q^{\cl(T/\sim)-\cc(T/\sim)}
\lambda_{ehr_q}(T\mid \sim).
\end{align*}
If $\varepsilon'(T)\neq 0$, then $\leq_T$ is an equivalence and $\cl(T)=\cc(T)$, so:
\begin{align*}
\varepsilon'(T)&=\sum_{\sim\triangleleft T} \lambda_{ho}(T/\sim)q^{\cl(T/\sim)-\cc(T/\sim)}
\lambda_{ehr_q}(T\mid\sim).
\end{align*}
Hence, the inverse of $\lambda_{ehr_q}$ is given by $\lambda_{ehr_q}^{\star-1}(T)=q^{\cl(T)-\cc(T)}\lambda_{ho}(T)$. \end{proof}

\begin{cor}
The following maps are twisted bialgebra automorphisms, inverse one from each other:
\begin{align*}
\Gamma:&\left\{\begin{array}{rcl}
\caltop&\longrightarrow&\caltop\\
T&\longrightarrow&\displaystyle \sum_{\sim\triangleleft T} T/\sim,
\end{array}\right.\\
\Gamma':&\left\{\begin{array}{rcl}
\caltop&\longrightarrow&\caltop\\
T&\longrightarrow&\displaystyle \sum_{\sim\triangleleft T}(-1)^{\cl(\sim)+\cl(T)} T/\sim.
\end{array}\right.
\end{align*}
Moreover, $\phi_{ehr_{-1}}=\phi_{ehr_1} \circ \Gamma$ and $\phi_{ehr_1}=\phi_{ehr_{-1}}\circ \Gamma'$.
\end{cor}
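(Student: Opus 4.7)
The approach mirrors that of Corollary \ref{cor53} for graphs: both $\Gamma$ and $\Gamma'$ will be realized as the image under the monoid morphism $\chi_\caltop$ of mutually inverse characters, and the factorization identities will follow from Lemma \ref{lemme27} combined with associativity of the action $\leftarrow$. I would first introduce the character $\mu := \varepsilon'\circ \phi_{ehr_{-1}}$; by Proposition \ref{prop58} it satisfies $\mu(T) = 1$ for every finite topology $T$, and it is a character as the composition of a twisted bialgebra morphism with a character. Unwinding the definition of $\leftarrow$ immediately gives $\Gamma = Id_\caltop \leftarrow \mu$, so Proposition \ref{prop26} ensures $\Gamma$ is a twisted bialgebra endomorphism; the lemma at the beginning of this subsection then shows $\mu$ is invertible in $(\chara(\caltop),\star)$, since $\mu(\tdun{$A$}) = 1 \neq 0$ for every nonempty finite set $A$.

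The key step is to identify $\mu^{\star-1}$ explicitly. I would exploit the factorization $\mu = (\varepsilon'\circ \phi_1)\star \lambda_{ehr_{-1}} = \lambda_{ho}\star \lambda_{ehr_{-1}} = \lambda_{ehr_1}^{\star-1}\star \lambda_{ehr_{-1}}$ (using $\lambda_{ehr_1} = \lambda_{ho}^{\star-1}$), which yields $\mu^{\star-1} = \lambda_{ehr_{-1}}^{\star-1}\star \lambda_{ehr_1}$. Expanding this convolution and applying the formula $\lambda_{ehr_{-1}}^{\star-1}(S) = (-1)^{cl(S)-cc(S)}\lambda_{ho}(S)$ from the preceding proposition, together with the identities $cl(T/\sim) = cl(\sim) = cc(T|\sim)$, $cc(T/\sim) = cc(T)$, and $cl(T|\sim) = cl(T)$ valid for $\sim\triangleleft T$, one factors the sign $(-1)^{cl(T/\sim)-cc(T/\sim)}$ as $(-1)^{cl(T)+cc(T)}(-1)^{cl(T|\sim)+cc(T|\sim)}$; invoking also $\lambda_{ehr_{-1}}(S) = (-1)^{cl(S)-cc(S)}\lambda_{ehr_1}(S)$, the residual sum collapses to $\mu(T) = 1$, giving $\mu^{\star-1}(T) = (-1)^{cl(T)+cc(T)}$.

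Applying once more $cl(T|\sim) = cl(T)$ and $cc(T|\sim) = cl(\sim)$ for $\sim\triangleleft T$, the identity $\mu^{\star-1}(T|\sim) = (-1)^{cl(T)+cl(\sim)}$ matches the coefficients in the stated formula for $\Gamma'$, so $\Gamma' = Id_\caltop \leftarrow \mu^{\star-1}$. That $\Gamma$ and $\Gamma'$ are mutually inverse twisted bialgebra automorphisms follows immediately from $\chi_\caltop$ being a monoid morphism: $\Gamma\circ \Gamma' = \chi_\caltop(\mu)\circ \chi_\caltop(\mu^{\star-1}) = \chi_\caltop(\mu\star \mu^{\star-1}) = \chi_\caltop(\varepsilon') = Id$, and likewise $\Gamma'\circ \Gamma = Id$.

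For the factorization identities, Lemma \ref{lemme27} gives $\phi_{ehr_1}\circ \Gamma = \phi_{ehr_1}\leftarrow \mu$; combining $\phi_{ehr_q} = \phi_1\leftarrow \lambda_{ehr_q}$ with the associativity of $\leftarrow$ then yields $\phi_1\leftarrow (\lambda_{ehr_1}\star \mu) = \phi_1\leftarrow \lambda_{ehr_{-1}} = \phi_{ehr_{-1}}$, the middle equality coming from $\lambda_{ehr_1}\star \mu = \lambda_{ehr_1}\star \lambda_{ehr_1}^{\star-1}\star \lambda_{ehr_{-1}} = \lambda_{ehr_{-1}}$; the second identity is obtained symmetrically via $\lambda_{ehr_{-1}}\star \mu^{\star-1} = \lambda_{ehr_1}$. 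The main obstacle is the computation of $\mu^{\star-1}$: it requires carefully tracking three combinatorial invariants ($cl$, $cc$, and $cl(\sim)$) under restriction and contraction, but the preceding proposition packages exactly the formulas needed.
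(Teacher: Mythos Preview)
Your proof is correct and follows essentially the same route as the paper: you realize $\Gamma$ as $Id\leftarrow\mu$ for the constant character $\mu=\varepsilon'\circ\phi_{ehr_{-1}}$, factor $\mu=\lambda_{ehr_1}^{\star-1}\star\lambda_{ehr_{-1}}$, compute $\mu^{\star-1}(T)=(-1)^{cl(T)+cc(T)}$ via the same sign manipulation, and deduce the factorization identities from Lemma~\ref{lemme27}. The only cosmetic difference is that the paper establishes $\phi_{ehr_{-1}}=\phi_{ehr_1}\circ\Gamma$ before computing $\mu^{\star-1}$, whereas you do it after.
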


\begin{proof} Let $\lambda$ be the character defined by $\lambda(T)=1$ for any finite topology $T$.
It is an invertible character and, as $\Gamma=\id\leftarrow \lambda$, $\Gamma$ is a twisted bialgebra automorphism.
By Proposition \ref{prop58}, $\lambda=\varepsilon'\circ \phi_{ehr_{-1}}$, so:
\[\lambda=\varepsilon'\circ (\phi_1\leftarrow \lambda_{ehr_{-1}})
=\varepsilon'\circ (\phi_{ehr_1}\leftarrow (\lambda_{ehr_1}^{\star-1}\star \lambda_{ehr_{-1}}))
=\lambda_{ehr_1}^{\star-1}\star \lambda_{ehr_{-1}}.\]
Hence:
\[\phi_{ehr_1}\circ \Gamma=\phi_{ehr_1}\leftarrow \lambda
=\phi_1\leftarrow (\lambda_{ehr_1}\star \lambda)=\phi_1\leftarrow (\lambda_{ehr_{-1}})=\phi_{ehr_{-1}}.\]
Moreover, $\Gamma^{-1}=\id\leftarrow \lambda^{\star-1}$. It remains to compute
$\lambda^{\star-1}=\lambda_{ehr_{-1}}^{\star-1}\star \lambda_{ehr_1}$. Let $T$ be a finite topology:
\begin{align*}
\lambda_{ehr_{-1}}^{\star-1}\star \lambda_{ehr_1}(T)&=\sum_{\sim\triangleleft T}
(-1)^{\cl(T/\sim)+\cc(T/\sim)} \lambda_{ehr_1}^{\star-1}(T/\sim) \lambda_{ehr_1}(T\mid \sim)\\
&=\sum_{\sim\triangleleft T}(-1)^{\cc(T\mid\sim)+\cc(T)} \lambda_{ehr_1}^{\star-1}(T/\sim) \lambda_{ehr_1}(T\mid \sim)\\
&=(-1)^{\cl(T)+\cc(T)}\sum_{\sim\triangleleft T}
(-1)^{\cc(T\mid \sim)+\cl(T\mid \sim)}\lambda_{ehr_1}^{\star-1}(T/\sim) \lambda_{ehr_1}(T\mid \sim)\\
&=(-1)^{\cl(T)+\cc(T)}\sum_{\sim\triangleleft T} \lambda_{ehr_1}^{\star-1}(T/\sim) \lambda_{ehr_{-1}}(T\mid \sim)\\
&=(-1)^{\cl(T)+\cc(T)} \lambda_{ehr_1}^{\star-1}\star \lambda_{ehr_{-1}}(T)\\
&=(-1)^{\cl(T)+\cc(T)} \lambda(T)\\
&=(-1)^{\cl(T)+\cc(T)}.
\end{align*}
As $\Gamma'=\id \leftarrow \lambda^{\star-1}$, $\Gamma'=\Gamma^{-1}$. \end{proof}

To summarize, we obtain  twisted bialgebra morphisms $\phi_q$ and $\phi_{ehr_q}$ from
$\caltop$ to $\calcomp$, such that the following diagrams are commutative:
\begin{align*}
&\xymatrix{\caltop\ar[r]^{\iota_q}\ar[d]_{\phi_{ehr_{q'}}}&\caltop\ar[d]^{\phi_{ehr_{\frac{q'}{q}}}}\\
\calcomp\ar[r]_{\theta_q}&\calcomp}&
&\xymatrix{\caltop\ar[r]^{\iota_q}\ar[d]_{\phi_{q'}}&\caltop\ar[d]^{\phi_{q'}}\\
\calcomp\ar[r]_{\theta_q}&\calcomp}
\end{align*}
We also have a twisted bialgebra automorphism $\Gamma$ of $\caltop$ making the following diagram commuting:
\[\xymatrix{\caltop\ar[rr]^{\Gamma} \ar[rd]_{\phi_{ehr_{-1}}}&&\caltop\ar[ld]^{\phi_{ehr_1}}\\
&\calcomp&}\]
They are related to several characters of $\caltop$:
\begin{align*}
\phi_{ehr_q}&=\phi_1\leftarrow \lambda_{ehr_q},&\Gamma&=\id\leftarrow \lambda.
\end{align*}
For any finite toplogy $T$:
\begin{align*}
\lambda_{ehr_q}(G)&=q^{\cl(T)-\cc(T)}\lambda_{ehr_1}(T),&
\lambda_{ehr_q}^{\star-1}(T)&=q^{\cl(T)-\cc(T)}\frac{\ho(T)}{\cl(T)!},\\
\lambda(T)&=1,&\lambda^{\star-1}(T)&=(-1)^{\cl(T)+\cc(T)}.
\end{align*}

\begin{remark}
Taking the limit when $q$ goes to $0$, one gives a meaning to $\phi_{ehr_0}$ and to $\lambda_{ehr_0}$.
For any finite topology $T$:
\begin{align*}
\lambda_{ehr_0}(T)&=\begin{cases}
1\mbox{ if }\cl(T)=\cc(T),\\
0\mbox{ otherwise}
\end{cases}\\
&=\varepsilon'(T).
\end{align*}
So $\phi_{ehr_0}=\phi_1\leftarrow \lambda_{ehr_0}=\phi_1\leftarrow \varepsilon'=\phi_1$.
\end{remark}

\section{Fock functors}

\subsection{Definition}

Let us now use the Fock functors of \cite{Aguiar}.

\begin{notation}
Let $V$ be a left $\mathfrak{S}_n$-module. The space of coinvariants of $V$ is:
\[Coinv(V)=\frac{V}{\Vect(x-\sigma.x,x\in V,\sigma \in \mathfrak{S}_n)}.\]
\end{notation}

In particular, if $\calP$ is a species, for any $n\geq 0$, $\calP[\underline{n}]$ is a left $\mathfrak{S}_n$-module.

\begin{defi}
\begin{enumerate}
\item (Full Fock functor). Let $\calP$ be a species. We put:
\[\fun(\calP)=\bigoplus_{n=0}^\infty \calP[\underline{n}].\]
If $\phi:\calP\longrightarrow \calQ$ is a species morphism, we put:
\[\fun(\phi):\left\{\begin{array}{rcl}
\fun(\calP)&\longrightarrow&\fun(\calQ)\\
x\in \calP[\underline{n}]&\longrightarrow& \phi[\underline{n}](x) \in \calQ[\underline{n}].
\end{array}\right.\]
This defines a functor from the category of species to the category of graded vector spaces.
\item (Bosonic Fock functor). Let $\calP$ be a species. We put:
\[\fdeux(\calP)=\bigoplus_{n=0}^\infty Coinv(\calP[\underline{n}]).\]
If $\phi:\calP\longrightarrow \calQ$ is a morphism of species, we put:
\[\fdeux(\phi):\left\{\begin{array}{rcl}
\fdeux(\calP)&\longrightarrow&\fdeux(\calQ)\\
\overline{x}\in Coinv(\calP[\underline{n}])&\longrightarrow& \overline{\phi[\underline{n}](x)} \in Coinv(\calQ[\underline{n}]).
\end{array}\right.\]
This defines a functor from the category of species to the category of graded vector spaces.
\end{enumerate}
\end{defi}

\begin{remark}
\begin{enumerate}
\item $\fdeux(\phi)$ is well-defined: if $x\in \calP[\underline{n}]$ and $\sigma \in \mathfrak{S}_n$, 
$\fun(\phi)(x-\sigma.x)=\fun(\phi)(x)-\sigma.\fun(\phi)(x)$. We have a commutative diagram:
\[\xymatrix{\fun(\calP)\ar@{->>}[d]\ar[r]^{\fun(\phi)} &\fun(\calQ)\ar@{->>}[d]\\
\fdeux(\calP)\ar[r]_{\fdeux(\phi)}&\fdeux(\calQ)}\]
where the vertical arrows are the canonical surjections.
\item For all $n\geq 0$, the $n$-th homogeneous component of $\fun(\calP)$ 
inherits a structure of left $\mathfrak{S}_n$-module.
This induces a trivial left action of $\mathfrak{S}_n$ on the $n$-th homogeneous component of $\fdeux(\calP)$.
\end{enumerate}
\end{remark}

\subsection{Fock functors applied to twisted (double) bialgebras}

\begin{notation}
Let $m,n\in \N$. We denote by $\sigma_{m,n}:\underline{m}\sqcup \underline{n}\longrightarrow \underline{m+n}$ 
the bijection defined by:
\begin{align*}
&\forall i\in \underline{m},&\sigma_{m,n}(i)&=i,\\
&\forall j\in \underline{n},&\sigma_{m,n}(j)&=j+m.
\end{align*}
For any $I\subseteq \underline{n}$, we denote by $\sigma_I$ the unique increasing bijection from $I$ to 
$\underline{\sharp I}$.
\end{notation}

\begin{theo} \label{theo62}
\begin{enumerate}
\item Let $\calP$ be a twisted algebra. 
Then $\fun(\calP)$ is a graded algebra, with the product defined by:
\begin{align*}
&\forall x\in \calP[\underline{m}],\: \forall y\in \calP[\underline{n}],
&x\cdot y&=\calP[\sigma_{m,n}]\circ m_\calP(x\otimes y).
\end{align*}
The unit is $1_\calP\in \calP[\emptyset]$.
\item Let $\calP$ be a twisted coalgebra. Then $\fun(\calP)$ is a graded coalgebra, with the coproduct defined by:
\begin{align*}
&\forall x\in \calP[\underline{n}],&\Delta(x)&=\sum_{I\subseteq \underline{n}} (\calP[\sigma_I] \otimes
 \calP[\sigma_{\underline{n}\setminus I}])\circ \Delta_{I,\underline{n}\setminus I}(x).
\end{align*}
We shall put $\Delta^{(I,\underline{n}\setminus I)}= (\sigma_I \otimes \sigma_{\underline{n}\setminus I})
\circ \Delta_{I,\underline{n}\setminus I}$ for any $I\subseteq \underline{n}$.
\item If $\calP$ is a twisted bialgebra, then $\fun(\calP)$ is a graded bialgebra.
\item If $\calP$ is a double twisted bialgebra, then $\fun(\calP)$ 
inherits a second coproduct $\delta$, defined by:
\begin{align*}
&\forall x\in \calP[\underline{n}],&\delta(x)&=\delta_{\underline{n}}(x).
\end{align*}
The triple $(\fun(\calP),m,\delta)$ is a bialgebra. Moreover, for any $I\subseteq \underline{n}$:
\[(\Delta^{(I,\underline{n}\setminus I)} \otimes \id_{\calP[\underline{n}]})\circ \delta
=(\id \otimes \id \otimes \calP[\alpha_{I,\underline{n}\setminus I}])\circ m_{1,3,24}
\circ (\delta \otimes \delta)\circ \Delta^{(I,\underline{n}\setminus I)},\]
where $\alpha_{i,\underline{n}\setminus I}$ is an element of $\mathfrak{S}_n$.
\end{enumerate}
\end{theo}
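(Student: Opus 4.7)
The plan is to address the four assertions in order, with each relying on the species-level structure of $\calP$ combined with naturality under the canonical bijections $\sigma_{m,n}: \underline{m}\sqcup \underline{n}\to \underline{m+n}$ and $\sigma_I: I\to \underline{\sharp I}$. For (1), the associativity of the proposed product on $\fun(\calP)$ will follow by unwinding both $(x\cdot y)\cdot z$ and $x\cdot (y\cdot z)$ into expressions of the form $\calP[\gamma]\circ m_{A_1,A_2,A_3}(x\otimes y\otimes z)$, and checking that the two resulting bijections $\gamma:\underline{m}\sqcup \underline{n}\sqcup \underline{p}\to \underline{m+n+p}$ agree; since $1_\calP\in \calP[\emptyset]$ is the unit of $m$ on species, it serves as a neutral element in degree $0$. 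Dually for (2), I will verify coassociativity by expanding both $(\Delta\otimes Id)\circ \Delta$ and $(Id\otimes \Delta)\circ \Delta$ as double sums indexed by pairs $(I,K)$ with $K\subseteq \underline{n}\setminus I$ and $(J,K')$ with $J\subseteq \underline{n}$, $K'\subseteq J$, and a change of variables identifies them through coassociativity of the species coproduct; the counit axiom is immediate because $\Delta_{\emptyset,\underline{n}}$ and $\Delta_{\underline{n},\emptyset}$ are governed by Lemma 3.

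For (3), the compatibility $\Delta\circ m = (m\otimes m)\circ (Id\otimes c\otimes Id)\circ (\Delta\otimes \Delta)$ on $\fun(\calP)$ reduces to the twisted bialgebra compatibility of $\calP$ as follows: given $x\in \calP[\underline{m}]$ and $y\in \calP[\underline{n}]$, writing $\Delta(xy) = \sum_{I\subseteq \underline{m+n}}\Delta^{(I,\underline{m+n}\setminus I)}(x\cdot y)$ and decomposing each $I$ as $I_1 \sqcup I_2$ with $I_1\subseteq \underline{m}$, $I_2\subseteq \underline{m+n}\setminus \underline{m}$, I can use naturality to pull $\sigma_{m,n}$ through the twisted compatibility of $\Delta_{I,J}\circ m_{I',J'}$ and recover the desired factorization.

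Assertion (4) is the heart of the statement. The Hadamard coproduct $\delta$ lifts directly because $\delta_A$ acts within a single set, so defining it degreewise on $\fun(\calP)$ is unambiguous; coassociativity and the counit axiom on $\fun(\calP)$ are inherited componentwise from the species level, and multiplicativity $\delta(xy)=\delta(x)\delta(y)$ in $\fun(\calP)$ follows from the species-level multiplicativity of $\delta$ combined with naturality under $\sigma_{m,n}$, which in turn gives that $(\fun(\calP),m,\delta)$ is a bialgebra.

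The main obstacle will be the last compatibility formula, where a permutation correction $\calP[\alpha_{I,\underline{n}\setminus I}]$ must appear. Here is the origin: applying $\Delta^{(I,\underline{n}\setminus I)}$ relabels the two pieces to $\underline{\sharp I}$ and $\underline{n-\sharp I}$, and after $\delta\otimes \delta$ and $m_{1,3,24}$ the fourth tensor factor is produced by the $\fun$-product, which factors as $\calP[\sigma_{\sharp I,n-\sharp I}]\circ m_{\underline{\sharp I},\underline{n-\sharp I}}$ and lands in $\calP[\underline{n}]$ via the composite bijection $\sigma_{\sharp I,n-\sharp I}\circ (\sigma_I\sqcup \sigma_{\underline{n}\setminus I})$, whereas on the LHS the last tensor factor is simply $\delta(x)''\in \calP[\underline{n}]$ with its native indexing. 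Invoking the species-level compatibility of Definition \ref{defi21} (the cointeraction equation $(\Delta_{I,J}\otimes Id)\circ \delta_{\underline{n}} = m_{1,3,24}\circ(\delta_I\otimes \delta_J)\circ \Delta_{I,J}$) identifies the two sides up to exactly this permutation, so setting $\alpha_{I,\underline{n}\setminus I}=\bigl(\sigma_{\sharp I,n-\sharp I}\circ (\sigma_I\sqcup \sigma_{\underline{n}\setminus I})\bigr)^{-1}\in \mathfrak{S}_n$ and applying $\calP[\alpha_{I,\underline{n}\setminus I}]$ to the last factor yields the claimed equality. The remaining bookkeeping is to check carefully that this $\alpha_{I,\underline{n}\setminus I}$ is indeed a well-defined permutation of $\underline{n}$ and that the compositions $\calP[\alpha]\circ \calP[\beta]=\calP[\alpha\circ \beta]$ unfold as expected, which is routine once the bijections are written explicitly.
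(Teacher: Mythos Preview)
Your proposal is correct and follows essentially the same approach as the paper's proof: both reduce each assertion to the corresponding species-level axiom combined with naturality under the relabeling bijections $\sigma_{m,n}$ and $\sigma_I$. In particular, your derivation of the permutation $\alpha_{I,\underline{n}\setminus I}=\bigl(\sigma_{\sharp I,n-\sharp I}\circ(\sigma_I\sqcup\sigma_{\underline{n}\setminus I})\bigr)^{-1}$ matches the paper's identification $\sigma_{I,J}=\sigma_I\sqcup\sigma_J$ (viewed as a permutation of $\underline{n}$, i.e.\ the inverse of a shuffle) once one unwinds the implicit identification of $\underline{\sharp I}\sqcup\underline{\sharp J}$ with $\underline{n}$ via $\sigma_{\sharp I,\sharp J}$; your version is simply more explicit about this composition.
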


\begin{proof}
1. Let $x\in \calP[\underline{m}]$, $y\in \calP[\underline{n}]$ and $z\in \calP[\underline{p}]$. 
We consider the bijection $\sigma_{m,n,p}:\underline{m}\sqcup \underline{n}\sqcup \underline{p}\longrightarrow 
\underline{m+n+p}$ defined by:
\begin{align*}
\sigma_{m,n,p}(i)&=\begin{cases}
i\mbox{ if }i\in \underline{m},\\
i+m\mbox{ if }i\in \underline{n},\\
i+m+n \mbox{ if }i\in \underline{p}.
\end{cases}
\end{align*}
Then:
\begin{align*}
(x\cdot y)\cdot z&=\calP[\sigma_{m+n,p}]\circ (\calP[\sigma_{m,n}]\otimes \id)\circ m
\circ (m\otimes \id)(x\otimes y\otimes z)\\
&=\calP[\sigma_{m,n,p}]\circ m\circ (m\otimes \id)(x\otimes y\otimes z)\\
&=\calP[\sigma_{m,n,p}]\circ m\circ (\id\otimes m)(x\otimes y\otimes z)\\
&=\calP[\sigma_{m,n+p}]\circ(\id \otimes \calP[\sigma_{n,p}])\circ m\circ (\id\otimes m)(x\otimes y\otimes z)\\
&=x\cdot(y\cdot z).
\end{align*}

2. Let $x\in \calP[\underline{n}]$. 
\begin{align*}
&(\Delta \otimes \id)\circ \Delta(x)\\
&=\sum_{I\sqcup J\sqcup K=\underline{n}} (\calP[\sigma_I] \otimes \calP[\sigma_J]\otimes \id)
\circ (\Delta_{I,J}\otimes \id)
\circ (\id\otimes \calP[\sigma_K])
\circ \Delta_{I\sqcup J,K}(x)\\
&=\sum_{I\sqcup J\sqcup K=\underline{n}}(\calP[\sigma_I]\otimes \calP[\sigma_J]\otimes \calP[\sigma_K])
\circ \Delta_{I,J,K}(x)\\
&=\sum_{I\sqcup J\sqcup K=\underline{n}}(\id \otimes \calP[\sigma_J] \otimes 
\calP[\sigma_K])
\circ (\id \otimes \Delta_{J,K})\circ(\calP[\sigma_I]\otimes \id)
\circ \Delta_{I,J\sqcup K}(x)\\
&=(\id \otimes \Delta)\circ\Delta(x). 
\end{align*}

3. Let $x\in \calP[\underline{m}]$ and $y\in \calP[\underline{n}]$. 
\begin{align*}
\Delta(x\cdot y)&=(\calP\otimes \calP)[\sigma_{m,n}]\circ \Delta \circ m(x\otimes y)\\
&=(\calP\otimes \calP)[\sigma_{m,n}]\circ (m\otimes m)\circ 
(\id \otimes c_{\calP,\calP}\otimes \id)\circ (\Delta \otimes \Delta)(x\otimes y)\\
&=\sum_{\substack{I\sqcup J=\underline{m},\\I'\sqcup J'=\underline{n}}}
(\calP\otimes \calP)[\sigma_{m,n}]\circ (m\otimes m)\circ (\id \otimes c_{\calP,\calP}\otimes \id)\\
&\hspace{1cm} \circ (\calP[\sigma_I]\otimes \calP[\sigma_J]\otimes \calP[\sigma_{I'}]\otimes \calP[\sigma_{J'})]
\circ(\Delta_{I,J} \otimes \Delta_{I',J'})(x\otimes y)\\
&=\sum_{\substack{I\sqcup J=\underline{m},\\I'\sqcup J'=\underline{n}}}
(\calP\otimes \calP)[\sigma_{m,n}]\circ (m\otimes m)\circ (\calP[\sigma_I]\otimes \calP[\sigma_{I'}]
\otimes \calP[\sigma_J]\otimes \calP[\sigma_{J'}])\\
&\hspace{1cm} \circ (\id \otimes c_{\calP,\calP}\otimes \id)\circ(\Delta_{I,J} \otimes \Delta_{I',J'})(x\otimes y)\\
&=\sum_{\substack{I\sqcup J=\underline{m},\\I'\sqcup J'=\underline{n}}} \Delta^{(I,J)}(x)\cdot \Delta^{(I',J')}(y)\\
&=\Delta(x)\cdot \Delta(y).
\end{align*}

4. Let $x\in \calP[\underline{m}]$ and $y\in \calP[\underline{n}]$. 
\begin{align*}
\delta(x\cdot y)&=\delta\circ \calP[\sigma_{m,n}]\circ m(x\otimes y)\\
&=(\calP[\sigma_{m,n}]\otimes \calP[\sigma_{m,n}])\circ (m\otimes m) \circ 
(\id \otimes c_{\calP,\calP}\otimes \id)\circ (\delta \otimes \delta)(x\otimes y)\\
&=\delta(x)\cdot \delta(y).
\end{align*}
If $x\in \calP[\underline{m}]$ and $\underline{m}=I\sqcup J$:
\begin{align*}
(\Delta^{(I,J)}\otimes \id)\circ \delta(x)&=(\calP[\sigma_I]\otimes \calP[\sigma_J]\otimes \id)\circ m_{1,3,24}
\circ (\delta\otimes \delta)\circ \Delta_{I,J}(x),\\
m_{1,3,24} \circ (\delta \otimes \delta)\circ \Delta^{(I,\underline{n}\setminus I)}
&=m_{1,3,24}\circ(\calP[\sigma_I]\otimes \calP[\sigma_I]\otimes \calP[\sigma_J]\otimes \calP[\sigma_J])
\circ (\delta\otimes \delta)\circ \Delta_{I,J}(x)\\
&=(\calP[\sigma_I]\otimes \calP[\sigma_J]\otimes \calP[\sigma_I\sqcup \sigma_J]
\circ m_{1,3,24}\circ (\delta\otimes \delta)\circ \Delta_{I,J}(x)\\
&=(\id \otimes \id \otimes \calP[\sigma_{I,J}])\circ (\Delta^{(I,J)}\otimes \id)\circ \delta(x),
\end{align*}
with $\sigma_{I,J}=\sigma_I\sqcup \sigma_J$. This permutation is nondecreasing on $I$ and $J$ 
and sends $I$ to $\underline{\sharp I}$, so is the inverse of a shuffle. \end{proof}

\begin{cor}
Let $\calP$ be a twisted algebra [respectively coalgebra, bialgebra]. Then $\fdeux(\calP)$ 
is an algebra [respectively a coalgebra, a bialgebra], quotient of $\fun(\calP)$.
If $\calP$ is a double twisted bialgebra, then $\fdeux(\calP)$ is a bialgebra in cointeraction 
in the sense of \cite{Foissychrom,FoissyEhrhart}:
\[(\Delta \otimes \id)\circ \delta=m_{1,3,24}\circ (\delta \otimes \delta)\circ \Delta.\]
\end{cor}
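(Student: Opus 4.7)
The plan is to show that the algebra, coalgebra, and bialgebra structures on $\fun(\calP)$ constructed in Theorem \ref{theo62} descend to the quotient $\fdeux(\calP)=\bigoplus_{n\geq 0}\mathrm{Coinv}(\calP[\underline{n}])$, and then to exploit the passage to coinvariants to kill the permutation $\calP[\alpha_{I,\underline{n}\setminus I}]$ appearing in the last formula of Theorem \ref{theo62}.

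First I would check that the product descends. For $x\in\calP[\underline{m}]$, $y\in\calP[\underline{n}]$, and $(\sigma,\tau)\in\mathfrak{S}_m\times\mathfrak{S}_n$, naturality of $m$ gives
\[(\sigma\cdot x)\cdot(\tau\cdot y)=\calP[\sigma_{m,n}]\circ m\circ(\calP[\sigma]\otimes\calP[\tau])(x\otimes y)=\calP[\sigma_{m,n}\circ(\sigma\sqcup\tau)\circ\sigma_{m,n}^{-1}](x\cdot y).\]
Since $\sigma_{m,n}\circ(\sigma\sqcup\tau)\circ\sigma_{m,n}^{-1}\in\mathfrak{S}_{m+n}$, this element is equivalent to $x\cdot y$ modulo coinvariants, so the product is well-defined on $\fdeux(\calP)$. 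The unit is the class of $1_\calP$. Similarly for the coproduct: if $\sigma\in\mathfrak{S}_n$ and $x\in\calP[\underline{n}]$, naturality of $\Delta$ applied to the bijection $\sigma:\underline{n}\longrightarrow\underline{n}$ yields, after reindexing $I\leftrightarrow\sigma(I)$ in the sum defining $\Delta$,
\[\Delta(\sigma\cdot x)=\sum_{I\subseteq\underline{n}}(\calP[\sigma_I\circ\sigma|_{\sigma^{-1}(I)}]\otimes\calP[\sigma_{\underline{n}\setminus I}\circ\sigma|_{\sigma^{-1}(\underline{n}\setminus I)}])\circ\Delta_{\sigma^{-1}(I),\sigma^{-1}(\underline{n}\setminus I)}(x),\]
and each summand differs from the corresponding summand of $\Delta(x)$ (indexed by $\sigma^{-1}(I)$) by the action of an element of $\mathfrak{S}_{\sharp I}\times\mathfrak{S}_{n-\sharp I}$, hence vanishes modulo coinvariants. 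The counits $\iota_\calP^{-1}:\calP[\emptyset]\longrightarrow\K$ and $\iota_\calP$ clearly descend. The bialgebra compatibility in $\fdeux(\calP)$ follows automatically from the corresponding compatibility in $\fun(\calP)$ proved in Theorem \ref{theo62}, since the surjection $\fun(\calP)\twoheadrightarrow\fdeux(\calP)$ is a morphism for all structures just checked.

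For the second coproduct $\delta$, the argument is the same: if $x\in\calP[\underline{n}]$ and $\sigma\in\mathfrak{S}_n$, then $\delta(\sigma\cdot x)=(\calP[\sigma]\otimes\calP[\sigma])\circ\delta(x)$, and $(\sigma,\sigma)$ lies in the diagonal of $\mathfrak{S}_n\times\mathfrak{S}_n\subseteq\mathfrak{S}_n\times\mathfrak{S}_n$ acting on $\mathrm{Coinv}(\calP[\underline{n}])\otimes\mathrm{Coinv}(\calP[\underline{n}])$. Thus $(\fdeux(\calP),m,\delta)$ is a bialgebra, as a quotient of the corresponding structure on $\fun(\calP)$.

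Finally, the cointeraction identity follows from the formula
\[(\Delta^{(I,\underline{n}\setminus I)}\otimes Id)\circ\delta=(Id\otimes Id\otimes\calP[\alpha_{I,\underline{n}\setminus I}])\circ m_{1,3,24}\circ(\delta\otimes\delta)\circ\Delta^{(I,\underline{n}\setminus I)}\]
established in Theorem \ref{theo62}. Summing over $I\subseteq\underline{n}$ gives $(\Delta\otimes Id)\circ\delta$ on the left, and on the right one obtains the sum of the compositions above; but the element $\alpha_{I,\underline{n}\setminus I}\in\mathfrak{S}_n$ acts trivially on the third tensor factor once we pass to coinvariants. Hence in $\fdeux(\calP)$,
\[(\Delta\otimes Id)\circ\delta=m_{1,3,24}\circ(\delta\otimes\delta)\circ\Delta,\]
which is precisely the cointeraction axiom. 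The main technical point is thus tracking the naturality of $\Delta$ carefully in the coalgebra descent step; the cointeraction identity itself comes for free from Theorem \ref{theo62} once the permutation $\alpha$ is killed by the quotient.
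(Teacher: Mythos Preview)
Your proof is correct and follows essentially the same approach as the paper. Both arguments show that the kernel of the projection $\fun(\calP)\twoheadrightarrow\fdeux(\calP)$ (the span of elements $x-\calP[\sigma](x)$) is a two-sided ideal and a coideal for $\Delta$ and for $\delta$, using naturality of $m$, $\Delta$, $\delta$ with respect to the bijections $\sigma$; and both obtain the cointeraction identity by summing the formula of Theorem \ref{theo62} over $I\subseteq\underline{n}$ and noting that the permutation $\alpha_{I,\underline{n}\setminus I}$ acts trivially once one passes to coinvariants.
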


\begin{proof}
Let $I$ be the subspace of $\fun(\calP)$ generated by the elements $x-\calP[\sigma](x)$, 
where $x\in \calP[\underline{n}]$, $\sigma\in \mathfrak{S}_n$, $n\geq 0$. Then $\fdeux(\calP)=\fun(\calP)/I$.\\

If $\calP$ is an algebra, let us prove that $I$ is an ideal of $\fun(\calP)$. For any $x\in \calP[\underline{m}]$, 
$\sigma \in \mathfrak{S}_m$, $y\in \calP[\underline{n}]$:
\begin{align*}
(x-\calP[\sigma](x))\cdot y&=\calP[\sigma_{m,n}]\circ m(x\otimes y-\calP[\sigma](x)\times y)\\
&=\calP[\sigma_{m,n}]\circ m(x\otimes y)-\calP[\sigma_{m,n}]\circ \calP[\id_m\sqcup \sigma]\circ m(x\otimes y)\\
&=X-\calP[\tau](X),
\end{align*}
with:
\begin{align*}
X&=\calP[\sigma_{m,n}]\circ m(x\otimes y),&\tau&=\sigma_{m,n}\circ (\id_m\sqcup \sigma)\circ \sigma_{m,n}^{-1}.
\end{align*}
So $(x-\calP[\sigma](x))\cdot y\in I$. Similarly, $y\cdot (x-\calP[\sigma](x))\in I$. \\

If $\calP$ is a coalgebra, let us prove that $I$ is a coideal of $\fun(\calP)$. 
For any $x\in \calP[\underline{m}]$, $\tau \in \mathfrak{S}_m$:
\begin{align*}
&\Delta(\calP[\tau](x))\\
&=\sum_{I\sqcup J=\underline{m}} (\calP[\sigma_{\tau(I)}]\otimes \calP[\sigma_{\tau(J)}])
\circ (\calP\otimes \calP[\tau])\circ \Delta_{I,J}(x)\\
&=\sum_{I\sqcup J=\underline{m}} (\calP[\sigma_{\tau(I)}]\otimes \calP[\sigma_{\tau(J)}])
\circ (\calP\otimes \calP[\tau]) \circ (\calP[\sigma_I^{-1}]\otimes \calP[\sigma_J^{-1}])
\circ (\calP[\sigma_I]\otimes \calP[\sigma_J])\circ \Delta_{I,J}(x)\\
&=\sum_{I\sqcup J=\underline{m}} \calP\otimes\calP[\tau_{I,J}]\circ (\calP[\sigma_I]\otimes \calP[\sigma_J])\circ \Delta_{I,J}(x),
\end{align*}
where $\tau_{I,J}$ is the permutation defined by:
\begin{itemize}
\item If $i\leq \sharp I$, $\tau_{I,J}(i)=\sigma_{\tau(I)}\circ \tau \circ \sigma_I^{-1}(i)$.
\item If $i> \sharp I$, $\tau_{I,J}(i)=\sigma_{\tau(J)}\circ \tau \circ \sigma_J^{-1}(i)$.
\end{itemize}
Hence, putting $k=\sharp I$, $\tau_{I,J}(\underline{k})=\underline{k}$. 
There exists $\tau'_{I,J} \in \mathfrak{S}_k$ and $\tau''_{I,J} \in \mathfrak{S}_{m-k}$ such that
$\tau_{I,J}=\tau'_{I,J}\sqcup \tau''_{I,J}$. 
Hence, putting $(\calP[\sigma_I]\otimes \calP[\sigma_J])\circ \Delta_{I,J}(x)=x'_{I,J}\otimes x''_{I,J}$:
\begin{align*}
\Delta(x-\calP[\tau](x))&=\sum_{I\sqcup J=\underline{m}} x'_{I,J}\otimes x''_{I,J}
-(\calP[\tau'_{I,J}]\otimes \calP[\tau''_{I,J}])(x'_{I,J}\otimes x''_{I,J})\\
&=\sum_{I\sqcup J=\underline{m}} (x'_{I,J}-\calP[\tau'_{I,J}](x'_{I,J}))\otimes 
x''_{I,J}+\calP[\tau'_{I,J}](x'_{I,J})\otimes (x''_{I,J}-\calP[\tau''_{I,J}](x''_{I,J}))\\
&\in I\otimes \fun(\calP)+\fun(\calP)\otimes I.
\end{align*}
So $I$ is a coideal for $\Delta$.\\

Let us assume that $\calP$ is a double bialgebra. By the preceding points, $(\fdeux(\calP),m,\Delta)$ is a bialgebra. 
 For any $x\in \calP[\underline{m}]$, $\tau \in \mathfrak{S}_m$, putting $\delta(x)=x'\otimes x''$:
\begin{align*}
\delta(x-\calP[\tau](x))&=\delta(x)-(\calP[\tau]\otimes \calP[\tau])\circ \delta(x)\\
&=(x'-\calP[\tau](x'))\otimes x''+\calP[\tau](x')\otimes (x''-\calP[\tau](x''))\\
&\in I\otimes \fun(\calP)+\fun(\calP)\otimes I.
\end{align*}
So $I$ is a coideal for $\delta$, and $\fdeux(\calP)$ inherits a second coproduct $\delta$. 
By Theorem \ref{theo62}, as the action of the symmetric groups of $\fdeux(\calP)$ are trivial:
\[(\Delta \otimes \id)\circ \delta=m_{1,3,24}\circ (\delta \otimes \delta)\circ \Delta.\]
So $(\fdeux(\calP),m,\Delta)$ is a bialgebra in the category of right $(\fdeux(\calP),m,\Delta)$-comodules. 
\end{proof}

\begin{remark}
Let $\calP$ be a double twisted bialgebra. Denoting by $\pi$ the canonical projection from $\fun(\calP)$ to $\fdeux(\calP)$,
we can define a right coaction of $\fdeux(\calP)$ on $\fun(\calP)$ by:
\[\rho=(\id \otimes \pi)\circ \delta:\fun(\calP)\longrightarrow \fun(\calP)\otimes \fdeux(\calP).\]
Then $(\fun(\calP),m,\Delta)$ is a bialgebra in the category or right $(\fdeux(\calP),m,\delta)$-comodules:
\[(\Delta \otimes \id)\circ \rho=m_{1,3,24}\circ (\rho \otimes \rho)\circ \Delta.\]
\end{remark}

\begin{example} Let us apply the Fock functors to $\com$. 
As the action of $\mathfrak{S}_n$ over $\com[\underline{n}]$ is trivial, $\fun(\com)=\fdeux(\com)$.
If we denote by $e_n$ the unit of $\K=\com[\underline{n}]$ for all $n\geq 0$, then a basis of $\fun(\com)$
is $(e_n)_{n\geq 0}$. For any $k$, $l\geq 0$, $e_k.e_l=e_{k+l}$, so putting $X=e_1$, for any $n\geq 0$,
$e_n=X^n$ and $\fun(\com)=\K[X]$. For any $n\geq 0$:
\begin{align*}
\Delta(X^n)&=\sum_{k=0}^n \binom{n}{k}X^k\otimes X^{n-k},&\delta(X^n)&=X^n\otimes X^n.
\end{align*}
So $\fun(\com)$ is the algebra $\K[X]$ with its usual coproducts $\Delta$ and $\delta$, as considered in
\cite{Foissychrom,FoissyEhrhart}.
\end{example}

\begin{example} Let us apply the Fock functors to $\fqsym$. For any total order $\preceq$ on $\underline{n}$, 
there exists a unique $\sigma \in \mathfrak{S}_n$, such that:
\begin{align*}
&\forall i,j\in \underline{n},&i\preceq j\Longleftrightarrow \sigma(i)\leq \sigma(j).
\end{align*}
Hence, $\fun(\fqsym)$ has a basis indexed by permutations. For any $\sigma \in \mathfrak{S}_n$:
\[\Delta(\sigma)=\sum_{i=0}^n \sigma^{(1)}_i\otimes \sigma^{(2)}_i,\]
where, if $\sigma$ is represented by the word $\sigma(1)\ldots \sigma(n)$:
\begin{itemize}
\item $\sigma^{(1)}_i$ is obtained by keeping only the letters $1,\ldots,i$, in the same order.
\item $\sigma^{(2)}_i$ is obtained by keeping only the letters $i+1,\ldots,n$, in the same order,
and by subtracting $i$ to any of these letters.
\end{itemize} 
If $\sigma \in \mathfrak{S}_k$ and $\tau\in \mathfrak{S}_l$:
\[\sigma \squplus \tau=\sum_{\alpha \in Sh(k,l)} \alpha \circ (\sigma \otimes \tau).\]
In other words, $\fun(\fqsym)$ is the Hopf algebra of permutations $\FQSym$ of \cite{DHT,MR3}.
Taking the coinvariants, $\fdeux(\fqsym)$ has a basis $(e_n)_{n\geq 0}$, where for all $n$
$e_n$ is the canonical projection of any permutation $\sigma \in \mathfrak{S}_n$.
For any $k,l,n\geq 0$:
\begin{align*}
\Delta(e_n)&=\sum_{i+j=n}e_i\otimes e_j,&
e_k \squplus e_l&=\binom{k+l}{k} e_{k+l}.
\end{align*}
Therefore, $\fdeux(\fqsym)$ is isomorphic to the Hopf algebra $\K[X]$, through the morphism:
\[\left\{\begin{array}{rcl}
\fdeux(\fqsym)&\longrightarrow&\K[X]\\
e_n&\longrightarrow&n!X^n.
\end{array}\right.\]
\end{example}

\begin{example} Let us apply the Fock functors to $\calcomp$.
\begin{enumerate}
\item The bialgebra $\fun(\calcomp)$ has for basis the set of set compositions of $\underline{n}$, with $n\geq 0$,
or, equivalently, the set of packed words, that is to say surjective maps from $\underline{n}$ to $\underline{k}$, for any $k\geq 1$. 
Its product is given by quasishuffles: if $u$, $v$ are packed words of respective length $k$ and $l$,
\begin{align*}
u\squplus v&=\sum_{\sigma \in \qsh(k,l)} \sigma \circ (u\sqcup  v[\max(u)]),
\end{align*}
where $v[\max(u)]=(v_1+\max(u))\ldots+(v_l+\max(u))$ if $v=v_1\ldots v_l$. For example:
\begin{align*}
(11)\squplus (11)&=(1122)+(2211)+(1111),\\
(12)\squplus (11)&=(1233)+(1322)+(2311)+(1211)+(1222).
\end{align*}
The coproduct $\Delta$ is given by extractions of letters: if $u$ is a packed word of length $n$,
\[\Delta(u)=\sum_{k=0}^n u_{\underline{k}} \otimes u_{\underline{n}\setminus \underline{k}},\]
where for any $I\subseteq \underline{n}$, $u_I$ is obtained by the following steps:
\begin{enumerate}
\item Keep only the letters of $u$ which belong to $I$: one obtains a word $u'_1\ldots u'_p$.
\item Let $\sigma:I\longrightarrow \underline{k}$ be the unique increasing bijection between $I$ and set $\underline{k}$,
where $k=|I|$. Then $u_I=f(u'_1)\ldots f(u'_p)$.
\end{enumerate}
For example:
\begin{align*}
\Delta((1123))&=1\otimes (1123)+(11)\otimes (12)+(112)\otimes (1)+(1123)\otimes 1.
\end{align*}
The second coproduct $\delta$ is given by:
\[\delta(u)=\sum_{(\sigma,\tau)\in \cont_{\max(u)}} \sigma \circ u\otimes \tau \circ u.\]
For example:
\begin{align*}
\delta((11))&=(11)\otimes (11),\\
\delta((12))&=(12)\otimes ((12)+(21)+(11))+(11)\otimes (12).
\end{align*}
In other words, $\fun(\calcomp)$ is the bialgebra $\WQSym$ \cite{NovelliThibon,NovelliThibon2,NovelliThibon3}.
\item The bialgebra $\fdeux(\calcomp)$ has for basis the set of compositions, that is to say finite sequence of
positive integers. The product is given by quasishuffles, the coproduct $\Delta$ by deconcatenation
and the coproduct $\delta$ by the action of the elements of $\cont_k$. For example, if $a,b,c,d\geq 1$:
\begin{align*}
(a)\squplus (b)&=(a,b)+(b,a)+(a+b),\\
(a,b)\squplus (c)&=(a,b,c)+(a,c,b)+(c,a,b)+(a+b,c)+(a,b+c),\\
(a,b)\squplus (c,d)&=(a,b,c,d)+(a,c,b,d)+(c,a,b,d)\\
&+(a,c,d,b)+(c,a,d,b)+(c,d,a,b)\\
&+(a,b+c,d)+(a+c,b,d)+(a+c,d,b)\\
&+(a,c,b+d)+(c,a,b+d)+(c,a+d,b)\\
&+(a+c,b+d);\\ \\
\Delta(a,b,c,d)&=1\otimes (a,b,c,d)+(a)\otimes (b,c,d)+(a,b)\otimes (c,d)\\
&+(a,b,c)\otimes (d)+(a,b,c,d)\otimes 1;\\ \\
\delta(a)&=(a)\otimes (a),\\
\delta(a,b)&=(a,b)\otimes ((a)\squplus (b))+(a+b)\otimes (a,b),\\
\delta(a,b,c)&=(a,b,c)\otimes ((a)\squplus (b)\squplus (c))+(a,b+c)\otimes ((a)\squplus (b,c))\\
&+(a+b,c)\otimes ((a,b)\squplus (c))+(a+b+c)\otimes (a,b,c).
\end{align*}
This is the Hopf algebra of quasisymmetric functions $\QSym$; see for example \cite{Hazewinkel} for more details.
\end{enumerate}
\end{example}

\subsection{The terminal property of $\K[X]$}

We observed that $\K[X]$, with its two coproducts $\Delta$ and $\delta$, is a bialgebra in cointeraction.

\begin{prop} \label{prop64}
Let $(A,\Delta)$ be a graded and connected coalgebra. We denote by $1_A$ the unique element of $A_0$
such that $\varepsilon(1_A)=1$. Let $\lambda:A\longrightarrow \K$ be any linear form such that $\lambda(1_A)=1$.
\begin{enumerate}
\item There exists a unique coalgebra morphism $\psi:(A,\Delta)\longrightarrow (\K[X],\Delta)$, 
such that $\varepsilon'\circ \psi=\lambda$.
\item Let us assume that $(A,m,\Delta)$ is a bialgebra. Then $\psi:(A,m,\Delta)\longrightarrow (\K[X],m,\Delta)$
is a bialgebra morphism if, and only if, $\lambda$ is a character of $A$.
\item Let us assume that $(A,m,\Delta,\delta)$ is a bialgebra in cointeraction. Then 
$\psi:(A,m,\Delta,\delta)\longrightarrow (\K[X],m,\Delta,\delta)$
is a  morphism of bialgebras in cointeraction  if, and only if, $\lambda$ is the counit $\varepsilon'$ of $(A,m,\delta)$.
\end{enumerate}
\end{prop}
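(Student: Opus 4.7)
The plan is to treat the three parts in sequence. For part~1, I would write any candidate coalgebra morphism as $\psi(a)=\sum_{n\geq 0}\psi_n(a)X^n$ for linear forms $\psi_n\colon A\longrightarrow\K$; using $\Delta(X^n)=\sum_{k}\binom{n}{k}X^k\otimes X^{n-k}$, the compatibility with $\Delta$ is equivalent to $\psi_0=\varepsilon_A$ together with $\binom{k+l}{k}\psi_{k+l}=\psi_k*\psi_l$ (convolution via $\Delta$), which forces $\psi_n=\psi_1^{*n}/n!$. The requirement $\varepsilon'\circ\psi=\lambda$ (where $\varepsilon'_{\K[X]}$ evaluates at $X=1$) then becomes $\exp_*(\psi_1)=\lambda$, uniquely solved by $\psi_1=\log_*(\lambda)=\sum_{n\geq 1}\frac{(-1)^{n+1}}{n}(\lambda-\varepsilon)^{*n}$, a locally finite sum thanks to the grading and $\lambda(1_A)=1$. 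This yields both existence and uniqueness, and the explicit formula.

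For part~2, the direction $\Rightarrow$ is immediate since $\varepsilon'_{\K[X]}$ is itself a character (evaluation at a point is multiplicative). Conversely, given that $\lambda$ is a character, I would consider the two coalgebra morphisms $\psi\circ m_A$ and $m_{\K[X]}\circ(\psi\otimes\psi)$ from the graded connected coalgebra $A\otimes A$ into $\K[X]$; their images under $\varepsilon'_{\K[X]}$ are $\lambda\circ m_A$ and $\lambda\otimes\lambda$, which coincide exactly because $\lambda$ is a character, so the uniqueness of part~1 applied to $A\otimes A$ forces $\psi\circ m_A = m_{\K[X]}\circ(\psi\otimes\psi)$.

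For part~3, the direction $\Rightarrow$ is essentially tautological: a morphism of bialgebras in cointeraction is in particular a coalgebra morphism for $\delta$, hence preserves its counit, so $\lambda=\varepsilon'_{\K[X]}\circ\psi=\varepsilon'_A$. For the converse, assume $\lambda=\varepsilon'_A$; part~2 already gives that $\psi$ is a bialgebra morphism for $(m,\Delta)$, and only the compatibility $\delta_{\K[X]}\circ\psi=(\psi\otimes\psi)\circ\delta_A$ remains to be checked. Here I would exploit the family of characters $\phi_q\colon\K[X]\longrightarrow\K$, $X\mapsto q$, for $q\in\K$: since $\delta_{\K[X]}(X^n)=X^n\otimes X^n$ one has $\phi_q\star\phi_r=\phi_{qr}$, while the explicit formula of part~1 gives $\phi_q\circ\psi=\lambda^{*q}$. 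Applying $\phi_q\otimes\phi_r$ to both sides of the desired identity therefore reduces it (as a polynomial identity in $q,r$) to $\lambda^{*(qr)}=\lambda^{*q}\star\lambda^{*r}$ in $\chara(A)$. With $\lambda=\varepsilon'_A$ the two-sided $\star$-identity, this last relation follows by induction on $q\in\N$ from the right distributivity $(a*b)\star c=(a\star c)*(b\star c)$ (which is the cointeraction axiom itself): $(\varepsilon')^{*q}\star(\varepsilon')^{*r}=(\varepsilon'\star(\varepsilon')^{*r})*((\varepsilon')^{*(q-1)}\star(\varepsilon')^{*r})=(\varepsilon')^{*r}*(\varepsilon')^{*(q-1)r}=(\varepsilon')^{*qr}$, with the base cases $q=0$ handled by $\varepsilon\star\mu=\varepsilon$ (which follows from $(\varepsilon\otimes Id)\delta=\varepsilon\cdot 1_A$) and $q=1$ by $\varepsilon'\star\mu=\mu$ (left $\star$-identity).

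The main obstacle is precisely the converse of part~3, where the compatibility with $\delta$ must be transformed into something tractable without invoking the universal property being established; the evaluation trick $\phi_q\circ\psi=\lambda^{*q}$ is what cleanly reduces the problem to the monoidal identity $\lambda^{*(qr)}=\lambda^{*q}\star\lambda^{*r}$, which is itself a classical analogue of Proposition~\ref{prop44} and provable by a short induction.
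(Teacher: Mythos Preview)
Your argument is correct. Part~2 matches the paper's proof exactly. Part~3 is the same idea as the paper's---both evaluate at integer points and induct using the cointeraction axiom---but you package it abstractly as the character identity $(\varepsilon')^{*q}\star(\varepsilon')^{*r}=(\varepsilon')^{*(qr)}$, while the paper works directly with $\psi(a)(k,l)$ and inducts on $k$ via $\psi(a)((k+1)l)=\Delta\psi(a)(kl,l)$.

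Part~1 is where you genuinely diverge. The paper constructs $\psi$ inductively on the grading: given $\psi$ on $A_{<n}$, it observes that $(\psi\otimes\psi)\tilde\Delta(a)$ lands in $\mathrm{Ker}(\tilde\Delta\otimes Id-Id\otimes\tilde\Delta)=\mathrm{Im}(\tilde\Delta)$ in $\K[X]$, picks a preimage, and corrects by a multiple of $X$ to force $\varepsilon'\circ\psi=\lambda$. Your approach instead exploits the convolution algebra: writing $\psi=\sum_n\psi_n X^n$, the coalgebra condition forces $\psi_n=\psi_1^{*n}/n!$, and then $\varepsilon'\circ\psi=\exp_*(\psi_1)=\lambda$ is solved by $\psi_1=\log_*(\lambda)$. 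This buys you an explicit closed formula for $\psi$ (which the paper's argument does not give) and avoids the auxiliary lemma on kernels in $\K[X]$; it also makes the identity $\phi_q\circ\psi=\lambda^{*q}$ in Part~3 transparent. The cost is that you rely on characteristic zero from the outset (via $1/n!$), whereas the paper's inductive construction for Part~1 does not---though the paper uses characteristic zero elsewhere anyway.
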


\begin{proof}
1. We define $\psi(a)$ for any $a\in A_n$ by induction on $n$. If $n=0$, it is defined by $\psi(1_A)=1$. Otherwise,
we put $\Delta(a)=a\otimes 1_A+1_A\otimes 1+\tdelta(a)$, and by the connectivity condition:
\[\tdelta(a)\in \sum_{i=1}^{n-1}A_i\otimes A_{n-i}.\]
By the induction hypothesis, $X=(\psi\otimes \psi)\circ \tdelta(a)$ is well-defined. Moreover:
\begin{align*}
(\tdelta\otimes \id)(X)&=((\tdelta \circ \psi)\otimes \psi)\circ \tdelta(a)\\
&=(\psi\otimes \psi\otimes \psi)\circ (\tdelta \otimes \id)\circ \tdelta(a)\\
&=(\psi\otimes \psi\otimes \psi)\circ (\\id \otimes \tdelta)\circ \tdelta(a)\\
&=(\id \otimes \tdelta)\circ (\psi\otimes \psi)(a)\\
&=(\id \otimes \tdelta)(X).
\end{align*}
So $X\in \ker(\tdelta \otimes \id-\id \otimes \tdelta)$. It is not difficult to prove that in $\K[X]$:
\[ \ker(\tdelta \otimes \id-\id \otimes \tdelta)=Im(\tdelta).\]
Therefore, there exists a linear map $\psi':A_n\longrightarrow\K[X]$
such that for any $a\in A_n$, $(\psi\otimes \psi)\circ\tdelta(a)=\tdelta \circ \psi'(a)$. We then put:
\[\psi(a)=\psi'(a)-\varepsilon'\circ \psi'(a)X+\lambda(a)X.\]
As $X\in \ker(\tdelta)$, for any $a\in A_n$, $(\psi\otimes \psi)\circ\tdelta(a)=\tdelta \circ \psi(a)$ and
$\varepsilon'\circ \psi(a)=\lambda(a)$. \\

\textit{Unicity}. Let $\psi$ and $\psi'$ be two such morphisms. Let us prove that $\psi_{\mid A_n}=\psi'_{\mid A_n}$
by induction on $n$. This is obvious for $n=0$, as $\psi$ and $\psi'$ both sends the group-like element $1_A$
to the unique group-like element $1$ of $(\K[X],\Delta)$. Let us assume the result at all ranks $<n$.
For any $a\in A_n$:
\[\tdelta\circ \psi(a)=(\psi\otimes \psi)\circ \tdelta(a)=(\psi'\otimes \psi')\circ \tdelta(a)
=\tdelta \circ \psi'(a),\]
so $\psi(a)-\psi'(a)\in \ker(\tdelta)=\Vect(X)$: let us put $\psi(a)-\psi'(a)=\mu X$, with $\mu\in \K$. Then:
\[\mu=\varepsilon'(\psi(a)-\psi'(a))=\lambda(a)-\lambda(a)=0,\]
so $\psi_{\mid A_n}=\psi'_{\mid A_n}$. \\

2. $\Longrightarrow$. Then $\lambda=\varepsilon'\circ \psi$ is an algebra morphism by composition.
$\Longleftarrow$. Let us consider the two linear maps $\psi_1,\psi_2:A\otimes A\longrightarrow \K[X]$ defined
by $\psi_1=m\circ (\psi\otimes \psi)$ and $\psi_2=\psi\circ m$.
As $m$ is a coalgebra morphism, $\psi_1$ and $\psi_2$ are coalgebra morphisms.
Moreover, for any $a,b\in A$:
\begin{align*}
\varepsilon'\circ \psi_1(a\otimes b)&=\varepsilon'(\psi(a)\psi(b))\\
&=\varepsilon'\circ\psi(a)\varepsilon'\circ\psi(b)\\
&=\lambda(a)\lambda(b),\\
\varepsilon'\circ \psi_2(a\otimes b)&=\varepsilon'\circ \psi(ab)\\
&=\lambda(ab).
\end{align*}
As $\lambda$ is a character, $\varepsilon'\circ \psi_1$ and $\varepsilon'\circ \psi_2$ are equal. The coalgebra 
$A\otimes A$ being connected, by unicity in the first point, $\psi_1=\psi_2$, so $\psi$ is an algebra morphism.\\

3. $\Longrightarrow$. If $\psi$ is a morphism of bialgebras in cointeration, then $\varepsilon'\circ \psi=\varepsilon'$.
$\Longleftarrow$. The counit $\varepsilon'$ is a character of $A$, so $\psi$ is compatible with $m$ and $\Delta$.
Let us prove that for any $a\in A$, for any $k,l\geq 1$,
\[\delta \circ \psi(a)(k,l)=(\psi\otimes \psi)\circ \delta(a)(k,l).\]
Note that $\delta\circ \psi(a)(k,l)=\psi(a)(kl)$ by definition of the coproduct $\delta$ of $\K[X]$. If $k=1$:
\begin{align*}
\delta \circ \psi(a)(1,l)&=(\varepsilon'\otimes \id)\circ \delta \circ \psi(a)(l)\\
&=\psi(a)(l),\\
(\psi\otimes \psi)\circ \delta(a)(1,l)&=(\varepsilon'\circ \psi\otimes \psi)\circ \delta(a)(l)\\
&=(\varepsilon'\otimes \psi)\circ \delta(a)(l)\\
&=\psi(a)(l).
\end{align*}
Let us assume the result at rank $k$. As $\psi$ is compatible with $m$ and $\Delta$, and by the induction hypothesis:
\begin{align*}
\psi(a)((k+1)l)&=\psi(a)(kl+1l)\\
&=\Delta\circ \psi(a)(kl,1l)\\
&=(\delta \otimes \delta)\circ \Delta\circ \psi(a)(k,l,1,l)\\
&=m_{1,3,24}\circ (\delta \otimes \delta)\circ \Delta\circ \psi(a)(k,1,l)\\
&=(\psi\otimes \psi\otimes\psi)\circ m_{1,3,24}\circ (\delta \otimes \delta)\circ \Delta(a)(k,1,l)\\
&=(\psi\otimes \psi\otimes\psi)(\Delta \otimes \id)\circ \delta(a)(k,1,l)\\
&=(\psi\otimes \psi)\circ \delta(a)(k+1,l).
\end{align*}
Hence, for any $\delta\circ \psi=(\psi\otimes \psi)\circ \delta$. \end{proof}

Let us apply this to $\QSym$:

\begin{prop}
There exists a unique morphism $H:\QSym\longrightarrow \K[X]$ such that:
\begin{enumerate}
\item $H:(\QSym,\squplus,\Delta)\longrightarrow (\K[X],m,\Delta)$ is a bialgebra morphism.
\item $H:(\QSym,\squplus,\delta)\longrightarrow (\K[X],m,\delta)$ is a bialgebra morphism.
\end{enumerate}
For any composition $(a_1,\ldots,a_n)$:
\[H(a_1,\ldots,a_n)=H_n(X)=\frac{X(X-1)\ldots (X-n+1)}{n!}.\]
\end{prop}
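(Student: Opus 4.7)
The strategy is to deduce the proposition directly from Proposition \ref{prop64}, applied to $\QSym = \fdeux(\calcomp)$. By Theorem \ref{theo62} and its corollary, $(\QSym, \squplus, \Delta, \delta)$ is a bialgebra in cointeraction; it is graded by the weight of compositions and connected, since weight zero is spanned by the empty composition. Taking $\lambda = \varepsilon'_{\QSym}$ (the counit of $\delta$), part (1) of Proposition \ref{prop64} produces a unique coalgebra morphism $H : (\QSym, \Delta) \to (\K[X], \Delta)$ with $\varepsilon'_{\K[X]} \circ H = \varepsilon'_{\QSym}$. Since $\varepsilon'_{\QSym}$ is automatically a character of $(\QSym, \squplus)$, part (2) upgrades $H$ to a bialgebra morphism for $(m, \Delta)$, and part (3) ensures compatibility with $\delta$. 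Conversely, any morphism $H' : \QSym \to \K[X]$ satisfying both compatibilities must obey $\varepsilon'_{\K[X]} \circ H' = \varepsilon'_{\QSym}$ by part (3), so $H' = H$ by the uniqueness in (1).

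To identify $H(a_1, \ldots, a_n)$ with $H_n(X) = \binom{X}{n}$, I would proceed by evaluation at positive integers. For $q \in \K$, let $\chi_q : \K[X] \to \K$ denote evaluation at $q$. The binomial coproduct $\Delta(X^k) = \sum_i \binom{k}{i} X^i \otimes X^{k-i}$ immediately gives $\chi_q * \chi_{q'} = \chi_{q+q'}$ in the convolution monoid attached to $\Delta$; since $\chi_1 = \varepsilon'_{\K[X]}$, this forces $\chi_q = \chi_1^{*q}$ for every positive integer $q$. Because $H$ is a coalgebra morphism for $\Delta$,
\[
\chi_q \circ H \;=\; (\chi_1 \circ H)^{*q} \;=\; (\varepsilon'_{\QSym})^{*q}.
\]

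Iterating the deconcatenation coproduct on $\QSym$, and using that $\varepsilon'_{\QSym}$ sends a composition to $1$ exactly when its length is $0$ or $1$, the value $(\varepsilon'_{\QSym})^{*q}(a_1, \ldots, a_n)$ reduces to the number of sequences $0 = i_0 \leq i_1 \leq \cdots \leq i_q = n$ whose increments all lie in $\{0, 1\}$. Such sequences are in bijection with the $n$-subsets of $\{1, \ldots, q\}$ (recording the positions where the increment equals $1$), giving the count $\binom{q}{n}$. Hence $H(a_1, \ldots, a_n)(q) = H_n(q)$ for every positive integer $q$, and since both sides are polynomials in $q$, we conclude $H(a_1, \ldots, a_n) = H_n(X)$. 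The only delicate point is the careful bookkeeping of the two distinct counits ($\varepsilon$ for $\Delta$ and $\varepsilon'$ for $\delta$) on both $\QSym$ and $\K[X]$; no serious combinatorial obstacle is expected beyond the elementary count just described.
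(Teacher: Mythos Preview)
Your proof is correct and rests on the same key tool as the paper's, namely Proposition~\ref{prop64}: both arguments obtain existence, uniqueness, and the two bialgebra compatibilities from that proposition applied with $\lambda=\varepsilon'_{\QSym}$.

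The genuine difference lies in how the explicit formula $H(a_1,\ldots,a_n)=H_n(X)$ is established. The paper proceeds in the opposite order: it \emph{starts} from the candidate map $(a_1,\ldots,a_n)\mapsto H_n(X)$, checks by the Vandermonde identity $\sum_i\binom{k}{i}\binom{l}{n-i}=\binom{k+l}{n}$ that this is a $\Delta$-coalgebra morphism, verifies $\varepsilon'\circ H=\varepsilon'_{\QSym}$ via $H_n(1)=\delta_{n,0}+\delta_{n,1}$, and then invokes Proposition~\ref{prop64} to conclude. Your route is the reverse: you first extract $H$ abstractly from Proposition~\ref{prop64}, and then identify it by computing $\chi_q\circ H=(\varepsilon'_{\QSym})^{*q}$ as a count of $\{0,1\}$-increment sequences. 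The paper's verification is marginally shorter (a single binomial identity), while your derivation is more conceptual and explains \emph{why} the Hilbert polynomials appear: they arise as the convolution powers of $\varepsilon'$ evaluated through the deconcatenation coproduct. One small caveat: your final step, concluding equality of two polynomials from agreement at all positive integers, tacitly uses that $\K$ is infinite (or of characteristic~$0$), which is consistent with the paper's implicit standing assumption given the denominators $n!$ in $H_n$.
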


\begin{proof}
Let us first prove that $H:(\QSym,\Delta)\longrightarrow (\K[X],\Delta)$ is a coalgebra morphism.
For any composition $(a_1,\ldots,a_n)$, for any $k,l\geq 0$:
\begin{align*}
\Delta \circ H((a_1,\ldots,a_n))(k,l)&=H(a_1,\ldots,a_n)(k+l)\\
&=H_n(k+l)\\
&=\binom{k+l}{n},\\
(H\otimes H)\circ \Delta((a_1,\ldots,a_n))(k,l)&=\sum_{i=0}^n H_i(k)H_{n-i}(l)\\
&=\sum_{i=0}^n \binom{k}{i}\binom{l}{n-i}\\
&=\binom{k+l}{n}, 
\end{align*}
so $\Delta \circ H=(H\otimes H)\circ \Delta$. For any composition $(a_1,\ldots,a_n)$:
\[\varepsilon'\circ H((a_1,\ldots,a_n))=H_n(1)=\delta_{n,1}+\delta_{n,0}=\varepsilon'((a_1,\ldots,a_n)),\]
so $H$ is a morphism of bialgebras in cointeraction. \end{proof}

Here is an application:

\begin{prop}
Let $q\in \K$. The following maps are bialgebra endomorphisms:
\begin{align*}
\fun(\theta_q)&:\left\{\begin{array}{rcl}
\WQSym&\longrightarrow&\WQSym\\
u&\longrightarrow&\displaystyle  \sum_{\substack{f:\underline{\max(u)}\longrightarrow \underline{k},\\
\mbox{\scriptsize surjective, non decreasing}}} H_{\sharp f^{-1}(1)}(q)\ldots H_{\sharp f^{-1}(\max(f))}(q) f\circ u,
\end{array}\right.\\
\fdeux(\theta_q)&:\left\{\begin{array}{rcl}
\QSym&\longrightarrow&\QSym\\
(a_1,\ldots,a_n)&\longrightarrow&\displaystyle \sum_{1\leq i_1<\ldots<i_k<n} H_{i_1}(q)H_{i_2-i_1}(q)\ldots H_{n-i_k}(q)\\
&&\hspace{1cm}(a_1+\ldots+a_{i_1},\ldots,a_{i_1+\ldots+i_k+1}+\ldots+a_n),
\end{array}\right.\\
\Theta_q&:\left\{\begin{array}{rcl}
\K[X]&\longrightarrow&\K[X]\\
P(X)&\longrightarrow&P(qX).
\end{array}\right.
\end{align*}
Moreover, the following diagram commutes:
\[\xymatrix{\WQSym\ar@{->>}[r]\ar[d]_{\fun(\theta_q)}&\QSym\ar[r]^{H}\ar[d]_{\fdeux(\theta_q)}&\K[X]\ar[d]^{\Theta_q}\\
\WQSym\ar@{->>}[r]&\QSym\ar[r]_{H}&\K[X]}\]
\end{prop}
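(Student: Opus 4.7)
My plan is to derive everything from the twisted-bialgebra side and use the functoriality of the Fock constructions, together with the terminal property of $\K[X]$. First, I apply Proposition \ref{prop44} to $\calQ = \com$, so that $\calP = \cot(\com_+) = \calcomp$ and $\theta_q = Id_{\calcomp}\leftarrow \lambda_q$ is a twisted bialgebra endomorphism of $(\calcomp,\squplus,\Delta)$, with $\lambda_q(A_1,\ldots,A_k)=H_k(q)$. The corollary to Theorem \ref{theo62} then gives immediately that $\fun(\theta_q)$ and $\fdeux(\theta_q)$ are bialgebra endomorphisms of $\WQSym$ and $\QSym$, with no further work. To get the explicit formulas I would unfold $\theta_q = (Id\otimes \lambda_q)\circ \delta$ using the description of $\delta$ on $\calcomp$ from Corollary \ref{cor24} together with the fact that $\lambda_q$ is a character for $\squplus$: on a composition of $k$ parts, this produces a sum indexed by cut points $0=i_0<i_1<\ldots<i_p<i_{p+1}=k$, weighted by $H_{i_1}(q)H_{i_2-i_1}(q)\cdots H_{k-i_p}(q)$, and contracting the blocks between consecutive cuts. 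Applying $\fdeux$ directly gives the stated formula for compositions; applying $\fun$ gives the formula for packed words once one identifies surjective non-decreasing maps $f:\underline{\max(u)}\to \underline{k}$ with interval partitions of $\underline{\max(u)}$, under which $f\circ u$ is precisely the packed word corresponding to the contracted set composition.

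For $\Theta_q$ I would just check directly that it is a bialgebra endomorphism of $(\K[X],m,\Delta)$: multiplicativity is obvious since $P(qX)Q(qX)=(PQ)(qX)$, and $\Delta(qX)=qX\otimes 1+1\otimes qX=(\Theta_q\otimes \Theta_q)\circ \Delta(X)$ extends to all of $\K[X]$ since both $\Delta$ and $\Theta_q$ are algebra morphisms.

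The left square of the diagram commutes by naturality: for any species morphism $\phi$, the canonical projection $\fun(\calP)\twoheadrightarrow \fdeux(\calP)$ intertwines $\fun(\phi)$ and $\fdeux(\phi)$. For the right square I would invoke the terminal property of $(\K[X],\Delta)$ from Proposition \ref{prop64}. Both $H\circ \fdeux(\theta_q)$ and $\Theta_q\circ H$ are coalgebra morphisms from the graded connected coalgebra $(\QSym,\Delta)$ to $(\K[X],\Delta)$, so by the uniqueness clause of Proposition \ref{prop64} it suffices to check that they coincide after composition with $\varepsilon'$. On a composition $(a_1,\ldots,a_n)$ one has $\varepsilon'\circ \Theta_q\circ H((a_1,\ldots,a_n))=H_n(qX)\big|_{X=1}=H_n(q)$, while $\varepsilon'\circ H\circ \fdeux(\theta_q)=\varepsilon'_{\QSym}\circ \fdeux(\theta_q)=\fdeux(\varepsilon'_{\calcomp}\circ \theta_q)=\fdeux(\lambda_q)$ sends $(a_1,\ldots,a_n)$ to $H_n(q)$ by the first item of Proposition \ref{prop44}. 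The two linear forms agree, hence the morphisms agree.

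The main obstacle I anticipate is bookkeeping rather than conceptual: one must verify cleanly the dictionary between surjective non-decreasing maps $\underline{m}\to\underline{k}$, interval partitions of $\underline{m}$, and the operation of merging consecutive blocks of the set composition $(u^{-1}(1),\ldots,u^{-1}(m))$ associated to a packed word $u$, in order to confirm that the image of $\theta_q$ under $\fun$ really is given by the formula involving $f\circ u$. Once this is set up, all compatibilities follow from the universal properties invoked above, and no brute calculation is needed on $\K[X]$ beyond the identity $H_n(qX)=\sum_{k\geq 0}\sum_{a_1+\cdots+a_{k+1}=n,\,a_i\geq 1}H_{a_1}(q)\cdots H_{a_{k+1}}(q)\,H_{k+1}(X)$, which is automatic from the proof rather than needing to be established by hand.
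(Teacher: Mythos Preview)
Your proposal is correct and follows essentially the same approach as the paper: the formulas for $\fun(\theta_q)$ and $\fdeux(\theta_q)$ are obtained by applying the Fock functors to the twisted endomorphism $\theta_q$, $\Theta_q$ is checked directly, and the right square is established via the uniqueness clause of Proposition \ref{prop64} by comparing $\varepsilon'\circ H\circ \fdeux(\theta_q)$ with $\varepsilon'\circ \Theta_q\circ H$ on a composition $(a_1,\ldots,a_n)$, both yielding $H_n(q)$. The paper is terser (it simply asserts the formulas are ``directly obtained'' and does not comment on the left square), but your added detail on the dictionary between surjective non-decreasing maps and interval partitions, and your explicit use of $\varepsilon'\circ\theta_q=\lambda_q$, are exactly the ingredients implicit in the paper's argument.
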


\begin{proof}
The formulas for $\fun(\theta_q)$ and $\fdeux(\theta_q)$ are directly obtained, and it is obvious that 
$\Theta_q$ is a Hopf algebra endomorphism. It remains to prove that $H\circ \fdeux(\theta_q)=\Theta_q\circ H$.
Let $(a_1,\ldots,a_n)$ be a composition.
\begin{align*}
\varepsilon'\circ H\circ \fdeux(\theta_q)(a_1,\ldots,a_n)&=\varepsilon'\circ \fdeux(\theta_q)(a_1,\ldots,a_n)\\
&=H_n(q),\\
\varepsilon'\circ \Theta_q\circ H(a_1,\ldots,a_n)&=\varepsilon'\circ \Theta_q (H_n(X))\\
&=H_n(q),
\end{align*}
so $\varepsilon'\circ H\circ \fdeux(\theta_q)=\varepsilon'\circ \Theta_q\circ H$.  
By Proposition \ref{prop64}, $H\circ \fdeux(\theta_q)=\Theta_q\circ H$. \end{proof}

\begin{remark}
If $q=-1$, $H_k(-1)=(-1)^k$ for any $k$, so:
\begin{align*}
\fun(\theta_{-1})&:\left\{\begin{array}{rcl}
\WQSym&\longrightarrow&\WQSym\\
u&\longrightarrow&\displaystyle (-1)^{\max(u)} \sum_{\substack{f:\underline{\max(u)}\longrightarrow \underline{k},\\
\mbox{\scriptsize surjective, non decreasing}}} f\circ u,
\end{array}\right.\\
\fdeux(\theta_{-1})&:\left\{\begin{array}{rcl}
\QSym&\longrightarrow&\QSym\\
(a_1,\ldots,a_n)&\longrightarrow&\displaystyle (-1)^n\sum_{1\leq i_1<\ldots<i_k<n}
(a_1+\ldots+a_{i_1},\ldots,a_{i_1+\ldots+i_k+1}+\ldots+a_n).
\end{array}\right.\\
\end{align*}
\end{remark}

\begin{example} If $a,b,c\geq 1$, we put:
\begin{align*}
(1^a)&=(\underbrace{1,\ldots,1}_a),\\
(1^a2^b)&=(\underbrace{1,\ldots,1}_a,\underbrace{2,\ldots,2}_b),\\
(1^a2^b3^c)&=(\underbrace{1,\ldots,1}_a,\underbrace{2,\ldots,2}_b,\underbrace{3,\ldots,3}_c).
\end{align*}
Then:
\begin{align*}
\fun(\theta_q)((1^a))&=q(1^a),\\
\fun(\theta_q)((1^a2^b))&=q^2(1^a2^b)+\frac{q(q-1)}{2}(1^{a+b}),\\
\fun(\theta_q)((1^a3^b2^c))&=q^3(1^a3^b2^c)+\frac{q^2(q-1)}{2}(1^a2^b1^c)+\frac{q^2(q-1)}{2}(1^a2^{b+c})
+\frac{q(q-1)(q-2)}{6}(1^{a+b+c}),\\
\fdeux(\theta_q)((a))&=q(a),\\
\fdeux(\theta_q)((a,b))&=q^2(a,b)++\frac{q(q-1)}{2}(a+b),\\
\fdeux(\theta_q)((a,b,c))&=q^3(a,b,c)+\frac{q^2(q-1)}{2}(a+b,c)+\frac{q^2(q-1)}{2}(a,b+c)+\frac{q(q-1)(q-2)}{6}(a+b+c).
\end{align*}
\end{example}

\subsection{Fock functors and characters}

The monoids of characters of a twisted bialgebra and of its images by the Fock functor are related:

\begin{prop}
Let $\calP$ be a twisted bialgebra. 
\begin{enumerate}
\item We denote by $\chara(\fun(\calP))$ the monoid of characters of the bialgebra $\fun(\calP)$. 
The following map is an injective map of monoids:
\[\rho:\left\{\begin{array}{rcl}
\chara(\calP)&\longrightarrow&\chara(\fun(\calP))\\
f&\longrightarrow&\rho(f)=\displaystyle \bigoplus_{n\geq 0} f[\underline{n}].
\end{array}\right.\]
\item $\rho$ induces a monoid isomorphism between $\chara(\calP)$ and 
 the monoid of characters $\chara(\fdeux(\calP))$ of the bialgebra $\fdeux(\calP)$.
\end{enumerate}\end{prop}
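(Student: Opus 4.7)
The plan rests on one simple observation: any species morphism $f:\calP\longrightarrow \com$ is automatically $\mathfrak{S}_n$-invariant on $\calP[\underline{n}]$, since naturality yields $f[\underline{n}]\circ \calP[\sigma]=\com[\sigma]\circ f[\underline{n}]=f[\underline{n}]$ for every $\sigma\in \mathfrak{S}_n$ (because $\com[\sigma]=Id_\K$). Hence each $f[\underline{n}]$ descends to a linear form on $Coinv(\calP[\underline{n}])$, so $\rho(f)$ factors through the canonical surjection $\pi:\fun(\calP)\twoheadrightarrow \fdeux(\calP)$. This reduces part 2 to proving that the induced map $\overline{\rho}:\chara(\calP)\longrightarrow \chara(\fdeux(\calP))$ is bijective.

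For part 1, I would first check that $\rho(f)$ is an algebra morphism of $\fun(\calP)$: for $x\in \calP[\underline{m}]$ and $y\in \calP[\underline{n}]$, naturality of $f$ collapses
\[\rho(f)(x\cdot y)=f[\underline{m+n}]\circ \calP[\sigma_{m,n}]\circ m_\calP(x\otimes y)=f[\underline{m}\sqcup\underline{n}]\circ m_\calP(x\otimes y),\]
and the right-hand side equals $f[\underline{m}](x)f[\underline{n}](y)$ because $f$ is a character of $\calP$. The identity $\rho(f*g)=\rho(f)*\rho(g)$ follows by an analogous computation: expanding the coproduct of $\fun(\calP)$ with the formula of Theorem~\ref{theo62} and collapsing each factor $\rho(f)\circ \calP[\sigma_I]$ into $f[I]$ via naturality turns the sum into $(f*g)[\underline{n}]$. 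Injectivity is then immediate, since naturality forces $f[A]=f[\underline{|A|}]\circ \calP[\sigma]$ for any bijection $\sigma:A\longrightarrow \underline{|A|}$, so the family $(f[\underline{n}])_{n\geq 0}$ determines $f$ completely.

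For surjectivity of $\overline{\rho}$, I would construct an explicit inverse. Given $g\in \chara(\fdeux(\calP))$ and a finite set $A$ of cardinality $n$, set $\tilde g[A](x)=g_n(\overline{\calP[\sigma](x)})$ for any bijection $\sigma:A\longrightarrow \underline{n}$; two choices of $\sigma$ differ by a permutation in $\mathfrak{S}_n$, whose action is trivial on $Coinv(\calP[\underline{n}])$, so $\tilde g[A]$ is well defined. Independence from $\sigma$ also gives the naturality of the species morphism $\tilde g:\calP\longrightarrow \com$ at once.

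The main step, and the main obstacle, is verifying that $\tilde g$ is multiplicative. Given $x\in \calP[A]$, $y\in \calP[B]$ with $|A|=m$, $|B|=n$, I would choose bijections $\alpha:A\longrightarrow \underline{m}$, $\beta:B\longrightarrow \underline{n}$, set $\gamma=\sigma_{m,n}\circ(\alpha\sqcup\beta)$, and apply naturality of $m_\calP$ to rewrite
\[\calP[\gamma]\circ m_{A,B}(x\otimes y)=\calP[\sigma_{m,n}]\circ m_\calP(\calP[\alpha](x)\otimes \calP[\beta](y)).\]
Passing to the coinvariants of $\calP[\underline{m+n}]$ and applying $g$ then yields
\[\tilde g[A\sqcup B](m_{A,B}(x\otimes y))=g_{m+n}\bigl(\overline{\calP[\alpha](x)}\cdot \overline{\calP[\beta](y)}\bigr)=g_m(\overline{\calP[\alpha](x)})\,g_n(\overline{\calP[\beta](y)})=\tilde g[A](x)\,\tilde g[B](y),\]
where the middle equality uses that $g$ is a character of $\fdeux(\calP)$. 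The delicate bookkeeping lies precisely in this step: one must reconcile the three compatible bijections $\alpha,\beta,\gamma$ with the definition of the Cauchy product on $\fun(\calP)$. Once this is done, $\overline{\rho}(\tilde g)=g$ holds by taking $\sigma=Id$ in the definition, which finishes the proof.
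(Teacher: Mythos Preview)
Your proposal is correct and follows essentially the same approach as the paper: the same naturality observation $f[B]\circ\calP[\sigma]=f[A]$, the same verification that $\rho(f)$ is a character and that $\rho$ respects convolution, the same injectivity argument, and the same explicit construction of the inverse $\tilde g$ via an arbitrary bijection $A\to\underline{n}$. Your treatment of the multiplicativity of $\tilde g$ is in fact slightly more explicit than the paper's about the composite bijection $\gamma=\sigma_{m,n}\circ(\alpha\sqcup\beta)$, which is exactly the point where the bookkeeping matters.
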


\begin{proof}
1. \textit{First step.} Let $\sigma:A\longrightarrow B$ be a bijection between two finite sets.
As $f$ is a species morphism, the following diagram is commutative:
\[\xymatrix{\calP[A]\ar[r]^{\calP[\sigma]} \ar[d]_{f[A]}&\calP[B]\ar[d]^{f[B]}\\
\com[A]\ar[r]_{\com[\sigma]}&\com[B]
}\]
As $\com[\sigma]=\id_\K$, $f[B]\circ \calP[\sigma]=f[A]$.\\

\textit{Second step}. Let us prove that $\rho$ is well-defined. 
If $f\in \chara(\calP)$, then if $x\in \calP[\underline{m}]$
and $y\in \calP[\underline{n}]$, by the first step:
\begin{align*}
\rho(f)(xy)&=f[\underline{m+n}]\circ \calP[\sigma_{m,n}]\circ m(x\otimes y)\\
&=f[\underline{m}\sqcup \underline{n}]\circ m(x\otimes y)\\
&=f[\underline{m}](x)f[\underline{n}](y)\\
&=\rho(f)(x)\rho(f)(y).
\end{align*}
So $\rho(f)$ is indeed a character of $\fun(\calP)$.\\

Let us now prove that $\rho$ is a monoid morphism. Let $f,g\in \chara(\calP)$.
For any $x \in \calP[\underline{n}]$, by the first step:
\begin{align*}
\rho(f)*\rho(g)(x)&=\sum_{I\subseteq \underline{n}}
(f[\underline{\sharp I}]\otimes g[\underline{n-\sharp I}])\circ
(\calP[\sigma_I]\otimes \calP[\sigma_{\underline{n}\setminus I}])\circ \Delta_{I,\underline{n}\setminus I}(x)\\
&=\sum_{I\subseteq \underline{n}} (f[I]\otimes g[\underline{n}\setminus I])
\circ \Delta_{I,\underline{n}\setminus I}(x)\\
&=(f*g)[n](x)\\
&=\rho(f*g)(x).
\end{align*}
So $\rho(f)*\rho(g)=\rho(f*g)$. \\

Let us now prove that $\rho$ is injective. If $\rho(f)=\rho(g)$, then for any $n\geq 0$,
$f[\underline{n}]=g[\underline{n}]$. For any finite set $A$, let us denote by $n$ the cardinality of $A$
and let us choose $\sigma:A\longrightarrow \underline{n}$ be a bijection. By the first step:
\[f[A]=f[\underline{n}]\circ \calP[\sigma]=g[\underline{n}]\circ \calP[\sigma]=g[A].\]
Hence, $f=g$. \\

2. Let $f\in \chara(\calP)$. For any $x\in \calP[\underline[n]]$, $\sigma \in \mathfrak{S}_n$:
\begin{align*}
\rho(f)(x-\calP[\sigma](x))&=f[\underline{n}](x)-f[\underline{n}](\calP[\sigma](x))\\
&=f[\underline{n}](x)-\calP[\sigma](f[\underline{n}](x))\\
&=f[\underline{n}](x)-f[\underline{n}](x)\\
&=0.
\end{align*}
Hence, $\rho(f)$ induces a character $\overline{\rho}(f)$ of $\fdeux(\calP)$ defined by:
\begin{align*}
&\forall x\in \calP[\underline{n}],& \overline{\rho}(f)(\overline{x})&=\rho(f)(x)=f[\underline{n}](x).
\end{align*}
This defines a monoid morphism $\overline{\rho}$ from $\chara(\calP)$ to $\chara(\fdeux(\calP))$.
It is injective: indeed, if $\overline{\rho}(f)=\overline{\rho}(g)$, then $\rho(f)=\rho(g)$, 
so $f=g$. Let us prove it is surjective. Let us consider $g\in \chara(\fdeux(\calP))$.
Let $A$ be a finite set. We define $f[A]:\calP[A]\longrightarrow \K$ by:
\begin{align*}
&\forall x\in \calP[A],& f[A](x)=g(\overline{\calP[\sigma](x)}),
\end{align*}
where $\sigma:A\longrightarrow \underline{n}$ is any bijection. This does not depend on the choice of $\sigma$:
if $\tau$ is another bijection, then, as $\sigma \circ \tau^{-1}\in \mathfrak{S}_n$:
\begin{align*}
g(\overline{\calP[\tau](x)})&=g(\overline{\calP[\sigma \circ \tau^{-1}]\circ \calP[\tau](x)})
=g(\overline{\calP[\sigma](x)}).
\end{align*}
Hence, $f[A]$ is well-defined. If $\tau:A\longrightarrow B$ is a bijection, for any $x \in \calP[A]$,
choosing a bijection $\sigma:A\longrightarrow \underline{n}$:
\begin{align*}
f[B](\calP[\tau](x))&=g(\calP[\sigma \circ \tau^{-1}]\circ \overline{\calP[\tau](x)})\\
&=g(\overline{\calP[\sigma](x)})\\
&=\com[\tau]\circ f[A](x).
\end{align*}
So $f$ is a species morphism. Let $x\in \calP[A]$, $y\in \calP[B]$. Choosing bijections 
$\sigma:A\longrightarrow \underline{m}$ and $\tau:A\longrightarrow \underline{n}$:
\begin{align*}
f(xy)&=g(\overline{\calP[\sigma \sqcup \tau](xy)})\\
&=g(\overline{\calP[\sigma](x)}\:\overline{\calP[\tau](y)})\\
&=g(\overline{\calP[\sigma](x)})g(\overline{\calP[\tau](y)})\\
&=f(x)f(y).
\end{align*}
So $f\in \chara(\calP)$. If $x\in \calP[\underline{n}]$:
\begin{align*}
\overline{\rho}(f)(\overline{x})&=f[\underline{n}](x)=g(\overline{\calP[\id_{\underline{n}}](x)})=g(\overline{x}).
\end{align*}
Therefore, $\overline{\rho}(f)=g$. \end{proof}

\begin{remark} It may happen that $\rho$ is not surjective. Let us consider for example the twisted
algebra of posets $\calpos$. We define a character $g$ on $\fun(\calpos)$ by the following:
if $P=(\underline{n},\leq_P)$ is a poset, 
\[g(P)=\begin{cases}
1\mbox{ if $\forall i,j\in \underline{n}$, $i\leq_P j\Longrightarrow i\leq j$},\\
0\mbox{ otherwise}.
\end{cases}\] 
Let us assume that there exists a character $f\in \chara(\calpos)$ such that $\rho(f)=g$.
Then $f[\underline{2}](\tddeux{$1$}{$2$})=1$ and $f[\underline{2}](\tddeux{$2$}{$1$})=0$.
Moreover, by the first step of the preceding proof, for $\sigma=(12)$:
\[f[\underline{2}](\tddeux{$2$}{$1$})=f[\underline{2}]\circ \calpos[\sigma](\tddeux{$1$}{$2$})
=f[\underline{2}](\tddeux{$1$}{$2$}).\]
This is a contradiction. So $\rho$ is not surjective.
\end{remark}

\subsection{Hopf algebraic structure from the first Fock functor}

\begin{theo}\label{theo68}
\begin{enumerate}
\item Let $\calP$ be a connected twisted bialgebra. We define another coproduct $\blacktriangle$ 
on $\fun(\calP)$ by the following:
\begin{align*}
&\forall x\in \calP[\underline{n}],&\blacktriangle(x)&=\sum_{k=0}^n (\calP[\sigma_{\underline{k}}]
\otimes \calP[\sigma_{\underline{n}\setminus \underline{k}}])\circ \Delta_{\underline{k},
\underline{n}\setminus \underline{k}}(x).
\end{align*}
Then $(\fun(\calP),m,\blacktriangle)$ is an infinitesimal bialgebra, in the sense of Loday and Ronco \cite{LodayRonco}.
\item If $\calP$ is a connected twisted bialgebra, then $\fun(\calP)$ is freely generated
as an algebra by the space of primitive elements of $\blacktriangle$.
\item If $\calP$ is a cofree twisted coalgebra, then $\fun(\calP)$ is a cofree coalgebra.
\end{enumerate}
\end{theo}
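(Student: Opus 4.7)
The plan is to prove the three parts in sequence, with part (1) providing the infinitesimal bialgebra structure that feeds into (2) and (3) via Loday--Ronco's rigidity theorem.

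For part (1), I would first verify coassociativity of $\blacktriangle$. Applying either iterate $(\blacktriangle\otimes Id)\circ\blacktriangle$ or $(Id\otimes\blacktriangle)\circ\blacktriangle$ to $x\in\calP[\underline{n}]$ produces a sum indexed by pairs $0\leq i\leq k\leq n$ of the triple split $\Delta_{\underline{i},\underline{k}\setminus\underline{i},\underline{n}\setminus\underline{k}}(x)$ relabeled by the appropriate $\sigma_I$; the two expressions agree by coassociativity of $\Delta$ at the species level. Then I would verify the Loday--Ronco infinitesimal compatibility
\[
\blacktriangle(xy) = (x\otimes 1)\blacktriangle(y) + \blacktriangle(x)(1\otimes y) - x\otimes y.
\]
For $x\in\calP[\underline{m}]$, $y\in\calP[\underline{n}]$, the product $xy=\calP[\sigma_{m,n}]\circ m(x\otimes y)$ lies in $\calP[\underline{m+n}]$, and I compute $\Delta_{\underline{k},\underline{m+n}\setminus\underline{k}}(xy)$ for each $k$. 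Since $\sigma_{m,n}$ sends $\underline{m}$ to itself and $\underline{n}$ to $\{m+1,\ldots,m+n\}$, twisted bialgebra compatibility combined with Lemma \ref{lemme3} (which forces trivial factors on the "uninvolved" side of a split) shows that for $k<m$ the contribution lies entirely on $x$'s side and feeds into $\blacktriangle(x)(1\otimes y)$; for $k>m$ it lies entirely on $y$'s side and feeds into $(x\otimes 1)\blacktriangle(y)$; and the boundary case $k=m$ produces $x\otimes y$ in \emph{both} expressions, which forces the $-x\otimes y$ correction.

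For part (2), connectedness of $\calP$ gives $\fun(\calP)_0 = \calP[\emptyset] = \K$, so $(\fun(\calP),m,\blacktriangle)$ is a connected graded infinitesimal bialgebra in the sense of Loday and Ronco \cite{LodayRonco}. Their rigidity theorem then asserts that $\fun(\calP)$ is isomorphic, as an algebra, to the free associative algebra on its space of $\blacktriangle$-primitive elements.

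For part (3), we only assume $\calP\cong\cot(\calQ)$ is cofree as a twisted coalgebra, and there need not be an algebra structure on $\calP$. I would exploit the intrinsic concatenation product at the species level: $\cdot_{A,B}((x_1\cdots x_k)\otimes(y_1\cdots y_l)) = x_1\cdots x_k y_1\cdots y_l \in \cot(\calQ)[A\sqcup B]$. The direction $1\Rightarrow 2$ of Theorem \ref{theo11} precisely records the semi-compatibility between this product and the cofree coproduct $\Delta$. Transporting through $\fun$ and repeating the case analysis of part (1) with Theorem \ref{theo11} in place of the bialgebra axiom, one obtains that $(\fun(\calP),\cdot,\blacktriangle)$ is again a connected infinitesimal bialgebra; Loday--Ronco's theorem also asserts that every such bialgebra is cofree as a graded coalgebra, which finishes the proof. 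The main obstacle throughout parts (1) and (3) will be the bookkeeping of the relabeling bijections $\sigma_I$ through the triple splits, and checking that after relabeling the non-standard compatibility of Theorem \ref{theo11} yields exactly the infinitesimal one -- the $-x\otimes y$ term emerging as the single overlap at $k=m$ where both cases $I'\subseteq I$ and $J'\subseteq J$ of Theorem \ref{theo11} hold simultaneously.
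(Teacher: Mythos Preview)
Your treatment of parts (1) and (2) matches the paper's proof: the infinitesimal relation for $\blacktriangle$ comes from splitting the sum over $k$ at $k\le m$ versus $k\ge m$, using Lemma~\ref{lemme3} to trivialize the empty tensorand, with the single overlap at $k=m$ producing the $-x\otimes y$ correction; part (2) is then Loday--Ronco's rigidity theorem applied directly.

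There is a genuine gap in your part (3). The assertion is that $\fun(\calP)$ is cofree \emph{for the coproduct $\Delta$} of Theorem~\ref{theo62}---this is the coalgebra structure understood when one speaks of $\fun(\calP)$, and it is what the subsequent remark on $\FQSym$ and $\WQSym$ being cofree \emph{bialgebras} refers to. Your argument instead proves that $(\fun(\calP),\blacktriangle)$ is cofree. These coproducts differ: for a word $x_1\cdots x_j\in\cot(\calQ)[\underline{n}]$ with $x_i\in\calQ[A_i]$, the full $\Delta$ picks up a term for every $p$ via $I=A_1\sqcup\cdots\sqcup A_p$, whereas $\blacktriangle$ only sees those $p$ for which $A_1\sqcup\cdots\sqcup A_p$ happens to equal an initial segment $\underline{k}$ of $\underline{n}$.

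The fix is simply to run your same argument with $\Delta$ in place of $\blacktriangle$, exactly as the paper does. For the concatenation product $*$, Theorem~\ref{theo11} with $I'=\underline{m}$ and $J'=\{m{+}1,\ldots,m{+}n\}$ forces $\Delta_{I,\underline{m+n}\setminus I}(x*y)=0$ unless $I\subseteq\underline{m}$ or $I\supseteq\underline{m}$; the two subsums then assemble into $\Delta(x)*(1\otimes y)+(x\otimes1)*\Delta(y)-x\otimes y$, with the unique overlap at $I=\underline{m}$. The point you missed is that the case split of Theorem~\ref{theo11} already kills most subsets $I$, so it is the \emph{full} $\Delta$, not merely $\blacktriangle$, that satisfies the infinitesimal relation with $*$.
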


\begin{proof}
Let $x\in \calP[\underline{n}]$. Then:
\begin{align*}
(\blacktriangle\otimes \id)\circ \blacktriangle(x)
&=(\id \otimes \blacktriangle)\circ \blacktriangle(x)
=\sum_{0\leq k\leq l\leq n} (\calP[\sigma_{\underline{k}}]\otimes \calP[\sigma_{\underline{l}\setminus \underline{k}}]
\otimes \calP[\sigma_{\underline{n}\setminus \underline{l}}])\circ 
\Delta_{\underline{k},\underline{l}\setminus \underline{k},\underline{n}\setminus \underline{l}}(x).
\end{align*}
So $\blacktriangle$ is coassociative. Let $x\in \calP[\underline{m}]$ and $y\in\calP[\underline{n}]$.
\begin{align*}
\blacktriangle(xy)&=\sum_{0\leq k\leq m+n}
(\calP[\sigma_{\underline{k}}]\otimes \calP[\sigma_{\underline{m+n}\setminus \underline{k}}])
\circ \Delta_{\underline{k},\underline{m+n}\setminus \underline{k}}\circ \calP[\sigma_{m,n}]\circ m(x\otimes y)\\
&=\sum_{0\leq k\leq m}
(\calP[\sigma_{\underline{k}}]\otimes \calP[\sigma_{\underline{m+n}\setminus \underline{k}}])
\circ \Delta_{\underline{k},\underline{m+n}\setminus \underline{k}}\circ \calP[\sigma_{m,n}]\circ m(x\otimes y)\\
&+\sum_{m\leq k\leq m+n}
(\calP[\sigma_{\underline{k}}]\otimes \calP[\sigma_{\underline{m+n}\setminus \underline{k}}])
\circ \Delta_{\underline{k},\underline{m+n}\setminus \underline{k}}\circ \calP[\sigma_{m,n}]\circ m(x\otimes y)\\
&-(\calP[\sigma_{\underline{m}}]\otimes \calP[\sigma_{\underline{m+n}\setminus \underline{m}}])
\circ \Delta_{\underline{m},\underline{m+n}\setminus \underline{m}}\circ \calP[\sigma_{m,n}]\circ m(x\otimes y)\\
&=\sum_{0\leq k\leq m}
(\calP[\sigma_{\underline{k}}]\otimes \calP[\sigma_{\underline{m+n}\setminus \underline{k}}])\circ 
(\Delta_{\underline{k},\underline{m}\setminus \underline{k}}(x)
\Delta_{\emptyset,\underline{n}}(y))\\
&+\sum_{m\leq k\leq m+n}
(\calP[\sigma_{\underline{k}}]\otimes \calP[\sigma_{\underline{m+n}\setminus \underline{k}}])
(\Delta_{\underline{m},\emptyset}(x)
\Delta_{\underline{k-m},\underline{m+n-k}\setminus \underline{k-m}}(y))\\
&-(\calP[\sigma_{\underline{m}}]\otimes \calP[\sigma_{\underline{m+n}\setminus \underline{m}}])
(\Delta_{\underline{m},\emptyset}(x)
\Delta_{\emptyset,\underline{n}}(y))\\
&=\sum_{0\leq k\leq m}
(\calP[\sigma_{\underline{k}}]\otimes \calP[\sigma_{\underline{m+n}\setminus \underline{k}}])(\Delta_{\underline{k},\underline{m}\setminus \underline{k}}(x)
1\otimes y)\\
&+\sum_{m\leq k\leq m+n}
(\calP[\sigma_{\underline{k}}]\otimes \calP[\sigma_{\underline{m+n}\setminus \underline{k}}])(x\otimes 1
\Delta_{\underline{k-m},\underline{m+n-k}\setminus \underline{k-m}}(y))\\
&-(\calP[\sigma_{\underline{m}}]\otimes \calP[\sigma_{\underline{m+n}\setminus \underline{m}}])
(x\otimes y)\\
&=\blacktriangle(x)(1\otimes y)+(x\otimes 1)\blacktriangle(y)-x\otimes y.
\end{align*}
Hence, $\fun(\calP)$ is an infinitesimal bialgebra.\\

2. This is a direct consequence of the first point, by Loday and Ronco's rigidity theorem \cite{LodayRonco}.\\

3. We assume that $\calP=\cot(\calQ)$. We define another product $*$ on $\fun(\calP)$ by:
\begin{align*}
&\forall x=q_1\ldots q_k, y=q_{k+1}\ldots q_{k+l} \in \calP,&
x*y&=p_1\ldots p_k q_{k+1}^+\ldots q_{k+l}^+,
\end{align*}
where we use the following notation: if $q_i \in \calQ[I]$, then $q_i^+=\calQ[(\sigma_{m,n})_{\mid I}](q_i)$.
This product is associative. Moreover, by definition of the coproduct of $\cot(\calQ)$:
\begin{align*}
\Delta(x*y)&=\sum_{i=0}^k \calP[\sigma'_i](p_1\ldots p_i)\otimes \calP[\sigma''_i](p_{i+1}\ldots p_k
q_1^+\ldots q_l^+)\\
&+\sum_{i=k}^l \calP[\sigma'_i](p_1\ldots p_kq_1^+\ldots q_i^+)\otimes \calP[\sigma''_i](q_{i+1}^+\ldots q_l^+)\\
&-q_1\ldots q_k\otimes q_{k+1}\ldots q_{k+l}\\
&=\Delta(x)*(1\otimes y)+(x\otimes 1)*\Delta(y)-x\otimes y,
\end{align*}
where $\sigma'_i$, $\sigma''_i$ are suitable bijections. Hence, $(\fun(\calP),*,\Delta)$
is an infinitesimal bialgebra. By Loday and Ronco's rigidity theorem, $(\fun(\calP),\Delta)$ is cofree.
\end{proof}

\begin{remark}
We obtain again the well-known result that $\FQSym$ and $\WQSym$ are both free and cofree bialgebras.
\end{remark}

\section{Applications}

\subsection{Hopf algebras and bialgebras of graphs}

We put $\bfh_{\calgr}=\fun(\calgr)$ and $H_{\calgr}=\fdeux(\calgr)$. 
The Hopf algebra $\bfh_{\calgr}$ has for basis the set of graphs whose vertices
are a set composition of $\underline{n}$, for a given $n$. Its product is given by shifted concatenation, for example:
\[\tdtroisun{$\{1,2\}$}{$\{4\}$}{\hspace{-2mm}$\{3\}$}\hspace{4mm}\cdot \tddeux{$\{1,3\}$}{$\{2\}$}\hspace{4mm}
=\:\tdtroisun{$\{1,2\}$}{$\{4\}$}{\hspace{-2mm}$\{3\}$}\hspace{4mm}\tddeux{$\{5,7\}$}{$\{6\}$}\hspace{4mm}.\]
Its coproduct is given by decompositions of the set of vertices into two parts with standardization, for example:
\begin{align*}
\Delta(\tdun{$\{1\}$}\hspace{2mm})&=\tdun{$\{1\}$}\hspace{2mm}\otimes 1+1\otimes \tdun{$\{1\}$}\hspace{2mm},\\
\Delta(\tddeux{$\{1\}$}{$\{2,3\}$}\hspace{4mm})&=\tddeux{$\{1\}$}{$\{2,3\}$}\hspace{4mm}\otimes 1
+\tdun{$\{1\}$}\hspace{2mm}\otimes \tdun{$\{1,2\}$}\hspace{4mm}+
\tdun{$\{1,2\}$}\hspace{4mm}\otimes \tdun{$\{1\}$}\hspace{2mm}+1\otimes \tddeux{$\{1\}$}{$\{2,3\}$}\hspace{4mm},\\
\Delta(\hspace{2mm}\tdtroisun{$\{1,3\}$}{$\{4\}$}{\hspace{-2mm}$\{2\}$}\hspace{4mm})
&=\hspace{2mm}\tdtroisun{$\{1,3\}$}{$\{4\}$}{\hspace{-2mm}$\{2\}$}\hspace{4mm}\otimes 1
+\tddeux{$\{1,3\}$}{$\{2\}$}\hspace{4mm}\otimes \tdun{$\{1\}$}\hspace{2mm}
+\tddeux{$\{1,2\}$}{$\{3\}$}\hspace{4mm}\otimes \tdun{$\{1\}$}\hspace{2mm}+
\tdun{$\{1\}$}\hspace{2mm}\tdun{$\{2\}$}\hspace{2mm}\otimes \tdun{$\{1,2\}$}\hspace{4mm}\\
&+\tdun{$\{1\}$}\hspace{2mm}\otimes \tddeux{$\{1,3\}$}{$\{2\}$}\hspace{4mm}
+\tdun{$\{1\}$}\hspace{2mm}\otimes \tddeux{$\{1,2\}$}{$\{3\}$}\hspace{4mm}
+\tdun{$\{1,2\}$}\hspace{4mm}\otimes \tdun{$\{1\}$}\hspace{2mm}\tdun{$\{2\}$}\hspace{2mm}+
1\otimes \hspace{2mm}\tdtroisun{$\{1,3\}$}{$\{4\}$}{\hspace{-2mm}$\{2\}$}\hspace{4mm}.
\end{align*}
The second coproduct is given by a contraction-extraction process; for example:
\begin{align*}
\delta(\tdun{$A$})&=\tdun{$A$}\otimes \tdun{$A$},\\
\delta(\tddeux{$A$}{$B$})&=\tddeux{$A$}{$B$}\otimes \tdun{$A$}\tdun{$B$}+
\tdun{$A\sqcup B$} \hspace{6mm} \otimes \tddeux{$A$}{$B$},\\
\delta(\tdtroisun{$A$}{$C$}{$B$})&=\tdtroisun{$A$}{$C$}{$B$}\otimes \tdun{$A$}\tdun{$B$}\tdun{$C$}
+\tddeux{$A\sqcup B$}{$C$}\hspace{6mm}\otimes \tddeux{$A$}{$B$}\tdun{$C$}+
\tddeux{$A\sqcup C$}{$B$}\hspace{6mm}\otimes \tddeux{$A$}{$C$}\tdun{$B$}+
\tdun{$A\sqcup B\sqcup C$}\hspace{12mm}\otimes \tdtroisun{$A$}{$C$}{$B$},
\end{align*}
where in the first computation, $(A)=\underline{n}$; in the second one, $(A,B)\in \comp[\underline{n}]$ and, in the last one,
$(A,B,C)\in \comp[\underline{n}]$.
This is a variant of the noncommutative Hopf algebra of graphs of \cite{Foissychrom}.\\

The Hopf algebra $H_{\calgr}$ has for basis the set of decorated graphs, that is to say graphs with a map
from their set of vertices to the set of positive integers. The product is given by concatenation, for example,
if $a$, $b$, $c$, $d$ and $e\geq 1$:
\[\tdtroisun{$a$}{$b$}{$c$}\cdot \tddeux{$d$}{$e$}=\tdtroisun{$a$}{$c$}{$b$}\tddeux{$d$}{$e$}.\]
The first product is given by partitions of the vertices into two subsets and the second one 
by an extraction-contraction process. For example, if $a$, $b$, $c\geq 1$:
\begin{align*}
\Delta(\tdun{$a$})&=\tdun{$a$}\otimes 1+1\otimes \tdun{$a$},\\
\Delta(\tddeux{$a$}{$b$})&=\tddeux{$a$}{$b$}\otimes 1+\tdun{$a$}\otimes \tdun{$b$}+
\tdun{$b$}\otimes \tdun{$a$}+1\otimes \tddeux{$a$}{$b$},\\
\Delta(\tdtroisun{$a$}{$c$}{$b$})&=\tdtroisun{$a$}{$c$}{$b$}\otimes 1
+\tddeux{$a$}{$b$}\otimes \tdun{$c$}+\tddeux{$a$}{$c$}\otimes \tdun{$b$}+
\tdun{$b$}\tdun{$c$}\otimes \tdun{$a$}\\
&+\tdun{$c$}\otimes \tddeux{$a$}{$b$}+\tdun{$b$}\otimes \tddeux{$a$}{$c$}+\tdun{$a$}\otimes \tdun{$b$}\tdun{$c$}+
1\otimes \tdtroisun{$a$}{$c$}{$b$};\\ \\
\delta(\tdun{$a$})&=\tdun{$a$}\otimes \tdun{$a$},\\
\delta(\tddeux{$a$}{$b$})&=\tddeux{$a$}{$b$}\otimes \tdun{$a$}\tdun{$b$}+
\tdun{$a+b$} \hspace{4mm} \otimes \tddeux{$a$}{$b$},\\
\delta(\tdtroisun{$a$}{$c$}{$b$})&=\tdtroisun{$a$}{$c$}{$b$}\otimes \tdun{$a$}\tdun{$b$}\tdun{$c$}
+\tddeux{$a+b$}{$c$}\hspace{4mm}\otimes \tddeux{$a$}{$b$}\tdun{$c$}+
\tddeux{$a+c$}{$b$}\hspace{4mm}\otimes \tddeux{$a$}{$c$}\tdun{$b$}+
\tdun{$a+b+c$}\hspace{9mm}\otimes \tdtroisun{$a$}{$c$}{$b$}.
\end{align*}
This is a decorated variant of the commutative Hopf algebra of graphs of \cite{Foissychrom,Hivert}.

\subsection{Morphisms from graphs to $\WQSym$, $\QSym$ and $\K[X]$}

The Fock functors applied to the the morphism $\phi_{chr_q}$ gives a commutative diagram of morphisms
compatible with the product and both coproducts:\\
\[\xymatrix{\bfh_{\calgr}\ar@{->>}[r]\ar[d]_{\fun(\phi_{chr_q})}&H_{\calgr}\ar[d]_{\fdeux(\phi_{chr_q})}\ar[dr]^{P_{chr_q}}&\\
\WQSym\ar@{->>}[r]&\QSym\ar@{->>}[r]_H&\K[X]}\]
where $P_{chr_q}=H\circ \fdeux(\phi_{chr_q})$. For any graph in $\bfh_{\calgr}$ of degree $n$:
\begin{align*}
\fun(\phi_{chr_1})(G)&=\sum_{c\in VC(G)} c(1)\ldots c(n),\\
\fdeux(\phi_{chr_1})(G)&=\sum_{c\in VC(G)} (\sharp c^{-1}(1),\ldots,\sharp c^{-1}(\max(c))).
\end{align*}
For example:
\begin{align*}
\fun(\phi_{chr_1})(\tdun{$\{1\}$}\hspace{2mm})&=(1),\\
\fun(\phi_{chr_1})(\tddeux{$\{1,3\}$}{$\{2\}$}\hspace{4mm})&=(121)+(212),\\
\fun(\phi_{chr_1})(\hspace{2mm}\tdtroisun{$\{1,4\}$}{$\{2\}$}{\hspace{-2mm}$\{3\}$}\hspace{3mm})
&=(1231)+(1321)+(2132)+(2312)+(3123)+(3213)+(1221)+(2112).
\end{align*}
This is a colored version of the noncommutative chromatic symmetric function of \cite{Gebhard}.
The morphism $\fdeux(\phi_{chr_1})$ is a colored version  of Stanley's chromatic symmetric series \cite{Stanley2,Rosas}, 
as it is explained in \cite{Foissychrom}. For example, if $a,b,c\geq 1$:
\begin{align*}
\fdeux(\phi_{chr_1})(\tdun{$a$})&=(a),\\
\fdeux(\phi_{chr_1})(\tddeux{$a$}{$b$})&=(a,b)+(b,a),\\
\fdeux(\phi_{chr_1})(\tdtroisun{$a$}{$c$}{$b$})&=(a,b,c)+(a,c,b)+(b,a,c)+(b,c,a)+(c,a,b)+(c,b,a)+(a,b+c)+(b+c,a).
\end{align*}
Reformulating in terms of formal series in a infinite set $\{X_1,X_2,\ldots\}$ of indeterminates,
with the help of the polynomial realization of $\QSym$:
\begin{align*}
\fdeux(\phi_{chr_1})(\tdun{$a$})&=\sum_{i} X_i^a,\\
\fdeux(\phi_{chr_1})(\tddeux{$a$}{$b$})&=\sum_{i\neq j} X_i^aX_j^b,\\
\fdeux(\phi_{chr_1})(\tdtroisun{$a$}{$c$}{$b$})
&=\sum_{i\neq j,i\neq k,j\neq k} X_i^a X_j^b X_k^c+\sum_{i\neq j} X_i^a X_j^{b+c}.
\end{align*}

Composing $\fdeux(\phi_{chr_1})$ with $H$, we obtain the chromatic polynomial $P_{chr_1}$ \cite{BL}. For example:
\begin{align*}
P_{chr_1}(\tdun{$a$})&=H_1(X)=X,\\
P_{chr_1}(\tddeux{$a$}{$b$})&=2H_2(X)=X(X-1)\\
P_{chr_1}(\tdtroisun{$a$}{$c$}{$b$})&=6H_3(X)+2H_2(X)=X(X-1)^2.
\end{align*}

\begin{prop} If $q\neq 0$, for any graph $G \in H_{\calgr}$:
\begin{align*}
P_{chr_q}(G)(X)&=q^{\deg(G)} P_{chr_1}(G)\left(\frac{X}{q}\right).
\end{align*} \end{prop}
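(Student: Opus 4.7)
The plan is to unfold the definition of $P_{chr_q}$ using the factorization $\phi_{chr_q} = \theta_{q^{-1}}\circ \phi_{chr_1}\circ \iota_q$ established in the previous section, and to transport this identity through the bosonic Fock functor $\fdeux$ and the morphism $H$. By functoriality of $\fdeux$ we obtain
\[
\fdeux(\phi_{chr_q}) = \fdeux(\theta_{q^{-1}})\circ \fdeux(\phi_{chr_1})\circ \fdeux(\iota_q),
\]
and composing on the left with $H$ gives $P_{chr_q} = H\circ \fdeux(\theta_{q^{-1}})\circ \fdeux(\phi_{chr_1})\circ \fdeux(\iota_q)$.

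The first step is to identify $\fdeux(\iota_q)$. Since the graduation of $\calgr$ is defined by the number of vertices, $\iota_q$ acts on $\calgr_n$ as multiplication by $q^n$, and this property is preserved by taking coinvariants; hence $\fdeux(\iota_q)(G) = q^{deg(G)}\,G$ for every $G \in H_{\calgr}$. The second step is to handle the left factor using the commutative square established just before the statement, namely $H\circ \fdeux(\theta_q) = \Theta_q\circ H$, where $\Theta_q(P)(X) = P(qX)$. Applying this with $q$ replaced by $q^{-1}$ yields $H\circ \fdeux(\theta_{q^{-1}}) = \Theta_{q^{-1}}\circ H$, and $\Theta_{q^{-1}}$ is precisely the substitution $X \mapsto X/q$.

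Chaining these observations, for any graph $G$ of degree $n$:
\begin{align*}
P_{chr_q}(G)(X)
&= H\circ \fdeux(\theta_{q^{-1}})\circ \fdeux(\phi_{chr_1})\bigl(q^{n}G\bigr)\\
&= q^{n}\,\Theta_{q^{-1}}\bigl(H\circ \fdeux(\phi_{chr_1})(G)\bigr)\\
&= q^{n}\,\Theta_{q^{-1}}\bigl(P_{chr_1}(G)\bigr)(X)\\
&= q^{deg(G)} P_{chr_1}(G)\!\left(\tfrac{X}{q}\right),
\end{align*}
which is the desired equality. There is essentially no obstacle: the argument is a direct bookkeeping exercise, provided one has established the compatibility $H\circ \fdeux(\theta_q) = \Theta_q\circ H$ (already proven) and the elementary description of $\fdeux(\iota_q)$ as a scalar on each homogeneous component. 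The only mild point of care is to ensure that the hypothesis $q \neq 0$ is used correctly so that $\theta_{q^{-1}}$ and $\Theta_{q^{-1}}$ are well-defined, which is automatic since $q$ is a nonzero scalar.
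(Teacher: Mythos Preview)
Your proposal is correct and follows essentially the same approach as the paper: both unfold $P_{chr_q}$ via $\fdeux(\phi_{chr_q}) = \fdeux(\theta_{q^{-1}})\circ \fdeux(\phi_{chr_1})\circ \fdeux(\iota_q)$, use that $\fdeux(\iota_q)$ multiplies by $q^{deg(G)}$, and then apply the identity $H\circ \fdeux(\theta_{q^{-1}}) = \Theta_{q^{-1}}\circ H$ to conclude.
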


\begin{proof}
Indeed:
\begin{align*}
P_{chr_q}(G)(X)&=H\circ \fdeux(\theta_{q^{-1}})\circ \fdeux(\phi_{chr_1})\circ \fdeux(\iota_q)(G)(X)\\
&=q^{\deg(G)}H\circ \fdeux(\theta_{q^{-1}})\circ \fdeux(\phi_{chr_1})(G)(X)\\
&=q^{\deg(G)} \Theta_{q^{-1}}\circ H \circ \fdeux(\phi_{chr_1})(G)(X)\\
&=q^{\deg(G)} \Theta_{q^{-1}} \circ P_{chr_1}(G)(X)\\
&=q^{\deg(G)} P_{chr_1}(G)\left(\frac{X}{q}\right).
\end{align*} \end{proof}

\begin{prop}
Let $q\in \K$. 
\begin{enumerate}
\item For any graph $G\in \calgr[\underline{n}]$:
\begin{align*}
\fun(\phi_q)(G)&=q^{\deg(G)} \sum_{w\in W(G)}w,
\end{align*}
where $w$ is the set of packed words of length $n$ such that for any $i\in \underline{n}$,
if $i$ and $j$ are in the same vertex of $G$, then $w(i)=w(k)$. 
\item For any decorated graph $G$, denoting by $d_1,\ldots,d_k$ the decoration of its vertices:
\[\fdeux(\phi_q)(G)=q^{\deg(G))} (d_1)\squplus \ldots \squplus (d_k).\]
\item For any decorated graph $G$:
\[H\circ \fdeux(\phi_q)(G)=q^{\deg(G)}X^{\deg(G)}.\]
\end{enumerate}\end{prop}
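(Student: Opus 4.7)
The plan is to derive the three items by unfolding the explicit formula for $\phi_q$ from Proposition \ref{prop48}, namely
$$\phi_q(G) = q^{deg(G)}\,(I_1)\squplus\cdots\squplus(I_k)$$
in $\calcomp[\underline{n}]$, where $V(G)=\{I_1,\ldots,I_k\}$, and then pushing this through the functors $\fun$, $\fdeux$ and the morphism $H$ successively.

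For item 1, I would first expand the iterated quasi-shuffle, by an easy induction on $k$ from the definition of $\squplus$:
$$(I_1)\squplus\cdots\squplus(I_k) \;=\; \sum_{\sigma} \Bigl(\bigsqcup_{\sigma(i)=1}I_i,\ldots,\bigsqcup_{\sigma(i)=\max(\sigma)}I_i\Bigr),$$
where $\sigma$ ranges over all surjections $\underline{k}\twoheadrightarrow\underline{p}$ for $p=1,\ldots,k$. Under the identification of set compositions of $\underline{n}$ with packed words of length $n$ described in the previous subsection (a composition $(B_1,\ldots,B_p)$ corresponds to the packed word $w$ with $w^{-1}(j)=B_j$), the summand indexed by $\sigma$ becomes the packed word that takes the value $\sigma(i)$ on the block $I_i$ for every $i$. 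The map $w\mapsto (\sigma(i))_i$, where $\sigma(i)$ is the common value of $w$ on the vertex $I_i$, is a bijection between $W(G)$ and the set of such surjections, which yields item 1.

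For item 2, I would apply the canonical projection $\fun(\calcomp)\twoheadrightarrow\fdeux(\calcomp)=\QSym$ to the previous formula; this projection sends a packed word $w$ to the composition $(|w^{-1}(1)|,\ldots,|w^{-1}(\max(w))|)$ of sizes of its level sets. A word $w\in W(G)$ with value $\sigma(i)$ on $I_i$ is then sent to the composition $\sigma\rightarrow(d_1,\ldots,d_k)$, and summing over $\sigma$ reassembles $(d_1)\squplus\cdots\squplus(d_k)$. For item 3, I would use that $H:(\QSym,\squplus)\to(\K[X],m)$ is an algebra morphism with $H((d))=H_1(X)=X$ for every $d\geq 1$, so
$$H\circ\fdeux(\phi_q)(G) \;=\; q^{deg(G)}\,H((d_1))\cdots H((d_k)) \;=\; q^{deg(G)}X^k \;=\; q^{deg(G)}X^{deg(G)}.$$
The only real obstacle is the bookkeeping in item 1: verifying that the surjections indexing the iterated quasi-shuffle match bijectively with packed words constant on the vertices of $G$. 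Once this correspondence is clear, items 2 and 3 reduce to direct applications of the definitions of $\fdeux$ and $H$.
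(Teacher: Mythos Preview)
Your proposal is correct and follows the same approach as the paper. The paper simply declares items 1 and 2 ``immediate'' and only writes out item 3, which it proves exactly as you do (using that $H$ is an algebra morphism and $H((d))=X$); your more explicit unpacking of the iterated quasi-shuffle as a sum over surjections and the bijection with $W(G)$ is precisely what makes those first two points immediate.
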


\begin{proof}
The two first points are immediate. If $G$ is a decorated graph:
\begin{align*}
H\circ \fdeux(\phi_q)(G)&=q^{\deg(G))}H( (d_1)\squplus \ldots \squplus (d_k))\\
&=q^{\deg(G)}H((a_1))\ldots H((a_k))\\
&=q^{\deg(G)}X^k\\
&=q^{\deg(G)}X^{\deg(G)}.
\end{align*} \end{proof}

As $\phi_{chr_1}=\phi_1\leftarrow \lambda_{chr_1}$:

\begin{cor}
For any graph $G\in H_{\calgr}$:
\begin{align*}
\fdeux(\phi_{chr_1})(G)
&=\sum_{\sim\triangleleft G} \lambda_{chr_1}(G\mid \sim) \prod^\squplus
_{\mbox{\scriptsize $C$ equivalence classe of $\sim$}} (C),\\
P_{chr_1}(G)&=\sum_{\sim\triangleleft G} \lambda_{chr_1}(G\mid \sim) X^{\cl(\sim)}.
\end{align*}
\end{cor}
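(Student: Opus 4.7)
The plan is to derive both formulas directly from the identity $\phi_{chr_1} = \phi_1 \leftarrow \lambda_{chr_1}$ highlighted just above the statement, together with the explicit description of the coproduct $\delta$ on $\calgr$ and the explicit formula for $\phi_1 = \phi_{\mathbf{1}}$ given in Proposition \ref{prop48}. No Fock-functor machinery is really needed beyond unpacking definitions, since $\lambda_{chr_1}$ is a character and its output is a scalar.

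First, I would spell out the action $\leftarrow$: for any graph $G \in \calgr[A]$,
\[
\phi_{chr_1}(G) \;=\; (\phi_1 \otimes \lambda_{chr_1}) \circ \delta_A(G).
\]
Substituting the formula $\delta_A(G) = \sum_{\sim \triangleleft G} G/\sim \otimes G|\sim$, I get
\[
\phi_{chr_1}(G) \;=\; \sum_{\sim \triangleleft G} \lambda_{chr_1}(G|\sim)\, \phi_1(G/\sim).
\]
The vertices of $G/\sim$ are exactly the equivalence classes of $\sim$, so Proposition \ref{prop48} applied with $u=\mathbf{1}$ gives $\phi_1(G/\sim) = \prod^{\squplus}_{C \text{ class of } \sim}(C)$. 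After passing through $\fdeux$ (which only identifies isomorphic graphs obtained by relabelling and does not interfere with $\lambda_{chr_1}$, which is a character taking scalar values), this yields the first identity.

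For the second identity I would simply post-compose with $H$. Since $H$ is an algebra morphism (from Proposition on $H$), I have
\[
H\!\left(\prod^{\squplus}_{C}(C)\right) \;=\; \prod_{C} H\bigl((C)\bigr) \;=\; \prod_{C} X \;=\; X^{cl(\sim)},
\]
where the middle step uses $H((a)) = H_1(X) = X$ for any $(a)$. Combining this with the first formula and the definition $P_{chr_1} = H \circ \fdeux(\phi_{chr_1})$ gives the second equation. There is no real obstacle here; the only point that requires a moment's care is verifying that the scalar character $\lambda_{chr_1}$ and the species morphism $\phi_1$ can be combined through $\leftarrow$ at the level of $\fdeux$ without reordering issues, which follows from the fact that $\lambda_{chr_1}$ is an invariant of the isomorphism class of the graph (so it factors through $\fdeux(\calgr)$) and from the compatibility of $\fdeux$ with the coproduct $\delta$ established in Theorem \ref{theo62}.
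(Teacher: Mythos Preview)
Your proof is correct and follows exactly the approach the paper intends: the paper states the corollary immediately after the line ``As $\phi_{chr_1}=\phi_1\leftarrow \lambda_{chr_1}$:'' with no further argument, and you have simply unpacked that identity using the formula for $\delta_A(G)$, Proposition \ref{prop48} for $\phi_1(G/\sim)$, and then composed with $H$ using $H((a))=H_1(X)=X$ for the second formula. The only detail the paper leaves implicit and that you address explicitly is the passage through $\fdeux$, which is indeed harmless since $\lambda_{chr_1}$ is scalar-valued and invariant under relabelling.
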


Applying the Fock functor to $\Gamma$:

\begin{prop}
The following maps are Hopf algebra automorphisms:
\begin{align*}
\fun(\Gamma)&:\left\{\begin{array}{rcl}
\bfh_{\calgr}&\longrightarrow&\bfh_{\calgr}\\
G&\longrightarrow&\displaystyle \sum_{\sim\triangleleft G}\sharp\{\mbox{acyclic orientations of $G\mid \sim$}\} G/\sim, 
\end{array}\right.\\
\fdeux(\Gamma)&:\left\{\begin{array}{rcl}
H_{\calgr}&\longrightarrow&H_{\calgr}\\
G&\longrightarrow&\displaystyle \sum_{\sim\triangleleft G} \sharp\{\mbox{acyclic orientations of $G\mid \sim$}\}G/\sim.
\end{array}\right.
\end{align*}
Their inverse are given by:
\begin{align*}
\fun(\Gamma)^{-1}&:\left\{\begin{array}{rcl}
\bfh_{\calgr}&\longrightarrow&\bfh_{\calgr}\\
G&\longrightarrow&\displaystyle \sum_{\sim\triangleleft G} (-1)^{\deg(G)+\cl(\sim)}
\sharp\{\mbox{acyclic orientations of $G\mid \sim$}\}G/\sim,
\end{array}\right.\\
\fdeux(\Gamma)^{-1}&:\left\{\begin{array}{rcl}
H_{\calgr}&\longrightarrow&H_{\calgr}\\
G&\longrightarrow&\displaystyle
\sum_{\sim\triangleleft G}(-1)^{\deg(G)+\cl(\sim)} \sharp\{\mbox{acyclic orientations of $G\mid \sim$}\}G/\sim.
\end{array}\right.
\end{align*}
Moreover, the following diagram commutes:
\begin{align*}
&\xymatrix{&&&\WQSym\ar@{->>}[dddd]\\
\bfh_{\calgr}\ar[rr]_{\fun(\Gamma)}\ar@{->>}[d]\ar[rrru]^{\fun(\phi_{chr_{-1}})}
&&\bfh_{\calgr}\ar@{->>}[d]\ar[ru]_{\fun(\phi_{chr_1})}&\\
H_{\calgr}\ar[rr]^{\fun(\Gamma)}\ar[dr]^{P_{chr_{-1}}}\ar@/_4pc/[ddrrr]_{\fdeux(\phi_{chr_{-1}})}
&&H_{\calgr}\ar[ld]^{P_{chr_1}}\ar[rdd]^<(.2){\fdeux(\phi_{chr_1})}&\\
&\K[X]&&\\
&&&\QSym\ar[ull]^H}
\end{align*}
\end{prop}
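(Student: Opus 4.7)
The plan is to leverage the functoriality of $\fun$ and $\fdeux$ together with the structural results already established for $\Gamma$ in Corollary \ref{cor53}. First, I would recall from Corollary \ref{cor53} that $\Gamma = Id \leftarrow \lambda_{ao}$ and $\Gamma' = Id \leftarrow \lambda_{ao}^{\star-1}$ are twisted bialgebra morphisms with $\Gamma \circ \Gamma' = \Gamma' \circ \Gamma = Id_{\calgr}$. Applying Theorem \ref{theo62}, which guarantees that the Fock functors send twisted bialgebra morphisms to bialgebra morphisms, and using the elementary functoriality $\fun(\Gamma \circ \Gamma') = \fun(\Gamma)\circ \fun(\Gamma')$ (and similarly for $\fdeux$), I would immediately conclude that $\fun(\Gamma)$ and $\fdeux(\Gamma)$ are bialgebra automorphisms, with inverses $\fun(\Gamma')$ and $\fdeux(\Gamma')$ respectively; these are automatically Hopf algebra automorphisms since the underlying bialgebras are graded and connected.

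Second, to obtain the explicit formulas, I would unpack the definition: for $G \in \calgr[\underline{n}]$, $\fun(\Gamma)(G) = \Gamma[\underline{n}](G) = \sum_{\sim \triangleleft G} \lambda_{ao}(G|\sim)\, G/\sim$. The key identification is that $\lambda_{ao}(H)$ equals the number of acyclic orientations of $H$ (shown in the proof of the extraction-contraction proposition). Since $G|\sim$ is the disjoint union of the graphs $G_{\mid C}$ over equivalence classes $C$ of $\sim$, and acyclic orientations are multiplicative under disjoint union, $\lambda_{ao}(G|\sim)$ equals the number of acyclic orientations of $G|\sim$. For the inverse, I substitute the formula from Corollary \ref{cor53}, $\lambda_{ao}^{\star-1}(H) = (-1)^{deg(H)+cc(H)} \lambda_{ao}(H)$, applied to $H = G|\sim$; here $deg(G|\sim) = deg(G)$ and $cc(G|\sim) = cl(\sim)$ since the connected components of $G|\sim$ are precisely the (connected) equivalence classes of $\sim$. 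The same calculation at the level of $\fdeux$ (or rather, the resulting formula passes to the coinvariant quotient unchanged) yields the stated formulas for $\fdeux(\Gamma)$ and $\fdeux(\Gamma)^{-1}$.

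Finally, for the commutative diagram, I would apply the functors to the identity $\phi_{chr_{-1}} = \phi_{chr_1} \circ \Gamma$ from Corollary \ref{cor53}, giving the two triangles involving $\bfh_{\calgr}$ and $H_{\calgr}$. The vertical squares between $\bfh_{\calgr}$ and $H_{\calgr}$ (and between $\WQSym$ and $\QSym$) are commutative by the naturality square in the Fock functor remark at the start of Section 8. The remaining equalities $P_{chr_q} = H\circ \fdeux(\phi_{chr_q})$ and $\fun(\phi_{chr_q}) \twoheadrightarrow \fdeux(\phi_{chr_q})$ are the defining relations of $P_{chr_q}$ and $\fdeux$, respectively. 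Pasting these pieces together yields the full diagram.

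The main obstacle is essentially bookkeeping rather than any deep argument: one must carefully verify the degree and connected-component identities $deg(G|\sim) = deg(G)$, $cc(G|\sim) = cl(\sim)$ to convert the abstract formula $Id \leftarrow \lambda_{ao}^{\star-1}$ into the sign-weighted expression displayed in the statement, and then confirm that the naturality diagram of the Fock functors glues correctly with the chromatic morphisms and with $H$.
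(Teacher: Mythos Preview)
Your proposal is correct and matches the paper's approach exactly: the paper presents this proposition simply under the heading ``Applying the Fock functor to $\Gamma$'' with no further proof, so the content is precisely what you describe---functoriality of $\fun$ and $\fdeux$ applied to Corollary~\ref{cor53}, together with the identifications $deg(G|\sim)=deg(G)$ and $cc(G|\sim)=cl(\sim)$ to unpack the sign in $\lambda_{ao}^{\star-1}$. Your write-up is in fact more detailed than what the paper gives.
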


\subsection{Hopf algebras and bialgebras on finite topologies}

We put $\bfh_{\caltop}=\fun(\caltop)$ and $H_{\caltop}=\fdeux(\caltop)$. 
The Hopf algebra $\bfh_{\caltop}$ has for basis the set of finite topologies on a set $\underline{n}$. 
Its product is given by shifted concatenation. For example:
\[\tdtroisun{$\{1,2\}$}{$\{4\}$}{\hspace{-2mm}$\{3\}$}\hspace{4mm}\cdot \tddeux{$\{1,3\}$}{$\{2\}$}\hspace{4mm}
=\:\tdtroisun{$\{1,2\}$}{$\{4\}$}{\hspace{-2mm}$\{3\}$}\hspace{4mm}\tddeux{$\{5,7\}$}{$\{6\}$}\hspace{4mm}.\]
Its coproduct is given by partitions of the underlying sets in an open and a closed set, for example:
\begin{align*}
\Delta(\tdun{$\{1\}$}\hspace{2mm})&=\tdun{$\{1\}$}\hspace{2mm}\otimes 1+1\otimes \tdun{$\{1\}$}\hspace{2mm},\\
\Delta(\tddeux{$\{1\}$}{$\{2,3\}$}\hspace{4mm})&=\tddeux{$\{1\}$}{$\{2,3\}$}\hspace{4mm}\otimes 1
+\tdun{$\{1\}$}\hspace{2mm}\otimes \tdun{$\{1,2\}$}\hspace{4mm}+1\otimes \tddeux{$\{1\}$}{$\{2,3\}$}\hspace{4mm},\\
\Delta(\hspace{2mm}\tdtroisun{$\{1,3\}$}{$\{4\}$}{\hspace{-2mm}$\{2\}$}\hspace{4mm})
&=\hspace{2mm}\tdtroisun{$\{1,3\}$}{$\{4\}$}{\hspace{-2mm}$\{2\}$}\hspace{4mm}\otimes 1
+\tddeux{$\{1,3\}$}{$\{2\}$}\hspace{4mm}\otimes \tdun{$\{1\}$}\hspace{2mm}
+\tddeux{$\{1,2\}$}{$\{3\}$}\hspace{4mm}\otimes \tdun{$\{1\}$}\hspace{2mm}
+\tdun{$\{1,2\}$}\hspace{4mm}\otimes \tdun{$\{1\}$}\hspace{2mm}\tdun{$\{2\}$}\hspace{2mm}+
1\otimes \hspace{2mm}\tdtroisun{$\{1,3\}$}{$\{4\}$}{\hspace{-2mm}$\{2\}$}\hspace{4mm},\\
\Delta(\tdtroisdeux{$\{1,3\}$}{$\{4\}$}{$\{2\}$}\hspace{4mm})&=\tdtroisdeux{$\{1,3\}$}{$\{4\}$}{$\{2\}$}\hspace{4mm}
\otimes 1+\tddeux{$\{1,2\}$}{$\{3\}$}\hspace{4mm}\otimes \tdun{$\{1\}$}\hspace{2mm}+
\tdun{$\{1,2\}$}\hspace{4mm}\otimes \tddeux{$\{2\}$}{$\{1\}$}\hspace{2mm}+
1\otimes \tdtroisdeux{$\{1,3\}$}{$\{4\}$}{$\{2\}$}\hspace{4mm}.
\end{align*}
The second coproduct is given by a contraction-extraction process; for example:
\begin{align*}
\delta(\tdun{$A$})&=\tdun{$A$}\otimes \tdun{$A$},\\
\delta(\tddeux{$A$}{$B$})&=\tddeux{$A$}{$B$}\otimes \tdun{$A$}\tdun{$B$}+
\tdun{$A\sqcup B$} \hspace{6mm} \otimes \tddeux{$A$}{$B$},\\
\delta(\tdtroisun{$A$}{$C$}{$B$})&=\tdtroisun{$A$}{$C$}{$B$}\otimes \tdun{$A$}\tdun{$B$}\tdun{$C$}
+\tddeux{$A\sqcup B$}{$C$}\hspace{6mm}\otimes \tddeux{$A$}{$B$}\tdun{$C$}+
\tddeux{$A\sqcup C$}{$B$}\hspace{6mm}\otimes \tddeux{$A$}{$C$}\tdun{$B$}+
\tdun{$A\sqcup B\sqcup C$}\hspace{12mm}\otimes \tdtroisun{$A$}{$C$}{$B$},\\
\delta(\tdtroisdeux{$A$}{$B$}{$C$})&=\tdtroisdeux{$A$}{$B$}{$C$}\otimes \tdun{$A$}\tdun{$B$}\tdun{$C$}
+\tddeux{$A\sqcup B$}{$C$}\hspace{6mm}\otimes \tddeux{$A$}{$B$}\tdun{$C$}+
\tddeux{$A$}{$B\sqcup C$}\hspace{6mm}\otimes \tdun{$A$}\tddeux{$B$}{$C$}+
\tdun{$A\sqcup B\sqcup C$}\hspace{12mm}\otimes \tdtroisdeux{$A$}{$B$}{$C$}.
\end{align*}
where in the first computation, $A=\underline{n}$; in the second one, $(A,B)\in \comp[\underline{n}]$ and, in the last one,
$(A,B,C)\in \comp[\underline{n}]$.
This is the Hopf algebra of finite topologies of \cite{FFM,FM,FMP,FoissyEhrhart}.\\

The Hopf algebra $H_{\caltop}$ has for basis the set of finite topologies, up to homeomorphism.
We represent such objects by Hasse graphs of the underlying quasi-order, where the cardinality of the
equivalence classes of the quasi-order are represented on the graph. 
The product is given by disjoint union, for example,
if $a$, $b$, $c$, $d$ and $e\geq 1$:
\[\tdtroisun{$a$}{$b$}{$c$}\cdot \tddeux{$d$}{$e$}=\tdtroisun{$a$}{$c$}{$b$}\tddeux{$d$}{$e$}.\]
The first product is given by partitions into a closed set and an open set, and the second one 
by an extraction-contraction process. For example, if $a$, $b$, $c\geq 1$:
\begin{align*}
\Delta(\tdun{$a$})&=\tdun{$a$}\otimes 1+1\otimes \tdun{$a$},\\
\Delta(\tddeux{$a$}{$b$})&=\tddeux{$a$}{$b$}\otimes 1+\tdun{$a$}\otimes \tdun{$b$}+1\otimes \tddeux{$a$}{$b$},\\
\Delta(\tdtroisun{$a$}{$c$}{$b$})&=\tdtroisun{$a$}{$c$}{$b$}\otimes 1
+\tddeux{$a$}{$b$}\otimes \tdun{$c$}+\tddeux{$a$}{$c$}\otimes \tdun{$b$}+
+\tdun{$c$}\otimes \tddeux{$a$}{$b$}+1\otimes \tdtroisun{$a$}{$c$}{$b$},\\
\Delta(\tdtroisdeux{$a$}{$b$}{$c$})&=\tdtroisdeux{$a$}{$b$}{$c$}\otimes 1+\tddeux{$a$}{$b$}\otimes \tdun{$c$}
+\tdun{$a$}\otimes \tddeux{$b$}{$c$}+1\otimes \tdtroisdeux{$a$}{$b$}{$c$};\\ \\
\delta(\tdun{$a$})&=\tdun{$a$}\otimes \tdun{$a$},\\
\delta(\tddeux{$a$}{$b$})&=\tddeux{$a$}{$b$}\otimes \tdun{$a$}\tdun{$b$}+
\tdun{$a+b$} \hspace{4mm} \otimes \tddeux{$a$}{$b$},\\
\delta(\tdtroisun{$a$}{$c$}{$b$})&=\tdtroisun{$a$}{$c$}{$b$}\otimes \tdun{$a$}\tdun{$b$}\tdun{$c$}
+\tddeux{$a+b$}{$c$}\hspace{4mm}\otimes \tddeux{$a$}{$b$}\tdun{$c$}+
\tddeux{$a+c$}{$b$}\hspace{4mm}\otimes \tddeux{$a$}{$c$}\tdun{$b$}+
\tdun{$a+b+c$}\hspace{9mm}\otimes \tdtroisun{$a$}{$c$}{$b$},\\
\delta(\tdtroisdeux{$a$}{$b$}{$c$})&=\tdtroisdeux{$a$}{$b$}{$c$}\otimes \tdun{$a$}\tdun{$b$}\tdun{$c$}
+\tddeux{$a+b$}{$c$}\hspace{4mm}\otimes \tddeux{$a$}{$b$}\tdun{$c$}
+\tddeux{$a$}{$b+c$}\hspace{4mm}\otimes \tdun{$a$}\tddeux{$b$}{$c$}+
\tdun{$a+b+c$}\hspace{9mm}\otimes \tdtroisdeux{$a$}{$b$}{$c$}.
\end{align*}

As $\caltop$ is a cofree twisted bialgebra, by Theorem \ref{theo68}:

\begin{prop}
$\bfh_{\caltop}$ is a free and cofree bialgebra. 
\end{prop}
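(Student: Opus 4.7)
The plan is to apply Theorem \ref{theo68} directly, invoking both its freeness and cofreeness conclusions. The twisted bialgebra $\caltop$ is connected (since $\caltop[\emptyset]$ is one-dimensional, generated by the empty topology), so Theorem \ref{theo68}(2) applies: $\fun(\caltop) = \bfh_{\caltop}$ is freely generated as an algebra by the space of primitive elements of the infinitesimal coproduct $\blacktriangle$. This gives the freeness part.

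For cofreeness, I would invoke Theorem \ref{theo42}, which establishes that $\caltop$ is isomorphic to a shuffle twisted bialgebra, and hence is a cofree twisted coalgebra (it is isomorphic to $\cot(\calQ)$ for some species $\calQ$). Then Theorem \ref{theo68}(3) directly yields that $\fun(\caltop) = \bfh_{\caltop}$ is a cofree coalgebra.

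Combining these two observations gives the statement. No step here is substantive: the entire content of the proposition is already packaged in Theorems \ref{theo42} and \ref{theo68}, and the proof is just a two-line citation. The only thing one should perhaps remark on is that the underlying twisted coalgebra structure of $\caltop$ is indeed connected (needed for Theorem \ref{theo68}(2)) and cofree (needed for Theorem \ref{theo68}(3))—both already verified earlier in the paper.
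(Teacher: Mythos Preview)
Your proposal is correct and matches the paper's own argument: the paper's proof is just the one-line remark preceding the proposition, ``As $\caltop$ is a cofree twisted bialgebra, by Theorem~\ref{theo68},'' which is exactly your citation of Theorems~\ref{theo42} and~\ref{theo68}.
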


\subsection{Morphisms from finite topologies to $\WQSym$, $\QSym$ or $\K[X]$}

The Fock functors applied to the the morphism $\phi_{ehr_q}$ gives a commutative diagram of morphisms
compatible with the product and both coproducts:\\
\[\xymatrix{\bfh_{\caltop}\ar@{->>}[r]\ar[d]_{\fun(\phi_{ehr_q})}&H_{\caltop}\ar[d]_{\fdeux(\phi_{ehr_q})}\ar[dr]^{P_{ehr_q}}&\\
\WQSym\ar@{->>}[r]&\QSym\ar@{->>}[r]_H&\K[X]}\]
where $P_{ehr_q}=H\circ \fdeux(\phi_q)$. 
For any finite topology $T$ in $\bfh_{\caltop}$ of degree $n$:
\begin{align*}
\fun(\phi_{ehr_1})(T)&=\sum_{c\in L(T)} c(1)\ldots c(n),&
\fun(\phi_{ehr_{-1}})(T)&=\sum_{c\in L'(T)} c(1)\ldots c(n).
\end{align*}
For example:
\begin{align*}
\fun(\phi_{ehr_1})(\tdun{$\{1\}$}\hspace{2mm})&=(1),\\
\fun(\phi_{ehr_1})(\tddeux{$\{1,3\}$}{$\{2\}$}\hspace{4mm})&=(121),\\
\fun(\phi_{ehr_1})(\hspace{2mm}\tdtroisun{$\{1,4\}$}{$\{2\}$}{\hspace{-2mm}$\{3\}$}\hspace{3mm})
&=(1231)+(1321)+(1221),\\
\fun(\phi_{ehr_1})(\tdtroisdeux{$\{1,3\}$}{$\{4\}$}{$\{2\}$}\hspace{4mm})&=(1312);\\
\fun(\phi_{ehr_{-1}})(\tdun{$\{1\}$}\hspace{2mm})&=(1),\\
\fun(\phi_{ehr_{-1}})(\tddeux{$\{1,3\}$}{$\{2\}$}\hspace{4mm})&=(121)+(111),\\
\fun(\phi_{ehr_{-1}})(\hspace{2mm}\tdtroisun{$\{1,4\}$}{$\{2\}$}{\hspace{-2mm}$\{3\}$}\hspace{3mm})
&=(1231)+(1321)+(1221)+(1211)+(1121)+(1111),\\
\fun(\phi_{ehr_{-1}})(\tdtroisdeux{$\{1,3\}$}{$\{4\}$}{$\{2\}$}\hspace{4mm})&=(1312)+(1211)+(1221)+(1111).
\end{align*}
The morphism $\fun(\phi_{ehr_1})$ is the noncommutative version of the strict quasisymmetric Ehrhart functions \cite{BR},
as it is  explained in \cite{FoissyEhrhart},
whereas  $\fdeux(\phi_{ehr_1})$ is the strict  quasisymmetric Ehrhart function. For example, if $a,b,c\geq 1$:
\begin{align*}
\fdeux(\phi_{ehr_1})(\tdun{$a$})&=(a),\\
\fdeux(\phi_{ehr_1})(\tddeux{$a$}{$b$})&=(a,b),\\
\fdeux(\phi_{ehr_1})(\tdtroisun{$a$}{$c$}{$b$})&=(a,b,c)+(a,c,b)+(a,b+c),\\
\fdeux(\phi_{ehr_1})(\tdtroisdeux{$a$}{$b$}{$c$})&=(a,b,c);\\ \\
\fdeux(\phi_{ehr_{-1}})(\tdun{$a$})&=(a),\\
\fdeux(\phi_{ehr_{-1}})(\tddeux{$a$}{$b$})&=(a,b)+(a+b),\\
\fdeux(\phi_{ehr_{-1}})(\tdtroisun{$a$}{$c$}{$b$})&=(a,b,c)+(a,c,b)+(a,b+c)+(a+b,c)+(a+c,b)+(a+b+c),\\
\fdeux(\phi_{ehr_{-1}})(\tdtroisdeux{$a$}{$b$}{$c$})&=(a,b,c)+(a+b,c)+(a,b+c)+(a+b+c).
\end{align*}
Reformulating in terms of formal series in a infinite set $\{X_1,X_2,\ldots\}$ of indeterminates:
\begin{align*}
\fdeux(\phi_{ehr_1})(\tdun{$a$})&=\sum_i X_i^a,\\
\fdeux(\phi_{ehr_1})(\tddeux{$a$}{$b$})&=\sum_{i<j} X_i^a X_j^b,\\
\fdeux(\phi_{ehr_1})(\tdtroisun{$a$}{$c$}{$b$})&=\sum_{i<j<k} X_i^aX_j^bX_k^c+\sum_{i<j<k} X_i^aX_j^cX_k^b+
\sum_{i<j} X_i^aX_j^{b+c},\\
\fdeux(\phi_{ehr_1})(\tdtroisdeux{$a$}{$b$}{$c$})&=\sum_{i<j<k} X_i^aX_j^bX_k^c.
\end{align*}
Composing $\fdeux(\phi_{ehr_1})$  with $H$, we obtain the strict Ehrhart polynomial $P_{ehr_1}$. For example:
\begin{align*}
P_{ehr_1}(\tdun{$a$})&=H_1(X)=X,\\
P_{ehr_1}(\tddeux{$a$}{$b$})&=H_2(X)=\frac{X(X-1)}{2},\\
P_{ehr_1}(\tdtroisun{$a$}{$c$}{$b$})&=2H_3(X)+H_2(X)=\frac{X(X-1)(2X-1)}{6},\\
P_{ehr_1}(\tdtroisdeux{$a$}{$b$}{$c$})&=H_3(X)=\frac{X(X-1)(X-2)}{6}.
\end{align*}
Composing $\fdeux(\phi_{ehr_{-1}})$ with $H$, we obtain the Ehrhart polynomial $P_{ehr_{-1}}$. For example:
\begin{align*}
P_{ehr_{-1}}(\tdun{$a$})&=H_1(X)=X,\\
P_{ehr_{-1}}(\tddeux{$a$}{$b$})&=H_2(X)+H_1(X)=\frac{X(X+1)}{2},\\
P_{ehr_{-1}}(\tdtroisun{$a$}{$c$}{$b$})&=2H_3(X)+3H_2(X)+H_1(X)=\frac{X(X+1)(2X+1)}{6},\\
P_{ehr_{-1}}(\tdtroisdeux{$a$}{$b$}{$c$})&=H_3(X)+2H_2(X)+H_1(X)=\frac{X(X+1)(X+2)}{6}.
\end{align*}

\begin{prop}[Duality principle]
For any finite topology $T \in H_{\caltop}$:
\begin{align*}
P_{ehr_q}(T)(X)&=q^{\cl(T)} P_{ehr_1}(T)\left(\frac{X}{q}\right).
\end{align*} \end{prop}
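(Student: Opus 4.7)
The plan is to mimic exactly the argument already carried out for the analogous statement on graphs (which produced $P_{chr_q}(G)(X)=q^{deg(G)}P_{chr_1}(G)(X/q)$). The key ingredients are all in place: the definition $\phi_{ehr_q}=\theta_{q^{-1}}\circ\phi_{ehr_1}\circ\iota_q$ (coming straight from the construction), the functoriality of the bosonic Fock functor $\fdeux$, the intertwining relation $H\circ\fdeux(\theta_q)=\Theta_q\circ H$ (proved in the preceding subsection via Proposition \ref{prop64}), and the fact that the graduation on $\caltop$ is by the number of equivalence classes, so that $\iota_q(T)=q^{cl(T)}T$.

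First, I would apply $\fdeux$ to the identity $\phi_{ehr_q}=\theta_{q^{-1}}\circ\phi_{ehr_1}\circ\iota_q$ to obtain
\[\fdeux(\phi_{ehr_q})=\fdeux(\theta_{q^{-1}})\circ\fdeux(\phi_{ehr_1})\circ\fdeux(\iota_q).\]
Then, since $T\in\caltop_{cl(T)}$ in the grading of $\caltop$, we have $\fdeux(\iota_q)(T)=q^{cl(T)}T$, so that
\[\fdeux(\phi_{ehr_q})(T)=q^{cl(T)}\,\fdeux(\theta_{q^{-1}})\circ\fdeux(\phi_{ehr_1})(T).\]

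Composing with $H:\QSym\longrightarrow\K[X]$ and using the intertwining relation $H\circ\fdeux(\theta_{q^{-1}})=\Theta_{q^{-1}}\circ H$ established earlier, I would obtain
\[P_{ehr_q}(T)=H\circ\fdeux(\phi_{ehr_q})(T)=q^{cl(T)}\Theta_{q^{-1}}\bigl(H\circ\fdeux(\phi_{ehr_1})(T)\bigr)=q^{cl(T)}\Theta_{q^{-1}}P_{ehr_1}(T),\]
which by definition of $\Theta_{q^{-1}}$ (substitution $X\mapsto X/q$) gives exactly the claimed identity.

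I do not anticipate any real obstacle: every ingredient has been explicitly constructed or verified earlier in the paper. The only thing to be a little careful about is to confirm that the grading on $\caltop$ used to define $\iota_q$ is indeed indexed by $cl(T)$ (and not, say, by cardinality of the underlying set), which is exactly the convention fixed in the definition of the graduation of $\caltop$ (number of equivalence classes). Once that is noted, the computation is a direct transcription of the chromatic case.
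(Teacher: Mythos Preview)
Your proposal is correct and follows exactly the same approach as the paper's proof: apply $\fdeux$ to $\phi_{ehr_q}=\theta_{q^{-1}}\circ\phi_{ehr_1}\circ\iota_q$, use $\fdeux(\iota_q)(T)=q^{cl(T)}T$, and then the intertwining $H\circ\fdeux(\theta_{q^{-1}})=\Theta_{q^{-1}}\circ H$ to conclude. Your remark about checking that the grading on $\caltop$ is by $cl(T)$ is the only point requiring care, and you handle it correctly.
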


\begin{proof}
Indeed:
\begin{align*}
P_{ehr_q}(T)(X)&=H\circ \fdeux(\theta_{q^{-1}})\circ \fdeux(\phi_{ehr_1})\circ \fdeux(\iota_q)(T)(X)\\
&=q^{\cl(T)}H\circ \fdeux(\theta_{q^{-1}})\circ \fdeux(\phi_{ehr_1})(T)(X)\\
&=q^{\cl(T)} \Theta_{q^{-1}}\circ H \circ \fdeux(\phi_{ehr_1})(T)(X)\\
&=q^{\cl(T)} \Theta_{q^{-1}}\circ P_{ehr_1}(T)(X)\\
&=q^{\cl(T)} P_{ehr}(T)\left(\frac{X}{q}\right).\qedhere
\end{align*} \end{proof}

\begin{prop} 
Let $q\in \K$. For any finite topology $T$:
\[H\circ \fdeux(\phi_q)(T)=\lambda_{ho}(T)X^{\cl(T)}.\]
\end{prop}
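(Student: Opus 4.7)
The plan is to unwind $H \circ \fdeux(\phi_q)(T)$ explicitly, recognize the result as the formal convolution power $\lambda_{ho}^X(T)$ of the character $\lambda_{ho}$, and then apply the identity $\lambda_{ho}^q = \lambda_{ho} \circ \iota_q$ from Proposition \ref{prop56}(2).

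First, I would reduce to the case $q=1$. By Proposition \ref{prop56}, $\phi_q = \phi_\mathbf{1}\circ\iota_q$ with $\iota_q(T)=q^{cl(T)}T$, so
$\fdeux(\phi_q)(T) = q^{cl(T)}\,\fdeux(\phi_\mathbf{1})(T)$
and thus $H\circ\fdeux(\phi_q)(T) = q^{cl(T)}\,H\circ\fdeux(\phi_\mathbf{1})(T)$. It therefore suffices to compute $H\circ\fdeux(\phi_\mathbf{1})(T)$.

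Next, I would apply Theorem \ref{theo17} with the character $\alpha=\lambda_{ho}$ to obtain
\[
\phi_\mathbf{1}(T)=\sum_{(A_1,\ldots,A_k)\in\comp[V(T)]}\lambda_{ho}(T_{\mid A_1})\cdots\lambda_{ho}(T_{\mid A_k})\,(A_1,\ldots,A_k),
\]
where the sum is effectively restricted to those set compositions with $\Delta_{A_1,\ldots,A_k}(T)\ne 0$. Passing to $\QSym$ via $\fdeux$ sends each set composition to the integer composition of cardinalities; applying $H$ sends a length-$k$ integer composition to $H_k(X)$. Regrouping by length yields
\[
H\circ\fdeux(\phi_\mathbf{1})(T) = \sum_{k\geq 0} H_k(X)\,(\lambda_{ho}-\varepsilon)^{*k}(T),
\]
using the fact (checked easily) that $(\lambda_{ho}-\varepsilon)(T_{\mid A_i})$ vanishes on empty parts and coincides with $\lambda_{ho}(T_{\mid A_i})$ on nonempty ones, so that the $k$-fold convolution power matches precisely the inner sum.

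Finally, the right-hand side is by definition $\lambda_{ho}^X(T)$, the formal $X$-th convolution power of $\lambda_{ho}$. By Proposition \ref{prop56}(2), $\lambda_{ho}^q = \lambda_{ho}\circ \iota_q$, hence $\lambda_{ho}^q(T) = q^{cl(T)}\lambda_{ho}(T)$; both sides being polynomials in $q$, substituting $X$ for $q$ gives $\lambda_{ho}^X(T) = \lambda_{ho}(T)\,X^{cl(T)}$. Combining with the first step yields the claim. The main obstacle is purely bookkeeping in the second paragraph: making sure the open-set filtration condition on set compositions (from $\Delta_{A_1,\ldots,A_k}\ne 0$) agrees with the combinatorial description used in $\lambda_{ho}^{*k}$ and that the passage $\calcomp\leadsto\QSym\leadsto\K[X]$ is coherent after regrouping. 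No deeper input beyond the universal property of Theorem \ref{theo17} and the scaling identity for $\lambda_{ho}^q$ is required.
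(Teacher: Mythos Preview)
Your approach is correct and reaches the same conclusion as the paper, namely $H\circ\fdeux(\phi_q)(T)=q^{cl(T)}\lambda_{ho}(T)X^{cl(T)}$ (the statement as printed omits the factor $q^{cl(T)}$, a typo confirmed by the paper's own final line). The route, however, is genuinely different.

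The paper argues as follows: writing $F_q=H\circ\fdeux(\phi_q)$, the homogeneity $\phi_q\circ\iota_{q'}=\theta_{q'}\circ\phi_q$ together with $H\circ\fdeux(\theta_{q'})=\Theta_{q'}\circ H$ yields $F_q(T)(q'X)=q'^{cl(T)}F_q(T)(X)$ for all $q'$, so $F_q(T)$ is a monomial of degree $cl(T)$. Its coefficient is then read off as $\varepsilon'(F_q(T))=\varepsilon'\circ\phi_q(T)=q^{cl(T)}\lambda_{ho}(T)$, using $\varepsilon'\circ H=\varepsilon'$. By contrast, you expand $\phi_\mathbf{1}(T)$ via Theorem~\ref{theo17}, push through $\QSym$ and $H$, and recognize the result as the formal convolution power $\lambda_{ho}^X(T)$; the monomial shape then follows from the polynomial identity $\lambda_{ho}^q=\lambda_{ho}\circ\iota_q$ of Proposition~\ref{prop56}. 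Your argument is more explicit and exposes the convolution-power interpretation of $H\circ\fdeux(\phi_\mathbf{1})$, which is conceptually pleasant; the paper's is shorter, bypassing that computation entirely via homogeneity and a single evaluation at $\varepsilon'$. Both rest on the same scaling identity for $\lambda_{ho}$. Your worry about the open-set filtration condition is unfounded: since $(\lambda_{ho}-\varepsilon)^{*k}$ is itself defined through the iterated coproduct $\Delta_{A_1,\ldots,A_k}$, the condition appears identically on both sides and there is nothing to reconcile.
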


\begin{proof}
In order to lighten the proof, we write $F_q=H\circ \fdeux(\phi_q)$. For any $q'\in \K$,
$\phi_q\circ \iota_{q'}=\theta_{q'}\circ \phi_q$, so:
\begin{align*}
q'^{\cl(T)} F_q(T)&=F_q \circ \fdeux(\iota_{q'})(T)\\
&=H\circ \fdeux(\theta_{q'})\circ  \fdeux(\phi_q)(T)\\
&=\Theta_{q'}\circ H\circ  \fdeux(\phi_q)(T)\\
&=F_q(T)(q'X),
\end{align*}
so $F_q(T)$ is homogeneous of degree $\cl(T)$.  We put $F_q(T)=\mu(T)X^{\cl(T)}$. Then:
\begin{align*}
\mu(T)&=\varepsilon'(F_q(T))\\
&=\varepsilon'\circ H\circ \fdeux(\phi_q)(T)\\
&=\varepsilon'\circ \fdeux(\phi_q)(T)\\
&=\varepsilon'\circ \phi_q(T)\\
&=q^{\cl(T)}\lambda_{ho}(T).
\end{align*}
So $F_q(T)=q^{\cl(T)} \lambda_{ho}(T)$. \end{proof}

As $\phi_{ehr_1}=\phi_1\leftarrow \lambda_{ehr_1}$:

\begin{cor}
For any finite topology $T\in H_{\caltop}$:
\begin{align*}
P_{ehr_1}(T)&=\sum_{\sim\triangleleft G} \lambda_{ehr_1}(G\mid \sim) \lambda_{ho}(T/\sim)X^{\cl(\sim)}.
\end{align*}
\end{cor}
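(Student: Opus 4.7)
The plan is to start from the identity $\phi_{ehr_1}=\phi_1\leftarrow \lambda_{ehr_1}$ recorded in the summary table preceding the corollary, and to evaluate both sides at an arbitrary finite topology $T$ by unfolding the right-hand side via the explicit formula for $\delta$ on $\caltop$. Since $\delta_A(T)=\sum_{\sim\triangleleft T}T/\sim\otimes T|\sim$, the very definition of the action $\leftarrow$ yields
\[
\phi_{ehr_1}(T)=(\phi_1\otimes\lambda_{ehr_1})\circ\delta(T)
=\sum_{\sim\triangleleft T}\phi_1(T/\sim)\,\lambda_{ehr_1}(T|\sim),
\]
which is an equality in $\calcomp[A]$ where each $\lambda_{ehr_1}(T|\sim)$ is a scalar.

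Next, I would apply the bosonic Fock functor $\fdeux$ and then the morphism $H:\QSym\longrightarrow\K[X]$ to both sides. Because $\lambda_{ehr_1}(T|\sim)$ is a scalar and $\fdeux,H$ are linear, one obtains
\[
P_{ehr_1}(T)=H\circ\fdeux(\phi_{ehr_1})(T)
=\sum_{\sim\triangleleft T}H\bigl(\fdeux(\phi_1)(T/\sim)\bigr)\,\lambda_{ehr_1}(T|\sim).
\]
At this stage the previous proposition applies: in the case $q=1$ it gives $H\circ\fdeux(\phi_1)(T')=\lambda_{ho}(T')X^{cl(T')}$ for every finite topology $T'$. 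Specializing to $T'=T/\sim$ and using the evident identity $cl(T/\sim)=cl(\sim)$ (the classes of $T/\sim$ are precisely the equivalence classes of $\sim$), one gets
\[
P_{ehr_1}(T)=\sum_{\sim\triangleleft T}\lambda_{ehr_1}(T|\sim)\,\lambda_{ho}(T/\sim)\,X^{cl(\sim)},
\]
which is the announced formula.

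There is no serious obstacle here: the corollary is a direct assembly of three ingredients (the factorization of $\phi_{ehr_1}$ through $\phi_1$, the explicit description of $\delta$ on $\caltop$, and the homogeneity statement $H\circ\fdeux(\phi_q)(T)=\lambda_{ho}(T)X^{cl(T)}$ proved just above). The only subtle point worth a line of commentary is to justify pulling $\fdeux$ and $H$ inside the sum, which is legitimate because each summand is a scalar multiple of an element of $\calcomp$ and both $\fdeux$ and $H$ are $\K$-linear.
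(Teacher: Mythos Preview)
Your proof is correct and is exactly the argument the paper has in mind: the corollary is stated immediately after the line ``As $\phi_{ehr_1}=\phi_1\leftarrow \lambda_{ehr_1}$:'' with no further proof, and you have simply unfolded this identity using the explicit formula for $\delta$ on $\caltop$ together with the preceding proposition for $q=1$. The only remark is cosmetic: the $G$ in the displayed formula is a typo for $T$, which you have silently corrected.
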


Applying the Fock functor to $\Gamma$:

\begin{prop}
The following maps are Hopf algebra automorphisms:
\begin{align*}
\fun(\Gamma)&:\left\{\begin{array}{rcl}
\bfh_{\caltop}&\longrightarrow&\bfh_{\caltop}\\
T&\longrightarrow&\displaystyle \sum_{\sim\triangleleft T} T/\sim, 
\end{array}\right.\\
\fdeux(\Gamma)&:\left\{\begin{array}{rcl}
H_{\caltop}&\longrightarrow&H_{\caltop}\\
G&\longrightarrow&\displaystyle \sum_{\sim\triangleleft T} T/\sim.
\end{array}\right.
\end{align*}
Their inverse are given by:
\begin{align*}
\fun(\Gamma)^{-1}&:\left\{\begin{array}{rcl}
\bfh_{\caltop}&\longrightarrow&\bfh_{\caltop}\\
T&\longrightarrow&\displaystyle  \sum_{\sim\triangleleft T} (-1)^{\cl(T)+\cl(\sim)}T/\sim,
\end{array}\right.\\
\fdeux(\Gamma)^{-1}&:\left\{\begin{array}{rcl}
H_{\caltop}&\longrightarrow&H_{\caltop}\\
T&\longrightarrow&\displaystyle \sum_{\sim\triangleleft T} (-1)^{\cl(T)+\cl(\sim)}T/\sim.
\end{array}\right.
\end{align*}
Moreover, the following diagram commutes:
\begin{align*}
&\xymatrix{&&&\WQSym\ar@{->>}[dddd]\\
\bfh_{\caltop}\ar[rr]_{\fun(\Gamma)}\ar@{->>}[d]\ar[rrru]^{\fun(\phi_{ehr_{-1}})}
&&\bfh_{\caltop}\ar@{->>}[d]\ar[ru]_{\fun(\phi_{ehr_1})}&\\
H_{\caltop}\ar[rr]^{\fun(\Gamma)}\ar[dr]^{P_{ehr_{-1}}}\ar@/_4pc/[ddrrr]_{\fdeux(\phi_{ehr_{-1}})}
&&H_{\caltop}\ar[ld]^{P_{ehr_1}}\ar[rdd]^<(.2){\fdeux(\phi_{ehr_1})}&\\
&\K[X]&&\\
&&&\QSym\ar[ull]^H}
\end{align*}
\end{prop}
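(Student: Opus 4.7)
The proof will essentially be a transport-of-structure argument: all the non-trivial content is already contained in the previous proposition on $\Gamma$, $\Gamma'$ and $\phi_{ehr_{\pm 1}}$ at the level of twisted bialgebras, and we only need to push everything through the Fock functors.

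First I would invoke functoriality. By Theorem~\ref{theo62} and its corollary, $\fun$ and $\fdeux$ send twisted bialgebra morphisms to bialgebra morphisms. Since $\Gamma$ and $\Gamma'$ are mutually inverse twisted bialgebra automorphisms of $\caltop$ (proved in the preceding corollary), their images $\fun(\Gamma)$, $\fun(\Gamma')$, $\fdeux(\Gamma)$, $\fdeux(\Gamma')$ are bialgebra morphisms, and the functorial identities
\[
\fun(\Gamma)\circ \fun(\Gamma') = \fun(\Gamma\circ \Gamma') = \fun(Id_{\caltop}) = Id_{\bfh_{\caltop}},
\]
together with the analogous identities for $\fdeux$, show they are automorphisms and identify the inverses as $\fun(\Gamma') = \fun(\Gamma)^{-1}$ and $\fdeux(\Gamma') = \fdeux(\Gamma)^{-1}$.

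Next I would unwind the explicit formulas. By the definition of $\fun$ given in Theorem~\ref{theo62}, $\fun(\Gamma)$ acts on a finite topology $T$ on $\underline{n}$ simply by $\Gamma[\underline{n}](T) = \sum_{\sim \triangleleft T} T/\sim$, and similarly with the sign $(-1)^{cl(T)+cl(\sim)}$ for $\Gamma'$. Passing to coinvariants gives the same formulas on $H_{\caltop}$, after identifying basis elements with homeomorphism classes of finite topologies. This step is essentially bookkeeping: one just has to check that the equivalence classes $T/\sim$ occurring in the sum are well-defined up to relabelling, which is immediate since $\sim$ is defined intrinsically.

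Finally I would assemble the commutative diagram. The inner square $\fun(\phi_{ehr_{-1}}) = \fun(\phi_{ehr_1})\circ \fun(\Gamma)$ and its $\fdeux$ counterpart are obtained by applying the functors to the identity $\phi_{ehr_{-1}} = \phi_{ehr_1}\circ \Gamma$ from the previous corollary. The triangles involving the surjection $\bfh_{\caltop}\twoheadrightarrow H_{\caltop}$ and $\QSym\twoheadleftarrow\WQSym$ commute because the canonical projections to coinvariants are natural in the species argument. The triangles involving $P_{ehr_{\pm 1}}$ and $\K[X]$ commute by the very definition $P_{ehr_q} = H\circ \fdeux(\phi_{ehr_q})$, and the arrow $H$ from $\QSym$ to $\K[X]$ is the unique double bialgebra morphism produced in the previous subsection. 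The only mildly subtle point is to verify that every face of the three-dimensional diagram actually commutes; this reduces to two types of squares (functoriality of $\fun$/$\fdeux$ composed with the canonical surjection, and the definition of $P_{ehr_q}$), which I do not expect to present any obstacle beyond careful diagram chasing.
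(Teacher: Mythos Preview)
Your proposal is correct and matches the paper's approach exactly: the paper states this proposition without proof, relying implicitly on precisely the transport-of-structure argument you outline---functoriality of $\fun$ and $\fdeux$ applied to the preceding corollary on $\Gamma$, $\Gamma'$, and $\phi_{ehr_{\pm 1}}$, together with the definition $P_{ehr_q}=H\circ\fdeux(\phi_{ehr_q})$.
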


\bibliographystyle{amsplain}
\bibliography{biblio}

\end{document}